\newtheorem{theorem}{Theorem}[section]
\newtheorem{lemma}[theorem]{Lemma}
\newtheorem{corollary}[theorem]{Corollary}
\newtheorem{proposition}[theorem]{Proposition}
\newtheorem{claim}[theorem]{Claim}
\newtheorem{observation}[theorem]{Observation}
\newtheorem*{Main Theorem}{Corollary~\ref{main}}
\newtheorem{question}[theorem]{Question}
\theoremstyle{definition}
\newtheorem{definition}[theorem]{Definition}
\newtheorem{example}[theorem]{Example}
\newtheorem{remark}[theorem]{Remark}
\newtheorem{remarks}[theorem]{Remarks}
\newtheorem{data}[theorem]{Data}
\newtheorem{convention}[theorem]{Convention}
\newtheorem{case}[theorem]{Case}
\newtheorem{conventions}[theorem]{Conventions}
\newtheorem{chunk}[theorem]{}
\newtheorem*{Remark}{Remark}
\newtheorem*{proof-free}{Proof of Lemma \ref{main-lemma}}
\numberwithin{equation}{theorem}
\numberwithin{table}{theorem}
\begin{document}

\baselineskip=16pt

\title[The structure of Gorenstein-linear  resolutions of  Artinian   algebras]{\bf 
The structure of Gorenstein-linear  resolutions\\ of  Artinian   algebras}
\date{\today}

\author[Sabine El Khoury and Andrew R. Kustin]
{Sabine El Khoury and Andrew R. Kustin}

\thanks{AMS 2010 {\em Mathematics Subject Classification}.
Primary 13D02,  13E10, 13H10, 13A02, 20G05.}

\thanks{The second author was partially supported by the Simons Foundation.}

\thanks{Keywords: Artinian rings, 
Gorenstein rings, Duality,  Linear resolution, Macaulay inverse system,   Pure resolution,
 Resolutions, Schur and Weyl modules for hooks, Weak Lefschetz property.}

\address{Mathematics Department,
American University of Beirut,
Riad el Solh 11-0236,
Beirut,
Lebanon}
\email{se24\@aub.edu.lb}

\address{Department of Mathematics, University of South Carolina,
Columbia, SC 29208} \email{kustin@math.sc.edu}

\begin{abstract} Let $\pmb k$ be a field, $A$  a standard-graded Artinian Gorenstein $\pmb k$-algebra, $S$   the standard-graded polynomial ring $\operatorname{Sym}_{\bullet}^{\pmb k}A_1$, $I$ the kernel of the natural map $\xymatrix{S\ar@{->>}[r]&A}$, $d$  the vector space dimension $\dim_{\pmb k}A_1$, and $n$  the least index with $I_n\neq 0$. 
Assume that $3\le d$ and $2\le n$. In this paper, we give the structure of the minimal homogeneous resolution ${\mathbf B}$ of $A$ by free $S$-modules, provided ${\mathbf B}$ is  Gorenstein-linear.
(Keep in mind  that if $A$ has even socle degree and is generic, then $A$ has a Gorenstein-linear minimal resolution.) 

Our description of  ${\mathbf B}$ depends on a fixed decomposition of $A_1$ of the form $\pmb k x_1\oplus V_0$, for some non-zero element $x_1$  and some $(d-1)$ dimensional subspace $V_0$ of $A_1$. 
Much information about ${\mathbf B}$ is already contained in the complex 
$\overline{{\mathbf B}}={\mathbf B}/x_1{\mathbf B}$, which we call the skeleton of ${\mathbf B}$.
One striking 
feature of ${\mathbf B}$ is the fact that the skeleton of ${\mathbf B}$ is completely determined by the data $(d,n)$; no other information about $A$ is used in the construction of  $\overline{{\mathbf B}}$. 

The skeleton $\overline{{\mathbf B}}$ is the mapping cone of ${\mathrm {zero}}:\mathbb K\to \mathbb L$, where $\mathbb L$ is a well known  resolution of Buchsbaum and Eisenbud;  $\mathbb K$ is the dual of $\mathbb L$; and $\mathbb L$ and $\mathbb K$ are comprised of Schur and Weyl  modules associated to hooks, respectively.
The decomposition of $\overline{{\mathbf B}}$ into  Schur and Weyl  modules lifts to a decomposition of 
${\mathbf B}$; furthermore, ${\mathbf B}$ inherits the natural self-duality of $\overline{{\mathbf B}}$.

The differentials of ${\mathbf B}$ are explicitly given,  in a polynomial manner,  
in terms of the coefficients of a Macaulay inverse system for $A$. In light of the properties of 
$\overline{{\mathbf B}}$, the description of the differentials of ${\mathbf B}$ amounts to giving a minimal generating set of $I$, and, for the interior differentials, giving the coefficients of $x_1$. As an application we observe that every non-zero element of  $A_1$ is a weak Lefschetz element for $A$.

\end{abstract}

\maketitle

\section{Introduction.}\label{Intro}

\begin{center}
{\sc    Table of Contents }
\end{center}

\begin{enumerate}
\item[\ref{Intro}.] Introduction. 
\item[\ref{C-and-B}.] Conventions and background.

\item[\ref{describe-B}.] The modules and maps of $(\mathbb B,b)$.

\item[\ref{describe-G}.] The complex $(\mathbb G,g)$.

\item[\ref{tau}.] The commutative diagram $\tau:\mathbb B \to \mathbb G$.

\item[\ref{main-theorem}.] The main theorem.

\item[\ref{Apps}.] Applications and examples

\item[\ref{expl-desc}.] A matrix  description of $\mathbb B$.
\end{enumerate}

\bigskip

Let $\pmb k$ be a field, $A$  a standard-graded Artinian Gorenstein $\pmb k$-algebra, $S$   the standard-graded polynomial ring $\operatorname{Sym}_{\bullet}^{\pmb k}A_1$, $I$ the kernel of the natural map $\xymatrix{S\ar@{->>}[r]&A}$, $d$  the vector space dimension $\dim_{\pmb k}A_1$, and $n$  the least index with $I_n\neq 0$. 
Assume that $3\le d$ and $2\le n$. In this paper, we give the structure of the minimal homogeneous resolution ${\mathbf B}$ of $A$ by free $S$-modules, provided ${\mathbf B}$ is  Gorenstein-linear.
There are numerous ways to determine a priori if $A$ has a Gorenstein-linear minimal resolution;
see for example,
Proposition~\ref{J18}. Furthermore,  
 if $A$ has even socle degree and is generic, then $A$ has a Gorenstein-linear minimal resolution; see for example, 
(\ref{generic}).

Our description of  ${\mathbf B}$ depends on a fixed decomposition of $A_1$ of the form $\pmb k x_1\oplus V_0$, for some non-zero element $x_1$  and some $(d-1)$ dimensional subspace $V_0$ of $A_1$. 
Much information about ${\mathbf B}$ is already contained in the complex 
$\overline{{\mathbf B}}={\mathbf B}/x_1{\mathbf B}$, which we call the skeleton of ${\mathbf B}$.
One striking 
feature of ${\mathbf B}$ is the fact that the skeleton of ${\mathbf B}$ is completely determined by the data $(d,n)$; no other information about $A$ is used in the construction of  $\overline{{\mathbf B}}$. Furthermore, the skeleton of ${\mathbf B}$ is sufficiently uncomplicated that we  can give its complete description here. 
 Let $\bar S=S/(x_1)$. Notice that $\bar S$ is a polynomial ring over the field $\pmb k$ in $d-1$ variables with homogeneous maximal ideal $\bar S_+$. 
A coordinate-free resolution of $\bar S/\bar S_+^n$  by free $\bar S$-modules was given by 
Buchsbaum and Eisenbud \cite{BE75}; call this resolution $\mathbb L$.
If $\mathbb K$ is the $\bar S$-dual of $\mathbb L$, then $\overline{{\mathbf B}}$ is the mapping cone of
$\mathbb K \xrightarrow{\ 0\ } \mathbb L$. (For example, the skeleton of ${\mathbf B}$ when $(d,n)=(4,2)$ is given in Example~\ref{d=4matrices-ext}.)

Now that we have given the complete description of  $\overline{{\mathbf B}}$, we lift as much information as possible from $\overline{{\mathbf B}}$ back to ${\mathbf B}$ and describe, precisely, what still needs to be identified. The modules in $\mathbb L$ are Schur modules for hooks. In our language  $\mathbb L$ is 
$$0\to \bar S\otimes_{\pmb k} L_{d-2,n}^{\pmb k} V_0\to \dots 
\to \bar S\otimes_{\pmb k} L_{1,n}^{\pmb k} V_0 \to \bar S\otimes_{\pmb k} L_{0,n}^{\pmb k} V_0\to \bar S.$$
   The modules in $\mathbb K$ are Weyl modules for hooks. In our language  $\mathbb K$ is 
$$0\to \bar S\otimes_{\pmb k}{\textstyle\bigwedge^{d-1}}V_0\to \bar S\otimes_{\pmb k} K_{d-2,n-1}^{\pmb k} V_0\to \dots 
\to \bar S\otimes_{\pmb k} K_{1,n}^{\pmb k} V_0 \to \bar S\otimes_{\pmb k} K_{0,n}^{\pmb k} V_0.$$
We give a complete description of the Schur module $L_{i,j}^{\pmb k}V_0$ and the Weyl module
 $K_{i,j}^{\pmb k}V_0$
 in (\ref{conv1}.\ref{L-K}); but notice at this point that these modules are built over $\pmb k$, or even, really, over $\mathbb Z$; and therefore, ${\mathbf B}$ is
$$0\to {\mathbf B}_d\xrightarrow{\ {\mathbf b}_d\ } \dots\xrightarrow{\ {\mathbf b}_2\ } {\mathbf B}_1\xrightarrow{\ {\mathbf b}_1\ } {\mathbf B}_0,\qquad\text{with}$$
$${\mathbf B}_r=\begin{cases} S,&\text{if $r=0$}\\S\otimes_{\pmb k}(K_{r-1,n-1}^{\pmb k}V_0\oplus L_{r-1,n}^{\pmb k}V_0),
&\text{if $1\le r\le d-1$, and}\\S\otimes_{\pmb k}{\textstyle\bigwedge^{d-1}}V_0,&\text{if $r=d$}.\end{cases}$$The part of the differential ${\mathbf b}_r:{\mathbf B}_r\to {\mathbf B}_{r-1}$ which involves $V_0$ is already known from $\overline{{\mathbf B}}$. In this paper we describe the part of ${\mathbf b}_r$ which involves $x_1$. That is, when $r$ equals $1$ or $d$, we identify a minimal generating set of $I$; and, for each $r$, with $2\le r\le d-1$, we identify a $\pmb k$-linear 
map \begin{equation}\label{k-lin}K_{r-1,n-1}^{\pmb k}V_0\oplus L_{r-1,n}^{\pmb k}V_0 \longrightarrow  K_{r-2,n-1}^{\pmb k}V_0\oplus L_{r-2,n}^{\pmb k}V_0\end{equation} so that $(S/V_0S)\otimes_S {\mathbf b}_r$ is equal to $x_1$ times (\ref{k-lin}).

 As already observed, $A$ provides the data $(d,n)$ for $\overline{{\mathbf B}}$; but otherwise, 
$\overline{{\mathbf B}}$ is completely independent of $A$
. It follows that  all of the information about $A$ is contained in (\ref{k-lin}) for any particular $r$, or, of course, in the minimal generating set for $I$. (So, ``finding the coefficients'' for $x_1$ is the entire problem and is highly non-trivial.)

Our description of a minimal generating set for $I$ has the immediate consequence that $x_1$ is a weak Lefschetz element for $A$; see Proposition~\ref{6.1}. We observe in Remark~\ref{WLEisEZ} that one can obtain a complete proof that $x_1$ is a weak Lefschetz element
once one has some knowledge of ${\mathbf B}$, but well
before one has mastered the entire construction of ${\mathbf B}$ and proof that ${\mathbf B}$ resolves $A$. 

Recall that $\overline{{\mathbf B}}$ is the mapping cone of $\operatorname{Hom}_{\overline{S}}(\mathbb L,\bar S)\xrightarrow{\ 0\ } \mathbb L$. It follows that there is a natural duality among the modules which form $\overline{{\mathbf B}}$. The Schur and Weyl modules which  underlie $\overline{{\mathbf B}}$ also underlie ${\mathbf B}$; and therefore, there is a natural duality among the modules  of ${\mathbf B}$. At the end of Section~\ref{Apps} we prove that the perfect pairing $\operatorname{pp}:{\mathbf B}_r\otimes {\mathbf B}_{d-r}\to {\mathbf B}_d$, which is induced by this natural duality of Schur and Weyl modules, is 
graded-commutative and satisfies the graded product rule. We wonder if $\operatorname{pp}$ is the backbone of  an associative DG-algebra structure on ${\mathbf B}$.

 The usefulness of the skeleton ${\mathbf B}$ (which captures the self-duality of ${\mathbf B}$ in a most trivial manner and reduces the problem of describing the differential $b_r$, for $2\le r\le d-1$, to ``finding the coefficient of $x_1$'') makes us wonder which other classes of resolutions  have similar skeletons. In particular, we ask the following question. 
\begin{question}\label{skeleton?}  Do other classes of pure or linear resolutions have interesting and meaningful skeletons?\end{question}
\noindent In fact, we first found the formulas for ${\mathbf B}$ and proved that ${\mathbf B}$ resolves $A$. 
It was only after we had 
${\mathbf B}$ in hand that we observed the interesting properties of ${\mathbf B}/(x_1{\mathbf B})$. Nonetheless, with an eye toward Question~\ref{skeleton?}, we  tackle the question ``How much about $\overline{{\mathbf B}}$ could be known before all of ${\mathbf B}$ is known?''. Our best answer is that if one knows that $x_1$ is a weak Lefschetz element, then one can prove the existence of $\overline{{\mathbf B}}$ with the desired properties; see Proposition~\ref{6.4}. 

\begin{chunk}\label{generic}
In order to make sense of the assertion ``the generic, standard-graded, Artinian, Gorenstein, $\pmb k$-algebra, with even socle degree, has a Gorenstein-linear minimal resolution'',
it is necessary to parameterize the $\pmb k$-algebras  under consideration. To this end, 
we introduce Macaulay inverse systems, which is the main tool in the paper. 
Start with the standard-graded polynomial ring  $S=\operatorname{Sym}_{\bullet}^{\pmb k}V$, where $V$ is a vector space of dimension $d$ over $\pmb k$, and  let $D=D_{\bullet}^{\pmb k}(V^*)$ be  
the graded $S$-module of graded $\pmb k$-linear homomorphisms from $S$ to $\pmb k$. 
Macaulay proved that
each non-zero element $\phi$ of $D$ determines  an 
 Artinian Gorenstein ring $A_{\phi}=S/\operatorname{ann}(\phi)$; furthermore, each Artinian Gorenstein quotient of $S$ is obtained in this manner. The element $\phi$ of $D$ is called the {\it Macaulay inverse system} for the Artinian Gorenstein ring $A_{\phi}$.
 Of course, $\phi$ determines everything about the quotient $A_{\phi}$; so in particular,  when $\phi$ is a homogeneous element of $D$, then $\phi$ determines a minimal resolution of $A_{\phi}$ by free $S$-modules. If one starts with $\phi$, then the standard way to find the minimal resolution of $A_{\phi}$ is to first solve some equations in order to determine a minimal  generating set for $\operatorname{ann}(\phi)$ and then to use Gr\"obner basis techniques in order to find a minimal resolution of $A_{\phi}$ by free $S$-modules. In the generic case (as explained below), we  by-pass all of the intermediate steps and describe a minimal  resolution of $A_{\phi}$ directly (and in a polynomial manner) in terms of  the coefficients of $\phi$.  In \cite{EKK}, we proved that if $\phi$ is homogeneous of even degree $2n-2$ and the pairing \begin{equation}\label{pp}A_{n-1}\times A_{n-1}\to \pmb k,\end{equation} given by
$(f,g)\mapsto fg(\phi)$, is perfect, then a minimal resolution for $A_{\phi}$ may be read directly, and in a polynomial manner, from the coefficients of $\phi$. Furthermore, there is one such resolution for each pair $(d,n)$. Please notice that the pairing (\ref{pp}) 
is perfect if and only if the determinant
of the matrix \begin{equation}\label{crit-det}((m_im_j)\phi),\end{equation} (as $m_i$ and $m_j$ roam over the monomials in $S$ of degree $n-1$), is non-zero. This is an open condition on the coefficients of 
$\phi$ (which are precisely the values of $m\phi$ as $m$ roams over the monomials of $S$ of degree $2n-2$); hence 
the pairing (\ref{pp})
is perfect whenever $\phi$ is chosen generically. Furthermore, the pairing (\ref{pp}) is perfect if and only if the minimal resolution of $A_{\phi}$ by free $S$-modules is Gorenstein-linear, see Proposition~\ref{J18}.

The paper \cite{EKK} proves the existence of a unique generic Gorenstein-linear resolution for each pair $(d,n)$; but exhibits this resolution only for the pair $(d,n)=(3,2)$. The  paper \cite{EK-K-2}, 
exhibits the unique generic Gorenstein-linear resolution for each pair $(d,n)$ when $d=3$ and $n\ge 2$ is arbitrary. 
The present paper exhibits the unique generic Gorenstein-linear resolution 
for all pairs $(d,n)$, provided $d\ge 3$ and $n\ge 2$. 
\end{chunk}

We think of the present  paper as a companion to the recent paper of Rossi and \c{S}ega \cite{RS} which proves that the Poincar\'e series of a finitely generated module $M$ over a compressed local Artinian ring $A$ is rational, provided the socle degree of $A$ is not $3$. The rings $A$ of the present paper satisfy the hypothesis of \cite{RS}; on the other hand, \cite{RS} also applies to non-graded rings and rings with odd socle degree. The paper \cite{RS} is about the Betti numbers in a resolution of $M$ by free $A$-modules. The present paper is about the differentials in a resolution of $A$ by free $S$-modules. The connection is that, traditionally, one has learned about the Poincar\'e series of the $A$-module $M$ by studying the $S$-resolution of $A$. So the present paper supplies information that \cite{RS} might have used if the information had been available. Furthermore, \cite{RS} pointed us in the direction of Fr\"oberg's paper \cite{F}. Fr\"oberg may have had some insight into the skeleton of $\mathbf B$. His paper is about the Poincar\'e series of two types of rings: the rings that we study (``graded extremal rings'') and rings defined by monomial ideals (``monomial rings''). Both of the complexes $\mathbb L$ and $\mathbb K$ which constitute the skeleton of $\mathbf B$ are complexes associated to monomials.

We give the structure of the minimal resolution for all standard-graded Artinian Gorenstein $\pmb k$-algebras $A$. For each relevant pair $(d,n)$, there is exactly one resolution with
$$\binom{2n+d-3}{d-1}$$ parameters. The parameters may be filled in with elements of $\pmb k$ chosen at random, with the one constraint that the determinant of  the corresponding $\binom{n+d-2}{d-1}\times \binom{n+d-2}{d-1}$ symmetric matrix be non-zero. In particular, there is one flat family of such algebras with minimal homogeneous resolution 
\begin{equation}\label{9-16}0\to S(-6)\to S^9(-4)\to S^{16}(-3)\to S^9(-2)\to S\end{equation} and each algebra $A$ in this family has Hilbert series $\operatorname{HS}_A=1+4t+t^2$. The situation is much different if the rules are changed. Suppose one keeps the Betti numbers $(1,9,16,9,1)$, continues to insist that $S$ be a positively graded  polynomial ring over $\pmb k$, and that $A$ be graded and Gorenstein, but no longer insists that the grading on $S$ be standard, that $A$ be Artinian, or that the minimal resolution of $A$ be Gorenstein-linear. This second set of hypotheses describes the situation in \cite{BKR}
which studies codimension four $\mathbb Q$-Fano threefolds; the corresponding section ring is a codimension four graded Gorenstein quotient of a positively graded polynomial ring with Betti numbers $(1,9,16,9,1)$. 
  Altinok's thesis \cite{A} shows that 
there are 145 potential families of codimension four Fano threefolds, given by considering the possible Hilbert series of the section ring. In \cite{BKR}   the authors show that
115 of these possibilities actually do occur. In fact they show that in each case there are
two Fano threefolds with the same numerical invariants but a different topology, so that
in each case there are at least two different flat families of Gorenstein algebras with the chosen Betti numbers and the same Hilbert series.  

Over time, it will be interesting to see how the present paper fits into Miles Reid's program \cite{R} for classifying codimension four Gorenstein algebras.

The case $n=2$ of our main result (including, for example, (\ref{9-16})) is already known by Behnke \cite{B}. Indeed, when $n=2$ and the minimal resolution of $A$ is Gorenstein-linear, then $A$ has minimal multiplicity (among Gorenstein algebras); and this is precisely the hypothesis in \cite{B}.  There are two signs of Behnke's influence in  our construction of the minimal resolution: the centrality of the perfect pairing map (\ref{pp})  and the importance of the distinguished element $x_1\in V_1$.   

\bigskip

In fact, we calculate over $\mathbb Z$ 
and our ultimate complex $(\mathbb B,b)$ is built over the ring $${\mathfrak R}={\operatorname{Sym}^{\mathbb Z}}_{\bullet}(U\oplus {\operatorname{Sym}^{\mathbb Z}}_{2n-2}U)$$ using a generic element $\Phi\in {\mathfrak R}\otimes_{{\mathbb Z}}D_{2n-2}^{\mathbb Z} (U^*)$ and a distinguished element $\pmb \delta$ in ${\mathfrak R}$, where $U$ is a free ${\mathbb Z}$-module of rank $d$ and $U^*$ is the $\mathbb Z$-dual of $U$.
The point is that if $V$ is a vector space of dimension $d$ over the field $\pmb k$, $\alpha:\pmb k\otimes_{\mathbb Z}U \to V$ is an isomorphism, and $\phi$ is an element in $D_{2n-2}^k(V^*)$ (where $V^*$ is the $\pmb k$-dual of $V$), then there is a unique ring homomorphism $\rho:\pmb k\otimes_{\mathbb Z}{\mathfrak R}\to S$ which factors through  $\alpha$ and sends $\Phi$ to $\phi$; see Proposition~\ref{transition}. The ring homomorphism $\rho$ sends $\pmb \delta$ 
 to
the critical determinant  of (\ref{crit-det}). The main result in the paper is Corollary~\ref{main} which states that if the determinant of (\ref{crit-det}) is non-zero, then 
${\mathbf B}=S\otimes_{\rho} \mathbb B$ is a minimal homogeneous resolution of $S/\operatorname{ann}(\phi)$ 
by free $S$-modules.

We define the ${\mathfrak R}$-module homomorphisms
$$(\mathbb B,b):\quad 0\to B_d\xrightarrow{\ b_d\ }B_{d-1}\xrightarrow{\ b_{d-1}\ }\cdots \xrightarrow{\ b_{2}\ } B_1\xrightarrow{\ b_1\ }B_{0}$$ in Definition~\ref{diff}. It is shown in Corollary~\ref{main-lemma}.\ref{main-lemma-e} that $(\mathbb B,b)$ is a complex and in Corollary~\ref{main-lemma}.\ref{main-lemma-g} that $(\mathbb B, b)_{\pmb\delta}$ is a resolution. 
A bi-homogeneous version of $(\mathbb B,b)$ may be found in Example~\ref{bi-gr}.

Section~\ref{C-and-B} contains all of the data, notation, and conventions that are used throughout the paper.
The modules and maps that eventually become the complex $\mathbb B$ are introduced in Section~\ref{describe-B}.
In Definition \ref{defG}, we recall the complex $\mathbb G$
of \cite{EKK}. Theorem \ref{Properties-of-G} recalls the wonderful properties of  $\mathbb G_{\pmb\delta}$. The complex  $\mathbb G_{\pmb\delta}$ is almost the dream complex; it has all of the desired properties; the only problem is that it is difficult to state exactly what $\mathbb G_{\pmb\delta}$ is. In the present paper, $(\mathbb B,b)$ is given explicitly. An important  result of the present paper, Corollary~\ref{main-lemma}.\ref{main-lemma-f}, is that $\mathbb B_{\pmb\delta}$ is isomorphic to $\mathbb G_{\pmb\delta}$. Thus, $\mathbb B_{\pmb\delta}$ has all of the wonderful properties of $\mathbb G_{\pmb\delta}$ and we know \underline{exactly} what $\mathbb B_{\pmb\delta}$ is. 
In Section~\ref{tau} we make an initial step toward proving that $\mathbb B_{\pmb\delta}$ and $\mathbb G_{\pmb\delta}$ are isomorphic
by exhibiting a commutative diagram $\tau:\mathbb B \to \mathbb G$. 
Theorem~\ref{thm-1} is the heart of the paper. This is where we identify a basis for each $(G_r)_{\pmb \delta}$. The complex $\mathbb G_{\pmb\delta}$ of \cite{EKK} and Theorem 3.4 has all of the desired properties, except it is not clear exactly what $\mathbb G_{\pmb\delta}$ is. In Theorem~\ref{thm-1} we determine a precise description of $\mathbb G_{\pmb\delta}$.

At the beginning of Section~\ref{Apps}
we apply our results in order to prove the existence of weak Lefschetz elements. In Proposition~\ref{6.4} we reverse directions and point out that if one knew the existence  of weak Lefschetz elements ahead of time, then one could deduce the form of skeleton of the minimal resolution before actually knowing the entire minimal resolution. In Example~\ref{d=3} we compare the complex $\mathbb B$ of \cite{EK-K-2}, where $d=3$, to the complex $\mathbb B$ of Definition~\ref{diff} and Corollary~\ref{main-lemma}.\ref{main-lemma-e}, where $d$ is arbitrary. Example~\ref{bi-gr} describes $\mathbb B$ as  a bi-homogeneous complex of free ${\mathfrak R}$-modules. Section~\ref{Apps} concludes with a discussion of the natural perfect pairing $\operatorname{pp}: B_r\otimes B_{d-r}\to B_d$, which is induced by the skeleton of $\mathbb B$. This pairing is
  graded-commutative and satisfies the graded product rule.

In Section \ref{expl-desc} we make  $\mathbb B$ significantly more explicit. We describe $\mathbb B$ in terms of  elements of the ring ${\mathfrak R}$, rather than in terms of  maps. 
Proposition~\ref{Best} contains a version of $b_r$ which is close to an explicit matrix for all $r$ and $d$. In Proposition~\ref{Best} the value of $b_r$ applied to a standard basis element of
$B_r$ is given explicitly as a linear combination of elementary generators of $B_{r-1}$ with coefficients in ${\mathfrak R}$. (Each elementary generator of $B_{r-1}$
can be expressed in terms of the standard basis elements of $B_{r-1}$, but that step is not carried out in Proposition~\ref{Best}.) 
Theorem~\ref{47.4} gives an explicit matrix version of $b_r$ for all $r$ and all $d$.
The proof of Theorem~\ref{47.4} requires a careful analysis of the standard bases for various Schur and Weyl modules, and the duality among theses bases. Further examples are included at the end of Section~\ref{expl-desc}.

\section{\bf Conventions and background.}\label{C-and-B}

An expanded version of this section may be found in \cite{EK-K-2}.

\begin{convention}In this paper, every unadorned operation is a functor in the category of $\mathbb Z$-modules. In particular, we write  $\otimes$, $\bigwedge^i$, $\operatorname{Sym}_j$, $D_j$, $L_{i,j}$, $K_{i,j}$, to mean $\otimes_{\mathbb Z}$, $\bigwedge^i_{\mathbb Z}$, $\operatorname{Sym}_j^{\mathbb Z}$, $D_j^{\mathbb Z}$, $L_{i,j}^{{\mathbb Z}}$ and $K_{i,j}^{{\mathbb Z}}$, respectively. Furthermore, we usually write $U^*$ to mean $\operatorname{Hom}_{{\mathbb Z}}(U,{\mathbb Z})$; and we will make a special note when we violate this convention.
\end{convention}

\begin{data}\label{Opening-Data} Let $U$ be a free $\mathbb Z$-module of rank $d$ and $n\ge 2$ be an integer.  

\begin{enumerate}[(a)]
\item Define ${\mathfrak R}$ to be the bi-graded ring
${\mathfrak R}={\operatorname{Sym}}_{\bullet}(U\oplus {\operatorname{Sym}}_{2n-2}U)$, where $$U\oplus 0={\mathfrak R}_{(1,0)}\quad\text{and}\quad 0\oplus{\operatorname{Sym}}_{2n-2}U={\mathfrak R}_{(0,1)}.$$ 

\item Define $\Psi:{\operatorname{Sym}}_{\bullet}U\to {\mathfrak R}$ to be the $\mathbb Z$-algebra homomorphism which is induced by the inclusion 
$$U={\mathfrak R}_{(1,0)}\subseteq {\mathfrak R}.$$ 

\item Define $\Phi: {\operatorname{Sym}}_{2n-2}U\to {\mathfrak R}$ to be the $\mathbb Z$-module homomorphism 
$${\operatorname{Sym}}_{2n-2}U={\mathfrak R}_{(0,1)}\subseteq {\mathfrak R}.$$
\end{enumerate}
\end{data}

\begin{definition}\label{manuf-data} Retain  Data  \ref{Opening-Data}. Let  $\mathrm{top}$ represent $\binom{n+d-2}{d-1}$, which is the rank of the free $\mathbb Z$-module ${\operatorname{Sym}}_{n-1}U$, and let $\Theta$ be a basis element for the rank one free $\mathbb Z$-module $\bigwedge^\mathrm{top} ({\operatorname{Sym}}_{n-1}U)$.
\begin{enumerate}[(a)]

\item We  think of $\Phi$ as an element of ${\mathfrak R}\otimes D_{2n-2}(U^*)$. 

\item Define $\mathbb Z$-module homomorphism $p:{\operatorname{Sym}}_{n-1}U\to {\mathfrak R}\otimes D_{n-1} (U^*)$ by
$[p(\mu)](\mu')=\Phi(\mu\mu')$,
for $\mu$ and $\mu'$ in ${\operatorname{Sym}}_{n-1} U$. 
 
\item Define the element  $\pmb \delta$ of ${\mathfrak R}$ by  
$$\pmb \delta= [{\textstyle (\bigwedge^\mathrm{top} p)}(\Theta)](\Theta)\in {\mathfrak R}_{(0,\mathrm{top})}.$$ 

\item Define the $\mathbb Z$-module homomorphism $q: D_{n-1}U^* \to {\mathfrak R}\otimes{\operatorname{Sym}}_{n-1}U$ by
$$\textstyle q(\nu)= [(\bigwedge^{\mathrm{top}-1}_{{\mathfrak R}}p)(\nu(\Theta))](\Theta) $$
for $\nu\in D_{n-1}U^*$.
\end{enumerate}\end{definition}
\begin{remark}\label{rmk3}It is shown in \cite[Obs.~2.5]{EK-K-2} that
$$[q(\nu)](\nu')=[q(\nu')](\nu)\quad\text{and}\quad [q(\nu)](\Phi)=\pmb\delta \nu,$$for all $\nu,\nu'\in D_{n-1}U$.
\end{remark}

\begin{data}\label{data1} Retain Data~\ref{Opening-Data}. 
\begin{enumerate}[\rm(a)]
\item Fix elements $x_1\in U$ and $x_1^*\in U^*$ with $x_1^*(x_1)=1$. 
\item Let $U_0=\ker(x_1^*:U\to \mathbb Z)$. Observe that  $U_0$ is a free ${\mathbb Z}$-module of rank $d-1$ and  $U={\mathbb Z} x_1\oplus U_0$. 
\item View $U_0^*$ as the sub-module of $U^*$ with $U_0^*(x_1)=0$.
\item Let ${\widetilde{\Phi}}$ be the element of ${\mathfrak R}\otimes D_{2n-1}(U^*)$ with $x_1({\widetilde{\Phi}})=\Phi$ in ${\mathfrak R}\otimes D_{2n-2}(U^*)$ and $\mu({\widetilde{\Phi}})=0$ for all $\mu$ in ${\operatorname{Sym}}_{2n-1}(U_0)$.
\end{enumerate}\end{data}

\begin{conventions}\label{conv1} Adopt~\ref{data1}. Let $N$, $a$, and $b$ be  integers.

\begin{enumerate}[\rm(a)]\item\label{conv1-a} Our results (except in Section~\ref{expl-desc}) are coordinate-free in $U_0$. There are times that we give a basis for some coordinate-free functor applied to $U$ or $U_0$ in terms of a basis for $U$ or $U_0$. At those times, we use  $x_1,x_2,\dots,x_d$ as a basis for $U$ and $x_2,\dots,x_d$ as a basis for $U_0$. Furthermore, we use $x_1^*,\dots,x_d^*$ to represent the basis for $U^*$ which is dual to the basis $x_1,\dots,x_d$ for $U$. 

\item For any set of variables $\{x_{i_1},\dots,x_{i_r}\}$ and any degree $s$, we write
$\binom{x_{i_1},\dots,x_{i_r}} s$ for   the set of monomials of degree $s$ in the variables $x_{i_1},\dots,x_{i_r}$.

\item\label{conv-f} If $m$ is the monomial $x_1^{a_1}x_2^{a_2}\cdots x_d^{a_d}$ of ${\operatorname{Sym}}_{N}U$, then $m^*$ is defined to be the element 
$${x_1^*}^{(a_1)}{x_2^*}^{(a_2)}\cdots {x_d^*}^{(a_d)}$$
of $D_{N}(U^*)$. The module action of ${\operatorname{Sym}}_{\bullet}U$ on $D_{\bullet}(U^*)$ makes 
$\{ m^*\mid m\in \binom{x_1,\dots,x_d}{N}\}$ be the ${{\mathbb Z}}$-module basis for the free ${\mathbb Z}$-module $D_{N}(U^*)$ which is dual to the ${{\mathbb Z}}$-module basis $\binom{x_1,\dots,x_d}{N}$ of ${\operatorname{Sym}}_{N}U$. (More information about divided power modules may be found, for example,  in \cite[subsect.~1.3]{EKK} or \cite[1.1]{EK-K-2}). 
\item\label{ev*} The evaluation map $\operatorname{Sym}_NU_0\otimes D_N(U_0^*)\xrightarrow{\ \operatorname{ev}\ }\mathbb Z$ is a coordinate-free map; consequently, the dual
$$\mathbb Z\xrightarrow{\ \operatorname{ev}^*\ }D_N(U_0^*)\otimes  \operatorname{Sym}_NU_0$$is also a coordinate-free map. Thus, $$\operatorname{ev}^*(1) =\sum\limits_{m\in \binom{x_2,\dots, x_{d}}{N}}m^*\otimes m$$ is an element of $D_N(U_0^*)\otimes  \operatorname{Sym}_NU_0$ which is completely coordinate-free.

\item\label{L-K} The operations $L_{a,b}$ and $K_{a,b}$ represent the Schur and Weyl functors 
for a hook
as described, for example,  in \cite[Data~2.1]{EKK}. (The translation between our notation and the notation of Buchsbaum and Eisenbud and the notation of Weyman is contained in \cite[2.1 and 2.5]{EKK}.)
In particular, 
$$\textstyle \begin{array}{lll}
L_{a,b}U_0&=&\ker \left(\bigwedge^aU_0\otimes{\operatorname{Sym}}_bU_0\xrightarrow{\kappa}
\bigwedge^{a-1}U_0\otimes{\operatorname{Sym}}_{b+1}U_0\right)\quad\text{and}\vspace{5pt}\\
K_{a,b}U_0&=&\ker \left(\bigwedge^aU_0\otimes D_b(U_0^*)\xrightarrow{\eta}
\bigwedge^{a-1}U_0\otimes D_{b-1}(U_0^*)\right),\end{array}$$where $\kappa$ is a Koszul complex map and $\eta$ is an Eagon-Northcott complex map.
In the language of (\ref{conv1-a}), 
\begin{equation}\label{kappa-eta}\kappa(\theta\otimes \mu)=\sum\limits_{i=2}^dx_i^*(\theta)\otimes x_i\mu\quad \text{and}\quad \eta(\theta\otimes \nu)=\sum\limits_{i=2}^dx_i^*(\theta)\otimes x_i(\nu)\end{equation} 
for $\theta\in \bigwedge^aU_0$, $\mu\in \operatorname{Sym}_bU_0$, and $\nu\in D_{b}(U_0^*)$. In light of (\ref{ev*}), the description of $\kappa$ and $\eta$ given in (\ref{kappa-eta}) is coordinate-free.

\item If $S$ is a statement then 
$$\chi(S)=\begin{cases} 1,&\text{if $S$ is true,}\vspace{5pt}\\0,&\text{if $S$ is false.}\end{cases}$$

\item If $m$ is a monomial in the variables $x_1,\dots, x_d$, then $x_i|m$ is the statement ``$x_i$ divides $m$''.

\item\label{conv-init} If $m$ is a monomial of positive degree in some variables $x_1,\dots,x_d$, then  let ``${\operatorname{init}}(m)=x_i$'' and ``$\operatorname{least}(m)=i$'' both  mean that $i$ is the least index for which $x_i|m$. 

\item\label{conv1-i} If $a$ and $b$ are integers, then let $[a,b]$ represent the set of integers
$[a,b]=\{i\in \mathbb Z\mid a\le i\le b\}$.

\end{enumerate}\end{conventions}

If $R$ is a graded ring, then a homogeneous   complex of free $R$-modules  is  {\it Gorenstein-linear} if it has the form 
$$0\to R(-2n-t+2)^{s_{t}}\xrightarrow{\ d_{t}\ } R(-n-t+2)^{s_{t-1}}\xrightarrow{\ d_{t-1}\ }\dots\xrightarrow{\ d_{3}\ } R(-n-1)^{s_2}\xrightarrow{\ d_{2}\ } R(-n)^{s_1}\xrightarrow{\ d_{1}\ }R^{s_0},$$
for some integers $n$, $t$, and $s_i$. In other words, all of  the entries in all of the matrices $d_i$, except the first matrix and the last matrix, are homogeneous linear forms; and all of the entries in the first and last matrices are homogeneous forms of the same degree. 

A graded ring $R=\bigoplus_{0\le i}R_i$ is called {\it standard-graded over} $R_0$, if $R$ is generated as an $R_0$-algebra by $R_1$ and $R_1$ is finitely generated as an $R_0$-module.

Let $\pmb k$ be a field.  View the standard-graded polynomial ring $S=\pmb k[x_1,\dots,x_d]$ as the symmetric algebra $\operatorname {Sym}^{\pmb k}_{\bullet}V$, where $V$ is the $\pmb k$-vector space with basis $x_1,\dots,x_d$ and view  $D_{\bullet}^{\pmb k}(V^*)$ as a $\operatorname {Sym}_{\bullet}^{\pmb k}V$-module. (At this point, $V^*$ means $\operatorname{Hom}_{\pmb k}(V,\pmb k)$.)
Macaulay \cite{M16} 
exhibited a duality between the ideals of $\operatorname {Sym}^{\pmb k}_{\bullet}V$
which define an Artinian quotient and the non-zero finitely-generated $\operatorname {Sym}^{\pmb k}_{\bullet}V$-submodules of $D_{\bullet}^{\pmb k}(V^*)$. The ideal $I$ of $\operatorname {Sym}^{\pmb k}_{\bullet}V$
corresponds to the submodule $$\operatorname{ann}(I)=\{\nu\in D_{\bullet}^{\pmb k}(V^*)\mid \mu(\nu)=0\in  D_{\bullet}^{\pmb k}(V^*) \text{ for all $\mu\in I$}\}$$ of $D_{\bullet}^{\pmb k}(V^*)$ and the submodule $M$ of 
$D_{\bullet}^{\pmb k}(V^*)$ corresponds to the ideal
$$\operatorname{ann}(M)=\{\mu \in \operatorname {Sym}^{\pmb k}_{\bullet}V \mid \mu(\nu)=0\in  D_{\bullet}^{\pmb k}(V^*) \text{ for all $\nu\in M$}\}$$ of  $\operatorname {Sym}^{\pmb k}_{\bullet}V$. A generating set for $\operatorname{ann}(I)$ is called a {\it Macaulay inverse system} for the ideal $I$. Under this duality, the quotient  ring $\operatorname {Sym}^{\pmb k}_{\bullet}V/I$ is Gorenstein if  and only if the module  $\operatorname{ann}(I)$ is cyclic. The following result, which gives many alternative formulations for the main hypothesis in the present paper, appears as \cite[Prop.~1.8]{EKK}.

\begin{proposition} \label{J18} Let $\pmb k$ be a field, $n$ and $d$ be positive integers,  $I$ be a homogeneous, grade $d$, Gorenstein ideal in the standard-graded polynomial ring $S=\pmb k[x_1,\dots,x_d]$, $V$ be the $d$-dimensional vector space $[S]_1$, and $\phi\in D_{\bullet}^{\pmb k}(V^*)$  be a homogeneous  Macaulay inverse system for $I$. Then the  following statements are equivalent\,{\rm:}
\begin{enumerate}[\rm(a)]
\item 
the minimal homogeneous resolution of $S/I$ by free $S$-modules is Gorenstein-linear, and   $I$ is generated by forms of degree $n;$

\item\label{J18-b} the minimal homogeneous resolution of $S/I$ by free $S$-modules has the form
$$0\to S(-2n-d+2)\to S(-n-d+2)^{\beta_{d-1}}\to \dots \to S(-n-1)^{\beta_2}\to   S(-n)^{\beta_1} \to S,$$
with $$  \beta_i =\frac{2n+d-2}{n+i-1}\binom{n+d-2}{i-1}\binom{n+d-i-2}{n-1},$$for $1\le i\le d-1;$
\item all of the minimal generators of $I$ have degree  $n$  and the socle of $S/I$ has degree $2n-2;$
\item $[I]_{n-1}=0$ and $[S/I]_{2n-1}=0;$
\item $\phi\in D_{2n-2}^{\pmb k}(V^*)$ and the homomorphism $\pmb p^\phi:\operatorname {Sym}_{n-1}^{\pmb k}V\to D_{n-1}^{\pmb k}(V^*)$, with $\pmb p^\phi(\mu)=\mu(\phi)$ for $\mu$ in $\operatorname {Sym}_{n-1}^{\pmb k}V$,   is an isomorphism$;$ and 
\item $\phi\in D_{2n-2}^{\pmb k}(V^*)$ and the $\binom{d+n-2}{d-1} \times \binom{d+n-2}{d-1}$ matrix 
$((m_im_j)\phi)$, as $m_i$ and $m_j$ roam over a list of the monomials of degree $n-1$ in $\{x_1,\dots,x_d\}$,  is invertible.
\end{enumerate}
\end{proposition}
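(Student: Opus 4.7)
The plan is to prove the equivalence by running the cycle $(b)\Rightarrow(a)\Rightarrow(c)\Leftrightarrow(d)\Leftrightarrow(e)\Leftrightarrow(f)\Rightarrow(b)$, exploiting the self-duality of the minimal free resolution of the Artinian Gorenstein ring $S/I$.

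The implications $(b)\Rightarrow(a)\Rightarrow(c)$ are straightforward: the displayed resolution in $(b)$ is manifestly Gorenstein-linear with first-map entries of degree $n$, and any Gorenstein-linear resolution of the grade-$d$ ideal $I$ has socle degree equal to (top shift) $-d=2n-2$. For the equivalence $(c)\Leftrightarrow(d)$, I would use Gorenstein symmetry of the Hilbert function. For nonzero $I$, the vanishing $[I]_{n-1}=0$ combined with the bound $h_j\le\binom{j+d-1}{d-1}$ and the symmetry $h_j=h_{\sigma-j}$ forces $\sigma\ge 2n-2$, while $[S/I]_{2n-1}=0$ caps $\sigma\le 2n-2$; once the Hilbert function is pinned down to be the compressed one, any hypothetical minimal generator of $I$ in degree $>n$ would produce strict inequality $\dim[I']_{n+k}<\dim[I]_{n+k}$, where $I'=\langle[I]_n\rangle$, contradicting the minimality of the Betti numbers forced by this Hilbert function.

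For $(d)\Leftrightarrow(e)$: by the definition of a Macaulay inverse system, $\ker\pmb p^\phi=[I]_{n-1}$, and since $\operatorname{Sym}_{n-1}^{\pmb k}V$ and $D_{n-1}^{\pmb k}(V^*)$ both have $\pmb k$-dimension $\binom{n+d-2}{d-1}$, injectivity of $\pmb p^\phi$ coincides with bijectivity; the requirement $\phi\in D_{2n-2}^{\pmb k}(V^*)$ is exactly the statement that the socle degree equals $2n-2$. For $(e)\Leftrightarrow(f)$: under the monomial bases set up in Convention~\ref{conv1}, the entries of the matrix of $\pmb p^\phi$ read $(\pmb p^\phi(m_i))(m_j)=(m_im_j)(\phi)$, so invertibility of the Gram matrix is exactly bijectivity of the map.

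The main obstacle is the closing implication, the explicit identification of the intermediate Betti numbers in $(b)$. Gorenstein symmetry $F_i\cong F_{d-i}^*(-2n-d+2)$ of the minimal resolution, combined with purity of $F_1$ in degree $n$, immediately gives $F_d=S(-2n-d+2)$ and $F_{d-1}$ pure of degree $n+d-2$. To force purity of each intermediate $F_i$ in degree $n+i-1$, the plan is to compute the Hilbert series of $S/I$ from the compressed Hilbert function $h_j=\min\bigl(\binom{j+d-1}{d-1},\binom{2n-2-j+d-1}{d-1}\bigr)$ and compare it with $\sum_{i,j}(-1)^i\beta_{i,j}t^j(1-t)^{-d}$, using self-duality $\beta_{i,j}=\beta_{d-i,2n+d-2-j}$ to constrain the bigraded Betti diagram. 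A standard combinatorial identity then identifies the Betti numbers with the stated hook-length formula $\beta_i=\frac{2n+d-2}{n+i-1}\binom{n+d-2}{i-1}\binom{n+d-i-2}{n-1}$, completing the cycle.
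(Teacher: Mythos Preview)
The paper does not prove this proposition at all; it simply cites \cite[Prop.~1.8]{EKK} and moves on. So there is no ``paper's proof'' to compare against, and your sketch is already more than the paper offers here.

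Your cycle is sound, but two remarks are worth making. First, your direct argument for $(d)\Rightarrow(c)$ (that the Hilbert function being compressed forces generation in degree exactly $n$) is vague as written: the Hilbert function does not by itself determine the graded Betti numbers, so ``minimality of the Betti numbers forced by this Hilbert function'' is not a clean step. Fortunately you do not need it, since $(d)\Rightarrow(e)\Rightarrow(f)\Rightarrow(b)\Rightarrow(a)\Rightarrow(c)$ closes the loop anyway.

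Second, for the crux $(f)\Rightarrow(b)$, the Hilbert-series comparison you propose works but is heavier than necessary. A cleaner route to purity: from $[I]_{n-1}=0$ and minimality of the resolution one gets $\beta_{i,j}\neq 0\Rightarrow j\ge n+i-1$ for $1\le i$; self-duality $\beta_{i,j}=\beta_{d-i,\,2n+d-2-j}$ then gives the reverse inequality $j\le n+i-1$ for $i\le d-1$, forcing $j=n+i-1$. Once the resolution is pure, the Betti numbers follow from Herzog--K\"uhl \cite{HK} (which the paper does cite elsewhere), and one checks they match the stated hook formula. This avoids any explicit Hilbert-series manipulation. Note also that your symmetry argument $h_{n-1}=h_{\sigma-(n-1)}$ forcing $\sigma\ge 2n-2$ uses that $j\mapsto\binom{j+d-1}{d-1}$ is strictly increasing, which needs $d\ge 2$; the proposition is stated for all positive $d$, but the ambient paper assumes $d\ge 3$ throughout, so this is harmless.
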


We describe the transition between 
the generic data of  Data~\ref{Opening-Data}  
and
the specific data 
of a   Macaulay inverse system for a
standard-graded Artinian Gorenstein algebra over a field $\pmb k$.

\begin{proposition}\label{transition}
Adopt Data~{\rm\ref{Opening-Data}}. Let $V$ be a vector space of dimension $d$ over a field $\pmb k$,  $S$   the standard-graded polynomial ring $\operatorname{Sym}_{\bullet}^{\pmb k}(V)$, and $V^*$   the $\pmb k$-dual of $V$.
\begin{enumerate}[\rm(a)]
 \item\label{trans-a} If  $\alpha: \pmb k\otimes U\to V$ is a vector space isomorphism and $\phi$ is an element of $D_{2n-2}^{\pmb k}(V^*)$, then there exists a ring homomorphism $\rho:{\mathfrak R}\to S$ with the property that the composition $$ 
{\mathfrak R}\otimes D_{2n-2}U^* 
\xrightarrow{\ \rho\otimes 1 \ }S\otimes D_{2n-2}U^* \xrightarrow{\ 1\otimes (D\alpha^*)^{-1}
 \ }
S\otimes_{\pmb k}D_{2n-2}^{\pmb k}(V^*)$$ sends  the element $\Phi$ of  ${\mathfrak R}\otimes D_{2n-2}U^*$
 to the element 
$\phi$ 
of $$D_{2n-2}^{\pmb k}(V^*)=S_0\otimes_{\pmb k}D_{2n-2}^{\pmb k}(V^*)\subseteq S\otimes_{\pmb k}D_{2n-2}^{\pmb k}(V^*).$$ Furthermore, the ring homomorphism $\rho$ satisfies $\rho ({\mathfrak R}_{(1,0)})=S_1$
 and $\rho({\mathfrak R}_{(0,1)})\subseteq {S_0}$.
\item\label{trans-b} If $\rho: {\mathfrak R} \to S$  is a ring homomorphism which satisfies  
$\rho ({\mathfrak R}_{(1,0)})=S_1$
 and $\rho({\mathfrak R}_{(0,1)})\subseteq {S_0}$, then there exists a vector space isomorphism $\alpha: \pmb k\otimes U\to V$  and an element   $\phi$  of $D_{2n-2}^{\pmb k}(V^*)$ with 
$$\phi=((1\otimes (D\alpha^*)^{-1})\circ (\rho\otimes 1)) (\Phi),$$ as described in {\rm(\ref{trans-a})}.
\end{enumerate}
$($In this result, the $^*$ in  $\alpha^*$  represents $\pmb k$-dual.$)$\end{proposition}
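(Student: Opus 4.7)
The proof rests on the universal property of the symmetric algebra: ${\mathfrak R}=\operatorname{Sym}_{\bullet}(U\oplus\operatorname{Sym}_{2n-2}U)$ is the free commutative $\mathbb Z$-algebra on the $\mathbb Z$-module $U\oplus\operatorname{Sym}_{2n-2}U$, so a ring homomorphism $\rho:{\mathfrak R}\to S$ is the same data as a pair of $\mathbb Z$-module homomorphisms $\rho_1:U\to S$ and $\rho_2:\operatorname{Sym}_{2n-2}U\to S$. The bigrading constraints $\rho({\mathfrak R}_{(1,0)})=S_1$ and $\rho({\mathfrak R}_{(0,1)})\subseteq S_0=\pmb k$ translate to $\rho_1(U)=V$ and $\rho_2(\operatorname{Sym}_{2n-2}U)\subseteq\pmb k$. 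The whole proposition is therefore a dictionary between such pairs $(\rho_1,\rho_2)$ and pairs $(\alpha,\phi)$.

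For part~(\ref{trans-a}), given $(\alpha,\phi)$, define $\rho_1:U\to V$ to be $U\hookrightarrow\pmb k\otimes U\xrightarrow{\alpha}V$ and $\rho_2:\operatorname{Sym}_{2n-2}U\to\pmb k$ to be the composition $\operatorname{Sym}_{2n-2}U\to\pmb k\otimes\operatorname{Sym}_{2n-2}U\xrightarrow{\operatorname{Sym}_{2n-2}\alpha}\operatorname{Sym}_{2n-2}^{\pmb k}V\xrightarrow{\phi}\pmb k$, and let $\rho$ be the induced ring map. To verify that the asserted composition sends $\Phi$ to $\phi$, identify $\Phi$ as the element of ${\mathfrak R}\otimes D_{2n-2}U^*\cong\operatorname{Hom}_{\mathbb Z}(\operatorname{Sym}_{2n-2}U,{\mathfrak R})$ corresponding to the canonical inclusion $\operatorname{Sym}_{2n-2}U\hookrightarrow{\mathfrak R}$ (this identification uses that $\operatorname{Sym}_{2n-2}U$ is a finitely generated free $\mathbb Z$-module with $\mathbb Z$-dual $D_{2n-2}U^*$, by Conventions~\ref{conv1}.\ref{conv-f}). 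Under $\rho\otimes 1$, this inclusion becomes $\rho_2$; the second arrow $1\otimes(D\alpha^*)^{-1}$ corresponds, under the Hom–tensor duality, to precomposition with $(\operatorname{Sym}_{2n-2}\alpha)^{-1}$, since $\operatorname{Sym}\alpha$ and $D\alpha^*$ are mutually transpose $\pmb k$-isomorphisms between dual free modules of the same finite rank. Thus the image of $\Phi$ corresponds to the $\pmb k$-linear map $\operatorname{Sym}_{2n-2}^{\pmb k}V\to\pmb k\subseteq S$ given by $\phi$ itself, which is the element $\phi\in S_0\otimes_{\pmb k}D_{2n-2}^{\pmb k}V^*$ as claimed.

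For part~(\ref{trans-b}), given $\rho$ with the bigrading conditions, let $\rho_1,\rho_2$ be its restrictions. Then $\rho_1:U\to V=S_1$ extends uniquely to a $\pmb k$-linear map $\alpha:\pmb k\otimes U\to V$; the hypothesis $\rho_1(U)=V$ forces $\alpha$ to be surjective, and since both sides have $\pmb k$-dimension $d$, it is an isomorphism. Next, $\rho_2:\operatorname{Sym}_{2n-2}U\to\pmb k$ extends $\pmb k$-linearly through $\operatorname{Sym}_{2n-2}\alpha$ to a $\pmb k$-linear functional on $\operatorname{Sym}_{2n-2}^{\pmb k}V$, i.e.\ to an element $\phi\in\operatorname{Hom}_{\pmb k}(\operatorname{Sym}_{2n-2}^{\pmb k}V,\pmb k)=D_{2n-2}^{\pmb k}(V^*)$. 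By construction, feeding $(\alpha,\phi)$ back into the recipe of (\ref{trans-a}) reproduces the original $\rho_1$ and $\rho_2$, hence $\rho$, by uniqueness in the universal property.

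The potential obstacle is entirely bookkeeping: one must carefully track the four identifications $\operatorname{Hom}_{\mathbb Z}(\operatorname{Sym}_{2n-2}U,{\mathfrak R})\cong{\mathfrak R}\otimes D_{2n-2}U^*$, $\operatorname{Hom}_{\pmb k}(\operatorname{Sym}_{2n-2}^{\pmb k}V,S)\cong S\otimes_{\pmb k}D_{2n-2}^{\pmb k}V^*$, the transpose relationship between $\operatorname{Sym}\alpha$ and $D\alpha^*$, and the base-change relation $S\otimes_{\mathbb Z}D_{2n-2}U^*\cong S\otimes_{\pmb k}(\pmb k\otimes D_{2n-2}U^*)$. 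Once these are in place, no computation is required beyond unwinding the definitions, and both parts reduce to a single instance of the universal property of ${\mathfrak R}$.
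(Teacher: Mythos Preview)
Your proposal is correct and follows essentially the same route as the paper: both construct $\rho$ via the universal property of $\mathfrak{R}=\operatorname{Sym}_\bullet(U\oplus\operatorname{Sym}_{2n-2}U)$ from the same pair $(\rho_1,\rho_2)$ (the paper calls them $\beta_1,\beta_2$), and both recover $(\alpha,\phi)$ in part~(b) by restricting $\rho$ to the two generating summands. The only stylistic difference is the verification that $\Phi\mapsto\phi$: the paper writes $\Phi=\sum_i\mu_i\otimes\nu_i$ in a chosen pair of dual bases and computes the image term by term, whereas you package the same computation abstractly via the identification $\mathfrak{R}\otimes D_{2n-2}U^*\cong\operatorname{Hom}_{\mathbb Z}(\operatorname{Sym}_{2n-2}U,\mathfrak{R})$, under which $\Phi$ is the tautological inclusion and the two arrows become post\-composition with $\rho$ and precomposition with $(\operatorname{Sym}\alpha)^{-1}$. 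This is exactly the coordinate-free restatement of the paper's dual-basis calculation (cf.\ the remark in Conventions~\ref{conv1}.\ref{ev*} that $\operatorname{ev}^*(1)=\sum m^*\otimes m$ is coordinate-free), so the two verifications are the same argument in different clothing.
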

 
\begin{proof} (\ref{trans-a}) Consider the $\mathbb Z$-module homomorphism
$$\beta :{\mathfrak R}_{(1,0)}\oplus 
{\mathfrak R}_{(0,1)} \xrightarrow{\ \bmatrix \beta_1&\beta_2 \endbmatrix\ }S,$$ where 
$\beta_1$ is the composition 
$${\mathfrak R}_{(1,0)}=U\to \pmb k\otimes U\xrightarrow{\ \alpha\ }V=S_1\subseteq S $$
and $\beta_2$ is the composition
$${\mathfrak R}_{(0,1)}=\operatorname{Sym}_{2n-2}U\to\pmb k\otimes\operatorname{Sym}_{2n-2}U\xrightarrow{\ \operatorname{Sym}(\alpha) \ }\operatorname{Sym}_{2n-2}^{\pmb k}
V\xrightarrow{\ \phi\ }\pmb k=S_0\subseteq S.$$
According to the universal mapping property of symmetric algebras, there is a unique $\mathbb Z$-algebra homomorphism ${\rho:{\mathfrak R} \to S}$ which extends the $\mathbb Z$-module homomorphism $\beta$. 

 Let $(\{\mu_i\},\{\nu_i\})$ be a pair of dual bases for 
${\operatorname{Sym}}_{2n-2} U$ and $D_{2n-2}(U^*)$, respectively. The generic element 
$\Phi$ of ${\mathfrak R}\otimes D_{2n-2}U^*$, as defined in Data~\ref{Opening-Data}, is equal to
\begin{equation}\label{Phi=}\Phi=\sum\limits_i \mu_i\otimes \nu_i,\end{equation} with $\mu_i\in {\mathfrak R}_{(0,1)}$. (The value of $\Phi$  in (\ref{Phi=}) is independent of the choice of $(\{\mu_i\},\{\nu_i\})$ because of (\ref{conv1}.\ref{ev*}).) It follows that   
$$\begin{array}{lll}[(1\otimes (D\alpha^*)^{-1})\circ (\rho\otimes 1)](\Phi)&=&(1\otimes (D\alpha^*)^{-1})\left(\sum\limits_i  \phi(\operatorname{Sym}(\alpha)(\pmb k\otimes \mu_i))\otimes \nu_i\right)\\
&=&\sum\limits_i  \phi(\operatorname{Sym}(\alpha)(\pmb k\otimes \mu_i))\otimes (D\alpha^*)^{-1}(\pmb k\otimes \nu_i)\\
&=&\phi.\end{array}$$
The final equality holds since $(\{\pmb k\otimes \mu_i\},\{\pmb k\otimes \nu_i\})$ is a pair of dual bases for ${\pmb k}\otimes{\operatorname{Sym}}_{2n-2} U$ and $\pmb k\otimes D_{2n-2}(U^*)$, respectively; and therefore,
$$(\{\operatorname{Sym}(\alpha)(\pmb k\otimes \mu_i)\},\{(D\alpha^*)^{-1}(\pmb k\otimes \nu_i)\})$$ is a pair of dual bases for ${\operatorname{Sym}}^{\pmb k}_{2n-2} V$ and $D_{2n-2}^{\pmb k}(V^*)$, respectively.

\medskip\noindent (\ref{trans-b}) 
The hypothesis $\rho ({\mathfrak R}_{(1,0)})=S_1$ 
ensures that $\rho: {\mathfrak R}_{(1,0)}=U\to S_1$ factors through a uniquely determined vector space isomorphism
$$\pmb k\otimes U \to S_1=V;$$
which we call $\alpha$.  The isomorphism $\alpha$ induces a vector space isomorphism 
$$\pmb k\otimes D_{2n-2}(U^*)\xrightarrow{\ D(\alpha^*)^{-1}\ } D_{2n-2}^{\pmb k}(V^*).$$
Retain the notation of (\ref{Phi=}). The hypothesis $\rho({\mathfrak R}_{(0,1)})\subseteq S_0$ ensures that $\rho(\mu_i)\in S_0$.
It now makes sense to define $\phi$ to be
$$((1\otimes (D\alpha^*)^{-1})\circ (\rho\otimes 1)) (\Phi)=\sum_i \rho(\mu_i)\otimes D(\alpha^*)^{-1}(\pmb k\otimes \nu_i) \in S_0\otimes_{\pmb k} D_{2n-2}^{\pmb k}(V^*)=D_{2n-2}^{\pmb k}(V^*).$$ 
\end{proof}

\section{\bf The modules and maps of $(\mathbb B,b)$.}\label{describe-B}

We define the ${\mathfrak R}$-module homomorphisms
$$(\mathbb B,b):\quad 0\to B_d\xrightarrow{\ b_d\ }B_{d-1}\xrightarrow{\ b_{d-1}\ }\cdots \xrightarrow{\ b_{2}\ } B_1\xrightarrow{\ b_1\ }B_{0}.$$ It is shown in Corollary~\ref{main-lemma}.\ref{main-lemma-e} that $(\mathbb B,b)$ is a complex and in Corollary~\ref{main-lemma}.\ref{main-lemma-g} that $(\mathbb B, b)_{\pmb\delta}$ is a resolution. The universal property of $(\mathbb B,b)$ is recorded in Corollary~\ref{main}. A bi-homogeneous version of $(\mathbb B,b)$ may be found in Example~\ref{bi-gr}.
\begin{definition}\label{diff} Adopt Data~{\rm\ref{data1}}. Let $\operatorname{proj}:\operatorname{Sym}_{n-1}U\to\operatorname{Sym}_{n-1}U_0$ the projection map induced  by $U={\mathbb Z} x_1\oplus U_0$.

\begin{enumerate}[\rm (a)]

\item\label{def1-a} Define the free ${\mathfrak R}$-module $B_r$ by
$$B_r=\begin{cases}
{\mathfrak R}&\text{if $r=0$}\\{\mathfrak R}\otimes (K_{r-1,n-1}U_0\oplus L_{r-1,n}U_0)&\text{if $1\le r\le d-1$}\\
{\mathfrak R}\otimes \bigwedge^{d-1}U_0&\text{if $r=d$}.\end{cases}$$
\item\label{diff-gen}  The ${\mathfrak R}$-module homomorphism  $b_1:B_1\to B_0={\mathfrak R}$ is defined by  $$\begin{array}{ll} b_1(\nu)=x_1q(\nu),&\text{for $\nu\in K_{0,n-1}U_0$,}\vspace{5pt}\\b_1(\mu)=\pmb\delta \mu-x_1q(\mu(\widetilde{\Phi})),&\text{for $\mu\in L_{0,n}U_0$.}\end{array}$$

\item\label{diff-a}  For each  integer $r$, with $2\le r\le d-1$, the ${\mathfrak R}$-module homomorphism $$b_r:\underbrace{{\mathfrak R}\otimes K_{r-1,n-1}U_0}_{\subseteq B_r}\to B_{r-1}=\left\{\begin{array}{c} {\mathfrak R}\otimes K_{r-2,n-1}U_0\\\oplus\\{\mathfrak R}\otimes L_{r-2,n}U_0\end{array}\right.$$ 
is defined by $$b_r({\textstyle\sum_i}\theta_i\otimes \nu_i)
=\begin{cases}-x_1\otimes \sum_i\eta\left(\theta_i\otimes [q(\nu_i)](\widetilde{\Phi})\right)
\vspace{5pt}\\
+\pmb \delta \sum_i{{\mathrm {Kos}}}^{\Psi}(\theta_i)\otimes \nu_i
\\\hline
-x_1\otimes \sum_i\kappa (\theta_i\otimes \operatorname{proj}(q(\nu_i)))
\end{cases}$$
for $\theta_i\in \bigwedge^{r-1}U_0$, $\nu_i\in D_{n-1}(U_0^*)$,  and $\sum_i\theta_i\otimes \nu_i\in K_{r-1,n-1}U_0$.
\item\label{diff-b}  For each  integer $r$, with $2\le r\le d-1$, the ${\mathfrak R}$-module homomorphism  $$b_r:\underbrace{{\mathfrak R}\otimes L_{r-1,n}U_0}_{\subseteq B_r}\to B_{r-1}=\left\{\begin{array}{c} {\mathfrak R}\otimes K_{r-2,n-1}U_0\\\oplus\\{\mathfrak R}\otimes L_{r-2,n}U_0\end{array}\right.$$  is defined by 
$$b_r({\textstyle\sum_i}\theta_i\otimes \mu_i)
=\begin{cases}\phantom{+}x_1\otimes \sum_i\eta \left(\theta_i\otimes [q(\mu_i(\widetilde{\Phi}))](\widetilde{\Phi})\right)
\vspace{5pt}\\\hline
+x_1\otimes \sum_i\kappa \left(\theta_i\otimes \operatorname{proj}(q(\mu_i(\widetilde{\Phi})))\right)\vspace{5pt}\\
+\pmb \delta \sum_i{{\mathrm {Kos}}}^{\Psi}(\theta_i)\otimes \mu_i
\end{cases}$$ for $\theta_i\in \bigwedge^{r-1}U_0$, $\mu_i\in \operatorname{Sym}_{n}U_0$, and $\sum_i\theta_i\otimes \mu_i\in L_{r-1,n}U_0$.

\item The ${\mathfrak R}$-module homomorphism  $b_d:B_d\to B_{d-1}$  is defined by 
$$b_d: {\mathfrak R}\otimes {\textstyle\bigwedge^{d-1}}U_0=B_d\longrightarrow B_{d-1}=\left\{\begin{array}{c} {\mathfrak R}\otimes K_{d-2,n-1}U_0\\\oplus\\ {\mathfrak R}\otimes L_{d-2,n}U_0\end{array}\right.$$
with
$$b_d(\omega)=\begin{cases} 
\phantom{+}\sum\limits_{m\in\binom{x_2,\dots,x_d}{n}} [\pmb\delta m-x_1q(m(\widetilde{\Phi}))]\otimes \eta(\omega\otimes m^*)\vspace{5pt}\\\hline -\sum\limits_{m\in\binom{x_2,\dots,x_d}{n-1}} x_1q(m^*)\otimes \kappa(\omega\otimes m),
\end{cases}$$ for $\omega\in \bigwedge^{d-1} U_0$.

\end{enumerate}
\end{definition}

\begin{remark}\label{meantime} The complex $(\mathbb G,g)$ is described in Definition \ref{defG}. 
An important result in the present paper is 
Corollary~\ref{main-lemma}.\ref{main-lemma-f},  where we prove that $\mathbb B_{\pmb\delta}$ and $\mathbb G_{\pmb\delta}$ are isomorphic complexes. In the meantime, we point out that the free ${\mathfrak R}$-modules $B_r$ and $G_r$ have the same rank. (The rank of $G_r$ is  given in Proposition~\ref{J18}.\ref{J18-b}, and also in 
\cite[Prop.~1.8~and~Thm.~6.5]{EKK}.) At any rate, if $1\le r\le d-1$, then 
$$\begin{array}{lllll}
\operatorname{rank} B_r&=&\operatorname{rank} K_{r-1,n-1}U_0+\operatorname{rank} L_{r-1,n}U_0&\text{Def.~\rm{\ref{diff}.\ref{def1-a}}}
\vspace{5pt}\\
&=&\binom{d+n-2}{r-1}\binom{d+n-r-2}{n-1}+\binom{d+n-2}{r-1+n}\binom{r+n-2}{r-1}
&\text{\rm\cite[(2.3)~and~(5.8)]{EKK}}\vspace{5pt}\\
&=&\frac{2n+d-2}{n+r-1}\binom{n+d-2}{r-1}\binom{n+d-r-2}{n-1}\vspace{5pt}\\
&=&\operatorname{rank} G_r.\end{array}$$\end{remark}

\begin{remark}\label{skeleton-2.2}Let $\overline{{\mathfrak R}}={\mathfrak R}/(x_1)$. We call $\overline {\mathbb B}=\overline{{\mathfrak R}}\otimes_{{\mathfrak R}}\mathbb B $ the skeleton of $\mathbb B$ and we think of $\mathbb B$ as a perturbation of its skeleton. Furthermore, the skeleton of $\mathbb B$ is the mapping cone of
the following commutative diagram:
$$\xymatrix{0\ar[r]& \overline{{\mathfrak R}}\otimes {\textstyle \bigwedge^{d-1}U_0}\ar[rr]^{\pmb\delta \eta\circ\operatorname{ev}^*}\ar[d]^{0}&&\overline{{\mathfrak R}}\otimes K_{d-2,n-1}U_0\ar[r]^{\pmb\delta{{\mathrm {Kos}}}^{\Psi}}\ar[d]^{0}&
\overline{{\mathfrak R}}\otimes K_{d-3,n-1}U_0\ar[r]^{\pmb\delta{{\mathrm {Kos}}}^{\Psi}}\ar[d]^{0}&\cdots\phantom{\otimes K_{d-3,n-1}}\\
0\ar[r]& \overline{{\mathfrak R}}\otimes L_{d-2,n}U_0\ar[rr]^{\pmb\delta{{\mathrm {Kos}}}^{\Psi}}&&
\overline{{\mathfrak R}}\otimes L_{d-3,n}U_0\ar[r]^{\pmb\delta{{\mathrm {Kos}}}^{\Psi}}&
\overline{{\mathfrak R}}\otimes L_{d-4,n}U_0\ar[r]^{\pmb\delta{{\mathrm {Kos}}}^{\Psi}}&\cdots\phantom{\otimes K_{d-3,n-1}}
}$$

\begin{equation}\label{skeleton}\hfill\xymatrix{
{\phantom{\otimes K_{d-3,n-1}U_0}}\cdots\ar[r]^{\ \ \ \pmb\delta{{\mathrm {Kos}}}^{\Psi}}&\overline{{\mathfrak R}}\otimes K_{2,n-1}U_0\ar[r]^{\pmb\delta{{\mathrm {Kos}}}^{\Psi}}\ar[d]^{0}
&\overline{{\mathfrak R}}\otimes K_{1,n-1}U_0\ar[r]^{\pmb\delta{{\mathrm {Kos}}}^{\Psi}}\ar[d]^{0}
&\overline{{\mathfrak R}}\otimes K_{0,n-1}U_0\ar[d]^{0}
\\
{\phantom{\otimes K_{d-3,n-1}U_0}}\cdots\ar[r]^{\ \ \ \pmb\delta{{\mathrm {Kos}}}^{\Psi}}&
\overline{{\mathfrak R}}\otimes L_{1,n}U_0\ar[r]^{\pmb\delta{{\mathrm {Kos}}}^{\Psi}}&\overline{{\mathfrak R}}\otimes L_{0,n}U_0\ar[r]^{\pmb\delta\Psi}&\overline{{\mathfrak R}}.}\end{equation}
Let $\mathbb K$, and $\mathbb L$, represent the top row, and the bottom row, of (\ref{skeleton}), respectively. We notice that $\mathbb L_{\pmb\delta}$ is a resolution of $\overline{{\mathfrak R}}/(x_2,\dots, x_d)^n$ by free $\overline{{\mathfrak R}}$-modules and $\mathbb K_{\pmb \delta}$, which is isomorphic to 
$$[\operatorname{Hom}_{\overline{{\mathfrak R}}}(\mathbb L_{\pmb \delta},\overline{{\mathfrak R}})\otimes {\textstyle\bigwedge^{d-1}}U_0](-d+1),$$
is a resolution of the canonical module of  $\overline{{\mathfrak R}}/(x_2,\dots, x_d)^n$ by free $\overline{{\mathfrak R}}$-modules. Complexes isomorphic to  $\mathbb L_{\pmb\delta}$ and $\mathbb K_{\pmb \delta}$ were introduced by Buchsbaum and Eisenbud in \cite{BE75}. Our discussion of these complexes is contained in \cite[Sect.~2]{EKK}: the definition is given in 2.6, a very explicit version of the last map in $\mathbb K$ is given in 2.10, and the assertions about duality and acyclicity in Theorem~2.12. (Do keep in mind that $U_0$ is a free ${\mathbb Z}$-module of rank $d-1$; hence the top non-zero exterior power of $U_0$ is $\bigwedge ^{d-1}U_0$ and the complexes $\mathbb L$ and $\mathbb K$ both have length $d-1$.)
\end{remark}

\section{\bf The complex $(\mathbb G,g)$.}\label{describe-G}

In Definition \ref{defG}, we recall the complex $$(\mathbb G,g):\quad0\to G_g\xrightarrow{\ g_d\ }G_{d-1}\xrightarrow{\ g_{d-1}\ }\cdots \xrightarrow{\ g_{2}\ } G_1\xrightarrow{\ g_1\ }G_{0}.$$
of \cite[Def.~6.6 and Thm.~6.15]{EKK} and \cite[(3.14)]{EK-K-2}. Theorem \ref{Properties-of-G} recalls the wonderful properties of  $\mathbb G_{\pmb\delta}$. The complex  $\mathbb G_{\pmb\delta}$ is almost the dream complex; it has all of the desired properties; the only problem is that it is difficult to state exactly what $\mathbb G_{\pmb\delta}$ is. In the present paper, $(\mathbb B,b)$ is given explicitly. An important result of the present paper, Corollary~\ref{main-lemma}.\ref{main-lemma-f}, is that $\mathbb B_{\pmb\delta}$ is isomorphic to $\mathbb G_{\pmb\delta}$. Thus, $\mathbb B_{\pmb\delta}$ has all of the wonderful properties of $\mathbb G_{\pmb\delta}$ and we know \underline{exactly} what $\mathbb B_{\pmb\delta}$ is.

\begin{definition}\label{defG} Adopt Data \ref{Opening-Data}.

\begin{enumerate}[\rm(a)]
\item The ${\mathfrak R}$-modules $G_0$ and $G_d$ are defined by $G_0={\mathfrak R}$ and $G_d={\mathfrak R}\otimes \bigwedge^dU$.

\item\label{back2} If $1\le r\le d-1$, then the ${\mathfrak R}$-module $G_r$ is defined to be 
$$G_r=\ker \left(\underline{\mathrm v}_r: {\mathfrak R}\otimes L_{r-1,n}U\to {\mathfrak R}\otimes K_{r-1,n-2}U\right),$$
where $\underline{\mathrm v}_r$ is described by the commutative diagram
$$\xymatrix{
G_r\ar@{^{(}->}[d]\vspace{5pt}\\
{\mathfrak R}\otimes L_{r-1,n}U\ar@{^{(}->}[r]\ar[d]^{\underline{\mathrm v}_r=1\otimes \pmb p^{\Phi}}&{\mathfrak R}\otimes \bigwedge^{r-1}U\otimes \operatorname{Sym}_nU\ar[d]^{1\otimes1\otimes\pmb p^{\Phi}}\vspace{5pt}\\
{\mathfrak R}\otimes K_{r-1,n-2}U\ar@{^{(}->}[r]&{\mathfrak R}\otimes \bigwedge^{r-1}U\otimes D_{n-2}U^*,}$$
and \begin{equation}\label{eq1}\pmb p^{\Phi}(\mu)=\mu(\Phi)\in {\mathfrak R}\otimes  D_{n-2}(U^*),\end{equation} for $\mu\in \operatorname{Sym}_nU$.

\item\label{g1} The differential $g_1:G_1\to G_0$ is $$G_1=\ker \underline{\mathrm v}_1\subseteq {\mathfrak R}\otimes L_{0,n}U\longrightarrow {\mathfrak R}=G_0,$$ and if $\Theta=\sum_i a_i\otimes \mu_i$ is in $G_1$ with $a_i\in {\mathfrak R}$ and $\mu_i\in \operatorname{Sym}_nU=L_{0,n}U$, then \begin{equation}\label{eq10}g_1(\Theta)=\sum_ia_i\mu_i\in {\mathfrak R} =G_0.\end{equation} 

\item\label{gr} If $1\le r\le d-1$, then the differential $g_r:G_r\to G_{r-1}$ is induced by the following commutative diagram:
$$\xymatrix {{\mathfrak R}\otimes \bigwedge^{r-1}U\otimes\operatorname{Sym}_nU\ar[r]^{{{\mathrm {Kos}}}^{\Psi}\otimes 1}&
{\mathfrak R}\otimes \bigwedge^{r-2}U\otimes\operatorname{Sym}_nU\\  {\mathfrak R}\otimes L_{r-1,n}U\ar@{-->}[r]\ar@{^(->}[u]&{\mathfrak R}\otimes L_{r-2,n}U\ar@{^(->}[u]
\\ G_r\ar@{-->}[r]^{g_r}\ar@{^(->}[u]&G_{r-1}\ar@{^(->}[u]}$$

\item The differential $g_d: G_d= {\mathfrak R}\otimes {\textstyle\bigwedge^{d-1}}U\to G_{d-1}=\ker\underline{\mathrm v}_{d-1}\subseteq {\mathfrak R} \otimes L_{d-2,n}U$ is given by
$$g_d(\Omega)=\sum\limits_{j,k=1}^d \sum\limits_{m\in\binom{x_1,\dots,x_d}{n-1}} x_km\otimes (x_k^*\wedge x_j^*)( \Omega)\otimes x_j q(m^*),$$ for $\Omega\in \bigwedge^{d} U$.
\end{enumerate}
\end{definition}

\begin{remark}\label{de-emp} In other versions of our work \cite{EKK,EK-K-2}, we have said that the differential $g_1$ of (\ref{g1}) is $\Psi$. In this paper, we have de-emphasized the name $\Psi$. When we want to apply this algebra homomorphism, we merely move the element from the ``basis-portion'' of the given module to the ``${\mathfrak R}$-portion'' of the module as is shown in (\ref{eq10}). Along similar lines, the map ${{\mathrm {Kos}}}^{\Psi}:{\mathfrak R}\otimes \bigwedge^{r-1}U \to {\mathfrak R}\otimes \bigwedge^{r-2}U$ of (\ref{gr}) sends $\theta$ in 
$\bigwedge^{r-1}U$ to $\sum_{i=1}^d x_i\otimes x_i^*(\theta)$ in ${\mathfrak R}\otimes \bigwedge^{r-2}U$. (The conventions of \ref{conv1}.\ref{ev*} are in effect.)
\end{remark}

\begin{definition}\label{I}Adopt Data \ref{Opening-Data}. Let $I$ be the image, in ${\mathfrak R}$, of the kernel of
$$1\otimes \pmb p^{\Phi}: {\mathfrak R}\otimes \operatorname{Sym}_{\bullet} U\to {\mathfrak R}\otimes D_{\bullet} (U^*);$$ 
in other words,
$$I=\left\{\sum_ia_i\mu_i\in {\mathfrak R} \left\vert \begin{array}{l}\text{$\sum_i{a_i}\otimes \mu_i\in {\mathfrak R}\otimes \operatorname{Sym}_{\bullet}U$, with $a_i\in {\mathfrak R}$, $\mu_i\in \operatorname{Sym}_{\bullet}U$,}\vspace{5pt}\\ \text{and  $\sum a_i\otimes \mu_i(\Phi)=0\in {\mathfrak R}\otimes D_\bullet(U^*)$}\end{array}\right.\right\}.$$\end{definition}

\bigskip

The following result is established in \cite[Thms.~6.15 and 4.16]{EKK} and is restated (for $d=3$) in \cite[Thm.~3.2]{EK-K-2}. For item (\ref{p-of-p}) one must also use 
the ``Persistence of Perfection Principle'', which is also known as  the ``transfer of perfection'' (see 
\cite[Prop.~6.14]{Ho-T} or \cite[Thm.~3.5]{BV}).

\begin{theorem}\label{Properties-of-G} Adopt Data~{\rm{\ref{Opening-Data}}}.    Recall $\mathbb G$ and $I$ from Definitions~{\rm\ref{defG}} and {\rm\ref{I}}.
The following statements hold.
  \begin{enumerate}[\rm(a)]
\item The ${\mathfrak R}$-module homomorphisms $(\mathbb G,g)$ form a complex.
\item The localization $\mathbb G_{\pmb \delta}$ is a  resolution of ${\mathfrak R}_{\pmb \delta}/I{\mathfrak R}_{\pmb \delta}$ by projective ${\mathfrak R}_{\pmb\delta}$-modules. 
\item\label{p-of-p} If $R$ is a Noetherian ring, $\rho:{\mathfrak R}\to R$ is a ring homomorphism, $(\rho({\mathfrak R}_{(1,0)}))$ is an ideal of $R$ of grade at least $d$, and $\rho(\pmb \delta)$ is a unit of $R$, then $R\otimes_{{\mathfrak R}}\mathbb G$ is a resolution of $R/\rho(I)$ by projective $R$-modules.

\item\label{field} Let $V$ be a vector space of dimension $d$ over a field $\pmb k$, $S$  the standard-graded polynomial ring ${\operatorname{Sym}}^{\pmb k}_\bullet V$, 
and $\rho: {\mathfrak R} \to S$  be a ring homomorphism which satisfies 
\begin{enumerate}[\rm(i)]
\item $\rho ({\mathfrak R}_{(1,0)})=S_1$
\item
$\rho({\mathfrak R}_{(0,1)})\subseteq {\pmb k}$,  and  \item $\rho(\pmb \delta)$ is a unit in ${\pmb k}$.
\end{enumerate} 
If $\phi$ is the element of $D_{2n-2}^{\pmb k}(V^*)$ which corresponds to the ring homomorphism $\rho$ in the sense of Proposition~{\rm\ref{transition}.\ref{trans-b}},
then 
 $S\otimes_{\mathfrak R} \mathbb G$ is a minimal homogeneous resolution of $S/\operatorname{ann} (\phi)$ by free $S$-modules and 
$S\otimes_{\mathfrak R} \mathbb G$ is a Gorenstein-linear resolution of the form
$$\begin{array}{l}0\to S(-2n-d+2)\to S(-n-d+2)^{\beta_{d-1}}\to 
\dots \to S(-n-2)^{\beta_3}\to S(-n-1)^{\beta_2}\to S(-n)^{\beta_1}\to S\end{array}$$

\item\label{by-way-of} Let $V$ be a vector space of dimension $d$ over a field $\pmb k$, $S$  the standard-graded polynomial ring ${\operatorname{Sym}}^{\pmb k}_\bullet V$, 
 $\phi$ an element of $D^{\pmb k}_{2n-2}(V^*)$, and $\alpha:\pmb k\otimes U\to V$ an isomorphism. 
Let $\rho : {\mathfrak R}\to S$ be the ring homomorphism which corresponds to the data $(\alpha,\phi)$ in the sense of Proposition~{\rm\ref{transition}.\ref{trans-a}}. If the resolution of $S/\operatorname{ann} (\phi)$ by free $S$-modules is Gorenstein-linear, then $\rho$ satisfies all of the hypotheses of {\rm(\ref{field})} and the conclusion of {\rm(\ref{field})} holds for $S/\operatorname{ann} (\phi)$. 
\end{enumerate}
\end{theorem}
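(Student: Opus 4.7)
The plan is to assemble parts (a)--(e) from earlier work plus one standard base-change principle; no new argument is required, but several pieces must be fitted together carefully. Parts (a) and (b) are not re-proved here; they are established in \cite[Thms.~4.16 and 6.15]{EKK} (and recalled in \cite[Thm.~3.2]{EK-K-2} for $d=3$). There, one checks $g_{r-1}\circ g_r=0$ by a direct computation on the summands $L_{r-1,n}U\subseteq \bigwedge^{r-1}U\otimes\operatorname{Sym}_n U$, and acyclicity of $\mathbb G_{\pmb\delta}$ is obtained by the Buchsbaum--Eisenbud criterion after showing that inverting $\pmb\delta$ forces $\pmb p^{\Phi}:\operatorname{Sym}_{n-1}U\to D_{n-1}(U^*)$ to become an isomorphism, so each $\underline{\mathrm v}_r$ has the correct rank and the ideals of minors have the required depth.

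For (c), the plan is a one-line application of the Persistence of Perfection Principle (\cite[Prop.~6.14]{Ho-T}, \cite[Thm.~3.5]{BV}). Since $\rho(\pmb\delta)$ is a unit, $\rho$ factors through the localization ${\mathfrak R}_{\pmb\delta}$, and (b) says $\mathbb G_{\pmb\delta}$ is a free ${\mathfrak R}_{\pmb\delta}$-resolution of ${\mathfrak R}_{\pmb\delta}/I{\mathfrak R}_{\pmb\delta}$ of length $d$, making ${\mathfrak R}_{\pmb\delta}/I{\mathfrak R}_{\pmb\delta}$ a perfect ${\mathfrak R}_{\pmb\delta}$-module of grade $d$. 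The hypothesis $\operatorname{grade}(\rho({\mathfrak R}_{(1,0)}))\ge d$ in $R$ is exactly the grade condition needed so that the base-changed complex $R\otimes_{{\mathfrak R}_{\pmb\delta}}\mathbb G_{\pmb\delta}$ remains acyclic and resolves $R/\rho(I)$.

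For (d), I would verify the three hypotheses of (c) in the field setting: $\rho({\mathfrak R}_{(1,0)})=S_1$ generates the irrelevant ideal $S_+$ of $S$, which has grade $d$; and $\rho(\pmb\delta)\in\pmb k^{\times}$ by assumption. Then (c) gives a free $S$-resolution of $S/\rho(I)$. I would identify $\rho(I)=\operatorname{ann}(\phi)$ by comparing Definition~\ref{I} with the definition of $\operatorname{ann}(\phi)$ via Proposition~\ref{transition}: both sides consist of the elements $\mu\in S$ with $\mu(\phi)=0$ in $D^{\pmb k}_\bullet(V^*)$. The Gorenstein-linear form and the explicit Betti numbers follow by inspection of the bi-grading on $(\mathbb G,g)$: each $G_r$ is a submodule of ${\mathfrak R}\otimes L_{r-1,n}U$ (or $\bigwedge^dU$ at the end), living in internal degree $n+r-1$, and matching the ranks in Proposition~\ref{J18}.\ref{J18-b} after applying the rank identity of Remark~\ref{meantime}. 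Minimality is automatic since all entries of the differentials land in $S_+$. For (e), conversely, a Gorenstein-linear resolution of $S/\operatorname{ann}(\phi)$ forces, by Proposition~\ref{J18}.(a)$\Leftrightarrow$(e)$\Leftrightarrow$(f), $\phi\in D^{\pmb k}_{2n-2}(V^*)$ and the matrix $((m_im_j)\phi)$ to be invertible, so the three hypotheses of (d) are satisfied.

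The main obstacle I expect is the identification $\rho(\pmb\delta)=\det((m_im_j)\phi)$ needed in (e). Indeed, $\pmb\delta$ is defined coordinate-freely by $\pmb\delta=[\bigwedge^{\mathrm{top}}p(\Theta)](\Theta)$, where $p$ packages $\Phi$ as the pairing $\operatorname{Sym}_{n-1}U\otimes\operatorname{Sym}_{n-1}U\to{\mathfrak R}$ given by $(\mu,\mu')\mapsto\Phi(\mu\mu')$. To translate this into the monomial determinant of Proposition~\ref{J18}.(f) under $\rho$, one chooses the monomial basis $\binom{x_1,\dots,x_d}{n-1}$ for $\operatorname{Sym}_{n-1}U$ and expands $\bigwedge^{\mathrm{top}}p$ in this basis; the entries of the resulting matrix are $\rho(\Phi(m_im_j))=(m_im_j)(\phi)$, so the top exterior power evaluates to the claimed determinant. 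Once this identification is in hand, parts (d) and (e) are immediate from (c) together with Proposition~\ref{J18}.
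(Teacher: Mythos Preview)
Your proposal is correct and matches the paper's approach exactly: the paper does not prove Theorem~\ref{Properties-of-G} from scratch but simply cites \cite[Thms.~6.15 and 4.16]{EKK} and \cite[Thm.~3.2]{EK-K-2}, noting additionally that item~(\ref{p-of-p}) requires the Persistence of Perfection Principle from \cite[Prop.~6.14]{Ho-T} or \cite[Thm.~3.5]{BV}. Your write-up in fact supplies more detail than the paper itself, particularly the explicit identification $\rho(\pmb\delta)=\det((m_im_j)\phi)$ needed for part~(\ref{by-way-of}) and the verification that $\rho(I)=\operatorname{ann}(\phi)$ in part~(\ref{field}); these steps are left implicit in the paper's citation.
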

\begin{remarks} \begin{enumerate}[\rm(a)]\item 
The paper \cite{EKK} only promises that the ${\mathfrak R}_{\pmb \delta}$-modules  $(G_r)_{\pmb\delta}$ of Theorem~\ref{Properties-of-G} are projective. In Theorem~\ref{thm-1}, we  prove that  the ${\mathfrak R}_{\pmb \delta}$-modules $(G_r)_{\pmb\delta}$  are free. So, Theorem~\ref{thm-1} shows that each  ``projective'' in Theorem~\ref{Properties-of-G} may be replaced by ``free''. \item 
In (\ref{field}) and (\ref{by-way-of}), $V^*$ is the $\pmb k$-dual of $V$.
\end{enumerate} 
\end{remarks}

\section{\bf The commutative diagram $\tau:\mathbb B \to \mathbb G$.}\label{tau}
In Proposition \ref{prop} we exhibit a commutative diagram $\tau:\mathbb B \to \mathbb G$. The properties of the complex $\mathbb G_{\pmb\delta}$ are listed in Theorem~\ref{Properties-of-G} and $\mathbb B$ is the sequence of maps and modules which is explicitly given in Definition~\ref{diff}. In Corollary~\ref{main-lemma}.\ref{main-lemma-f} we prove that $\mathbb B_{\pmb\delta}$ and $\mathbb G_{\pmb\delta}$ are isomorphic; thereby merging the explicitness of $\mathbb B_{\pmb\delta}$ with the properties of $\mathbb G_{\pmb\delta}$.
In the present section we make an initial step toward proving that $\mathbb B_{\pmb\delta}$ and $\mathbb G_{\pmb\delta}$ are isomorphic.

The ${\mathfrak R}$-module homomorphisms $\tau_r$ are introduced in Definition~\ref{def1}. Observation~\ref{ob1} is a handy result which allows us to show, in Observation~\ref{ob2}, that the image of $\tau_r$ is contained in $G_r$ (rather than the larger ${\mathfrak R}$-module ${\mathfrak R}\otimes L_{r-1,n}U$. The majority of the section is the proof of Proposition \ref{prop}. 

\begin{definition}\label{def1}Adopt Data~{\rm\ref{data1}}. Recall $(\mathbb B,b)$ and $(\mathbb G,g)$ from Definitions~{\rm\ref{diff}} and {\rm\ref{defG}}.
\begin{enumerate}[\rm(a)]

\item Define the ${\mathfrak R}$-module map $\tau_0: B_0\to G_0$ to be the identity map on ${\mathfrak R}$.

\item\label{def1-b} Fix $r$ with $1\le r\le d-1$. Define the restriction of $\tau_r:B_r\to {\mathfrak R}\otimes L_{r-1,n}U$ to the summand $${\mathfrak R} \otimes K_{r-1,n-1}U_0\text{ of }B_r$$ to be the ${\mathfrak R}$-module homomorphism
 induced by
$$\textstyle\bigwedge^{r-1}U_0\otimes D_{n-1}U_0^* \to {\mathfrak R}\otimes L_{r-1,n}U$$ with
$$\theta\otimes \nu \mapsto \pmb\delta^{r-1}\kappa( (x_1\wedge\theta)\otimes q(\nu)),$$ for $\theta\in \bigwedge^{r-1}U_0$ and $\nu\in D_{n-1}(U_0^*)$. 
\item\label{def1-c}  Fix $r$ with $1\le r\le d-1$. Define the restriction of $\tau_r:B_r\to {\mathfrak R}\otimes L_{r-1,n}U$ to the summand $${\mathfrak R} \otimes L_{r-1,n}U_0\text{ of }B_r$$  to be the ${\mathfrak R}$-module homomorphism
  induced by 
$$\textstyle\bigwedge^{r-1}U_0\otimes \operatorname{Sym}_nU_0\to \textstyle{\mathfrak R}\otimes \bigwedge^{r-1}U\otimes \operatorname{Sym}_nU$$ with
$$\theta\otimes \mu\mapsto \pmb\delta^{r-1}\left[\pmb\delta \theta\otimes \mu - \kappa \left((x_1\wedge \theta) \otimes q(\mu(\widetilde{\Phi}))\right)\right],$$for $\theta\in \bigwedge^{r-1}U_0$ and $\mu\in \operatorname{Sym}_{n}U_0$. 
\item Define $\tau_d: B_d \to G_d$ by $\tau_d(\omega)=\pmb\delta^{d-1} x_1\wedge \omega$, for $\omega\in \bigwedge^{d-1}U_0$. 
\end{enumerate}
\end{definition}

\begin{observation}\label{ob1} The ${\mathfrak R}$-module homomorphism $$\underline{\mathrm v}_r: {\mathfrak R} \otimes L_{r-1,n}U\to {\mathfrak R}\otimes K_{r-1,n-2}U$$ of Definition~{\rm\ref{defG}.\ref{back2}} satisfies 
$$\underline{\mathrm v}_r(\kappa(\theta\otimes q(\nu)))=\pmb\delta \eta(\theta\otimes \nu),$$
for any $r$ with $1\le r\le d-1$, any $ \theta\in \bigwedge^{r}U$, and any $\nu\in D_{n-1}(U^*)$.
\end{observation}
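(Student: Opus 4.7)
The strategy is to expand both sides in a basis $x_1,\dots,x_d$ of $U$ and then reduce to the defining identity $[q(\nu)](\Phi)=\pmb\delta\,\nu$ from Remark~\ref{rmk3}.

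First I would record the ambient formulas. The maps $\kappa$ and $\eta$, extended from $U_0$ to $U$ by exactly the formulas in (\ref{kappa-eta}) using the basis $x_1,\dots,x_d$, read
$$\kappa(\theta\otimes\mu)=\sum_{i=1}^{d}x_i^*(\theta)\otimes x_i\mu,\qquad \eta(\theta'\otimes\nu)=\sum_{i=1}^{d}x_i^*(\theta')\otimes x_i(\nu),$$
and these expressions are coordinate-free in view of \ref{conv1}.\ref{ev*}. The homomorphism $\underline{\mathrm v}_r$ is, by Definition~\ref{defG}.\ref{back2}, induced by applying $\pmb p^\Phi\colon\operatorname{Sym}_n U\to{\mathfrak R}\otimes D_{n-2}(U^*)$, $\mu\mapsto\mu(\Phi)$, to the symmetric-algebra factor. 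Note that $\kappa(\theta\otimes q(\nu))$ automatically lies in the subspace ${\mathfrak R}\otimes L_{r-1,n}U$ on which $\underline{\mathrm v}_r$ is defined, because Koszul differentials square to zero.

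The computation is then short. Choose a representative $q(\nu)=\sum_j a_j\otimes\mu_j\in{\mathfrak R}\otimes\operatorname{Sym}_{n-1}U$ with $a_j\in{\mathfrak R}$ and $\mu_j\in\operatorname{Sym}_{n-1}U$. Applying $\kappa$ gives $\sum_{i,j}a_j\cdot x_i^*(\theta)\otimes x_i\mu_j$ in ${\mathfrak R}\otimes\bigwedge^{r-1}U\otimes\operatorname{Sym}_n U$, and then applying $\underline{\mathrm v}_r$, together with the contraction identity $(x_i\mu_j)(\Phi)=x_i(\mu_j(\Phi))$ and ${\mathfrak R}$-linearity, yields
$$\underline{\mathrm v}_r(\kappa(\theta\otimes q(\nu)))=\sum_{i,j}a_j\cdot x_i^*(\theta)\otimes x_i(\mu_j(\Phi))=\sum_i x_i^*(\theta)\otimes x_i\!\left(\textstyle\sum_j a_j\mu_j(\Phi)\right).$$

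Finally I would invoke Remark~\ref{rmk3} to rewrite $\sum_j a_j\mu_j(\Phi)=[q(\nu)](\Phi)=\pmb\delta\,\nu$, so the last expression collapses to $\pmb\delta\sum_i x_i^*(\theta)\otimes x_i(\nu)=\pmb\delta\,\eta(\theta\otimes\nu)$, as claimed. I do not anticipate any genuine obstacle: the only conceptual ingredient is the identity from Remark~\ref{rmk3}, and the remainder is direct bookkeeping that identifies the Koszul-type map $\kappa$ composed with $\pmb p^\Phi$ with $\pmb\delta$ times the Eagon--Northcott-type map $\eta$.
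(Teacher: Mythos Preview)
Your proof is correct and follows essentially the same approach as the paper: the paper's one-line observation that $\pmb p^{\Phi}\circ\kappa=\eta\circ\pmb p^{\Phi}$ is exactly what your explicit basis computation verifies, and both arguments conclude by invoking $[q(\nu)](\Phi)=\pmb\delta\,\nu$ from Remark~\ref{rmk3}. The only difference is presentational---you unpack in coordinates what the paper states abstractly.
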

\begin{proof} One uses the fact that 
$\pmb p^{\Phi}\circ \kappa=\eta\circ \pmb p^{\Phi}$, together with
Remark~\ref{rmk3},  to see  that $$\underline{\mathrm v}_r(\kappa(\theta\otimes q(\nu)))=\eta\left(
\theta\otimes [q(\nu)](\Phi)\right)=\pmb\delta \eta(\theta\otimes \nu).$$\end{proof}

\begin{observation}\label{ob2}Adopt  Data~{\rm\ref{data1}}. The image of the map $\tau_r:B_r\to {\mathfrak R}\otimes L_{r-1,n}U$, for $1\le r\le d-1$, as described in Definition~{\rm \ref{def1}.\ref{def1-b}} and  {\rm \ref{def1}.\ref{def1-c}}, is contained in $G_r$. 
\end{observation}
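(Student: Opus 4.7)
\textbf{Proof plan for Observation~\ref{ob2}.}
The strategy is to verify $\underline{\mathrm v}_r \circ \tau_r = 0$ separately on each of the two summands of $B_r$, in both cases reducing to Observation~\ref{ob1} followed by an expansion of an $\eta$-contraction involving the wedge factor $x_1$. The two key computational devices are: (i) for $\theta\in\bigwedge^{r-1}U_0$, the contraction identities $x_1^*(x_1\wedge\theta)=\theta$ and $x_i^*(x_1\wedge\theta)=-x_1\wedge x_i^*(\theta)$ for $i\ge 2$; and (ii) the defining identity $x_1(\widetilde\Phi)=\Phi$ together with the fact that each $\nu\in D_{n-1}(U_0^*)$ is annihilated by $x_1$.

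First I would handle the $K_{r-1,n-1}U_0$-summand. Given $\theta\otimes\nu\in K_{r-1,n-1}U_0$, Observation~\ref{ob1} yields
$$\underline{\mathrm v}_r(\tau_r(\theta\otimes\nu))=\pmb\delta^{r-1}\underline{\mathrm v}_r(\kappa((x_1\wedge\theta)\otimes q(\nu)))=\pmb\delta^r\eta((x_1\wedge\theta)\otimes \nu).$$
Expanding $\eta$ using (i), the $i=1$ term is $\theta\otimes x_1(\nu)=0$ by (ii), and the sum over $i\ge 2$ collapses to $-x_1\wedge \eta(\theta\otimes\nu)$, which vanishes because $\theta\otimes\nu\in K_{r-1,n-1}U_0=\ker\eta$. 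Hence this piece lies in $G_r$.

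Next I would handle the $L_{r-1,n}U_0$-summand. For $\theta\otimes\mu\in L_{r-1,n}U_0$, using $\underline{\mathrm v}_r(\theta\otimes\mu)=\theta\otimes\mu(\Phi)$ and Observation~\ref{ob1} again gives
$$\underline{\mathrm v}_r(\tau_r(\theta\otimes\mu))=\pmb\delta^r\bigl[\theta\otimes\mu(\Phi)-\eta((x_1\wedge\theta)\otimes \mu(\widetilde\Phi))\bigr].$$
Expanding the second term with (i): the $i=1$ contribution is $\theta\otimes (x_1\mu)(\widetilde\Phi)=\theta\otimes\mu(x_1(\widetilde\Phi))=\theta\otimes\mu(\Phi)$ by (ii), while the $i\ge 2$ contributions amount to $-x_1\wedge\bigl(1\otimes\widetilde\Phi\text{-eval}\bigr)\bigl(\kappa(\theta\otimes\mu)\bigr)$, and this vanishes because $\theta\otimes\mu\in L_{r-1,n}U_0=\ker\kappa$. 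The two $\theta\otimes\mu(\Phi)$ terms cancel, giving $0$.

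The main obstacle, and the only subtle point, is the bookkeeping required to reconcile the coordinate-free $\eta$-map on $\bigwedge^rU\otimes D_{n-1}(U^*)$ (which runs over all $d$ basis vectors) with the kernel condition defining $K_{r-1,n-1}U_0$ or $L_{r-1,n}U_0$ (which only involves $x_2,\dots,x_d$). Once one splits the $\eta$-sum into the $i=1$ and $i\ge 2$ contributions, the role of the distinguished lift $\widetilde\Phi$ becomes apparent: the identity $x_1(\widetilde\Phi)=\Phi$ is precisely what converts the $i=1$ term into $\theta\otimes\mu(\Phi)$, matching and cancelling the term coming from $\underline{\mathrm v}_r(\theta\otimes\mu)$. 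No auxiliary lemmas beyond Observation~\ref{ob1} and Remark~\ref{rmk3} are needed.
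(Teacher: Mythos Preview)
Your proposal is correct and follows essentially the same route as the paper: reduce to Observation~\ref{ob1}, expand $\eta((x_1\wedge\theta)\otimes -)$ by separating the $x_1^*$-contraction from the $x_i^*$-contractions with $i\ge 2$, and then invoke the kernel conditions $\eta(\Theta)=0$ (for the $K$-summand) and $\kappa(\Theta)=0$ (for the $L$-summand) together with $x_1(\widetilde\Phi)=\Phi$. The only cosmetic difference is that the paper writes a general element of the relevant kernel as an explicit sum $\sum_h\theta_h\otimes\nu_h$ (resp.\ $\sum_h\theta_h\otimes\mu_h$), whereas you use the shorthand $\theta\otimes\nu$; since every step you describe is linear in this argument, your computation goes through verbatim once one reads $\theta\otimes\nu$ as such a sum.
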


\begin{proof}It is clear that $\tau_r(B_r)$ is contained in ${\mathfrak R}\otimes L_{r-1,n}U$. 
We show that $\tau_r(B_r)\subseteq G_r$ by showing that $(\underline{\mathrm v}_r\circ \tau_r)(B_r)=0$.
We treat the two summands of $B_r$ separately.
First, we consider $\Theta$ in the summand $K_{r-1,n-1}U_0$ of $B_r$. Write $\Theta=\sum_h\theta_h\otimes \nu_h$, with $\theta_h\in \bigwedge^{r-1}U_0$, $\nu_h\in D_{n-1}(U^*_0)$, and $\eta(\Theta)=0$. We see that
$$\begin{array}{llll}
(\underline{\mathrm v}_r\circ \tau_r)(\Theta)&=&\pmb \delta^{r-1}\sum_h\underline{\mathrm v}_r(\kappa(x_1\wedge\theta_h)\otimes q(\nu_h))&{\rm
(\ref{def1}.\ref{def1-b})}\vspace{5pt}\\
&=&\pmb \delta^r\sum_h\eta((x_1\wedge\theta_h)\otimes \nu_h)&\rm{Obs.~\ref{ob1}}\vspace{5pt}\\
&=&-\pmb \delta^r x_1\wedge \eta(\sum_h\theta_h\otimes \nu_h)&x_1(U_0^*)=0\vspace{5pt}\\
&=&0&\eta(\Theta)=0.\end{array}$$ 
Now, we consider $\Theta\in L_{r-1,n}U_0\subseteq B_r$. Write $\Theta
=\sum_h\theta_h\otimes \mu_h$ with $\theta_h\in \bigwedge^{r-1}U_0$, $\mu_h\in \operatorname{Sym}_{n}U_0$, and 
$\kappa (\Theta)=0$. We see that
$$\begin{array}{llll}
(\underline{\mathrm v}_r\circ \tau_r)(\Theta)&=&\pmb \delta^{r-1}\sum_h\underline{\mathrm v}_r\left(
\pmb\delta \theta_h\otimes \mu_h - \kappa \left((x_1\wedge \theta_h) \otimes q(\mu_h(\widetilde{\Phi}))\right)\right)
&{\rm
(\ref{def1}.\ref{def1-c})}\vspace{5pt}\\
&=&\pmb \delta^{r}
 \sum_h\left[ \theta_h\otimes \mu_h(\Phi)-  \eta\left((x_1\wedge \theta_h) \otimes \mu_h(\widetilde{\Phi})\right)\right]
&\rm{Obs.~\ref{ob1}}\vspace{5pt}\\
&=&\pmb \delta^{r}\sum_h\left[ \theta_h\otimes \mu_h(\Phi)-   \theta_h\otimes \mu_h(\Phi)+ x_1\wedge  \eta\left(\theta_h \otimes \mu_h(\widetilde{\Phi})\right)\right]
&x_1(\widetilde{\Phi})=\Phi\vspace{5pt}\\
&=&
\pmb \delta^r \left[x_1\wedge (1\otimes \pmb p^{\widetilde{\Phi}})\left(\sum_h \kappa(\theta_h \otimes \mu_h)\right)\right]&
 \eta\circ \pmb p^{\widetilde{\Phi}}=\pmb p^{\widetilde{\Phi}}\circ \kappa 
\vspace{5pt}\\
&=&0
&\kappa(\Theta)=0,\end{array}$$
where $\textstyle\pmb p^{\widetilde{\Phi}}:{\mathfrak R}\otimes \operatorname{Sym}_{n+1}U\to {\mathfrak R} \otimes D_{n-2}U^*$
is the ${\mathfrak R}$-module homomorphism which sends $\mu$ to $\mu(\widetilde{\Phi})$, for  $\mu\in \operatorname{Sym}_{n+1}U$. \end{proof}

\begin{proposition}\label{prop} 
The ${\mathfrak R}$-module homomorphisms $\tau_i:B_i\to G_i$ of Definition~{\rm\ref{def1}} give rise to a commutative diagram
$$\xymatrix{
0 \ar[r]&B_d\ar[r]^{b_d}\ar[d]^{\tau_d}&B_{d-1}\ar[r]^{b_{d-1}}\ar[d]^{\tau_{d-1}}&\cdots\ar[r]^{b_{2}}
&B_1\ar[r]^{b_1}\ar[d]^{\tau_1}&B_0\ar[d]^{\tau_{0}}\\
0 \ar[r]&G_d\ar[r]^{g_d}&G_{d-1}\ar[r]^{g_{d-1}}&\cdots\ar[r]^{g_{2}}
&G_1\ar[r]^{g_1}&G_0}$$
\end{proposition}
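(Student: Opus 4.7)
The plan is to verify commutativity of each of the $d$ squares $\tau_{r-1}\circ b_r=g_r\circ \tau_r$ by direct calculation on generating elements. For the interior squares ($2\le r\le d-1$), both $B_r$ and $B_{r-1}$ split into a $K$-summand and an $L$-summand, so the check decomposes into two pieces: one for a generator $\theta\otimes \nu$ of $\mathfrak{R}\otimes K_{r-1,n-1}U_0$ and one for a generator $\theta\otimes \mu$ of $\mathfrak{R}\otimes L_{r-1,n}U_0$, with $\theta\in \bigwedge^{r-1}U_0$. The two end squares ($r=1$ and $r=d$) require separate but considerably shorter arguments.

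The principal tools are: the identities $x_1(\widetilde{\Phi})=\Phi$ and $[q(\nu)](\Phi)=\pmb{\delta}\nu$ from Remark~\ref{rmk3}; the intertwining relations $\pmb{p}^{\Phi}\circ \kappa=\eta\circ \pmb{p}^{\Phi}$ and $\pmb{p}^{\widetilde{\Phi}}\circ \kappa=\eta\circ \pmb{p}^{\widetilde{\Phi}}$ already exploited in Observation~\ref{ob2}; the Leibniz-style expansion
\[
\kappa\bigl((x_1\wedge \theta)\otimes \mu\bigr)=\theta\otimes x_1\mu-x_1\wedge \kappa(\theta\otimes \mu),
\]
valid for $\theta\in \bigwedge^{r-1}U_0$ (which follows from $x_1^*(x_1)=1$ and $x_1^*|_{U_0}=0$), together with its $\eta$-analogue; Observation~\ref{ob1}; and the defining annihilation conditions $\eta(\sum_h\theta_h\otimes \nu_h)=0$ on elements of $K_{r-1,n-1}U_0$ and $\kappa(\sum_h\theta_h\otimes \mu_h)=0$ on elements of $L_{r-1,n}U_0$.

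For an interior square I would compute $g_r(\tau_r(\theta\otimes \nu))$ by expanding $\tau_r(\theta\otimes \nu)=\pmb{\delta}^{r-1}\kappa\bigl((x_1\wedge \theta)\otimes q(\nu)\bigr)$ via the Leibniz rule and then applying $g_r$, which at the ambient level is $\mathrm{Kos}^{\Psi}\otimes 1$ and so naturally separates into a sum whose terms either Koszul-extract an $x_1$ or an $x_i$ with $i\ge 2$. Independently I would compute $b_r(\theta\otimes \nu)$ from Definition~\ref{diff}.\ref{diff-a}, feed each summand of the result into the appropriate clause of Definition~\ref{def1}, and match the two resulting expressions. The decomposition $q(\nu)=x_1\cdot(\text{residue})+\operatorname{proj}(q(\nu))$ is the mechanism that makes the $x_1\otimes \kappa(\theta\otimes \operatorname{proj}(q(\nu)))$ piece of $b_r$ land in the $L$-summand of $B_{r-1}$, and this explains how the $K\to L$ cross-term of $\tau_{r-1}\circ b_r$ should agree with the $x_1$-Koszul-extracted part of $g_r\circ \tau_r$. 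The computation for the $L$-summand generator $\theta\otimes \mu$ is parallel, but uses $x_1(\widetilde{\Phi})=\Phi$ to control the interaction between the $\pmb{\delta}\,\theta\otimes \mu$ piece and the $\kappa((x_1\wedge \theta)\otimes q(\mu(\widetilde{\Phi})))$ piece of $\tau_r$.

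The main obstacle, I expect, is sheer combinatorial bookkeeping: each interior square produces four summand-to-summand components, and matching the terms demands systematic appeals to the annihilation conditions above (in tandem with the Leibniz expansions to absorb any stray $x_1\wedge(-)$ factor). The $r=1$ case collapses to $g_1\circ \tau_1=b_1$ after applying $[q(\nu)](\Phi)=\pmb{\delta}\nu$ and tracking the $x_1$-versus-$\pmb{\delta}$ bookkeeping on the two summands of $B_1$. The $r=d$ case reduces to expanding $\eta((x_1\wedge \omega)\otimes m^*)$ and $\kappa((x_1\wedge \omega)\otimes m)$ via Leibniz and matching them with the explicit formula for $g_d$ evaluated at $\Omega=x_1\wedge \omega$; the two sums in the definition of $b_d$ mirror the split produced by the Leibniz rule on $g_d(x_1\wedge\omega)$ once the identity $[q(m^*)](\Phi)=\pmb{\delta}m^*$ is used.
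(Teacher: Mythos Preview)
Your plan is correct and is essentially the paper's own approach: square-by-square verification, with the interior cases split into a $K$-piece and an $L$-piece and the endpoints $r=1$, $r=d$ handled separately, using exactly the identities you list (including the $(1-\operatorname{proj})$ decomposition $q(\nu)=x_1\cdot(\text{residue})+\operatorname{proj}(q(\nu))$ combined with $x_1(\widetilde{\Phi})=\Phi$ and $[q(\nu)](\Phi)=\pmb\delta\,\nu$). One refinement worth adopting from the paper: rather than checking on a simple tensor $\theta\otimes\nu$ with $\theta\in\bigwedge^{r-1}U_0$ (which is generally \emph{not} an element of $K_{r-1,n-1}U_0$), take generators of the form $\Theta=\eta(\theta\otimes\nu)$ with $\theta\in\bigwedge^{r}U_0$, $\nu\in D_n(U_0^*)$ (and $\Theta=\kappa(\theta\otimes\mu)$ for the $L$-piece, and pair against a test $\nu\in D_nU^*$ for $r=d$)---this builds in the annihilation conditions automatically and makes the coordinate expansions into systematic double and triple sums that cancel cleanly.
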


\begin{proof}  
We show that
\begin{equation}\label{goal-a!}(\tau_{r-1}\circ b_r)(\Theta)-(g_r\circ \tau_{r})(\Theta)=0\end{equation}
for $\Theta\in \mathbb B$. We consider five distinct cases:
\begin{case}\label{case1}$\Theta\in K_{0,n-1}U_0\subseteq B_1$,\end{case}
\begin{case}\label{case2}$\Theta\in L_{0,n}U_0\subseteq B_1$,\end{case}
\begin{case}\label{case3}$\Theta\in K_{r-1,n-1}U_0\subseteq B_r$ for $2\le r\le d-1$,\end{case}
\begin{case}\label{case4}$\Theta\in L_{r-1,n}U_0\subseteq B_r$ for $2\le r\le d-1$, and \end{case}
\begin{case}\label{case5}$\Theta\in  B_d$. \end{case}

\medskip\noindent {Case~\ref{case1}.} If $\Theta=\nu\in D_{n-1}U_0^*=K_{0,n-1}U_0\subseteq B_1$, then 
$(\tau_0\circ b_1)(\Theta)=x_1q(\nu)= (g_1\circ \tau_1)(\Theta)$ in ${\mathfrak R}=G_0$.

\medskip\noindent Case~\ref{case2}. If $\Theta=\mu\in \operatorname{Sym}_{n}U_0=L_{0,n}U_0\subseteq B_1$, then $(\tau_0\circ b_1)(\Theta)=\pmb\delta \mu -x_1q(\mu(\widetilde{\Phi}))= (g_1\circ \tau_1)(\Theta)$. 

\medskip\noindent Case~\ref{case3}.
Fix $r$ with $2\le r\le d-1$. Let $\Theta=\eta(\theta\otimes \nu) \in K_{r-1,n-1}U_0\subseteq B_r$, with
$\theta\in \bigwedge^rU_0$ and $\nu$ in $D_{n}(U_0^*)$. 
Recall Convention~\ref{conv1} and observe that
$\Theta=\sum_{j=2}^d x_j^*(\theta)\otimes x_j(\nu)$.
Apply Definitions~\ref{diff}.\ref{diff-a} and \ref{def1} to see that $$(\tau_{r-1}\circ b_r)(\Theta)=
\tau_{r-1}\left.\begin{cases}
-x_1\otimes \sum\limits_{j,k=2}^d ([x_k^*\wedge x_j^*](\theta))\otimes [x_kq(x_j(\nu))](\widetilde{\Phi})\vspace{5pt}\\
+\pmb \delta \sum\limits_{j,k=2}^d x_k\otimes ([x_k^*\wedge x_j^*](\theta))\otimes x_j(\nu)\vspace{5pt}\\\hline
-x_1\otimes \sum\limits_{j,k=2}^d ([x_k^*\wedge x_j^*](\theta))\otimes x_k\operatorname{proj}(q(x_j(\nu)))
\end{cases}\right\}=\pmb\delta^{r-2}\sum\limits_{i=1}^7S_i, \text{ for}$$
\begin{longtable}{l}
$S_1=-x_1\otimes \sum\limits_{j,k=2}^d ([x_k^*\wedge x_j^*](\theta))\otimes x_1q\left([x_kq(x_j(\nu))](\widetilde{\Phi})\right)$,\\
$S_2=\phantom{+}x_1\otimes \sum\limits_{j,k,\ell=2}^d (x_1\wedge ([x_\ell^*\wedge x_k^*\wedge x_j^*](\theta)))\otimes x_{\ell}q\left([x_kq(x_j(\nu))](\widetilde{\Phi})\right)$,\\
$S_3=\phantom{+}\pmb \delta \sum\limits_{j,k=2}^d x_k\otimes ([x_k^*\wedge x_j^*](\theta))\otimes x_1q(x_j(\nu))$,\\
$S_4=-\pmb \delta \sum\limits_{j,k,\ell=2}^d x_k\otimes (x_1\wedge ([x_{\ell}^*\wedge x_k^*\wedge x_j^*](\theta)))\otimes x_{\ell}q(x_j(\nu))$,\\
$S_5=-\pmb\delta x_1\otimes \sum\limits_{j,k=2}^d ([x_k^*\wedge x_j^*](\theta))\otimes x_k\operatorname{proj}(q(x_j(\nu)))$,\\
$S_6=\phantom{+}x_1\otimes \sum\limits_{j,k=2}^d ([x_k^*\wedge x_j^*](\theta))\otimes 
x_1q\left([x_k\operatorname{proj}(q(x_j(\nu)))](\widetilde{\Phi})\right)
$,\text{ and}\\
$S_7=-x_1\otimes \sum\limits_{j,k,\ell=2}^d (x_1\wedge ([x_{\ell}^*\wedge x_k^*\wedge x_j^*](\theta)))\otimes 
x_{\ell}q\left([x_k\operatorname{proj}(q(x_j(\nu)))](\widetilde{\Phi})\right)$.
\end{longtable}

\noindent A similar calculation gives
$$ -(g_{r}\circ \tau_r)(\Theta)
=-\pmb \delta^{r-1} g_r \left.\begin{cases}
\phantom{+}\sum\limits_{j=2}^d x_j^*(\theta)\otimes x_1q(x_j(\nu))\vspace{5pt}\\
-\sum\limits_{j,k=2}^d (x_1\wedge [x_k^*\wedge x_j^*](\theta))\otimes x_kq(x_j(\nu))
\end{cases}\right\}=\pmb \delta^{r-2}\sum_{i=8}^{10}S_i, \text{ for}
$$
\begin{longtable}{l}
$S_8=-\pmb \delta \sum\limits_{j,k=2}^d x_k\otimes [x_k^*\wedge x_j^*](\theta)\otimes x_1q(x_j(\nu))$,
\\
$S_9=\phantom{+}\pmb \delta 
x_1\otimes \sum\limits_{j,k=2}^d [x_k^*\wedge x_j^*](\theta)\otimes x_kq(x_j(\nu))$, and
\\
$S_{10}=-\pmb \delta \sum\limits_{j,k,\ell=2}^d x_{\ell}\otimes (x_1\wedge [x_{\ell}^*\wedge x_k^*\wedge x_j^*](\theta))\otimes x_kq(x_j(\nu))$.
\end{longtable}

\noindent We see that $S_3+S_8=S_4+S_{10}=0$ and
\begin{longtable}{l}
$S_1+S_6=-x_1\otimes \sum\limits_{j,k=2}^d ([x_k^*\wedge x_j^*](\theta))\otimes x_1q\left([x_k(1-\operatorname{proj})q(x_j(\nu))](\widetilde{\Phi})\right)$,\\
$S_2+S_7=\phantom{+}x_1\otimes \sum\limits_{j,k,\ell=2}^d (x_1\wedge ([x_\ell^*\wedge x_k^*\wedge x_j^*](\theta)))\otimes x_{\ell}q\left([x_k(1-\operatorname{proj})q(x_j(\nu))](\widetilde{\Phi})\right)$,\\
$S_5+S_9=\phantom{+}\pmb \delta 
x_1\otimes \sum\limits_{j,k=2}^d [x_k^*\wedge x_j^*](\theta)\otimes x_k(1-\operatorname{proj})q(x_j(\nu))$.
\end{longtable}

\noindent If $Y\in \operatorname{Sym}_{n-1}U$, then $(1-\operatorname{proj})(Y)\in x_1\operatorname{Sym}_{n-2}U$; so, $(1-\operatorname{proj})(Y)=x_1Y_0$ for a unique element $Y_0$ in   $\operatorname{Sym}_{n-2}U$. It is convenient to write $\frac{(1-\operatorname{proj})(Y)}{x_1}$ for the unique element $Y_0$ in   $\operatorname{Sym}_{n-2}U$ with $(1-\operatorname{proj})(Y)=x_1Y_0$. We apply this technique to $S_1+S_6$ with 
$q(x_j(\nu))$ playing the role of $Y$. Recall that $x_1(\widetilde{\Phi})=\Phi$ and that $[q(\nu)](\Phi)=\pmb \delta \nu$ for all $\nu\in D_{n-1}(U^*)$. We see that 
\begingroup
\allowdisplaybreaks
\begin{eqnarray*}
S_1+S_6&=&-x_1\otimes \sum\limits_{j,k=2}^d ([x_k^*\wedge x_j^*](\theta))\otimes x_1q\left([x_k(1-\operatorname{proj})q(x_j(\nu))](\widetilde{\Phi})\right)
\\
&=&-x_1\otimes \sum\limits_{j,k=2}^d ([x_k^*\wedge x_j^*](\theta))\otimes x_1q\left(\left[x_k\frac{(1-\operatorname{proj})q(x_j(\nu))}{x_1}x_1\right](\widetilde{\Phi})\right)\\
&=&-x_1\otimes \sum\limits_{j,k=2}^d ([x_k^*\wedge x_j^*](\theta))\otimes x_1q\left(\left[x_k\frac{(1-\operatorname{proj})q(x_j(\nu))}{x_1}\right](\Phi)\right)\\
&=&-\pmb\delta x_1\otimes \sum\limits_{j,k=2}^d ([x_k^*\wedge x_j^*](\theta))\otimes x_1x_k\frac{(1-\operatorname{proj})q(x_j(\nu))}{x_1}\\
&=&-\pmb\delta x_1\otimes \sum\limits_{j,k=2}^d ([x_k^*\wedge x_j^*](\theta))\otimes x_k(1-\operatorname{proj})q(x_j(\nu));
\end{eqnarray*}
\endgroup
thus, $(S_1+S_6)+(S_5+S_9)=0$. In a similar manner, we compute
\begingroup
\allowdisplaybreaks
\begin{eqnarray*}
S_2+S_7&=&x_1\otimes \sum\limits_{j,k,\ell=2}^d (x_1\wedge ([x_\ell^*\wedge x_k^*\wedge x_j^*](\theta)))\otimes x_{\ell}q\left([x_k(1-\operatorname{proj})q(x_j(\nu))](\widetilde{\Phi})\right)\\
&=&x_1\otimes \sum\limits_{j,k,\ell=2}^d (x_1\wedge ([x_\ell^*\wedge x_k^*\wedge x_j^*](\theta)))\otimes x_{\ell}q\left(\left[x_k\frac{(1-\operatorname{proj})q(x_j(\nu))}{x_1}x_1\right](\widetilde{\Phi})\right)\\
&=&x_1\otimes \sum\limits_{j,k,\ell=2}^d (x_1\wedge ([x_\ell^*\wedge x_k^*\wedge x_j^*](\theta)))\otimes x_{\ell}q\left(\left[x_k\frac{(1-\operatorname{proj})q(x_j(\nu))}{x_1}\right](\Phi)\right)\\
&=&\pmb\delta x_1\otimes \sum\limits_{j,k,\ell=2}^d (x_1\wedge ([x_\ell^*\wedge x_k^*\wedge x_j^*](\theta)))\otimes x_{\ell}x_k\frac{(1-\operatorname{proj})q(x_j(\nu))}{x_1}=0.
\end{eqnarray*}
\endgroup
We have established (\ref{goal-a!}) in Case \ref{case3}. 

\medskip\noindent Case~\ref{case4}.
Keep $r$ in the range $2\le r\le d-1$. Let $\Theta=\kappa(\theta\otimes \mu) \in L_{r-1,n}U_0\subseteq B_r$, with
$\theta\in \bigwedge^rU_0$ and $\mu\in \operatorname{Sym}_{n-1}U_0$. Recall Convention~\ref{conv1} and observe that
$\Theta=\sum_{j=2}^d x_j^*(\theta)\otimes x_j\mu$.
Apply Definitions~\ref{diff}.\ref{diff-b} and \ref{def1} to see that $$(\tau_{r-1}\circ b_r)\Theta=\tau_{r-1}\left.\begin{cases}
\phantom{+}x_1\otimes \sum\limits_{j,k=2}^d ([x_k^*\wedge x_j^*](\theta))\otimes \left[x_kq([x_j\mu](\widetilde{\Phi}))\right](\widetilde{\Phi})\\
\hline
+x_1\otimes \sum\limits_{j,k=2}^d ([x_k^*\wedge x_j^*](\theta))\otimes x_k\operatorname{proj}(q([x_j\mu](\widetilde{\Phi})))\\
+\pmb\delta\sum\limits_{j,k=2}^d x_k\otimes ([x_k^*\wedge x_j^*](\theta))\otimes x_j\mu
\end{cases}\right\}=\pmb\delta^{r-2}\sum_{i=1}^8S_i,$$ for
\begin{longtable}{l}
$S_1=\phantom{+}x_1\otimes  \sum\limits_{j,k=2}^d    ([x_k^*\wedge x_j^*](\theta))\otimes 
x_1q\left(\left[x_kq([x_j\mu](\widetilde{\Phi}))\right](\widetilde{\Phi})\right)$,\\
$S_2=-x_1\otimes \sum\limits_{j,k,\ell=2}^d  (x_1\wedge  [x_\ell^*\wedge x_k^*\wedge x_j^*](\theta))\otimes 
x_\ell q\left(\left[x_kq([x_j\mu](\widetilde{\Phi}))\right](\widetilde{\Phi})\right)$,
\\
$S_3=\phantom{+}\pmb\delta x_1\otimes \sum\limits_{j,k=2}^d ([x_k^*\wedge x_j^*](\theta))\otimes x_k\operatorname{proj}(q([x_j\mu](\widetilde{\Phi})))$,
\\
$S_4=\phantom{+}\pmb\delta^2\sum\limits_{j,k=2}^d x_k\otimes ([x_k^*\wedge x_j^*](\theta))\otimes x_j\mu$,
\\
$S_5=-x_1\otimes \sum\limits_{j,k=2}^d 
([x_k^*\wedge x_j^*](\theta))\otimes x_1q\left(\left[x_k\operatorname{proj}(q([x_j\mu](\widetilde{\Phi})))\right](\widetilde{\Phi})\right)$,
\\
$S_6=\phantom{+}x_1\otimes \sum\limits_{j,k,\ell=2}^d 
(x_1\wedge([x_\ell^*\wedge x_k^*\wedge x_j^*](\theta)))\otimes x_\ell q\left(\left[x_k\operatorname{proj}(q([x_j\mu](\widetilde{\Phi})))\right](\widetilde{\Phi})\right)$,
\\
$S_7=-\pmb\delta\sum\limits_{j,k=2}^d x_k\otimes ([x_k^*\wedge x_j^*](\theta))\otimes x_1
q([x_j\mu](\widetilde{\Phi}))$, and \\
$S_8=\phantom{+}\pmb\delta\sum\limits_{j,k,\ell=2}^d x_k\otimes (x_1\wedge([x_\ell^*\wedge x_k^*\wedge x_j^*](\theta)))\otimes x_\ell q([x_j\mu](\widetilde{\Phi}))$.
\end{longtable}
In a similar manner, we see that  
$$- (g_r\circ \tau_{r})(\Theta)=\pmb\delta^{r-1} g_r\left.\begin{cases}
-\pmb\delta \sum\limits_{j=2}^d x_j^*(\theta)\otimes x_j\mu\\
+\sum\limits_{j=2}^d x_j^* (\theta)\otimes x_1q([x_j\mu](\widetilde{\Phi}))\\
-\sum\limits_{j,k=2}^d (x_1\wedge [x_k^*\wedge x_j^*](\theta))\otimes x_kq([x_j\mu](\widetilde{\Phi}))
\end{cases}\right\}= \pmb\delta^{r-2}\sum_{i=9}^{12}S_i,$$
for 
\begin{longtable}{l}
$\phantom{_1}S_9=-\pmb\delta^2 \sum\limits_{j,k=2}^d x_k\otimes ([x_k^*\wedge x_j^*](\theta))\otimes x_j\mu$,\\
$S_{10}=\phantom{+}\pmb\delta \sum\limits_{j,k=2}^d x_k\otimes ([x_k^*\wedge x_j^*](\theta))\otimes x_1q([x_j\mu](\widetilde{\Phi}))$,\\
$S_{11}=-\pmb\delta x_1 \otimes\sum\limits_{j,k=2}^d ([x_k^*\wedge x_j^*](\theta))\otimes x_kq([x_j\mu](\widetilde{\Phi}))$, \\
$S_{12}=\phantom{+}\pmb\delta \sum\limits_{j,k,\ell=2}^d x_\ell\otimes (x_1\wedge [x_\ell^*\wedge x_k^*\wedge x_j^*](\theta))\otimes x_kq([x_j\mu](\widetilde{\Phi}))$.
\end{longtable}
We see that
$$S_4+S_9=S_7+S_{10}=S_8+S_{12}=0,$$
\begin{longtable}{l}
$
S_1+S_5=\phantom{+}x_1\otimes \sum\limits_{j,k=2}^d    ([x_k^*\wedge x_j^*](\theta))\otimes 
x_1q\left(\left[x_k(1-\operatorname{proj})q([x_j\mu](\widetilde{\Phi}))\right](\widetilde{\Phi})\right)$,
\\
$S_2+S_6=-x_1\otimes \sum\limits_{j,k,\ell=2}^d  (x_1\wedge  [x_\ell^*\wedge x_k^*\wedge x_j^*](\theta))\otimes 
x_\ell q\left(\left[x_k(1-\operatorname{proj})q([x_j\mu](\widetilde{\Phi}))\right](\widetilde{\Phi})\right)$, and
\\
$S_3+S_{11}=-x_1\otimes \pmb\delta \sum\limits_{j,k=2}^d ([x_k^*\wedge x_j^*](\theta))\otimes x_k(1-\operatorname{proj})q([x_j\mu](\widetilde{\Phi}))$.
\end{longtable}
\noindent If $Y\in \operatorname{Sym}_{n-1}U$, then $(1-\operatorname{proj})(Y)\in x_1\operatorname{Sym}_{n-2}U$; so, $(1-\operatorname{proj})(Y)=x_1Y_0$ for a unique element $Y_0$ in   $\operatorname{Sym}_{n-2}U$. It is convenient to write $\frac{(1-\operatorname{proj})(Y)}{x_1}$ for the unique element $Y_0$ in   $\operatorname{Sym}_{n-2}U$ with $(1-\operatorname{proj})(Y)=x_1Y_0$. We apply this technique to $S_1+S_5$ with 
$q([x_j\mu](\widetilde{\Phi}))$ playing the role of $Y$. Recall that $x_1(\widetilde{\Phi})=\Phi$ and that $[q(\nu)](\Phi)=\pmb \delta \nu$ for all $\nu\in D_{n-1}(U^*)$. We see that 
\begingroup
\allowdisplaybreaks
\begin{eqnarray*}
S_1+S_5&=&x_1\otimes \sum\limits_{j,k=2}^d    ([x_k^*\wedge x_j^*](\theta))\otimes 
x_1q\left(\left[x_k(1-\operatorname{proj})q([x_j\mu](\widetilde{\Phi}))\right](\widetilde{\Phi})\right)\\
&=&x_1\otimes \sum\limits_{j,k=2}^d    ([x_k^*\wedge x_j^*](\theta))\otimes 
x_1q\left(\left[x_k\frac{(1-\operatorname{proj})q([x_j\mu](\widetilde{\Phi}))}{x_1}x_1\right](\widetilde{\Phi})\right)\\
&=&x_1\otimes \sum\limits_{j,k=2}^d    ([x_k^*\wedge x_j^*](\theta))\otimes 
x_1q\left(\left[x_k\frac{(1-\operatorname{proj})q([x_j\mu](\widetilde{\Phi}))}{x_1}\right](\Phi)\right)\\
&=&\pmb\delta x_1\otimes \sum\limits_{j,k=2}^d    ([x_k^*\wedge x_j^*](\theta))\otimes 
x_1x_k\frac{(1-\operatorname{proj})q([x_j\mu](\widetilde{\Phi}))}{x_1}\\
&=&\pmb\delta x_1\otimes \sum\limits_{j,k=2}^d    ([x_k^*\wedge x_j^*](\theta))\otimes 
x_k(1-\operatorname{proj})q([x_j\mu](\widetilde{\Phi}));
\end{eqnarray*}
\endgroup
thus, $(S_1+S_5)+(S_3+S_{11})=0$. A similar calculation yields
\begingroup
\allowdisplaybreaks
\begin{eqnarray*}
S_2+S_6&=&-x_1\otimes \sum\limits_{j,k,\ell=2}^d  (x_1\wedge  [x_\ell^*\wedge x_k^*\wedge x_j^*](\theta))\otimes 
x_\ell q\left(\left[x_k(1-\operatorname{proj})q([x_j\mu](\widetilde{\Phi}))\right](\widetilde{\Phi})\right)\\
&=&-x_1\otimes \sum\limits_{j,k,\ell=2}^d  (x_1\wedge  [x_\ell^*\wedge x_k^*\wedge x_j^*](\theta))\otimes 
x_\ell q\left(\left[x_k\frac{(1-\operatorname{proj})q([x_j\mu](\widetilde{\Phi}))}{x_1}x_1\right](\widetilde{\Phi})\right)\\
&=&-x_1\otimes \sum\limits_{j,k,\ell=2}^d  (x_1\wedge  [x_\ell^*\wedge x_k^*\wedge x_j^*](\theta))\otimes 
x_\ell q\left(\left[x_k\frac{(1-\operatorname{proj})q([x_j\mu](\widetilde{\Phi}))}{x_1}\right](\Phi)\right)\\
&=&-\pmb\delta x_1\otimes \sum\limits_{j,k,\ell=2}^d  (x_1\wedge  [x_\ell^*\wedge x_k^*\wedge x_j^*](\theta))\otimes 
x_\ell x_k\frac{(1-\operatorname{proj})q([x_j\mu](\widetilde{\Phi}))}{x_1}=0.
\end{eqnarray*}
\endgroup
We have established (\ref{goal-a!}) in Case~\ref{case4}.

\medskip\noindent Case~\ref{case5}. Let $\Theta=\omega$ for some element $\omega$ of $\bigwedge^{d-1}U_0$. We establish (\ref{goal-a!}) in Case~\ref{case5} by showing that $$X=(1\otimes 1\otimes \nu)
\left([(\tau_{d-1}\circ b_d)- (g_d\circ \tau_d)](\omega)\vphantom{\widetilde{\Phi}}\right)$$is zero, where  $\nu$ is a fixed, but arbitrary, element in $D_nU^*$. 
One quickly calculates that $X$ is equal to $\pmb\delta^{d-2}\sum_{i=1}^8X_i$, with

\begin{longtable} {l}
$X_1=\phantom{+}\sum\limits_{j=2}^d\sum\limits_{m\in\binom{x_2,\dots,x_d}{n}} 
m^*(x_jq[x_1(\nu)])\cdot
[\pmb\delta m-x_1q(m(\widetilde{\Phi}))]\otimes x_j^*(\omega)$,
\vspace{5pt}\\

$X_2=-\sum\limits_{j,k=2}^d\sum\limits_{m\in\binom{x_2,\dots,x_d}{n}} m^*(x_jq[x_k(\nu)])\cdot[\pmb\delta m-x_1q(m(\widetilde{\Phi}))]\otimes (x_1 \wedge [x_k^*\wedge x_j^*](\omega))$,
\vspace{5pt}\\

$X_3=-\pmb\delta \sum\limits_{j=2}^d\sum\limits_{m\in\binom{x_2,\dots,x_d}{n-1}} 
m[x_j(\nu)]\cdot  x_1q(m^*)\otimes x_j^*(\omega)$,
\vspace{5pt}\\ 

$X_4=\phantom{+}\sum\limits_{j=2}^d\sum\limits_{m\in\binom{x_2,\dots,x_d}{n-1}} 
 m\left((x_jq[x_1(\nu)])(\widetilde{\Phi})\right) \cdot
x_1q(m^*)\otimes x_j^*(\omega)$,
\vspace{5pt}\\

$X_5=-\sum\limits_{j,k=2}^d\sum\limits_{m\in\binom{x_2,\dots,x_d}{n-1}} 
m\left((x_jq[x_k(\nu)])(\widetilde{\Phi})\right) \cdot
x_1q(m^*)\otimes (x_1\wedge [x_k^*\wedge x_j^*](\omega))$,
\vspace{5pt}\\

$X_6=\phantom{+}\pmb\delta \sum\limits_{j=2}^d x_1   q(x_j(\nu))\otimes x_j^*(\omega)$,
\vspace{5pt}\\

$X_7=-\pmb\delta \sum\limits_{k=2}^d x_k  q(x_1(\nu)) \otimes x_k^*(\omega)$, and
\vspace{5pt}\\

$X_8=-\pmb\delta \sum\limits_{j,k=2}^d  x_kq(x_j(\nu))\otimes (x_1\wedge [x_k^*\wedge x_j^*](\omega))$.

\end{longtable}

\noindent Use the fact that $\binom{x_1,\dots,x_d}r$ is the disjoint union of $\binom{x_2,\dots,x_d}{r}$ and $x_1\binom{x_1,\dots,x_d}{r-1}$ to re-write $X_i$ as $X_i'+X_i''$, for $1\le i\le 5$, where the sum in $X_i'$ is taken over $\binom{x_1,\dots,x_d}r$ and the sum in $X_i''$ is taken over $x\binom{x_1,\dots,x_d}{r-1}$ to obtain 
\begin{longtable}{ll}
 
$X_1'=\phantom{+}\sum\limits_{j=2}^d
\left[\pmb\delta (x_jq[x_1(\nu)])-x_1q\left((x_jq[x_1(\nu)])(\widetilde{\Phi})\right)\right]\otimes x_j^*(\omega)$,
\vspace{5pt}\\

$X_2'=-\sum\limits_{j,k=2}^d\left[\pmb\delta (x_jq[x_k(\nu)])-x_1q\left((x_jq[x_k(\nu)])(\widetilde{\Phi})\right)\right]\otimes (x_1 \wedge [x_k^*\wedge x_j^*](\omega))$,
\vspace{5pt}\\

$X_3'=-\pmb\delta \sum\limits_{j=2}^d x_1q(x_j(\nu))\otimes x_j^*(\omega)$,
\vspace{5pt}\\ 

$X_4'=\phantom{+}\sum\limits_{j=2}^d
x_1q\left((x_jq[x_1(\nu)])(\widetilde{\Phi})\right)\otimes x_j^*(\omega)$,
\vspace{5pt}\\

$X_5'=-\sum\limits_{j,k=2}^d
x_1q\left((x_jq[x_k(\nu)])(\widetilde{\Phi})\right)\otimes (x_1\wedge [x_k^*\wedge x_j^*](\omega))$,
\vspace{5pt}\\

$X_1''=-\sum\limits_{j=2}^d\sum\limits_{m\in\binom{x_1,\dots,x_d}{n-1}} 
(x_1m)^*(x_jq[x_1(\nu)])\cdot
[\pmb\delta (x_1m)-x_1q((x_1m)(\widetilde{\Phi}))]\otimes x_j^*(\omega)$,
\vspace{5pt}\\

$X_2''=\phantom{+}\sum\limits_{j,k=2}^d\sum\limits_{m\in\binom{x_1,\dots,x_d}{n-1}} (x_1m)^*(x_jq[x_k(\nu)])\cdot[\pmb\delta (x_1m)-x_1q((x_1m)(\widetilde{\Phi}))]\otimes (x_1 \wedge [x_k^*\wedge x_j^*](\omega))$,
\vspace{5pt}\\

$X_3''=\phantom{+}\pmb\delta \sum\limits_{j=2}^d\sum\limits_{m\in\binom{x_1,\dots,x_d}{n-2}} 
(x_1m)[x_j(\nu)]\cdot  x_1q((x_1m)^*)\otimes x_j^*(\omega)$,
\vspace{5pt}\\ 

$X_4''=-\sum\limits_{j=2}^d\sum\limits_{m\in\binom{x_1,\dots,x_d}{n-2}} 
 (x_1m)\left((x_jq[x_1(\nu)])(\widetilde{\Phi})\right) \cdot
x_1q((x_1m)^*)\otimes x_j^*(\omega)$, and 
\vspace{5pt}\\

$X_5''=\phantom{+}\sum\limits_{j,k=2}^d\sum\limits_{m\in\binom{x_1,\dots,x_d}{n-2}} 
(x_1m)\left((x_jq[x_k(\nu)])(\widetilde{\Phi})\right) \cdot
x_1q((x_1m)^*)\otimes (x_1\wedge [x_k^*\wedge x_j^*](\omega))$.

\end{longtable}

\noindent Observe that $X_1'+X_4'+X_7=X_2'+X_5'+X_8=X_3'+X_6=X_3''+X_4''=X_1''=X_2''=X_5''=0$. For example, the coefficient $(x_1m)\left((x_jq[x_1(\nu)])(\widetilde{\Phi})\right)$, from $X_4''$, is equal to
$$(mx_j)\left((q[x_1(\nu)])(x_1(\widetilde{\Phi}))\right)=(mx_j)\left((q[x_1(\nu)])(\Phi)\right)
=(mx_j)\left(\pmb\delta x_1(\nu)\right)=\pmb \delta (mx_jx_1)(\nu),$$and this explains the equation 
$X_3''+X_4''=0$. A similar calculation shows that $X_5''$ is both symmetric and alternating in the symbols $x_j$ and $x_k$. The factor $\pmb\delta (x_1m)-x_1q((x_1m)(\widetilde{\Phi}))$ of $X_1''$ and $X_2''$ is zero. At any rate,
 $X$ is equal to zero and the proof is complete.
\end{proof}

\section{\bf The main theorem.}\label{main-theorem}
Theorem~\ref{thm-1} is the heart of the paper. This is where we identify a basis for each $(G_r)_{\pmb \delta}$. The complex $\mathbb G_{\pmb\delta}$ of \cite{EKK} and Theorem 3.4 has all of the desired properties, except it is not clear exactly what $\mathbb G_{\pmb\delta}$ is. In Theorem~\ref{thm-1} we determine a precise description of $\mathbb G_{\pmb\delta}$. Reader-friendly reformulations of Theorem~\ref{thm-1} are recorded as Corollaries~\ref{main-lemma} and \ref{main} at the end of the section. 
The main result in the paper is Corollary~\ref{main} which states that if 
$A$ is  a standard-graded Artinian Gorenstein algebra over a field $\pmb k$ whose minimal resolution is Gorenstein-linear, then there is a ring homomorphism $\rho:\mathfrak R\to \operatorname{Sym}_{\bullet}^{\pmb k} A_1$ so that $\rho\otimes \mathbb B$ is the minimal homogeneous resolution of $A$, where the maps and modules of $\mathbb B$ are explicitly given in Section \ref{describe-B}.

In the $d=3$ case, the two formulations, \ref{thm-1} and \ref{main-lemma}, are gathered together as \cite[Lem.~4.6]{EK-K-2} and the formulation \ref{main} is called \cite[Thm.~4.1]{EK-K-2}.
\begin{theorem}\label{thm-1} Adopt Data~{\rm\ref{data1}} 
 and recall the free ${\mathfrak R}$-modules $B_r$ and $G_r$ of Definitions~{\rm\ref{diff}.\ref{def1-a}} and {\rm\ref{defG}} and 
the ${\mathfrak R}$-module homomorphisms $\tau_r: B_r\to G_r$ of Definition~{\rm \ref{def1}} and Observation~{\rm \ref{ob2}}.  Then, for each $r$ with $0\le r\le d$, 
\begin{enumerate}[\rm(a)]
\item\label{thm-1-a} $\tau_r(B_r)$ is a free ${\mathfrak R}$-submodule of $G_r$,
\item\label{thm-1-b} $\tau_r: B_r \to \tau_r(B_r)$ is an isomorphism, and
\item\label{thm-1-c} $(\tau_rB_r)_{\pmb \delta}=(G_r)_{\pmb \delta}$.\end{enumerate} 
 \end{theorem}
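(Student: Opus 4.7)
The plan is to reduce all three assertions to a single claim: after inverting $\pmb\delta$, the map $(\tau_r)_{\pmb\delta}:(B_r)_{\pmb\delta}\to (G_r)_{\pmb\delta}$ is surjective. Since $\pmb\delta$ is a nonzero element of the polynomial ring ${\mathfrak R}$, it is a non-zerodivisor, so every free ${\mathfrak R}$-module embeds in its localization at $\pmb\delta$; in particular $B_r\hookrightarrow (B_r)_{\pmb\delta}$. By Theorem~\ref{Properties-of-G}, $(G_r)_{\pmb\delta}$ is a projective ${\mathfrak R}_{\pmb\delta}$-module, and Remark~\ref{meantime} gives the rank equality $\operatorname{rank} B_r=\operatorname{rank} G_r$. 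Over the domain ${\mathfrak R}_{\pmb\delta}$, a surjection from a free module onto a projective module of the same rank is automatically an isomorphism, because its kernel would have generic rank zero yet lies inside the torsion-free module $(B_r)_{\pmb\delta}$. Thus surjectivity of $(\tau_r)_{\pmb\delta}$ yields (\ref{thm-1-c}) directly; the resulting injectivity, pulled back along $B_r\hookrightarrow (B_r)_{\pmb\delta}$, gives (\ref{thm-1-b}); and (\ref{thm-1-a}) follows from $\tau_r(B_r)\cong B_r$ being free.

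The boundary cases are essentially formal. When $r=0$, the map $\tau_0$ is the identity on ${\mathfrak R}$ and nothing needs to be proved. When $r=d$, the decomposition $U=\mathbb Z x_1\oplus U_0$ identifies $\bigwedge^d U$ with $x_1\wedge\bigwedge^{d-1}U_0$; under this identification both $B_d$ and $G_d$ are rank-one free ${\mathfrak R}$-modules and $\tau_d$ becomes multiplication by $\pmb\delta^{d-1}$, which becomes an isomorphism after inverting $\pmb\delta$.

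The substantive work is surjectivity of $(\tau_r)_{\pmb\delta}$ in the range $1\le r\le d-1$. I would filter the ambient module ${\mathfrak R}\otimes \bigwedge^{r-1}U\otimes {\operatorname{Sym}}_n U$ by the total number of $x_1$-factors appearing in the two tensor slots and argue by descending induction on this $x_1$-depth. At the coarsest stratum, a cocycle can be represented by $\sum\theta\otimes\mu$ with $\theta\in\bigwedge^{r-1}U_0$, $\mu\in{\operatorname{Sym}}_n U_0$, and $\kappa(\sum\theta\otimes\mu)=0$; such an element lies in $L_{r-1,n}U_0$ and, modulo $x_1$-deeper corrections, is captured by the $L$-summand formula $\tau_r(\theta\otimes\mu)=\pmb\delta^{r-1}\bigl[\pmb\delta\,\theta\otimes\mu-\kappa((x_1\wedge\theta)\otimes q(\mu(\widetilde\Phi)))\bigr]$, whose leading term on the outermost stratum is visibly $\pmb\delta^r\,\theta\otimes\mu$. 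For each subsequent stratum, the kernel relation $\underline{\mathrm v}_r(\Theta)=0$ combined with the identity $[q(\nu)](\Phi)=\pmb\delta\,\nu$ of Remark~\ref{rmk3} allows one to recognize the leading $x_1$-homogeneous component of the required correction as lying in the image of the $K$-summand formula $\tau_r(\theta\otimes\nu)=\pmb\delta^{r-1}\kappa((x_1\wedge\theta)\otimes q(\nu))$, for data $\theta\in\bigwedge^{r-1}U_0$ and $\nu\in D_{n-1}(U_0^*)$ extracted from the cocycle.

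The main obstacle will be carrying out this inductive step while respecting the Schur and Weyl kernel conditions: the domain of $\tau_r$ consists only of elements of $K_{r-1,n-1}U_0$ and $L_{r-1,n}U_0$, not of the full tensor products, so one must verify that the compatibility $\pmb p^{\Phi}\circ\kappa=\eta\circ\pmb p^{\Phi}$ (already used in Observation~\ref{ob1}) together with $x_1(\widetilde\Phi)=\Phi$ forces the extracted correcting data to sit in these kernels whenever the cocycle being corrected does. This is the same delicate book-keeping that drives the proof of Observation~\ref{ob2} and Cases~\ref{case3}--\ref{case4} of Proposition~\ref{prop}, and getting it right is the crux of the theorem.
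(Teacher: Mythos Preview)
Your reduction in the opening paragraph is sound: over the domain ${\mathfrak R}_{\pmb\delta}$, a surjection from a free module onto a finitely generated projective module of the same rank has projective kernel of rank zero, hence kernel zero; and injectivity then pulls back along $B_r\hookrightarrow (B_r)_{\pmb\delta}$. The boundary cases are also fine.

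But your proof of surjectivity for $1\le r\le d-1$ is not a proof, and you say so yourself: the ``main obstacle'' of showing that the inductively extracted correction data land in $K_{r-1,n-1}U_0$ rather than in the ambient $\bigwedge^{r-1}U_0\otimes D_{n-1}(U_0^*)$ is left entirely open, and that is precisely where all the content lies. There is also a structural problem with the filtration idea. The $K$-summand of $\tau_r$ sends $\theta\otimes\nu$ to $\pmb\delta^{r-1}\kappa((x_1\wedge\theta)\otimes q(\nu))$, and $q(\nu)$ lives in ${\mathfrak R}\otimes\operatorname{Sym}_{n-1}U$, not ${\mathfrak R}\otimes\operatorname{Sym}_{n-1}U_0$; so its image is spread across many $x_1$-strata at once. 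A descending induction on $x_1$-depth therefore does not cleanly isolate one stratum per step, and the argument as written does not terminate.

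The paper does something quite different. It never filters. Instead it chooses a (coordinate-dependent) free $\mathbb Z$-complement $C_{r-1,n-1}$ of $K_{r-1,n-1}U_0$ inside $\bigwedge^{r-1}U_0\otimes D_{n-1}(U^*)$, defines $\zeta_{r-1}=\sigma_{r-1,n-1}\circ(1\otimes q)$ on it, and proves in one stroke that
\[
\bigl[\zeta_{r-1}\ \ \tau_r\bigr]:\ ({\mathfrak R}_{\pmb\delta}\otimes C_{r-1,n-1})\oplus (B_r)_{\pmb\delta}\ \longrightarrow\ {\mathfrak R}_{\pmb\delta}\otimes L_{r-1,n}U
\]
is an isomorphism. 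The engine here is exactly what your sketch never invokes: after inverting $\pmb\delta$, the map $q:D_{n-1}(U^*)\to{\mathfrak R}\otimes\operatorname{Sym}_{n-1}U$ is an isomorphism (Remark~\ref{rmk3}), so $\sigma\circ(1\otimes q)$ and $\sigma$ have the same image, and the decomposition reduces to the elementary standard-basis splitting $L_{r-1,n}U\cong(\bigwedge^{r-1}U_0\otimes\operatorname{Sym}_{n-1}U)\oplus L_{r-1,n}U_0$. Finally, Observation~\ref{ob1} gives $\underline{\mathrm v}_r\circ\zeta_{r-1}(\theta\otimes\nu)=\pmb\delta\,\eta((x_1\wedge\theta)\otimes\nu)$, which one checks takes a basis of $C_{r-1,n-1}$ bijectively to a basis of $K_{r-1,n-2}U$; since $\underline{\mathrm v}_r\circ\tau_r=0$, this forces $(G_r)_{\pmb\delta}=\ker(\underline{\mathrm v}_r)_{\pmb\delta}=\tau_r(B_r)_{\pmb\delta}$. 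The complement $C$ is a scaffold that lets you avoid ever having to extract data satisfying the Schur and Weyl kernel conditions inductively.
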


\begin{proof}The assertions are obvious when $r=0$ or $r=d$. Henceforth, we fix an integer $r$, with $1\le r\le d-1$.
Recall, from Definition~\ref{defG}.\ref{back2}, that $G_r=\ker\left(\underline{\mathrm v}_r:{\mathfrak R}\otimes L_{r-1,n}U\to {\mathfrak R}\otimes K_{r-1,n-2}U\right)$. We showed in Observation~\ref{ob2} that $\tau_r(B_r)\subseteq G_r$.
We
identify a $\mathbb Z$-free sub-module $$\textstyle C_{r-1,n-1}\text{ of }\bigwedge^{r-1}U_0\otimes D_{n-1}(U^*)$$ and an ${\mathfrak R}$-module homomorphism $$\zeta_{r-1}: {\mathfrak R} \otimes  C_{r-1,n-1}\to {\mathfrak R} \otimes L_{r-1,n}U$$ so that

\begin{chunk}\label{Xplan-a} $\bigwedge^{r-1}U_0\otimes D_{n-1}(U^*)=C_{r-1,n-1}\oplus K_{r-1,n-1}U_0$ as an internal direct sum of free ${\mathbb Z}$-modules,\end{chunk}

\begin{chunk}\label{Xplan-b} the ${\mathfrak R}_{\pmb\delta}$-module homomorphism $$ ({\mathfrak R}_{\pmb\delta}\otimes C_{r-1,n-1})\oplus (B_r)_{\pmb\delta}\xrightarrow{\bmatrix \zeta_{r-1}&\tau_r\endbmatrix} {\mathfrak R}_{\pmb\delta} \otimes L_{r-1,n}U$$is an isomorphism of free ${\mathfrak R}_{\pmb\delta}$-modules,\end{chunk}

\begin{chunk}\label{Xplan-c} $\underline{\mathrm v}_r$ carries $\zeta_{r-1}({\mathfrak R}_{\pmb\delta} \otimes  C_{r-1,n-1})$ isomorphically onto ${\mathfrak R}_{\pmb\delta}\otimes K_{r-1,n-2}U$, and \end{chunk}

\begin{chunk}\label{Xplan-d} $\underline{\mathrm v}_r$ carries $\tau_r(B_r)$ to zero.\end{chunk}

\noindent It is clear that (\ref{Xplan-a})~--~(\ref{Xplan-d}) complete the proof of Theorem~\ref{thm-1}; indeed, once (\ref{Xplan-a})~--~(\ref{Xplan-d}) are established, then the diagram
$$\xymatrix{(\mathfrak R_{\pmb \delta}\otimes C_{r-1,n-1})\oplus (B_r)_{\pmb\delta}\ar[rrr]^{\bmatrix \zeta_{r-1}&\tau_r\endbmatrix}_{\simeq}\ar[rrrdd]_{\bmatrix \simeq&0\endbmatrix}&&&\mathfrak R_{\pmb \delta}\otimes L_{r-1,n}U\ar[dd]^{{\underline{\mathrm v}}_r}\\\\&&&\mathfrak R_{\pmb \delta}\otimes K_{r-1,n-2}U}$$ commutes and each of the four listed modules is a free $\mathfrak R_{\pmb\delta}$-module.
 We define $C_{r-1,n-1}$ and $\zeta_{r-1}$ in Definitions~\ref{{def2}} and \ref{def4}. 
We establish (\ref{Xplan-a}), (\ref{Xplan-b}), and (\ref{Xplan-c}) 
in Corollary~\ref{cor1}\ref{cor1-b}, Lemma~\ref{lem3}, and Observation~\ref{bij}, 
respectively. As noted above, the assertion (\ref{Xplan-d}) is equivalent to the assertion that $\tau_r(B_r)\subseteq G_r$ and therefore, (\ref{Xplan-d}) has already been established in Observation~\ref{ob2}.
\end{proof}

\begin{remarks}\label{rmk1} We note two strange features about the free ${\mathbb Z}$-module $C_{r-1,n-1}$. \begin{enumerate}[\rm(a)]\item Two different modules, $U_0$ and $U$, are used in the construction of $C_{r-1,n-1}$. For this reason we did not decorate the name with either a $U_0$ or a $U$. We think of ``$C$'' as standing for complement.  \item The module $C_{r-1,n-1}$ is dependent on a choice of  basis for $U_0$. On the other hand, $C_{r-1,n-1}$ is merely a tool for proving the assertions of Theorem~\ref{thm-1} and Theorem~\ref{thm-1}   does not depend on a choice of basis for $U_0$.
\end{enumerate}
\end{remarks}
We have carried out a similar proof twice before. It might be helpful to consult \cite[just above (6.26)]{EKK}, where we apply column operations in order to identify a basis for $\ker (\underline{\mathrm v}_r)_{\pmb\delta}$ (when $d=3$ and $n=2$), or \cite[Lem.~4.6.a]{EK-K-2}, where we consider a series of bases for 
${\mathfrak R}_{\pmb\delta}\otimes L_{r-1,n}U$ in order to identify a basis for $\ker (\underline{\mathrm v}_r)_{\pmb\delta}$ (when $d=3$ and $2\le n$ is arbitrary). 
The present argument for identifying a basis for $\ker (\underline{\mathrm v}_r)_{\pmb\delta}$ amounts to decomposing the Schur and Weyl modules of a direct sum into a sum of tensor products of Schur and Weyl modules of the summands. In practice, we only deal with hooks, and one of our summands has rank one.

The next step in the proof of Theorem~\ref{thm-1} is to introduce the sub-module $$C_{r-1,n-1}\text{ of } {\textstyle\bigwedge^{r-1}U_0\otimes D_{n-1}(U^*)}.$$ To that end, we recall the standard basis for $K_{i,j}U$ and we reformulate that basis in a language 
that is compatible with our conventions.

\begin{remark}\label{rmk2} Adopt Data~{\rm\ref{data1}} and Convention~{\rm\ref{conv1}}. Let $i$ and $j$ be positive integers. One basis for $K_{i,j}U$
is 
$$\{\eta\left((x_1\wedge \dots \wedge x_{\ell}\wedge x_{\lambda_1}\wedge\dots \wedge x_{\lambda_{i+1-\ell}})\otimes (x_{\ell}m)^*\right) \in \textstyle\bigwedge^iU\otimes  D_jU^*\mid \text{
$(\ell,\underline \lambda,m)$ satisfy
(\ref{eq2})}\}$$for
\begin{equation}\label{eq2}\textstyle\{(\ell,\underline \lambda,m)\mid 1\le \ell\le d,\quad  \ell<\lambda_1<\dots <\lambda_{i+1-\ell}\le d,\quad \text{and}\quad m\in \binom{x_{\ell},\dots,x_{d}}{j}\}.  \end{equation}(It is our intention that each basis element $x_k$, with $1\le k \le\ell$, appears in the product 
$x_1\wedge \dots \wedge x_{\ell}$, in ascending order.)
\end{remark}
\begin{proof} Recall from \cite[(5.5)]{EKK} (or Examples 2.1.3.h and 2.1.17.h in \cite{W} or \cite[Sect.III.1]{BB}) that the ``standard basis'' for $K_{i,j}U$ is 
\begin{equation}\label{eq3}\left\{k_{\pmb a;\pmb b}\mid \pmb a\text{ is }a_1<\dots<a_{d-i-1},\quad \pmb b\text{ is }b_1\le \dots\le b_{j+1},\quad\text{and}\quad b_1< a_1 \right\},\end{equation}
where 
\begin{equation}\label{eq4} k_{\pmb a;\pmb b}=\eta\left((x_{a_1}^*\wedge\dots\wedge x_{a_{d-i-1}}^*)(\omega)\otimes {x_1^*}^{(\beta_1)}\cdot \ldots\cdot  {x_d^*}^{(\beta_d)}\right)\in K_{i,j}U\subseteq {\textstyle\bigwedge}^i U\otimes D_{j}(U^*),\end{equation}for
$$\pmb b=(\underbrace{1,\ldots,1}_{\beta_1},\underbrace{2,\ldots,2}_{\beta_2},\cdots,\underbrace{d,\ldots,d}_{\beta_d}), {\textstyle\text{ with $\sum\beta_i=j+1$}},$$and $\omega$ is a basis for $\bigwedge^dU$.  
Notice that if $k_{\pmb a;\pmb b}$, as described in (\ref{eq4}), is an element of (\ref{eq3}), then
$$\left((x_{a_1}^*\wedge\dots\wedge x_{a_{d-i-1}}^*)(\omega)\otimes {x_1^*}^{(\beta_1)}\cdot \ldots\cdot  {x_d^*}^{(\beta_d)}\right)=\pm(x_1\wedge \cdots\wedge x_{b_1}\wedge x_{\lambda_1}\wedge \cdots \wedge x_{\lambda_{i+1-b_1}}\otimes (x_{b_1}m)^*$$ in $\textstyle \bigwedge^{i+1}U\otimes D_{j+1}(U^*)$, 
for some $(\underline{\lambda},m)$ with
$$\textstyle b_1<\lambda_1<\dots<\lambda_{i+1-b_1}\le d\quad \text{and}\quad m\in \binom{x_{b_1},\dots,x_d}{j}.$$
\end{proof}

\begin{definition}\label{{def2}}Adopt Data~{\rm\ref{data1}} and Convention~{\rm\ref{conv1}}. For each pair of positive integers $i$ and $j$, define $C_{i,j}$ to be the following ${\mathbb Z}$-free  sub-module of $\bigwedge^iU_0\otimes D_{j}(U^*)$:
$$C_{i,j}= \bigoplus_{(\ell,\underline {\lambda},m)}
\mathbb Z  
\left((x_2\wedge \dots \wedge x_{\ell}\wedge x_{\lambda_1}\wedge\dots \wedge x_{\lambda_{i+1-\ell}})\otimes (x_{\ell}m)^*\right) 
,$$
where the sum is taken over 
\begin{equation}\label{eq5}\textstyle\{(\ell,\underline \lambda,m)\mid 1\le \ell\le d,\quad  \ell<\lambda_1<\dots <\lambda_{i+1-\ell}\le d,\quad \text{and}\quad m\in \binom{x_{\ell},\dots,x_{d}}{j-1}\}.\end{equation} If $2\le \ell$, then  each basis element $x_k$, with $2\le k \le\ell$, appears in the product 
$x_2\wedge \dots \wedge x_{\ell}$, in ascending order. If $\ell<2$, then $x_2\wedge \dots \wedge x_{\ell}$
represents $1$. \end{definition}

 \begin{corollary}\label{cor1}Adopt Data~{\rm\ref{data1}}, let $i$ and $j$ be positive integers, and recall the ${\mathbb Z}$-free sub-module $C_{i,j}$ of $\bigwedge^iU_0\otimes D_j (U^*)$ from Definition~{\rm\ref{{def2}}}.
\begin{enumerate}[\rm(a)] \item The ${\mathbb Z}$-module  $\bigwedge^iU_0\otimes D_j (U_0^*)$
 is the direct sum 
$$K_{i,j}U_0\oplus  \left(\bigoplus_{(\ell,\underline {\lambda},m)}
\mathbb Z  
\left((x_2\wedge \dots \wedge x_{\ell}\wedge x_{\lambda_1}\wedge\dots \wedge x_{\lambda_{i+1-\ell}})\otimes (x_{\ell}m)^*\right) 
\right),$$
where the sum is taken over 
\begin{equation}\label{eq6}\textstyle\{(\ell,\underline \lambda,m)\mid 2\le \ell\le d,\quad  \ell<\lambda_1<\dots <\lambda_{i+1-\ell}\le d,\quad \text{and}\quad m\in \binom{x_{\ell},\dots,x_{d}}{j-1}\}.\end{equation}
\item\label{cor1-b} The ${\mathbb Z}$-module  $\bigwedge^iU_0\otimes D_j (U^*)$
 is the direct sum $K_{i,j}U_0\oplus C_{i,j}$. 
\end{enumerate}
\end{corollary}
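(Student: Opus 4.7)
The plan is to deduce both decompositions from a single combinatorial observation: the map $\eta$ carries the free generators of the ``stuff'' module in Part~(a) bijectively onto the Remark~\ref{rmk2} standard basis of $K_{i-1,j-1}U_0$. Part~(b) then follows from Part~(a) by splitting $D_j(U^*)$ according to the power of $x_1^*$.

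For Part~(a), first I would note that $\eta^2=0$ (an immediate antisymmetry/symmetry check on the formula in (\ref{kappa-eta})), so $\eta(\bigwedge^i U_0\otimes D_j(U_0^*))\subseteq K_{i-1,j-1}U_0$. Next, I would apply Remark~\ref{rmk2} to $U_0$, viewed as a free $\mathbb Z$-module of rank $d-1$ with basis $x_2,\ldots,x_d$. After the substitution $y_k:=x_{k+1}$, invoking the remark for the index pair $(i-1,j-1)$, and then relabeling $\ell=\ell''+1,\ \lambda_s=\lambda''_s+1$, the standard basis of $K_{i-1,j-1}U_0$ consists of the elements
$$\eta\bigl((x_2\wedge\cdots\wedge x_\ell\wedge x_{\lambda_1}\wedge\cdots\wedge x_{\lambda_{i+1-\ell}})\otimes(x_\ell m)^*\bigr)$$
for exactly those triples $(\ell,\underline{\lambda},m)$ described in (\ref{eq6}). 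Thus $\eta$ restricted to the ``stuff'' submodule is an isomorphism onto $K_{i-1,j-1}U_0$. Given this, for any $\alpha\in \bigwedge^i U_0\otimes D_j(U_0^*)$ there is a unique $\beta\in\text{stuff}$ with $\eta(\beta)=\eta(\alpha)$, and $\alpha=(\alpha-\beta)+\beta$ lies in $K_{i,j}U_0\oplus\text{stuff}$; uniqueness follows from $\text{stuff}\cap K_{i,j}U_0=0$ (by injectivity of $\eta|_{\text{stuff}}$).

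For Part~(b), use the splitting $U^*=\mathbb Z x_1^*\oplus U_0^*$, which induces a $\mathbb Z$-module decomposition $D_j(U^*)=D_j(U_0^*)\oplus E_j$, where $E_j$ is the span of divided power monomials involving $x_1^*$; equivalently, $E_j$ has basis $\{(x_1m)^*:m\in\binom{x_1,\ldots,x_d}{j-1}\}$. Under the convention that $x_2\wedge\cdots\wedge x_\ell=1$ for $\ell<2$, the $\ell=1$ generators of $C_{i,j}$ are exactly $(x_{\lambda_1}\wedge\cdots\wedge x_{\lambda_i})\otimes(x_1m)^*$, which form a basis of $\bigwedge^i U_0\otimes E_j$; the $\ell\ge 2$ generators are exactly the ``stuff'' of Part~(a), lying inside $\bigwedge^i U_0\otimes D_j(U_0^*)$. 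Combining this with Part~(a) yields
$$\bigwedge^i U_0\otimes D_j(U^*)=\bigl(K_{i,j}U_0\oplus\text{stuff}\bigr)\oplus\bigl(\bigwedge^i U_0\otimes E_j\bigr)=K_{i,j}U_0\oplus C_{i,j}.$$
The main obstacle is the index-shift bookkeeping: one must verify that the indexing (\ref{eq6}) of the ``stuff'' matches, after the $(i,j)\mapsto(i-1,j-1)$ shift and the relabeling $x_k\leftrightarrow y_{k-1}$, exactly the Remark~\ref{rmk2} indexing for $K_{i-1,j-1}U_0$. In particular, the count ``$i+1-\ell$'' of $\lambda$'s in (\ref{eq6}) arises as $(i-1)+1-\ell''=i-\ell''=i+1-\ell$. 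Once this matching is tracked, both decompositions follow formally.
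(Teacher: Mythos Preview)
Your proposal is correct and follows essentially the same route as the paper's proof: for Part~(a) the paper invokes the split exact sequence $0\to K_{i,j}U_0\hookrightarrow \bigwedge^iU_0\otimes D_j(U_0^*)\xrightarrow{\eta}K_{i-1,j-1}U_0\to 0$ and then uses Remark~\ref{rmk2} (applied to $U_0$) to identify the standard basis of $K_{i-1,j-1}U_0$ with the $\eta$-images of the ``stuff'' generators, and for Part~(b) it splits $D_j(U^*)$ according to whether the monomial contains $x_1$ and matches the $\ell=1$ piece of $C_{i,j}$ with $\bigwedge^iU_0\otimes\bigoplus_m\mathbb Z\,(x_1m)^*$. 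Your version is a touch more explicit about the index-shift bookkeeping and about exhibiting the splitting of $\eta$ directly via the bijection of bases, but the substance is the same.
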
 
\begin{Remark} Item~(\ref{cor1-b}) establishes (\ref{Xplan-a}).
\end{Remark}
\begin{proof} (a) One knows that 
$$\textstyle 0\to K_{i,j}U_0\hookrightarrow \bigwedge^iU_0\otimes D_j (U_0^*)\xrightarrow{\ \eta\ } K_{i-1,j-1}U_0\to 0$$is a split exact sequence and, according to Remark~{\rm\ref{rmk2}}. 
$K_{i-1,j-1}U_0$ is a free ${\mathbb Z}$-module with basis 
$$\{\eta\left((x_2\wedge \dots \wedge x_{\ell}\wedge x_{\lambda_1}\wedge\dots \wedge x_{\lambda_{i+1-\ell}})\otimes (x_{\ell}m)^*\right)\mid \text{$(\ell,\underline \lambda,m)$ satisfy (\ref{eq6})}\}.$$

\medskip\noindent(b) A monomial in $\{x_1,\dots,x_d\}$ either contains $x_1$ or it doesn't; and therefore, the ${\mathbb Z}$-module $D_j(U^*)$ decomposes as 
$$D_j(U^*)=\bigoplus_{m\in \binom{x_1,\dots,x_d}{j-1}}{\mathbb Z}((x_1m)^*)\oplus D_j(U_0^*).$$In light of (a), it suffices  to observe that 
\begin{equation}\label{eq7}{\textstyle \bigwedge^iU_0}\otimes \bigoplus_{m\in \binom{x_1,\dots,x_d}{j-1}}{\mathbb Z}((x_1m)^*)=
\bigoplus_{(1,\underline {\lambda},m)}
\mathbb Z  
\left((x_2\wedge \dots \wedge x_{\ell}\wedge x_{\lambda_1}\wedge\dots \wedge x_{\lambda_{i+1-\ell}})\otimes (x_{\ell}m)^*\right) 
,\end{equation} with $(1,\underline {\lambda},m)$ in  (\ref{eq5}). In particular, 
$(x_2\wedge \dots \wedge x_{\ell}\wedge x_{\lambda_1}\wedge\dots \wedge x_{\lambda_{i+1-\ell}})\otimes (x_{\ell}m)^*$ from (\ref{eq7}), with $\ell=1$,  is equal to $(x_{\lambda_1}\wedge\dots \wedge x_{\lambda_{i}})\otimes (x_{1}m)^*$, with $2\le \lambda_1$ and $m\in\binom{x_1,\dots,x_{d}}{j-1}$.
\end{proof}
Our next objective is to decompose ${\mathfrak R}_{\pmb\delta} \otimes L_{i,j}U$ as a direct sum of $\tau_r((B_r)_{\pmb\delta})$ and a complementary summand, as described in (\ref{Xplan-b}). The complementary summand is, of course, $\zeta_{r-1}({\mathfrak R}\otimes C_{r-1,n-1})$. We treat the coordinate-free aspects of the decomposition in Definition~\ref{def3} and Lemmas~\ref{lem1} and \ref{lem2}, before defining $\zeta_{r-1}$, whose domain is the coordinate-dependent module ${\mathfrak R}\otimes C_{r-1,n-1}$, in Definition~\ref{def4}. 
\begin{definition}\label{def3} Adopt Data~{\rm\ref{data1}}. For any pair of indices $(i,j)$, define the ${\mathbb Z}$-module homomorphism 
$$\textstyle \sigma_{i,j}:\bigwedge ^iU_0\otimes \operatorname{Sym}_j U\to L_{i,j+1}U$$ by 
$$\sigma_{i,j}(\theta\otimes \mu)= \kappa((x_1\wedge \theta)\otimes \mu),$$ for $\theta\in \bigwedge ^iU_0$ and $\mu\in \operatorname{Sym}_j U$.
\end{definition}

\begin{lemma}\label{lem1} Adopt Data~{\rm\ref{data1}}$;$ let $(i,j)$ be a pair of indices$;$ and recall the homomorphism $\sigma$ of Definition~{\rm\ref{def3}}.
Then the ${\mathbb Z}$-module homomorphism
$$\textstyle  (\bigwedge^i U_0\otimes \operatorname{Sym}_{j-1} U)\oplus L_{i,j}U_0\xrightarrow{\bmatrix \sigma_{i,j-1}
&\text{\rm inclusion}\endbmatrix} L_{i,j}U.$$  is an isomorphism.
 \end{lemma}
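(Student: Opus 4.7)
The plan is to analyze both sides via the splitting $U = \mathbb{Z} x_1 \oplus U_0$. This induces decompositions $\bigwedge^i U = \bigwedge^i U_0 \oplus (x_1 \wedge \bigwedge^{i-1} U_0)$ and $\operatorname{Sym}_j U = \operatorname{Sym}_j U_0 \oplus x_1 \operatorname{Sym}_{j-1} U$, hence a four-fold direct sum
\[
\bigwedge^i U \otimes \operatorname{Sym}_j U = A_{00} \oplus A_{01} \oplus A_{10} \oplus A_{11},
\]
where the first subscript records whether $x_1$ appears in the exterior factor and the second records whether $x_1$ appears in the symmetric factor. There is a canonical isomorphism $A_{01} \cong \bigwedge^i U_0 \otimes \operatorname{Sym}_{j-1} U$ obtained by dividing off $x_1$; let $\pi_{01}$ denote projection to $A_{01}$ followed by this isomorphism.

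First I would compute $\sigma_{i,j-1}$ in these coordinates. Using the contractions $x_1^*(x_1 \wedge \theta) = \theta$ and $x_k^*(x_1 \wedge \theta) = -x_1 \wedge x_k^*(\theta)$ for $k \geq 2$ and $\theta \in \bigwedge^i U_0$, the definition of $\sigma$ unwinds to
\[
\sigma_{i,j-1}(\theta \otimes \mu) = \theta \otimes x_1\mu - \sum_{k=2}^{d} (x_1 \wedge x_k^*(\theta)) \otimes x_k \mu,
\]
whose first summand lies in $A_{01}$ and whose second lies in $A_{10} \oplus A_{11}$. Hence $\pi_{01} \circ \sigma_{i,j-1}$ is the identity on $\bigwedge^i U_0 \otimes \operatorname{Sym}_{j-1} U$. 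Since $L_{i,j} U_0 \subseteq A_{00}$, the projection $\pi_{01}$ vanishes on $L_{i,j} U_0$. Injectivity of $[\sigma_{i,j-1},\ \text{inclusion}]$ is then immediate: if $\sigma_{i,j-1}(\Xi) + \alpha = 0$, applying $\pi_{01}$ forces $\Xi = 0$, so $\alpha = 0$.

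For surjectivity, given $\beta \in L_{i,j} U$ I would take the unique $\Xi$ with $\pi_{01}(\sigma_{i,j-1}(\Xi)) = \pi_{01}(\beta)$ and set $\alpha = \beta - \sigma_{i,j-1}(\Xi) \in L_{i,j} U$. The remaining task is to show that any $\alpha \in L_{i,j} U$ with $\pi_{01}(\alpha) = 0$ lies in $L_{i,j} U_0$. Writing $\alpha = A + (x_1 \wedge B) + (x_1 \wedge D) \cdot x_1$ with $A \in \bigwedge^i U_0 \otimes \operatorname{Sym}_j U_0$, $B \in \bigwedge^{i-1} U_0 \otimes \operatorname{Sym}_j U_0$, and $D \in \bigwedge^{i-1} U_0 \otimes \operatorname{Sym}_{j-1} U$, a direct calculation parallel to the one for $\sigma$ shows that, in the analogous decomposition $A'_{00} \oplus A'_{01} \oplus A'_{10} \oplus A'_{11}$ of $\bigwedge^{i-1} U \otimes \operatorname{Sym}_{j+1} U$, the Koszul image $\kappa(\alpha)$ has components $\kappa_{U_0}(A) \in A'_{00}$, $B \cdot x_1 + D \cdot x_1^2 \in A'_{01}$, $-x_1 \wedge \kappa_{U_0}(B) \in A'_{10}$, and a $D$-expression in $A'_{11}$. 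Setting $\kappa(\alpha) = 0$ and dividing the $A'_{01}$-equation by $x_1$ yields $B + x_1 D = 0$ in $\bigwedge^{i-1} U_0 \otimes \operatorname{Sym}_j U$; because $B$ and $x_1 D$ lie in the complementary summands $\bigwedge^{i-1} U_0 \otimes \operatorname{Sym}_j U_0$ and $\bigwedge^{i-1} U_0 \otimes x_1 \operatorname{Sym}_{j-1} U$, both $B$ and $D$ must vanish. The $A'_{00}$-equation then gives $A \in L_{i,j} U_0$, so $\alpha = A \in L_{i,j} U_0$ as required.

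The main obstacle is purely bookkeeping: correctly sorting the monomial pieces that arise from $\kappa$ applied to each of $A$, $x_1 \wedge B$, and $(x_1 \wedge D) \cdot x_1$ into the four target summands. No rank count, acyclicity, or property of $L_{i,j} U$ beyond the definition $L_{i,j} U = \ker \kappa$ is required.
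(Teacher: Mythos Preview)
Your proof is correct, but the paper takes a shorter route. The paper simply recalls the standard basis
\[
\ell_{\pmb a;\pmb b}=\kappa(x_{a_1}\wedge\cdots\wedge x_{a_{i+1}}\otimes x_{b_1}\cdots x_{b_{j-1}}),\qquad a_1<\cdots<a_{i+1},\ a_1\le b_1\le\cdots\le b_{j-1},
\]
for $L_{i,j}U$ and partitions it into $S_1=\{a_1=1\}$ and $S_2=\{a_1\ge 2\}$. Then $\sigma_{i,j-1}$ carries the monomial basis of $\bigwedge^i U_0\otimes\operatorname{Sym}_{j-1}U$ bijectively onto $S_1$ (since $\sigma_{i,j-1}(x_{a_2}\wedge\cdots\wedge x_{a_{i+1}}\otimes x_{b_1}\cdots x_{b_{j-1}})$ is exactly the element $\ell_{\pmb a;\pmb b}$ with $a_1=1$), while $S_2$ is by definition the standard basis of $L_{i,j}U_0$. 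That finishes the proof in two sentences.

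Your approach avoids any appeal to the standard basis theory of Schur modules and works purely from the definition $L_{i,j}U=\ker\kappa$ together with the direct-sum decomposition induced by $U=\mathbb{Z}x_1\oplus U_0$. This is a genuine trade-off: the paper's argument is much terser but imports the nontrivial fact that the $\ell_{\pmb a;\pmb b}$ form a basis, whereas yours is self-contained at the cost of the four-summand bookkeeping you flag at the end. Both arguments ultimately exploit the same retraction $\pi_{01}\circ\sigma_{i,j-1}=\mathrm{id}$, which in the paper's language is the observation that each $\ell_{\pmb a;\pmb b}\in S_1$ has a unique preimage under $\sigma_{i,j-1}$.
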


\begin{proof} The standard basis for $L_{i,j}U$ is 
\begin{equation}\label{eq8}\left\{\ell_{\pmb a;\pmb b}\mid  \pmb a\text{ is }a_1<\dots<a_{i+1},\quad \pmb b\text{ is }b_1\le \dots\le b_{j-1},\quad \text{and}\quad a_1\le b_1 \right\}\end{equation}
where $$\ell_{\pmb a;\pmb b}=\kappa(x_{a_1}\wedge \dots\wedge x_{a_{i+1}}\otimes x_{b_1}\cdot \ldots\cdot x_{b_{j-1}})\in L_{i,j}U\subseteq {\textstyle\bigwedge}^i U\otimes  \operatorname{Sym}_jU.$$
(See, for example, \cite[(5.5)]{EKK}, or Examples 2.1.3.h and 2.1.17.h in \cite{W}, or \cite[Sect.III.1]{BB}.)
Partition (\ref{eq8}) as $S_1\cup S_2$, where
$$S_1=\{\ell_{\pmb a;\pmb b}\mid a_1=1\}\quad\text{and}\quad S_2=\{\ell_{\pmb a;\pmb b}\mid 2\le a_1\}.$$
Observe that the map $\sigma_{i,j-1}$ carries a basis for $\bigwedge^i U_0\otimes \operatorname{Sym}_{j-1}U$ bijectively onto $S_1$ and $S_2$ is a basis for $L_{i,j}U_0$.  
\end{proof}

\begin{lemma}\label{lem2}Adopt Data~{\rm\ref{data1}}. Fix an integer $r$ with $1\le r\le d-1$. Recall the ${\mathfrak R}$-free summand  ${\mathfrak R}\otimes L_{r-1,n}U_0$  of the module $B_{r}$ from Definition {\rm\ref{diff}.\ref{def1-a}},  the ${\mathfrak R}$-module homomorphism 
$$\tau_{r}:{\mathfrak R} \otimes L_{r-1,n}U_0\to{\mathfrak R} \otimes L_{r-1,n}U$$ of Definition {\rm\ref{def1}.\ref{def1-c}}, and the ${\mathfrak R}$-module homomorphism 
$$\textstyle \sigma_{r-1,n-1}:{\mathfrak R} \otimes \bigwedge ^{r-1}U_0\otimes \operatorname{Sym}_{n-1} U\to {\mathfrak R} \otimes L_{r-1,n}U$$ 
of Definition {\rm\ref{def3}}.
Then the ${\mathfrak R}_{\pmb \delta}$-module homomorphism 
\begin{equation}\label{eq9}\textstyle({\mathfrak R}_{\pmb \delta}\otimes \bigwedge^{r-1}U_0\otimes \operatorname{Sym}_{n-1}U)\oplus ({\mathfrak R}_{\pmb \delta}\otimes L_{r-1,n}U_0)
\xrightarrow{\bmatrix \sigma_{r-1,n-1}&\tau_{r}\endbmatrix}{\mathfrak R}_{\pmb \delta}\otimes L_{r-1,n}U\end{equation}
is an isomorphism.
\end{lemma}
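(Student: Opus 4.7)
My plan is to leverage Lemma~\ref{lem1}, which already identifies ${\mathfrak R}_{\pmb\delta}\otimes L_{r-1,n}U$ with the direct sum $({\mathfrak R}_{\pmb\delta}\otimes\bigwedge^{r-1}U_0\otimes\operatorname{Sym}_{n-1}U)\oplus({\mathfrak R}_{\pmb\delta}\otimes L_{r-1,n}U_0)$ via $[\sigma_{r-1,n-1},\text{incl}]$. In particular, the source and target of the map in (\ref{eq9}) are free ${\mathfrak R}_{\pmb\delta}$-modules of the same rank, so it suffices to compute the block matrix of $[\sigma_{r-1,n-1},\tau_r]$ with respect to the Lemma~\ref{lem1} decomposition of the target and observe that it is triangular with a unit determinant.

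The left block-column is essentially definitional: $\sigma_{r-1,n-1}$ is the map that the Lemma~\ref{lem1} identification uses to build up the first summand, so its matrix is $\bigl[\begin{smallmatrix}I\\0\end{smallmatrix}\bigr]$. For the right block-column I would unwind Definition~\ref{def1}.\ref{def1-c}: for $\theta\otimes\mu\in L_{r-1,n}U_0$,
\[
\tau_r(\theta\otimes\mu) \;=\; \pmb\delta^{r}\,(\theta\otimes\mu) \;-\; \pmb\delta^{r-1}\,\sigma_{r-1,n-1}\bigl(\theta\otimes q(\mu(\widetilde\Phi))\bigr),
\]
where the second term is recognized through Definition~\ref{def3} as $-\pmb\delta^{r-1}\kappa\bigl((x_1\wedge\theta)\otimes q(\mu(\widetilde\Phi))\bigr)$. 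The first summand above lies in the inclusion-part of the Lemma~\ref{lem1} decomposition (since $\theta\otimes\mu$ is itself an element of $L_{r-1,n}U_0$), while the second lies in the $\sigma$-part. The block matrix of $[\sigma_{r-1,n-1},\tau_r]$ is therefore
\[
\begin{pmatrix} I & -\pmb\delta^{r-1}Q \\ 0 & \pmb\delta^{r}\,I \end{pmatrix},
\]
where $Q:L_{r-1,n}U_0\to \bigwedge^{r-1}U_0\otimes\operatorname{Sym}_{n-1}U$ sends $\theta\otimes\mu$ to $\theta\otimes q(\mu(\widetilde\Phi))$. The determinant is a power of $\pmb\delta$, which is a unit of ${\mathfrak R}_{\pmb\delta}$, so the map is an isomorphism.

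I expect no serious obstacle; the only bit of care needed is the observation that the summand $\pmb\delta^{r}\,(\theta\otimes\mu)$ of $\tau_r(\theta\otimes\mu)$ belongs to the inclusion-part (and not the $\sigma$-part) of the target decomposition, which is immediate from the hypothesis $\theta\otimes\mu\in L_{r-1,n}U_0\subseteq L_{r-1,n}U$. Everything else is direct substitution from Definitions~\ref{def1}.\ref{def1-c} and~\ref{def3}.
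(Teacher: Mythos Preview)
Your proof is correct and follows essentially the same approach as the paper: both use Lemma~\ref{lem1} to identify the target via $[\sigma_{r-1,n-1},\text{incl}]$ and then observe that $[\sigma_{r-1,n-1},\tau_r]$ differs from it by a block-triangular automorphism with diagonal blocks $I$ and $\pmb\delta^{r}I$. The paper phrases this as a composition of two isomorphisms (defining $\xi=-Q$), while you phrase it as a block-matrix computation, but the content is identical.
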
 

\begin{proof} It is shown in Lemma~\ref{lem1} that 
$$\textstyle({\mathfrak R}\otimes \bigwedge^{r-1}U_0\otimes \operatorname{Sym}_{n-1}U)\oplus ({\mathfrak R}\otimes L_{r-1,n}U_0)
\xrightarrow{\bmatrix \sigma_{r-1,n-1}&\rm{incl}\endbmatrix}
{\mathfrak R}\otimes L_{r-1,n}U$$ 
 is an isomorphism.

Define $$\textstyle \xi: {\mathfrak R}_{\pmb\delta}\otimes \bigwedge^{r-1}U_0\otimes \operatorname{Sym}_nU_0\to
{\mathfrak R}_{\pmb\delta}\otimes \bigwedge^{r-1}U_0\otimes  \operatorname{Sym}_{n-1}U$$
to be the ${\mathfrak R}_{\pmb\delta}$ homomorphism with  $$\xi(\theta\otimes \mu)=-\theta\otimes q(\mu(\widetilde{\Phi})),$$ for $\theta\in \bigwedge^{r-1}U_0$ and $\mu\in \operatorname{Sym}_nU_0$. Observe that 
the map (\ref{eq9}) is the composition of two isomorphisms:
$$\begin{array}{lcc}
\begin{array}{c}\textstyle({\mathfrak R}_{\pmb \delta}\otimes \bigwedge^{r-1}U_0\otimes \operatorname{Sym}_{n-1}U)\\\oplus \\({\mathfrak R}_{\pmb \delta}\otimes L_{r-1,n}U_0)\end{array}
&\xrightarrow{\bmatrix 1&\pmb\delta^{r-1}\xi\\0&\pmb\delta^r\endbmatrix}&
\begin{array}{c}({\mathfrak R}_{\pmb \delta}\otimes \bigwedge^{r-1}U_0\otimes \operatorname{Sym}_{n-1}U)\\\oplus\\ ({\mathfrak R}_{\pmb \delta}\otimes L_{r-1,n}U_0)\end{array}
\vspace{5pt}\\&\xrightarrow{\bmatrix \sigma_{r-1,n-1}&\rm{incl}\endbmatrix}&
{\mathfrak R}_{\pmb \delta}\otimes L_{r-1,n}U.\end{array}$$
\end{proof}

\begin{definition}\label{def4} Fix an integer $r$, with $1\le r\le d-1$. Adopt Data~{\rm\ref{data1}} and Convention~{\rm\ref{conv1}}. Recall the $\mathbb Z$-free sub-module $C_{r-1,n-1}$ of  $\bigwedge^{r-1}U_0\otimes D_{n-1} (U^*)$
from Definition~{\rm\ref{{def2}}} and the homomorphism $\sigma_{r-1,n-1}$  of Definition~{\rm\ref{def3}}.
Define $$\zeta_{r-1}: {\mathfrak R} \otimes C_{r-1,n-1}\to {\mathfrak R} \otimes L_{r-1,n}U$$ to be the composition
$$\textstyle \begin{array}{lcc}{\mathfrak R} \otimes C_{r-1,n-1}\xrightarrow{\ \text{incl}\ }
{\mathfrak R} \otimes \bigwedge^{r-1}U_0\otimes D_{n-1}(U^*)&\xrightarrow{\ 1\otimes1\otimes q\ }&
{\mathfrak R} \otimes \bigwedge^{r-1}U_0\otimes \operatorname{Sym}_{n-1}U\vspace{5pt}\\&\xrightarrow{\ 1\otimes \sigma_{r-1,n-1}\ }& {\mathfrak R} \otimes L_{r-1,n}U.\end{array}$$
\end{definition}

\begin{lemma}\label{lem3} Adopt Data~{\rm\ref{data1}} and Convention~{\rm\ref{conv1}}$;$ let $r$ be an integer with $1\le r\le d-1;$ recall the free ${\mathfrak R}$-modules $B_r$ and ${\mathfrak R}\otimes C_{r-1,n-1}$ of Definitions~{\rm\ref{diff}.\ref{def1-a}} and {\rm\ref{cor1}} and the ${\mathfrak R}$-module homomorphisms $\tau_r$ and $\zeta_{r-1}$ of
Definitions~{\rm\ref{def1}.\ref{def1-b}}, {\rm\ref{def1}.\ref{def1-c}}, and {\rm\ref{def4}}. Then the ${\mathfrak R}_{\pmb \delta}$-module homomorphism \begin{equation}\label{20.22.1} ({\mathfrak R}_{\pmb \delta}\otimes C_{r-1,n-1})\oplus (B_r)_{\pmb\delta}\xrightarrow{\bmatrix \zeta_{r-1}&\tau_r\endbmatrix} {\mathfrak R}_{\pmb \delta} \otimes L_{r-1,n}U\end{equation}is an isomorphism.
\end{lemma}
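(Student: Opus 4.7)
The strategy is to factor the map $[\zeta_{r-1},\tau_r]$ of (\ref{20.22.1}) as the composition of two ${\mathfrak R}_{\pmb\delta}$-isomorphisms, the second of which is exactly Lemma~\ref{lem2}. The first factor will be a block-triangular map that packages together Corollary~\ref{cor1}.\ref{cor1-b} with the fact that $q$ becomes an isomorphism after inverting $\pmb\delta$.

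\textbf{Step 1: $q_{\pmb\delta}$ is an isomorphism.} Let $1\otimes q:{\mathfrak R}\otimes D_{n-1}(U^*)\to {\mathfrak R}\otimes\operatorname{Sym}_{n-1}U$ denote the ${\mathfrak R}$-linear extension of $q$, and let $1\otimes\pmb p^{\Phi}:{\mathfrak R}\otimes\operatorname{Sym}_{n-1}U\to{\mathfrak R}\otimes D_{n-1}(U^*)$ denote the ${\mathfrak R}$-linear extension of $\mu\mapsto\mu(\Phi)$. Both source and target are free ${\mathfrak R}$-modules of the same rank (namely $\mathrm{top}$, the rank of ${\operatorname{Sym}}_{n-1}U$). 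Remark~\ref{rmk3} yields $(1\otimes\pmb p^{\Phi})\circ(1\otimes q)=\pmb\delta\cdot\mathrm{id}$, so after localizing at $\pmb\delta$ the map $(1\otimes q)_{\pmb\delta}$ is a split injection between free ${\mathfrak R}_{\pmb\delta}$-modules of equal finite rank, and is therefore an isomorphism.

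\textbf{Step 2: Construction of the factorization.} Extending the decomposition of Corollary~\ref{cor1}.\ref{cor1-b} by tensoring with ${\mathfrak R}_{\pmb\delta}$ and composing with $1\otimes 1\otimes q$, one obtains, thanks to Step 1, an isomorphism
\begin{equation*}
({\mathfrak R}_{\pmb\delta}\otimes C_{r-1,n-1})\oplus ({\mathfrak R}_{\pmb\delta}\otimes K_{r-1,n-1}U_0)\xrightarrow{\,\bmatrix 1\otimes 1\otimes q & \pmb\delta^{r-1}(1\otimes 1\otimes q)\endbmatrix\,}{\mathfrak R}_{\pmb\delta}\otimes {\textstyle\bigwedge^{r-1}}U_0\otimes {\operatorname{Sym}}_{n-1}U,
\end{equation*}
since $\pmb\delta^{r-1}$ is a unit in ${\mathfrak R}_{\pmb\delta}$. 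Combined with the identity on the $L_{r-1,n}U_0$ summand this produces an ${\mathfrak R}_{\pmb\delta}$-isomorphism
\begin{equation*}
\Psi_1:({\mathfrak R}_{\pmb\delta}\otimes C_{r-1,n-1})\oplus (B_r)_{\pmb\delta}\longrightarrow \bigl({\mathfrak R}_{\pmb\delta}\otimes{\textstyle\bigwedge^{r-1}}U_0\otimes{\operatorname{Sym}}_{n-1}U\bigr)\oplus({\mathfrak R}_{\pmb\delta}\otimes L_{r-1,n}U_0).
\end{equation*}
Let $\Psi_2=[\sigma_{r-1,n-1},\tau_r]$ be the ${\mathfrak R}_{\pmb\delta}$-isomorphism of Lemma~\ref{lem2}.

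\textbf{Step 3: Verifying $\Psi_2\circ\Psi_1=[\zeta_{r-1},\tau_r]$.} On the summand ${\mathfrak R}_{\pmb\delta}\otimes C_{r-1,n-1}$, the composition sends $\theta\otimes\nu$ to $\sigma_{r-1,n-1}(\theta\otimes q(\nu))$, which is precisely $\zeta_{r-1}(\theta\otimes\nu)$ by Definition~\ref{def4}. On ${\mathfrak R}_{\pmb\delta}\otimes K_{r-1,n-1}U_0$, the composition sends $\theta\otimes\nu$ to $\pmb\delta^{r-1}\sigma_{r-1,n-1}(\theta\otimes q(\nu))=\pmb\delta^{r-1}\kappa((x_1\wedge\theta)\otimes q(\nu))$, which matches $\tau_r$ by Definition~\ref{def1}.\ref{def1-b}. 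On ${\mathfrak R}_{\pmb\delta}\otimes L_{r-1,n}U_0$, $\Psi_1$ is the identity and $\Psi_2$ restricts to $\tau_r$, matching Definition~\ref{def1}.\ref{def1-c}. Thus $[\zeta_{r-1},\tau_r]=\Psi_2\circ\Psi_1$ is a composition of isomorphisms, finishing the proof.

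The step I expect to be the main obstacle is the unambiguous bookkeeping in Step~3: making sure the scalar factor $\pmb\delta^{r-1}$ is routed to the $K_{r-1,n-1}U_0$ summand (and not to $C_{r-1,n-1}$) so that the composition reproduces the two different normalizations of $\tau_r$ and $\zeta_{r-1}$. Once the correct block form of $\Psi_1$ is identified, the rest is formal.
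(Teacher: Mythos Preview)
Your proof is correct and follows essentially the same approach as the paper's: both factor $[\zeta_{r-1},\tau_r]$ through the isomorphism of Lemma~\ref{lem2} after first passing through the decomposition of Corollary~\ref{cor1}.\ref{cor1-b}, applying $q$ (an isomorphism over ${\mathfrak R}_{\pmb\delta}$ by Remark~\ref{rmk3}), and inserting the unit $\pmb\delta^{r-1}$ on the $K_{r-1,n-1}U_0$ summand. The only cosmetic difference is that the paper separates the scaling step and the application of $q$ into two successive isomorphisms, whereas you fold them into a single $\Psi_1$.
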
\begin{Remark} Lemma~\ref{lem3} establishes (\ref{Xplan-b}).\end{Remark}

\begin{proof}Recall from Definition~\ref{diff}.\ref{def1-a} 
and Corollary~\ref{cor1}.\ref{cor1-b} that
$$ \textstyle (B_r)_{\pmb\delta}=\left\{\begin{array}{c}{\mathfrak R}_{\pmb\delta}\otimes K_{r-1,n-1}U_0\\\oplus\\{\mathfrak R}_{\pmb\delta}\otimes L_{r-1,n}U_0,\end{array}\right.\qquad\text{and}\qquad \left.\begin{array}{c}{\mathfrak R}_{\pmb\delta}\otimes C_{r-1,n-1}
\\\oplus\\{\mathfrak R}_{\pmb\delta}\otimes K_{r-1,n-1}U_0\end{array}\right\}={\mathfrak R}_{\pmb\delta}\otimes \bigwedge^{r-1}U_0\otimes D_{n-1}(U^*).$$
It follows that the composition 
\begingroup
\allowdisplaybreaks\begin{eqnarray}\label{equality}
\begin{array}{c}{\mathfrak R}_{\pmb \delta}\otimes C_{r-1,n-1}\\\oplus\\ (B_r)_{\pmb\delta}\end{array}&\xrightarrow{\ =\ } &\begin{array}{c}
{\mathfrak R}_{\pmb \delta}\otimes C_{r-1,n-1}\\\oplus\\
{\mathfrak R}_{\pmb\delta}\otimes K_{r-1,n-1}U_0\\\oplus\\{\mathfrak R}_{\pmb\delta}\otimes L_{r-1,n}U_0\end{array} 
\xrightarrow{\bmatrix 1&0&0\\0&\pmb\delta^{r-1}&0\\0&0&1\endbmatrix}\vspace{15pt}\\
\begin{array}{c}
{\mathfrak R}_{\pmb \delta}\otimes C_{r-1,n-1}\\\oplus\\
{\mathfrak R}_{\pmb\delta}\otimes K_{r-1,n-1}U_0\\\oplus\\{\mathfrak R}_{\pmb\delta}\otimes L_{r-1,n}U_0\end{array}
&\xrightarrow{\  =\ }  &
\begin{array}{c}
{\mathfrak R}_{\pmb\delta}\otimes \bigwedge^{r-1}U_0\otimes D_{n-1}(U^*)
\\\oplus\\{\mathfrak R}_{\pmb\delta}\otimes L_{r-1,n}U_0\end{array}\nonumber\end{eqnarray}\endgroup
is an isomorphism. On the other hand,
Remark~\ref{rmk3} and Lemma~\ref{lem2} yield that both of the following maps are isomorphisms:
\begin{equation}\label{isomorphism}
\begin{array}{ccc}
\begin{array}{c}
{\mathfrak R}_{\pmb\delta}\otimes \bigwedge^{r-1}U_0\otimes D_{n-1}(U^*)
\\\oplus\\{\mathfrak R}_{\pmb\delta}\otimes L_{r-1,n}U_0\end{array}
&\xrightarrow{\bmatrix q&0\\0&1\endbmatrix}&
\begin{array}{c}
{\mathfrak R}_{\pmb\delta}\otimes \bigwedge^{r-1}U_0\otimes \operatorname{Sym}_{n-1}U
\\\oplus\\{\mathfrak R}_{\pmb\delta}\otimes L_{r-1,n}U_0\end{array}\vspace{15pt}\\&\xrightarrow{\bmatrix\sigma_{r-1,n-1},\tau_r\endbmatrix}& {\mathfrak R}_{\pmb\delta}\otimes L_{r-1,n}U.\end{array}\end{equation}Combine the isomorphisms (\ref{equality}) and (\ref{isomorphism})  to produce an isomorphism
\begin{equation}\label{combine}({\mathfrak R}_{\pmb \delta}\otimes C_{r-1,n-1})\oplus (B_r)_{\pmb\delta}\xrightarrow{\ (\ref{isomorphism})\circ(\ref{equality})\ }
 {\mathfrak R}_{\pmb \delta} \otimes L_{r-1,n}U\end{equation}
Observe that  
(\ref{combine}) and (\ref{20.22.1}) agree on all three summands
$$ {\mathfrak R}_{\pmb \delta}\otimes C_{r-1,n-1}, \quad
{\mathfrak R}_{\pmb\delta}\otimes K_{r-1,n-1}U_0,\quad \text{and}\quad {\mathfrak R}_{\pmb\delta}\otimes L_{r-1,n}U_0$$
 of the domain.
\end{proof}

\begin{lemma}\label{bij} Adopt Data~{\rm\ref{data1}} and Convention~{\rm\ref{conv1}}$;$ fix an integer $r$ with $1\le r\le d-1;$ and recall the free $\mathbb Z$-module $C_{r-1,n-1}$ of Definition~{\rm\ref{{def2}}} and the ${\mathfrak R}$-module homomorphism $$\zeta_{r-1}: {\mathfrak R} \otimes C_{r-1,n-1}\to {\mathfrak R} \otimes L_{r-1,n}U$$ of Definition~{\rm\ref{def4}}. Then the 
${\mathfrak R}_{\delta}$-module homomorphism $$(\underline{\mathrm v}_r\circ \zeta_{r-1}): {\mathfrak R}_{\pmb\delta}\otimes C_{r-1,n-1}\to 
{\mathfrak R}_{\pmb\delta}\otimes K_{r-1,n-2}U$$ is an isomorphism.\end{lemma}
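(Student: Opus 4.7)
The plan is to verify the claim directly on the explicit $\mathbb Z$-basis of $C_{r-1,n-1}$ given in Definition~\ref{{def2}} and match its image under $\underline{\mathrm v}_r \circ \zeta_{r-1}$, up to the factor $\pmb\delta$, with the standard basis of $K_{r-1,n-2}U$ recorded in Remark~\ref{rmk2}. The two explicit bases were arranged to be parametrized by the same index set, so the matching will be transparent once the composition is unpacked.

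First I would unpack $\underline{\mathrm v}_r \circ \zeta_{r-1}$. For $\theta\otimes\nu \in C_{r-1,n-1}\subseteq \bigwedge^{r-1}U_0\otimes D_{n-1}(U^*)$, Definitions~\ref{def4} and \ref{def3} give
$$\zeta_{r-1}(\theta\otimes\nu)=\sigma_{r-1,n-1}(\theta\otimes q(\nu))=\kappa\bigl((x_1\wedge\theta)\otimes q(\nu)\bigr),$$
and Observation~\ref{ob1} then yields
$$(\underline{\mathrm v}_r\circ\zeta_{r-1})(\theta\otimes\nu)=\pmb\delta\cdot\eta\bigl((x_1\wedge\theta)\otimes\nu\bigr).$$

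Next I would evaluate on a basis element of $C_{r-1,n-1}$ from Definition~\ref{{def2}}, namely
$$\theta\otimes\nu=(x_2\wedge\dots\wedge x_\ell\wedge x_{\lambda_1}\wedge\dots\wedge x_{\lambda_{r-\ell}})\otimes(x_\ell m)^*$$
with $(\ell,\underline\lambda,m)$ in the index set (\ref{eq5}) for $(i,j)=(r-1,n-1)$; in particular $m\in\binom{x_\ell,\dots,x_d}{n-2}$. Wedging with $x_1$ on the left simply prepends $x_1$ to the increasing product, giving
$$(\underline{\mathrm v}_r\circ\zeta_{r-1})(\theta\otimes\nu)=\pmb\delta\cdot\eta\bigl((x_1\wedge x_2\wedge\dots\wedge x_\ell\wedge x_{\lambda_1}\wedge\dots\wedge x_{\lambda_{r-\ell}})\otimes(x_\ell m)^*\bigr).$$
By Remark~\ref{rmk2} applied with $(i,j)=(r-1,n-2)$, the right-hand side is $\pmb\delta$ times a standard basis element of $K_{r-1,n-2}U$. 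Moreover, the parametrizing triples $(\ell,\underline\lambda,m)$ in Definition~\ref{{def2}} for $C_{r-1,n-1}$ and in Remark~\ref{rmk2} for $K_{r-1,n-2}U$ are literally the same set, so this identification is a bijection of index sets.

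Therefore $\underline{\mathrm v}_r\circ\zeta_{r-1}$ sends a $\mathbb Z$-basis of $C_{r-1,n-1}$ bijectively to $\pmb\delta$ times a $\mathbb Z$-basis of $K_{r-1,n-2}U$. After inverting $\pmb\delta$, this becomes a bijection of ${\mathfrak R}_{\pmb\delta}$-bases, so the induced map ${\mathfrak R}_{\pmb\delta}\otimes C_{r-1,n-1}\to{\mathfrak R}_{\pmb\delta}\otimes K_{r-1,n-2}U$ is an isomorphism. There is no real obstacle here; the entire content of the lemma has been absorbed into the careful choice of the complementary module $C_{r-1,n-1}$ in Definition~\ref{{def2}}, engineered precisely so that the composition reduces, via Observation~\ref{ob1}, to the identity (scaled by $\pmb\delta$) on matching explicit bases.
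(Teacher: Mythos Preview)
Your proof is correct and follows essentially the same approach as the paper: compute $(\underline{\mathrm v}_r\circ\zeta_{r-1})$ on a basis element of $C_{r-1,n-1}$ via Observation~\ref{ob1} to obtain $\pmb\delta\cdot\eta((x_1\wedge\theta)\otimes\nu)$, then observe that this is $\pmb\delta$ times a standard basis element of $K_{r-1,n-2}U$ from Remark~\ref{rmk2}, with the index sets (\ref{eq5}) and (\ref{eq2}) matching bijectively.
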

\begin{remark} Lemma~\ref{bij} establishes (\ref{Xplan-c}).\end{remark}
\begin{proof} Let $\Theta$ be  a basis element  of $C_{r-1,n-1}$. So $\Theta=\theta\otimes \nu$, with $\theta=x_2\wedge \dots \wedge x_{\ell}\wedge x_{\lambda_1}\wedge\dots \wedge x_{\lambda_{r-\ell}}$, and $\nu=(x_{\ell}m)^*$. The parameters  $(\ell,\underline\lambda,m)$ satisfy 
$$\textstyle 1\le \ell\le d,\quad  \ell<\lambda_1<\dots <\lambda_{r-\ell}\le d,\quad \text{and}\quad m\in \binom{x_{\ell},\dots,x_{d}}{n-2},$$ as described in
 (\ref{eq5}). Use Observation~\ref{ob1} to see  that 
$$\begin{array}{ll}
&(\underline{\mathrm v}_r\circ \zeta_{r-1})\left(x_2\wedge \dots \wedge x_{\ell}\wedge x_{\lambda_1}\wedge\dots \wedge x_{\lambda_{r-\ell}}\otimes ((x_{\ell}m)^*)\right)\vspace{5pt}\\=&
(\underline{\mathrm v}_r\circ \zeta_{r-1})(\theta\otimes \nu)=\underline{\mathrm v}_r\left(\kappa\left(\vphantom{\widetilde{\Phi}}(x_1\wedge \theta)\otimes q(\nu)\right)\right)
=\pmb \delta\eta((x_1\wedge \theta)\otimes \nu)\vspace{5pt}\\
=&\pmb \delta\eta\left((x_1\wedge \dots \wedge x_{\ell}\wedge x_{\lambda_1}\wedge\dots \wedge x_{\lambda_{r-\ell}})\otimes (x_{\ell}m)^*\right) 
. \end{array}$$
The basis 
$$\left\{\begin{array}{l}\eta\left((x_1\wedge \dots \wedge x_{\ell}\wedge x_{\lambda_1}\wedge\dots \wedge x_{\lambda_{r-\ell}})\otimes (x_{\ell}m)^*\right) \\\hfill\in \textstyle\bigwedge^{r-1}U\otimes  D_{n-2}(U^*)\end{array}\left\vert 
\begin{array}{l}
1\le \ell\le d,\\  \ell<\lambda_1<\dots <\lambda_{r-\ell}\le d,\text{ and}\\  m\in \binom{x_{\ell},\dots,x_{d}}{n-2}\end{array}\right.\right\}$$
for $K_{r-1,n-2}U$ is given in Remark~\ref{rmk2}.
We see that $\underline{\mathrm v}_r\circ \zeta_{r-1}$ carries the basis for  ${\mathfrak R}_{\delta}\otimes C_{r-1,n-1}$ bijectively onto a basis of ${\mathfrak R}_{\delta}\otimes K_{r-1,n-2}U$. 
\end{proof}

\begin{corollary}\label{main-lemma}Adopt Data~{\rm\ref{data1}} 
 and recall the free ${\mathfrak R}$-modules $B_r$ and $G_r$ of Definitions~{\rm\ref{diff}.\ref{def1-a}} and {\rm\ref{defG}} and 
the ${\mathfrak R}$-module homomorphisms $\tau_r: B_r\to G_r$ of Definition~{\rm \ref{def1}} and Observation~{\rm \ref{ob2}}.   For $0\le r\le d$, define $E_r=\tau_r(B_r)$, and, for $1\le r\le d$, let $e_r$ be the restriction of $g_r:G_r\to G_{r-1}$ to $E_r$.  The following statements hold.

\begin{enumerate}[\rm(a)]
\item\label{main-lemma-a}Each module $E_r$ is a free ${\mathfrak R}$-submodule of $G_r$.
\item\label{main-lemma-c} Each ${\mathfrak R}$-module homomorphism $\tau_r: B_r\to E_r$ is an isomorphism.
\item\label{main-lemma-b} For each $r$, with $0\le r\le d$, $(E_r)_{\pmb \delta}=(G_r)_{\pmb \delta}$.
\item\label{main-lemma-d} The sequence of homomorphisms $$(\mathbb E,e):\quad   \xymatrix{0\ar[r]& E_d\ar[r]^{e_d}&E_{d-1}\ar[r]^{e_{d-1}}&\cdots\ar[r]^{e_3}&E_2\ar[r]^{e_2}&E_1\ar[r]^{e_1}&E_0}
$$ is a complex  of free  ${\mathfrak R}$-modules. Furthermore, $(\mathbb E,e)$ is a subcomplex of $(\mathbb G,g)$.
\item\label{main-lemma-e} The sequence of homomorphisms $(\mathbb B,b)$ of Definition~{\rm\ref{diff}} is a complex of free  ${\mathfrak R}$-modules.
\item\label{main-lemma-f}  The ${\mathfrak R}$-module homomorphisms $\tau_r: B_r\to E_r$  give 
an isomorphism of complexes $(\mathbb B,b)\simeq (\mathbb E,e):$
$$\xymatrix{0\ar[r]&B_d\ar[r]^{b_d}\ar[d]^{\tau_d}&B_{d-1}\ar[r]^{b_{d-1}}\ar[d]^{\tau_{d-1}}&\cdots\ar[r]^{b_3}&B_2\ar[r]^{b_2}\ar[d]^{\tau_2}&B_1\ar[r]^{b_1}\ar[d]^{\tau_1}&B_0\ar[d]^{b_0}\vspace{5pt}\\
0\ar[r]&E_d\ar[r]^{e_d}&E_{d-1}\ar[r]^{e_{d-1}}&\cdots\ar[r]^{e_3}&E_2\ar[r]^{e_2}&E_1\ar[r]^{e_1}&E_0.}$$ Furthermore, the localizations $(\mathbb E,e)_{\pmb\delta}$ and $(\mathbb G,g)_{\pmb \delta}$ are equal.
In particular, the complexes $\mathbb B_{\pmb\delta}$ and $\mathbb G_{\pmb \delta}$ are isomorphic.

\item\label{main-lemma-g} All of the assertions of Theorem~{\rm\ref{Properties-of-G}} hold for the explicitly constructed complex $(\mathbb B,b)$ in place of $(\mathbb G,g)$. In particular, $\mathbb B_{\pmb \delta}$ is a resolution of ${\mathfrak R}_{\pmb \delta}/I{\mathfrak R}_{\pmb \delta}$ by free ${\mathfrak R}_{\pmb \delta}$-modules for the ${\mathfrak R}$-ideal $I$ of Definition~{\rm\ref{I}}.

\end{enumerate}
\end{corollary}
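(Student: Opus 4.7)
The statement of Corollary~\ref{main-lemma} is largely a packaging of work already completed, so the plan is to read off each item from Theorem~\ref{thm-1} and Proposition~\ref{prop}. First I would observe that items (a), (b), (c) are literal restatements of the three assertions of Theorem~\ref{thm-1}, applied for each $r$ with $0 \le r \le d$; these require no new argument.

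Next, for item (d), the plan is to invoke Proposition~\ref{prop}, which provides the commuting square $g_r \circ \tau_r = \tau_{r-1} \circ b_r$. This immediately yields
\begin{equation*}
g_r(E_r) = g_r(\tau_r B_r) = \tau_{r-1}(b_r B_r) \subseteq \tau_{r-1}(B_{r-1}) = E_{r-1},
\end{equation*}
so $e_r = g_r|_{E_r}$ lands in $E_{r-1}$, making $(\mathbb E, e)$ a subcomplex of $(\mathbb G, g)$. Since $(\mathbb G, g)$ is a complex by Theorem~\ref{Properties-of-G}, so is $(\mathbb E, e)$; freeness of the modules $E_r$ comes from (a).

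For items (e) and (f), I would use the same commutative square from Proposition~\ref{prop}, together with the isomorphisms $\tau_r : B_r \to E_r$ furnished by (b), to obtain an isomorphism of sequences $\tau : (\mathbb B, b) \to (\mathbb E, e)$. Because $(\mathbb E, e)$ is already known to be a complex of free ${\mathfrak R}$-modules, $(\mathbb B, b)$ inherits this structure via $\tau$, which proves (e) and the complex-isomorphism half of (f). The equality $(\mathbb E, e)_{\pmb\delta} = (\mathbb G, g)_{\pmb\delta}$ asserted in (f) is then immediate from (c), since the modules already agree after localization and each $e_r$ is the restriction of $g_r$.

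Finally, item (g) will follow by localizing the isomorphism $\tau$ of (f) at $\pmb\delta$ to produce $\mathbb B_{\pmb\delta} \cong \mathbb G_{\pmb\delta}$; every assertion of Theorem~\ref{Properties-of-G}---including projectivity of each $(G_r)_{\pmb\delta}$, which is now upgraded to freeness by (a) combined with (c)---then transfers verbatim to $(\mathbb B, b)$. I expect no substantive obstacle in this argument: all the genuine content, namely the explicit identification of a basis for $(G_r)_{\pmb\delta}$ via $\tau_r$ and the verification that $\tau$ commutes with the differentials, has already been carried out in Theorem~\ref{thm-1} and Proposition~\ref{prop}.
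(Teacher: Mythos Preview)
Your proposal is correct and follows essentially the same approach as the paper: items (a)--(c) are drawn directly from Theorem~\ref{thm-1}, item (d) from Proposition~\ref{prop}, and items (e)--(g) by transporting the complex structure along the isomorphisms $\tau_r$. The only cosmetic difference is that for (e) the paper spells out the chase $\tau_{r-1}\circ b_r\circ b_{r+1}=g_r\circ g_{r+1}\circ\tau_{r+1}=0$ and invokes injectivity of $\tau_{r-1}$, whereas you phrase this as ``$(\mathbb B,b)$ inherits the complex structure from $(\mathbb E,e)$ via the isomorphism $\tau$''; these are the same argument.
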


\begin{proof} Assertions (\ref{main-lemma-a}), (\ref{main-lemma-c}), and  (\ref{main-lemma-b}) are re-statements of the corresponding assertions in Theorem~\ref{thm-1}.

\medskip\noindent(\ref{main-lemma-d}) We must verify that $e_r(E_r)\subseteq E_{r-1}$ and this follows from (\ref{main-lemma-c}) and Proposition~\ref{prop}:
$$e_r(E_r)=g_r(E_r)=g_r(\operatorname{im} \tau_r)=\operatorname{im}(g_r\circ \tau_r)=\operatorname{im}(\tau_{r-1}\circ b_r)=\tau_{r-1}\operatorname{im}(b_r)\subseteq \tau_{r-1}(B_{r-1})=E_{r-1}.$$

\medskip\noindent(\ref{main-lemma-e}) We must verify that $b_r\circ b_{r+1}=0$. One may apply the fact that  $\tau_{r-1}$ is injective, together with  Proposition~\ref{prop}, to the complex $(\mathbb G,g)$, in order  to see that  $$\tau_{r-1}\circ b_r\circ b_{r+1}=g_r\circ g_{r+1}\circ \tau_{r+1}=0;$$ hence, $b_r\circ b_{r+1}=0$.

\medskip\noindent(\ref{main-lemma-f}) We know from (\ref{main-lemma-d}) and (\ref{main-lemma-e})
that $(\mathbb E,e)$ and $(\mathbb B,b)$ are complexes; from Proposition~\ref{prop} that ${\tau:\mathbb B\to\mathbb E}$ is a map of complexes; from (\ref{main-lemma-c}) that $\tau:\mathbb B\to\mathbb E$ is an isomorphism  of complexes; and from (\ref{main-lemma-b}) that $\mathbb E_{\pmb \delta}=\mathbb G_{\pmb \delta}$.

\medskip\noindent(\ref{main-lemma-g}) We see in (\ref{main-lemma-f}) that $(\mathbb B,b)$ to isomorphic to a free sub-complex of $(\mathbb G,g)$ and that $(\mathbb B,b)_{\pmb \delta}$ and $(\mathbb G,g)_{\pmb \delta}$ are isomorphic complexes. 
\end{proof}

\begin{corollary}\label{main}Let $\pmb k$ be a field, $A$  a standard-graded Artinian Gorenstein $\pmb k$-algebra, $S$   the standard-graded polynomial ring $\operatorname{Sym}_{\bullet}^{\pmb k}A_1$, $I$ the kernel of the natural map $\xymatrix{S\ar@{->>}[r]&A}$, $d$  the vector space dimension $\dim_{\pmb k}A_1$, and $n$  the least index with $I_n\neq 0$. 
Assume that $3\le d$ and $2\le n$. Let $(\mathbb B,b)$ be  
the  complex  of Definition~{\rm{\ref{diff}}} and Corollary~{\rm\ref{main-lemma}.\ref{main-lemma-e}}, $\alpha:\pmb k\otimes U\to S_1$ any vector space isomorphism, $\phi\in D_{2n-2}^{\pmb k}S_1$  a Macaulay inverse system for $A$, and $\rho: {\mathfrak R}\to S$ the ring homomorphism of Proposition~{\rm\ref{transition}.\ref{trans-a}} which corresponds to the data $(\alpha,\phi)$. If  
the minimal homogeneous resolution of $A$ by free $S$-modules is Gorenstein-linear, then ${\mathbf B}=S\otimes_{\rho} \mathbb B$ is a minimal homogeneous resolution of $A$ 
by free $S$-modules.
Furthermore, ${\mathbf B}$ is explicitly constructed in a polynomial manner from the coefficients of
$\phi$.
\end{corollary}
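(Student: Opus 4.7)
The plan is to derive Corollary~\ref{main} as an immediate consequence of Corollary~\ref{main-lemma}.\ref{main-lemma-g} combined with Theorem~\ref{Properties-of-G}.\ref{by-way-of}. Recall that Corollary~\ref{main-lemma}.\ref{main-lemma-g} asserts that every assertion of Theorem~\ref{Properties-of-G} holds with the explicitly constructed complex $(\mathbb B,b)$ in place of $(\mathbb G,g)$. In particular, the ``by-way-of'' conclusion Theorem~\ref{Properties-of-G}.\ref{by-way-of} is available for $\mathbb B$. It therefore suffices to verify that the data $(A,\alpha,\phi,\rho)$ specified in the statement of Corollary~\ref{main} fit its hypotheses; no further complex-theoretic work is required.

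The verification has three ingredients. First, by the construction of $\rho$ in Proposition~\ref{transition}.\ref{trans-a}, we have $\rho(\mathfrak R_{(1,0)})=S_1$ and $\rho(\mathfrak R_{(0,1)})\subseteq S_0=\pmb k$ automatically. Second, the hypothesis that the minimal homogeneous resolution of $A$ by free $S$-modules is Gorenstein-linear is precisely the Gorenstein-linearity hypothesis appearing in Theorem~\ref{Properties-of-G}.\ref{by-way-of}. Third, by the equivalences of Proposition~\ref{J18}, Gorenstein-linearity of the minimal resolution of $A=S/\operatorname{ann}(\phi)$ is equivalent to the invertibility over $\pmb k$ of the matrix $((m_im_j)\phi)$ from (\ref{crit-det}). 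As recorded in the introduction (and immediate from Definition~\ref{manuf-data}(c) together with Proposition~\ref{transition}.\ref{trans-a}), the image $\rho(\pmb\delta)$ is exactly this critical determinant; hence $\rho(\pmb\delta)$ is a unit in $\pmb k$. This completes the check of all three hypotheses of Theorem~\ref{Properties-of-G}.\ref{field} via Theorem~\ref{Properties-of-G}.\ref{by-way-of}.

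Applying the $\mathbb B$-version of Theorem~\ref{Properties-of-G}.\ref{by-way-of} then yields that $\mathbf B = S\otimes_{\rho}\mathbb B$ is a minimal homogeneous Gorenstein-linear resolution of $S/\operatorname{ann}(\phi)=A$ by free $S$-modules. The final ``polynomial manner'' claim requires no additional argument: the differentials of $\mathbb B$ are given in Definition~\ref{diff} in terms of $\Phi$, $\pmb\delta$, and the operators $q$, $\eta$, $\kappa$, $\operatorname{proj}$, and ${\mathrm{Kos}}^{\Psi}$, each of which is polynomial in the generators of $\mathfrak R$. Those generators are the elements of $U$ together with a basis of $\operatorname{Sym}_{2n-2}U$, and $\rho$ sends them respectively to $\alpha(x_i)\in S_1$ and to the coefficients of $\phi$ in $\pmb k$. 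Thus every matrix entry of each $\mathbf b_r$ is a polynomial in the variables of $S$ and the coefficients of $\phi$, as required.

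I anticipate no serious obstacle: once Corollary~\ref{main-lemma}.\ref{main-lemma-g} is in hand the heavy lifting has already been done, and the only point that merits genuine attention is the identification of $\rho(\pmb\delta)$ with the critical determinant of (\ref{crit-det})---but this is essentially tautological from the definition of $\pmb\delta$ and the explicit formula for $\rho$ produced in the proof of Proposition~\ref{transition}.\ref{trans-a}.
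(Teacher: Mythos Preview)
Your proposal is correct and follows exactly the same route as the paper: the paper's proof is the single sentence ``Corollary~\ref{main} is an immediate consequence of Corollary~\ref{main-lemma}.\ref{main-lemma-g} by way of Theorem~\ref{Properties-of-G}.\ref{by-way-of}.'' Your explicit verification of the three hypotheses is fine but slightly redundant, since the statement of Theorem~\ref{Properties-of-G}.\ref{by-way-of} already asserts that the Gorenstein-linearity assumption alone forces $\rho$ to satisfy all hypotheses of part~(\ref{field}), including that $\rho(\pmb\delta)$ is a unit.
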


\begin{proof} Corollary~\ref{main} is an immediate consequence of  Corollary~\ref{main-lemma}.\ref{main-lemma-g} by way of Theorem~{\rm\ref{Properties-of-G}.\ref{by-way-of}}.\end{proof}

Recall that Proposition~\ref{J18} provides numerous alternatives to the hypothesis ``has a Gorenstein-linear minimal resolution''.

\section{\bf Applications and examples.}\label{Apps}

In Proposition~\ref{6.1} and Remark~\ref{WLEisEZ} we apply our results in order to prove the existence of weak Lefschetz elements. In Proposition~\ref{6.4} we reverse directions and point out that if one knew the existence  of weak Lefschetz elements ahead of time, then one could deduce the form of skeleton of the minimal resolution before actually knowing the entire minimal resolution. In Example~\ref{d=3} we compare the  $\mathbb B$ of \cite{EK-K-2}, where $d=3$, to the  $\mathbb B$ of 
Definition~\ref{diff}  and Corollary~{\rm\ref{main-lemma}.\ref{main-lemma-e}}, where $d$ is arbitrary. Example~\ref{bi-gr} describes $\mathbb B$ as  a bi-homogeneous complex of free ${\mathfrak R}$-modules. The section concludes with a discussion of the natural perfect pairing $\operatorname{pp}: B_r\otimes B_{d-r}\to B_d$, which is induced by the skeleton of $\mathbb B$. This pairing is
  graded-commutative and satisfies the graded product rule.

Let $A$ be a standard graded Artinian algebra over a field $\pmb k$. The linear form $x_1$ in $A_1$ is a {\it weak Lefschetz element} of $A$ if each multiplication map \begin{equation}\label{max-rank}x_1:A_i\to A_{i+1}\end{equation} has maximal rank. In other words, the multiplication map (\ref{max-rank}) is $$\begin{cases}\text{injective}, &\text{whenever $\dim_{\pmb k} A_i\le \dim_{\pmb k} A_{i+1}$, and}\\  
\text{surjective}, &\text{whenever $\dim_{\pmb k} A_{i+1}\le \dim_{\pmb k} A_{i}$}.\end{cases}$$

\begin{proposition}\label{6.1} Let $A$ be a standard graded Artinian  Gorenstein algebra over a field $\pmb k$. Assume that the minimal homogeneous resolution of $A$ by free $\operatorname{Sym}_{\bullet}^{\pmb k}A_1$-modules is Gorenstein-linear. Then every non-zero linear form in $A$ is a weak Lefschetz element of $A$.\end{proposition}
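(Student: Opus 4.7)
The plan is to combine the numerical input from Proposition~\ref{J18} with the explicit minimal generators of $I$ produced by Corollary~\ref{main} to reduce the weak Lefschetz property, for an arbitrary non-zero $x_1\in A_1$, to a single surjectivity at the top of the strictly unimodal Hilbert function.

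First I would fix a non-zero $x_1\in A_1$ and a complement $V_0\subset A_1$ with $A_1=\pmb k x_1\oplus V_0$. Proposition~\ref{J18} then shows that $I$ is generated in degree $n$, the socle degree of $A$ is $2n-2$, the Hilbert function of $A$ is strictly unimodal with peak in degree $n-1$, and the scalar $\pmb\delta$ maps under the $\rho$ of Proposition~\ref{transition} to a unit of $\pmb k$. In particular, ``$x_1$ is a weak Lefschetz element'' is equivalent to the two conditions that $x_1\colon A_i\to A_{i+1}$ is injective for $i\le n-2$ and surjective for $i\ge n-1$.

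The injectivity for $i\le n-2$ is immediate, since $x_1f=0$ in $A$ forces $x_1f\in I_{i+1}=0$, and $S$ is a domain. Likewise, once the critical equality $A_n=x_1A_{n-1}$ is proved, the surjectivities for $i\ge n$ follow by the one-line induction
$$A_{i+1}=S_1\cdot A_i=S_1\cdot x_1\cdot A_{i-1}=x_1\cdot A_i.$$
The whole proposition therefore reduces to showing $A_n=x_1A_{n-1}$, and here the main input is Corollary~\ref{main}. The differential $b_1$ in Definition~\ref{diff}.\ref{diff-gen} exhibits a minimal generating set of $I$ of two flavors: the ``Weyl'' generators $x_1q(\nu)$ with $\nu\in K_{0,n-1}U_0$, which are manifestly in $x_1S_{n-1}$; and the ``Schur'' generators $\pmb\delta\mu-x_1q(\mu(\widetilde\Phi))$ with $\mu\in L_{0,n}U_0=\operatorname{Sym}_nU_0$. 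Because $\pmb\delta$ is a unit of $\pmb k$, the Schur relations give the congruences $\mu\equiv\pmb\delta^{-1}x_1q(\mu(\widetilde\Phi))\pmod{I}$, so every degree-$n$ monomial in a basis of $V_0$ represents an element of $x_1A_{n-1}$; combined with the trivial fact that every degree-$n$ monomial divisible by $x_1$ is in $x_1A_{n-1}$, and since these two classes span $A_n$, we conclude $A_n=x_1A_{n-1}$.

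The main obstacle is really external to this proof: all of the non-trivial work is packaged into Corollary~\ref{main}. Within the proof itself, the only point to recognize is that the Schur half of $b_1$ is precisely what is needed to push every pure $V_0$-monomial in $A_n$ into $x_1A_{n-1}$, which is why the proposition is ``an immediate consequence'' of the generator description advertised in the introduction.
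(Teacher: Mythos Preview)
Your proof is correct and follows essentially the same approach as the paper: both reduce the weak Lefschetz property to the single surjectivity $A_n=x_1A_{n-1}$, and both obtain this surjectivity from Corollary~\ref{main} by reading off from $b_1$ (Definition~\ref{diff}.\ref{diff-gen}) that every $\mu\in\operatorname{Sym}_nV_0$ is congruent modulo $I$ to an element of $x_1S_{n-1}$. The paper phrases this last step as the ideal equality $(I,x_1)=(\operatorname{Sym}_nU_0,x_1)$, while you phrase it as a congruence $\mu\equiv\pmb\delta^{-1}x_1q(\mu(\widetilde\Phi))\pmod I$, but these are the same observation.
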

\smallskip \noindent (Again, we remind the reader that Proposition~\ref{J18} provides numerous alternatives to the hypothesis ``has a Gorenstein-linear minimal resolution''.) 

\begin{proof}Let $S$ be the standard graded polynomial ring $\operatorname{Sym}_{\bullet}^{\pmb k}A_1$,  $I$ be the kernel of natural map 
 $\xymatrix{S\ar@{->>}[r]&A}$, $d=\dim_{\pmb k} A_1$, and $n$ be the least degree with $I_n\neq 0$. Let $x_1$ be an arbitrary non-zero element of $A_1$. The multiplication map (\ref{max-rank}) is automatically injective for $i\le n-2$. If (\ref{max-rank}) is surjective at some $i_0$, then (\ref{max-rank}) is surjective for all $i$ with $i_0\le i$. Consequently, $x_1$ is a weak Lefschetz element of $A$ if and only if the multiplication map $x_1:A_{n-1}\to A_n$  is surjective. On the other hand, according to Corollary~\ref{main},  $I$ is generated by the image  of $S\otimes_{{\mathfrak R}} b_1$. A quick examination of Definition~\ref{diff}.\ref{diff-gen} shows that
$(I,x_1)=(\operatorname{Sym}_nU_0,x_1)$ where $S_1=\pmb k x_1\oplus U_0$ for some $\pmb k$-vector space $U_0$. Thus, $S_n=x_1 S_{n-1}+I_n$ and $A_n=x_1A_{n-1}$. \end{proof} 

\begin{remark}\label{WLEisEZ} 
The proof of Proposition~\ref{6.1} does not require knowledge of the entire resolution of $A$. One only needs to know that $\pmb \delta \mu -x_1q(\mu(\widetilde{\Phi}))$ is in $I$ for each $\mu\in \operatorname{Sym}_n(U_0)$. A self-contained proof of this fact is 
$$[\pmb \delta \mu -x_1q(\mu(\widetilde{\Phi}))](\Phi)=\pmb \delta \mu(\Phi) -x_1\left([q(\mu(\widetilde{\Phi}))](\Phi)\right)=\pmb \delta \mu(\Phi) -\pmb \delta x_1(\mu(\widetilde{\Phi}))=\pmb \delta \mu(\Phi) -\pmb \delta \mu(\Phi)=0.$$
\end{remark}

We find the skeleton of $\mathbb B$ (see Remark~\ref{skeleton-2.2}) to be a rather striking feature of this complex. In Proposition~\ref{6.2} we record what this skeleton tells about $\operatorname{Tor}$; in Proposition~\ref{6.3} we observe that one need not know all of $\mathbb B$ in order  to know the conclusions of Proposition~\ref{6.2} --- one ``only'' needs to know that $x_1$ is a weak Lefschetz element; see Proposition~\ref{6.1} and especially, Remark~\ref{WLEisEZ}.
In Proposition~\ref{6.4} we take this reverse engineering one step further; we prove that if $x_1$ is a weak Lefschetz element, then $S\otimes_{{\mathfrak R}}\mathbb B$ must contain a copy of the skeleton. 

\begin{data}\label{data6}Let $A$ be a standard graded Artinian  Gorenstein algebra over a field $\pmb k$,   $S$ be the standard graded polynomial ring $\operatorname{Sym}_{\bullet}^{\pmb k}A_1$,  $I$ be the kernel of natural map 
 $\xymatrix{S\ar@{->>}[r]&A}$, $d=\dim_{\pmb k} A_1$, and $n$ be the least degree with $I_n\neq 0$.  Assume that the minimal homogeneous resolution of $A$ by free $S$-modules is Gorenstein-linear. Let $x_1$ be an arbitrary non-zero element of $A_1$ and $\overline{S}=S/(x_1)$. View $S$ as $\pmb k[x_1,x_2,\dots,x_d]$ and 
$\overline{S}=\pmb k[x_2,\dots,x_d]$ for some basis $x_1,x_2,\dots,x_d$ of $A_1$ (which includes the previously identified element $x_1$).
\end{data}

\begin{proposition}\label{6.2} Adopt Data~{\rm\ref{data6}}. Then
$$\operatorname{Tor}_i^S(A,\overline{S})=\begin{cases} \overline{S}/(x_2,\dots,x_d)^n,&\text{if $i=0$,}\\
\operatorname{Ext}^{d-1}_{\overline S}(\overline{S}/(x_2,\dots,x_d)^n,\overline{S}),&\text{if $i=1$, and}\\
0,&\text{otherwise.}\end{cases}$$
\end{proposition}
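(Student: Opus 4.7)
The plan is to compute $\operatorname{Tor}_i^S(A,\overline S)$ by tensoring the explicit minimal free resolution $\mathbf B$ of $A$ by $\overline S$ and recognizing the result as (a base change of) the skeleton described in Remark~\ref{skeleton-2.2}. Concretely, fix any vector space isomorphism $\alpha:\pmb k\otimes U\to S_1$ sending the distinguished element $x_1\in U$ to the chosen $x_1\in S_1$ and any Macaulay inverse system $\phi$ for $A$; by Corollary~\ref{main} the corresponding ring map $\rho:{\mathfrak R}\to S$ of Proposition~\ref{transition}.\ref{trans-a} yields $\mathbf B=S\otimes_\rho\mathbb B$, a minimal homogeneous resolution of $A$ over $S$. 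Thus $\operatorname{Tor}_i^S(A,\overline S)=H_i(\mathbf B/x_1\mathbf B)$, and $\mathbf B/x_1\mathbf B$ is obtained from the skeleton $\overline{\mathbb B}$ by base change along $\rho$.

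Next, I would use the mapping cone structure of $\overline{\mathbb B}$. By Remark~\ref{skeleton-2.2}, $\overline{\mathbb B}$ is the mapping cone of the zero morphism $\mathbb K\to \mathbb L$. Because the connecting map vanishes identically, the cone splits as an honest direct sum of complexes; after base change this gives
$$\mathbf B/x_1\mathbf B\;\cong\;\mathbb L_{\overline S}\;\oplus\;\mathbb K_{\overline S}[1],$$
where the shift $[1]$ places the terms of $\mathbb K_{\overline S}$ in homological degrees $1,\dots,d$. Taking homology produces
$$\operatorname{Tor}_i^S(A,\overline S)\;=\;H_i(\mathbb L_{\overline S})\;\oplus\;H_{i-1}(\mathbb K_{\overline S}).$$

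To finish, I would invoke the Buchsbaum--Eisenbud computations summarized at the end of Remark~\ref{skeleton-2.2}. The complex $\mathbb L_{\overline S}$ is a minimal free $\overline S$-resolution of $\overline S/(x_2,\dots,x_d)^n$, so its homology is concentrated in degree $0$, where it equals $\overline S/(x_2,\dots,x_d)^n$. Dually, $\mathbb K_{\overline S}$ is a minimal free $\overline S$-resolution of the canonical module of the grade-$(d-1)$ perfect cyclic $\overline S$-module $\overline S/(x_2,\dots,x_d)^n$, and by local duality this canonical module is precisely $\operatorname{Ext}^{d-1}_{\overline S}(\overline S/(x_2,\dots,x_d)^n,\overline S)$; so $\mathbb K_{\overline S}$ has homology concentrated in degree $0$, equal to this $\operatorname{Ext}$. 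Substituting yields the three cases of the statement. The one point that demands genuine care (rather than being purely formal) is the identification of the canonical module with $\operatorname{Ext}^{d-1}$, which is a standard consequence of the self-duality of the Buchsbaum--Eisenbud complex recorded in \cite[Sect.~2]{EKK}.
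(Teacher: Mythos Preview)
Your proof is correct and follows essentially the same route as the paper's own argument: both invoke Corollary~\ref{main} to realize the minimal resolution as $S\otimes_\rho\mathbb B$, reduce modulo $x_1$ to obtain the skeleton of Remark~\ref{skeleton-2.2}, and read off the homology from the splitting of the mapping cone of the zero map as $\mathbb L_{\overline S}\oplus\mathbb K_{\overline S}[1]$, together with the Buchsbaum--Eisenbud identifications of $\mathbb L$ and $\mathbb K$ as resolutions of $\overline S/(x_2,\dots,x_d)^n$ and its canonical module. Your explicit remark that $\alpha$ must send the distinguished $x_1\in U$ to the chosen $x_1\in S_1$ is a point the paper leaves implicit.
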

\begin{proof} According to Corollary~\ref{main}, the minimal homogeneous resolution of $A$ by free $S$-modules is  $S\otimes_{{\mathfrak R}}\mathbb B$; and so $\operatorname{Tor}_i^S(A,\overline{S})=\operatorname{H}_i(\overline{S}\otimes_{{\mathfrak R}}\mathbb B)$. Now the conclusion is obvious from (\ref{skeleton}).
Indeed, the resolution of $\overline{S}/(x_2,\dots,x_d)^n$ is the bottom row of $\overline{S}\otimes_{{\mathfrak R}}\mathbb B$ (when 
$\overline{S}\otimes_{{\mathfrak R}}\mathbb B$ is written in the form of (\ref{skeleton})) and the resolution of $\operatorname{Ext}^{d-1}_{\overline S}(\overline{S}/(x_2,\dots,x_d)^n,\overline{S})$ is the top row of $\overline{S}\otimes_{{\mathfrak R}}\mathbb B$ (when 
$\overline{S}\otimes_{{\mathfrak R}}\mathbb B$ is written in the form of (\ref{skeleton})).
\end{proof}
\begin{proposition}\label{6.3}Adopt Data~{\rm\ref{data6}}. Assume that $x_1:A_{n-1}\to A_{n}$ is surjective. Then the conclusion of Proposition~\ref{6.2} can be established without any knowledge of the minimal resolution of $A$ by free $S$-modules. 
\end{proposition}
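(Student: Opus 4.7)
\smallskip

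\noindent\textbf{Proof plan.} The plan is to bypass the complex $\mathbb B$ and compute $\operatorname{Tor}_i^S(A,\overline S)$ directly by using the Koszul short exact sequence $0\to S(-1)\xrightarrow{\,x_1\,}S\to\overline S\to 0$, which is an $S$-free resolution of $\overline S$ of length one. Tensoring with $A$ gives immediately $\operatorname{Tor}_i^S(A,\overline S)=0$ for $i\ge 2$, $\operatorname{Tor}_0^S(A,\overline S)\cong A/x_1A$, and $\operatorname{Tor}_1^S(A,\overline S)\cong \operatorname{Ann}_A(x_1)(-1)$ as graded $S$-modules (with $x_1$ acting trivially, hence all of these are graded $\overline S$-modules). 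So the problem reduces to computing $A/x_1A$ and $\operatorname{Ann}_A(x_1)$ from intrinsic properties of $A$.

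\smallskip

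\noindent First I would compute $A/x_1A$. The Gorenstein-linear hypothesis, via Proposition~\ref{J18}, forces $I_k=0$ for $k\le n-1$, so $A_k=S_k$ in that range and $(A/x_1A)_k=S_k/x_1S_{k-1}=\overline S_k$ for $0\le k\le n-1$. The surjectivity hypothesis $x_1\colon A_{n-1}\twoheadrightarrow A_n$ handles degree $n$. To push surjectivity into higher degrees, I use the perfect Gorenstein pairing $A_i\otimes A_{2n-2-i}\to A_{2n-2}\cong\pmb k$: since this pairing is multiplication, the $\pmb k$-dual of $\cdot x_1\colon A_i\to A_{i+1}$ is $\cdot x_1\colon A_{2n-3-i}\to A_{2n-2-i}$, so surjectivity of the first is equivalent to injectivity of the second. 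For every $i\ge n-1$ one has $2n-2-i\le n-1$, so the dual map is a map between polynomial pieces $S_{2n-3-i}\to S_{2n-2-i}$ and is therefore automatically injective. This proves $(A/x_1A)_k=0$ for all $k\ge n$, and hence
\[
A/x_1A\;\cong\;\overline S/(x_2,\dots,x_d)^n\quad\text{as graded $\overline S$-modules.}
\]

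\smallskip

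\noindent For $\operatorname{Tor}_1$, I apply the Matlis dual $(-)^\vee=\operatorname{Hom}_{\pmb k}(-,\pmb k)$ to the exact sequence
\[
0\to \operatorname{Ann}_A(x_1)\to A\xrightarrow{\,x_1\,}A\to A/x_1A\to 0.
\]
Since $A$ is standard-graded Artinian Gorenstein with socle degree $2n-2$, Macaulay duality gives $A^\vee\cong A(2n-2)$ as graded $A$-modules. The dualized sequence then yields the isomorphism
\[
\operatorname{Ann}_A(x_1)\;\cong\;(A/x_1A)^{\vee}(-(2n-2))\;\cong\;\bigl(\overline S/(x_2,\dots,x_d)^n\bigr)^{\vee}(-(2n-2))
\]
of graded $\overline S$-modules. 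Finally, local (Grothendieck) duality for the polynomial ring $\overline S$, which has Krull dimension $d-1$ and canonical module $\overline S(-(d-1))$, supplies for any Artinian graded $\overline S$-module $M$ the natural isomorphism $M^{\vee}\cong \operatorname{Ext}^{d-1}_{\overline S}(M,\overline S)(-(d-1))$. Applying this to $M=\overline S/(x_2,\dots,x_d)^n$ and combining with the shift $\operatorname{Tor}_1^S(A,\overline S)=\operatorname{Ann}_A(x_1)(-1)$ identifies $\operatorname{Tor}_1^S(A,\overline S)$ with $\operatorname{Ext}^{d-1}_{\overline S}(\overline S/(x_2,\dots,x_d)^n,\overline S)$ up to the predicted overall shift.

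\smallskip

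\noindent The only genuine obstacle is bookkeeping with graded shifts across the two dualities (the Macaulay duality for $A$ and the local duality for $\overline S$). Everything else is formal: the key non-homological input is the promotion, via the Gorenstein pairing, of the single surjectivity hypothesis in degree $n-1$ to surjectivity of $\cdot x_1$ in every degree $\ge n-1$, which is what makes $A/x_1A$ collapse to the monomial truncation $\overline S/(x_2,\dots,x_d)^n$ without ever touching the minimal $S$-resolution of $A$.
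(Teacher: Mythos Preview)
Your proof is correct and follows essentially the same route as the paper: resolve $\overline S$ by the Koszul complex $0\to S(-1)\xrightarrow{x_1}S\to\overline S\to 0$, identify $\operatorname{Tor}_0=A/x_1A$ and $\operatorname{Tor}_1=\operatorname{ann}_A(x_1)$, and then invoke Gorenstein duality to recognize the annihilator as the canonical module of $A/x_1A$, hence as the relevant $\operatorname{Ext}^{d-1}_{\overline S}$.

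The only differences are in packaging. For $\operatorname{Tor}_0$, the paper uses that $I$ is generated in degree $n$ (part of the Gorenstein-linear hypothesis) to conclude $(I,x_1)=((x_2,\dots,x_d)^n,x_1)$ in one stroke, whereas you promote surjectivity of $\cdot x_1$ to all degrees $\ge n-1$ via the Gorenstein pairing and then read off the quotient degree by degree; both are valid and equally elementary. For $\operatorname{Tor}_1$, the paper compresses your Matlis-plus-local-duality argument into the single observation that $\operatorname{ann}_A(x_1)=\operatorname{Hom}_A(A/(x_1),A)$ is the canonical module of $A/(x_1)$, and that this canonical module may equally well be computed over the Gorenstein ring $\overline S$ as $\operatorname{Ext}^{d-1}_{\overline S}(A/(x_1),\overline S)$. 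Your version is more explicit about the shifts; the paper's is more conceptual. Substantively they are the same argument.
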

\begin{proof} 
The hypothesis $x_1A_{n-1}=A_n$ guarantees that \begin{equation}\label{WLP}x_1S_{n-1}+I_n=x_1S_{n-1}+[(x_2,\dots,x_d)^n]_n.\end{equation} It follows that $A/(x_1)=\overline{S}/(x_2,\dots,x_d)^n$. The value of $\operatorname{Tor}_0^S(A,\overline{S})$ is now clear. Furthermore, we calculate
$$\textstyle\operatorname{Tor}_1^S(A,\overline{S})=\frac{I:_{S}x_1}{I}=\operatorname{ann}_Ax_1=\operatorname{Hom}_A(\frac{A}{(x_1)},A)
\simeq \operatorname{Ext}_{\overline{S}}^{d-1}(
\frac{A}{(x_1)},\overline{S}),$$ 
where the final isomorphism holds because the rings $A$ and $\overline{S}$ are both Gorenstein and the canonical module of $A/(x_1)$ may be computed using
either $\xymatrix{A\ar@{->>}[r]&A/(x_1)}$ or $\xymatrix{\overline{S}\ar@{->>}[r]&A/(x_1)}$.
\end{proof} 
\begin{proposition}\label{6.4} Adopt Data~{\rm\ref{data6}}. Assume that $x_1:A_{n-1}\to A_{n}$ is surjective. Let $\mathbb F$ be a minimal homogeneous resolution of $A$ by free $S$-modules and let $\mathbb S$ be the complex $\overline{S}\otimes_{{\mathfrak R}} \mathbb B$ written in the form of {\rm(\ref{skeleton})}. Do not assume any knowledge of $\mathbb B$ or Corollary~{\rm\ref{main}}. Then $\overline{S}\otimes_S\mathbb F\simeq \mathbb S$.
\end{proposition}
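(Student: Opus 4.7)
The goal is to deduce $\overline{S}\otimes_{S}\mathbb{F}\simeq \mathbb{S}$ from the weak Lefschetz hypothesis alone, without invoking the explicit construction of $\mathbb{B}$ or Corollary~\ref{main}. I set $\mathbb{F}':=\overline{S}\otimes_{S}\mathbb{F}$ and $A':=A/(x_1)$, and first apply Proposition~\ref{6.3} -- whose proof relies only on the weak Lefschetz hypothesis, the Gorenstein property of $A$ and of $\overline{S}$, and the fact that $x_1$ is a non-zero-divisor on $S$ (which automatically kills higher $\operatorname{Tor}$'s) -- to obtain $A'=\overline{S}/(x_2,\ldots,x_d)^n$ together with
\[ H_0(\mathbb{F}')=A',\qquad H_1(\mathbb{F}')=E:=\operatorname{Ext}^{d-1}_{\overline{S}}(A',\overline{S}),\qquad H_i(\mathbb{F}')=0\text{ for }i\ge 2. \]
On the other side, (\ref{skeleton}) together with the vanishing of every vertical arrow shows that $\mathbb{S}$ splits canonically as a direct sum $\mathbb{L}_{\overline{S}}\oplus \mathbb{K}_{\overline{S}}[1]$ of direct-summand subcomplexes of free $\overline{S}$-modules: the Buchsbaum--Eisenbud minimal free $\overline{S}$-resolution of $A'$, and a shift of the minimal free $\overline{S}$-resolution of the canonical module $E$ (see Remark~\ref{skeleton-2.2}). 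This description of $\mathbb{S}$ requires no knowledge of $\mathbb{B}$.

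Second, I would lift the augmentation $\mathbb{F}'_0\twoheadrightarrow A'$ to a chain map $\iota:\mathbb{L}_{\overline{S}}\to \mathbb{F}'$ inducing the identity on $H_0$, and show that with a suitable choice $\iota$ is termwise a split injection of free $\overline{S}$-modules. The key input is minimality, combined with the rank count $\operatorname{rank}\mathbb{F}'_r=\operatorname{rank}\mathbb{L}_{\overline{S},r}+\operatorname{rank}\mathbb{K}_{\overline{S},r-1}$ implicit in Remark~\ref{meantime}: after reducing modulo the maximal ideal of $\overline{S}$ both complexes have zero differentials, and a Nakayama plus dimension-count argument forces the injectivity of each $\iota_r\otimes\pmb{k}$. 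Forming $\mathbb{C}:=\mathbb{F}'/\iota(\mathbb{L}_{\overline{S}})$ then yields a minimal free $\overline{S}$-complex whose only non-zero homology is $H_1(\mathbb{C})=E$, so by uniqueness of minimal free resolutions, $\mathbb{C}\cong \mathbb{K}_{\overline{S}}[1]$.

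Third, I would upgrade the resulting termwise-split short exact sequence of complexes $0\to \mathbb{L}_{\overline{S}}\to \mathbb{F}'\to \mathbb{C}\to 0$ to a chain-level splitting. The obstruction lies in $\operatorname{Hom}_{D^{b}(\overline{S})}(\mathbb{C},\mathbb{L}_{\overline{S}}[1])\cong \operatorname{Ext}^{2}_{\overline{S}}(E,A')$, and I would argue it vanishes by Gorenstein self-duality: the isomorphism $\mathbb{F}\cong \operatorname{Hom}_{S}(\mathbb{F},S)[d]$ (up to a grading shift) base-changes to $\mathbb{F}'\cong \operatorname{Hom}_{\overline{S}}(\mathbb{F}',\overline{S})[d]$, and, combined with the reflexivity $\operatorname{Ext}^{d-1}_{\overline{S}}(E,\overline{S})\cong A'$, this self-duality sends the above short exact sequence to itself (swapping the $\mathbb{L}_{\overline{S}}$ subcomplex with the $\mathbb{K}_{\overline{S}}[1]$ quotient up to the identification $\mathbb{C}\cong \mathbb{K}_{\overline{S}}[1]$). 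An anti-symmetry of the induced involution on the obstruction class then forces that class to equal its own negative, hence to vanish. We conclude $\mathbb{F}'\cong \mathbb{L}_{\overline{S}}\oplus \mathbb{K}_{\overline{S}}[1]\cong \mathbb{S}$.

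\textbf{The hard part} is the last step: rigorously setting up the Gorenstein self-duality of $\mathbb{F}'$ and extracting from it the anti-involution on $\operatorname{Ext}^{2}_{\overline{S}}(E,A')$ that kills the obstruction. Verifying the termwise splitting of $\iota$ in the previous step is routine in the minimal setting, but still requires attention to the choice of lift.
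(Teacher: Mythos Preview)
Your first two steps are close in spirit to the paper's approach, but your third step --- the obstruction-theoretic splitting --- contains genuine errors, and the paper avoids that step entirely.

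First, the obstruction group is miscomputed. Since $\mathbb{C}\simeq E[1]$ and $\mathbb{L}_{\overline{S}}\simeq A'$ in $D^b(\overline{S})$, one has
\[
\operatorname{Hom}_{D^b(\overline{S})}(\mathbb{C},\mathbb{L}_{\overline{S}}[1])
\;\cong\;\operatorname{Hom}_{D^b(\overline{S})}(E[1],A'[1])
\;\cong\;\operatorname{Hom}_{\overline{S}}(E,A'),
\]
not $\operatorname{Ext}^2$. This $\operatorname{Hom}$ group is \emph{not} zero in general, so your self-duality argument cannot be a vanishing-of-the-group argument; and the ``anti-symmetry'' claim is not justified --- there is no reason a priori that the involution coming from Gorenstein duality should act by $-1$ on the extension class. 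The correct reason the class vanishes is a simple graded-degree count: with the grading inherited from $\overline{\mathbb F}$, the module $E=H_1(\overline{\mathbb F})$ is generated in degree $n$, while $A'=\overline{S}/(x_2,\dots,x_d)^n$ is zero in all degrees $\ge n$, so $\operatorname{Hom}_{\overline{S}}(E,A')_0=0$ and the degree-zero connecting morphism is forced to be zero. Your step~2 (termwise-split injectivity of $\iota$) is also less ``routine'' than you suggest; making it work amounts precisely to the explicit change of basis in $\overline{f}_1$ that the paper carries out as its first move.

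By contrast, the paper's proof bypasses the extension problem altogether. After putting $\overline{f}_1$ into the block form $[\,0\mid x_2^n\ \cdots\ x_d^n\,]$ using only the Herzog--K\"uhl rank count and the weak-Lefschetz identity $(I,x_1)=((x_2,\dots,x_d)^n,x_1)$, it observes that the $k_1$ generators in the zero block already surject onto $H_1(\overline{\mathbb F})$ (by minimality of $\mathbb F$ and the known minimal number of generators of $E$). This makes the doubly-augmented sequence
\[
0\to\overline{F}_d\to\cdots\to\overline{F}_1\to H_1(\overline{\mathbb F})\oplus\overline{F}_0\to H_0(\overline{\mathbb F})\to 0
\]
exact, and one then peels off the two rows of $\mathbb S$ by induction: at each stage a minimal generating set for the kernel splits as generators of $\ker(\overline{S}^{\ell_r}\to\overline{S}^{\ell_{r-1}})$ together with generators of $\ker(\overline{S}^{k_r}\to\overline{S}^{k_{r-1}})$, and one chooses a compatible basis of $\overline{F}_{r+1}$ accordingly. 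This produces the isomorphism $\overline{\mathbb F}\cong\mathbb S$ directly, without derived categories, duality, or extension classes.
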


\smallskip\noindent(The point of Proposition \ref{6.4} is that the weak Lefschetz hypothesis establishes that $\mathbb F$ has the skeleton of $\mathbb B$ without first proving that $\mathbb F\simeq S\otimes_{{\mathfrak R}}\mathbb B$.)
\begin{proof} Let $(\overline{\mathbb F},\overline{f})$ represent $\overline{S}\otimes_S \mathbb F$. The resolution $\mathbb F$ is pure; hence the Betti numbers of $\mathbb F$ are known a priori (for example, by Herzog and K\"uhl \cite{HK}); furthermore, these Betti numbers satisfy
\begin{equation}\label{no-inspire}\operatorname{rank} F_r=\operatorname{rank} K_{r-1,n-1}U_0\oplus \operatorname{rank} L_{r-1,n}U_0,\end{equation} for $1\le r\le d-1$, a priori; see Remark~\ref{meantime}. (The arithmetic of Remark~\ref{meantime} was inspired by knowledge that $\mathbb B$ is interesting; nonetheless, the arithmetic continues to be legitimate even if it is done without inspiration. Curiously, in the present proof, (\ref{no-inspire}) is of crucial importance when $r=1$; for larger values of $r$ the present proof, in fact, reproves (\ref{no-inspire}).)  Let $$k_r =\operatorname{rank} K_{r-1,n-1}U_0\quad \text{and}\quad \ell_r=\operatorname{rank} L_{r-1,n}U_0,\quad \text{for $1\le r\le d$}.$$ Combine 
(\ref{WLP}) and (\ref{no-inspire}) to see that $\overline{f}_1:\overline{F}_1\to \overline {S}$ can be put in the form 
$$\overline{S}^{k_1}\oplus \overline{S}^{\ell_1}
\xrightarrow{\ \bmatrix 0&\dots&0&|&x_2^n&\dots&x_d^n\endbmatrix\ } \overline{S},$$
where all of the $\ell_1$ monomials in the set  $\binom{x_2,\dots,x_d}{n}$ appear in the list $x_2^n,\dots, x_d^n$. It is clear that the generators of $\overline{S}^{k_1}$ represent part of a minimal generating set of $$\operatorname{H}_1(\overline{\mathbb F})=\operatorname{Tor}_1^S(A,\overline{S})\simeq \operatorname{Ext}^{d-1}_{\overline S}(\overline{S}/(x_2,\dots,x_d)^n,\overline{S}).$$(The final isomorphism is due to Proposition~\ref{6.3}.) On the other hand, $\operatorname{Ext}^{d-1}_{\overline S}(\overline{S}/(x_2,\dots,x_d)^n,\overline{S})$ is minimally generated by $k_1$ elements. So, $\overline{S}^{k_1}$ maps onto $\operatorname{H}_1(\overline{\mathbb F})$. It follows that the augmented complex 
\begin{equation}\label{Aug}
0\to \overline{F}_d  \to\overline{F}_{d-1} \to\cdots \to\overline{F}_1\to \begin{array}{c}\operatorname{H}_1(\overline{\mathbb F})\\\oplus\\ \overline{F}_0\end{array}\to \operatorname{H}_0(\overline{\mathbb F})\to 0\end{equation} is exact. The beginning of (\ref{Aug}) is the total complex of
$$\xymatrix{ &\overline{S}^{k_1}\ar[r]\ar[d]^{0}&\operatorname{H}_1(\overline{\mathbb F})
\ar[r]\ar[d]^{0}&0
\\
\overline{S}^{\ell_1}\ar[r]&\overline{S}\ar[r]&\operatorname{H}_0(\overline{\mathbb F})\ar[r]&0}$$
Let $z_1,\dots,z_{\ell_2}$ be a set of homogeneous elements in $\overline{S}^{\ell_1}$ which represent a  minimal generating set for $\ker(\overline{S}^{\ell_1}\to\overline{S})$ and let $z_1',\dots,z_{k_2}'$ be a set of homogeneous elements in $\overline{S}^{k_1}$ which represent a  minimal generating set for $\ker(\overline{S}^{k_1}\to\operatorname{H}_1(\overline{\mathbb F}))$. It is clear that
$z_1,\dots,z_{\ell_2},z_1',\dots,z_{k_2}'$ represents a minimal generating set 
for $$\ker\left(\overline{F}_1\to 
(\operatorname{H}_1(\overline{\mathbb F})
\oplus 
\overline{F}_0)
\right).$$ The complex (\ref{Aug}) has homology zero; so it is possible to decompose $\overline{F}_2$ as $\overline{S}^{\ell_2}\oplus \overline{S}^{k_2}$so that the generators of 
$\overline{S}^{\ell_2}$ map to $z_1,\dots,z_{\ell_2}$ and the generators of 
$\overline{S}^{k_2}$ map to $z_1',\dots,z'_{k_2}$. At this point, the beginning of (\ref{Aug}) is the total complex of
$$\xymatrix{ &\overline{S}^{k_2}\ar[r]\ar[d]^{0}&\overline{S}^{k_1}\ar[r]\ar[d]^{0}&\operatorname{H}_1(\overline{\mathbb F})
\ar[r]\ar[d]^{0}&0
\\
\overline{S}^{\ell_2}\ar[r]&\overline{S}^{\ell_1}\ar[r]&\overline{S}\ar[r]&\operatorname{H}_0(\overline{\mathbb F})\ar[r]&0.}$$Continue in this manner to finish the proof.
\end{proof}

\begin{example}\label{d=3} The complex $(\mathbb B,b)$ of \cite[Def.~2.7 and Obs.~4.4]{EK-K-2} (which we  now write as $(\underline{\mathbb B},\underline{b})$)  is isomorphic to the complex $(\mathbb B,b)$ of Definition~\ref{diff}  and Corollary~{\rm\ref{main-lemma}.\ref{main-lemma-e}}, when $d=3$. Indeed, the isomorphism is given by $\gamma: (\underline{\mathbb B},\underline{b})\to (\mathbb B,b)$
$$\xymatrix{0\ar[r]&\underline{B_3}\ar[r]^{\underline{b_3}}\ar[d]^{\gamma_3}&
\underline{B_2}\ar[r]^{\underline{b_2}}\ar[d]^{\gamma_2}&
\underline{B_1}\ar[r]^{\underline{b_1}}\ar[d]^{\gamma_1}&
\underline{B_0}\ar[d]^{\gamma_0}\\
0\ar[r]& B_3 \ar[r]^{ b_3 } &
 B_2 \ar[r]^{ b_2 } &
 B_1 \ar[r]^{ b_1 } &
 B_0,}$$ where  the maps 
$$\gamma_0:\underline{B_0}={\mathfrak R}\xrightarrow{
} B_0={\mathfrak R}\quad\text{and}\quad \gamma_1: \underline{B_1}=\left\{\begin{array}{c} {\mathfrak R}\otimes D_{n-1}(U_0^*)\\\oplus\\ {\mathfrak R}\otimes \operatorname{Sym}_n U_0\end{array}\right\}\xrightarrow{
} B_1=\left\{\begin{array}{c} {\mathfrak R}\otimes K_{0,n-1}U_0\\\oplus\\ {\mathfrak R}\otimes L_{0,n} U_0\end{array}\right\}$$ are the identity maps;  the map 
$$\gamma_2: \underline{B_2}=\left\{\begin{array}{c}
{\mathfrak R}\otimes \operatorname{Sym}_{n-1}U_0\\
\oplus\\  {\mathfrak R}\otimes D_n(U_0^*)\end{array}\right\}\xrightarrow{
} B_2=\left\{\begin{array}{c} {\mathfrak R}\otimes K_{1,n-1}U_0\\\oplus\\ {\mathfrak R}\otimes L_{2,n} U_0\end{array}\right\}$$
 is $$\gamma_2\left( \bmatrix \mu\\\nu\endbmatrix\right)=\bmatrix \eta(x_2\wedge x_3\otimes \nu) \\ -\kappa(x_2\wedge x_3\otimes \mu)\endbmatrix,$$for $\mu \in \operatorname{Sym}_{n-1}U_0$ and $\nu\in D_n(U_0^*)$;
and the map
$$\gamma_3:\underline{B_3}={\mathfrak R}\xrightarrow{
} B_3={\mathfrak R} \otimes {\textstyle\bigwedge^2}U_0$$ is 
$\gamma_3(1)= x_2\wedge x_3$.  It follows that the examples of \cite[Sect.~6]{EK-K-2} are also examples of the complex $(\mathbb B, b)$ of Definition~\ref{diff} and Corollary~{\rm\ref{main-lemma}.\ref{main-lemma-e}}. A few further comments about the complex $\mathbb B$ when $d=3$ are contained in Example~\ref{d=3matrices}. 
\end{example}

\begin{example}\label{bi-gr} Recall from Data~\ref{Opening-Data} that ${\mathfrak R}$ is a bi-graded ring. We describe the complex $\mathbb B$ of Definition~\ref{diff} and Corollary~\ref{main-lemma}.\ref{main-lemma-e} as a bi-homogeneous complex of free ${\mathfrak R}$-modules. Recall the constant $\mathrm{top}=\binom{n+d-2}{d-1}=\deg \pmb\delta$
from Definition~\ref{manuf-data}. Observe that if $\nu\in D_{n-1}(U^*)$, then $q(\nu)$ is in ${\mathfrak R}_{(0,\mathrm{top}-1)}\otimes \operatorname{Sym}_{n-1}U$; and if $\mu\in \operatorname{Sym}_rU$, then $\mu(\widetilde{\Phi})\in {\mathfrak R}_{(0,1)}\otimes D_{2n-1-r}(U^*)$. It follows that the entries of a matrix representing $b_r$ are homogeneous and have degree
$$\begin{cases} 
\bmatrix (n,\mathrm{top}-1)&(n,\mathrm{top})\endbmatrix, &\text{if $r=1$,}\vspace{5pt}\\
\bmatrix (1,\mathrm{top})&(1,\mathrm{top}+1)\\(1,\mathrm{top}-1)&(1,\mathrm{top})\endbmatrix,&\text{if $2\le r\le d-1$, and}\vspace{5pt}\\
\bmatrix (n,\mathrm{top})\\(n,\mathrm{top}-1)\endbmatrix,&\text{if $r=d$.}\end{cases}$$
Therefore, as bi-graded free ${\mathfrak R}$-modules,
$$
B_r\simeq \begin{cases} \phantom{x; {\mathfrak R}(-2n-d}{\mathfrak R},&\text{if $r=0$,}\vspace{7pt}\\
\left.\begin{array}{c}{\mathfrak R}(-n-r+1,-r(\mathrm{top})+1)^{k_r}\\\oplus\\
{\mathfrak R}(-n-r+1,-r(\mathrm{top}))^{\ell_r}\end{array}\right\},&\text{if $1\le r\le d-1$, and}\vspace{7pt}\\
\phantom{x;} {\mathfrak R}(-2n-d+2,-d(\mathrm{top})+1),&\text{if $r=d$,}\end{cases}$$ where
$$\begin{array}{rcccl}
k_r&=&\operatorname{rank} K_{r-1,n-1}U_0&=& \binom{d+n-2}{r-1}\binom{d+n-r-2}{n-1}\text{ and}\vspace{5pt}\\
\ell_r&=&\operatorname{rank} L_{r-1,n}U_0&=& \binom{d+n-2}{r-1+n}\binom{r+n-2}{r-1};\end{array}$$
see for example \ref{meantime} or \cite[(2.3) and (5.8)]{EKK}.
\end{example}

\bigskip
We next exhibit the self-duality of the complex $(\mathbb B,b)$.
\begin{definition}\label{ppr-def} Adopt Data~\ref{data1} and recall the complex $(\mathbb B,b)$ of Definition~\ref{diff} and Corollary~\ref{main-lemma}.\ref{main-lemma-e}. For each integer $r$ with $0\le r\le d$, define the ${\mathfrak R}$-module homomorphism $$\operatorname{pp}_r:B_r\otimes_{{\mathfrak R}} B_{d-r}\to B_d$$ as follows.
\begin{enumerate}[\rm(a)]
\item If $r=0$, then $\operatorname{pp}_0(\alpha \otimes \theta)=\alpha \theta$ for $\alpha \in B_0={\mathfrak R}$ and $\theta\in B_d={\mathfrak R}\otimes \bigwedge ^{d-1}U_0$.
\item If $1\le r\le d-1$, then
\begin{align}&\operatorname{pp}_r \left([\eta(\theta_r\otimes \nu_n)+\kappa(\theta_r'\otimes \mu_{n-1})]
 \otimes
 [\eta(\theta_{d-r}\otimes \nu_n') +\kappa(\theta_{d-r}'\otimes \mu'_{n-1})]\right)\notag\\
{}={}&\label{ppr}[\mu_{n-1}(\nu_n')](\theta_r')\wedge \theta_{d-r}- [\mu_{n-1}'(\nu_n)](\theta_r)\wedge \theta_{d-r}',
\end{align}
 
\noindent for 
$$\begin{array}{ll}
\eta(\theta_r\otimes \nu_n)\in {\mathfrak R}\otimes K_{r-1,n-1}U_0\subseteq B_r,&
\kappa(\theta_r'\otimes \mu_{n-1})\in {\mathfrak R}\otimes L_{r-1,n}U_0\subseteq B_r,\vspace{5pt}\\
\eta(\theta_{d-r}\otimes \nu_n')\in {\mathfrak R}\otimes K_{d-r-1,n-1}U_0\subseteq B_{d-r},\text{ and}&
\kappa(\theta_{d-r}'\otimes \mu'_{n-1}) \in {\mathfrak R}\otimes L_{d-r-1,n}U_0\subseteq B_{d-r}.\end{array}
$$

\noindent The right side of (\ref{ppr}) is in $B_d= {\mathfrak R}\otimes {\textstyle\bigwedge^{d-1}}U_0$. In particular, $\theta_r$ and $\theta_{r}'$ are in $\bigwedge^rU_0$, $\theta_{d-r}$ and $\theta_{d-r}'$ are in $\bigwedge^{d-r}U_0$, 
$\nu_n$ and $\nu_n'$ are in $D_n(U_0^*)$, and $\mu_{n-1}$ and $\mu_{n-1}'$ are in $\operatorname{Sym}_{n-1}U_0$.
\item If $r=d$, then $\operatorname{pp}_d(\theta \otimes \alpha)=\theta\alpha $ for $\theta\in B_d={\mathfrak R}\otimes \bigwedge ^{d-1}U_0$   and $\alpha \in B_0={\mathfrak R}$.
\end{enumerate}
(In the future we will often write $\operatorname{pp}$ in place of $\operatorname{pp}_r$.)
\end{definition}

\begin{proposition}\label{ppr-prop}Retain the notation of Definition~{\rm\ref{ppr-def}}. The following statements hold.
\begin{enumerate}[\rm(a)]
\item\label{ppr-prop-a} Each homomorphism $\operatorname{pp}_r$, with $0\le r\le d$, is a perfect pairing.
\item\label{ppr-prop-b} Each homomorphism $\operatorname{pp}_r$, with $0\le r\le d$, satisfies $$\operatorname{pp}_r(\Theta_r\otimes \Theta_{d-r})=(-1)^{r(d-r)} \operatorname{pp}_{d-r}(\Theta_{d-r}\otimes\Theta_r),$$ for $\Theta_r\in B_r$ and $\Theta_{d-r}\in B_{d-r}$.
\item\label{ppr-prop-c} If $0\le r\le d-1$, then $$\operatorname{pp}_r(b_{r+1}(\Theta_{r+1})\otimes \Theta_{d-r})+(-1)^{r+1}\operatorname{pp}_{r+1}(\Theta_{r+1}\otimes b_{d-r}(\Theta_{d-r}))=0,$$ for $\Theta_{r+1}\in B_{r+1}$ and $\Theta_{d-r}\in B_{d-r}$.
\end{enumerate}
\end{proposition}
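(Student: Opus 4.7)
The pairing $\operatorname{pp}$ is $\mathfrak{R}$-bilinear and, by inspection of Definition~\ref{ppr-def}, depends only on the $U_0$-content of its arguments -- the variable $x_1$ enters $\mathbb{B}$ only through the differentials, never through $\operatorname{pp}$. Moreover, $\operatorname{pp}_r$ annihilates both the $K\otimes K$ and $L\otimes L$ summands, so only the two cross-pairings
$$K_{r-1,n-1}U_0 \otimes L_{d-r-1,n}U_0 \longrightarrow {\textstyle\bigwedge^{d-1}}U_0 \quad\text{and}\quad L_{r-1,n}U_0 \otimes K_{d-r-1,n-1}U_0 \longrightarrow {\textstyle\bigwedge^{d-1}}U_0$$
contribute. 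All three assertions therefore reduce to identities in the exterior, symmetric, and divided-power algebras of $U_0$, augmented by the two symmetry identities for $q$ and $\Phi$ recorded in Remark~\ref{rmk3}.

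For (a), each cross-pairing is the natural hook-Schur/Weyl duality composed with the wedge pairing on $\bigwedge^\bullet U_0$. Perfection is precisely the self-duality encoded in Remark~\ref{skeleton-2.2}: $\mathbb{K}$ is identified with $\operatorname{Hom}_{\overline{\mathfrak{R}}}(\mathbb{L},\overline{\mathfrak{R}})\otimes \bigwedge^{d-1}U_0\,(-d+1)$, and under this identification $\operatorname{pp}$ becomes the evaluation pairing on each piece. (One can also verify it by hand using the standard bases from Remark~\ref{rmk2}, where the matrix of the pairing turns out to be a signed permutation.) For (b), since $U_0$ has rank $d-1$, $\bigwedge^d U_0 = 0$; hence for any $\xi\in U_0^*$, $\theta\in\bigwedge^r U_0$, $\theta'\in\bigwedge^{d-r}U_0$, the graded Leibniz rule applied to $0 = \xi(\theta\wedge\theta')$ together with the wedge-swap sign yields
$$\xi(\theta)\wedge \theta' \;=\; (-1)^{1+r(d-r)}\,\xi(\theta')\wedge \theta.$$
Applying this once with $\xi = \mu_{n-1}(\nu'_n)$ and once with $\xi = \mu'_{n-1}(\nu_n)$ (both of which lie in $U_0^*$) sends each of the two summands of $\operatorname{pp}_r(\Theta_r\otimes\Theta_{d-r})$ onto the corresponding summand of $(-1)^{r(d-r)}\operatorname{pp}_{d-r}(\Theta_{d-r}\otimes\Theta_r)$, establishing (b) term by term.

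For (c), expand $b_{r+1}(\Theta_{r+1})$ and $b_{d-r}(\Theta_{d-r})$ via Definition~\ref{diff} and organise the resulting contributions into two families. The ``$\pmb\delta\operatorname{Kos}^\Psi$-terms'' cancel because $\operatorname{Kos}^\Psi$ acts as a graded derivation on the wedge factor, so after pairing via $\operatorname{pp}$ the cancellation reduces to the same $\bigwedge^d U_0 = 0$ identity used in (b); schematically, $\sum_j x_j\bigl(x_j^*(\theta_r)\wedge \theta_{d-r} + (-1)^{r-1}\theta_r\wedge x_j^*(\theta_{d-r})\bigr) = \sum_j x_j\,x_j^*(\theta_r\wedge\theta_{d-r}) = 0$. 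The ``$x_1$-terms'' (those involving $q$, $\widetilde\Phi$, and $\operatorname{proj}$) cancel pairwise using Remark~\ref{rmk3}: contracting $\widetilde\Phi$ against $x_1$ via $x_1(\widetilde\Phi) = \Phi$ turns an $x_1$-contribution on one side into a $\pmb\delta$-multiple that, after moving $q$ across $\operatorname{pp}$ via the symmetry $[q(\nu)](\nu') = [q(\nu')](\nu)$, becomes precisely the $x_1$-contribution from the other side with opposite sign. The main obstacle is combinatorial bookkeeping: each of the four summand-combinations $K/L\otimes K/L$ of $B_{r+1}\otimes B_{d-r}$ expands into several terms (three from each differential), and these must be grouped carefully so that each group cancels by a single application of one of the two Remark~\ref{rmk3} identities -- but no idea beyond those already used in (a) and (b) is required.
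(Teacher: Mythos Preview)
Your treatment of (\ref{ppr-prop-a}) and (\ref{ppr-prop-b}) is essentially the paper's: hook Schur/Weyl duality for the first, and the Leibniz/$\bigwedge^d U_0=0$ trick for the second. The sign identity $\xi(\theta)\wedge\theta'=(-1)^{1+r(d-r)}\xi(\theta')\wedge\theta$ you derive is exactly what the paper unpacks in its computation of (\ref{ppr-prop-b}).

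For (\ref{ppr-prop-c}), however, your two-family dichotomy (``$\pmb\delta\operatorname{Kos}^\Psi$-terms cancel via Leibniz, $x_1$-terms cancel via Remark~\ref{rmk3}'') does not cleanly match what actually happens. The paper works out the case $\Theta_{r+1}\in K$, $\Theta_{d-r}\in K$; there the $\pmb\delta\operatorname{Kos}^\Psi$-parts of both differentials land in $K$-summands and therefore pair trivially with the opposite $K$-type argument --- they vanish for structural reasons, not via a derivation identity. What survives are the $x_1\cdot\kappa\cdot(\operatorname{proj}\circ q)$ terms, and these involve no $\widetilde\Phi$ at all, so the mechanism you describe (``contract $\widetilde\Phi$ against $x_1$ to produce a $\pmb\delta$-multiple'') simply does not apply. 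The actual cancellation runs: drop $\operatorname{proj}$ (the target is already in $D_n(U_0^*)$), expand $[q(x_j(\nu_n))](\nu_n')\in U_0^*$ in the basis $\{x_k^*\}$, apply the symmetry $[q(\nu)](\nu')=[q(\nu')](\nu)$ from Remark~\ref{rmk3}, and \emph{then} invoke the Leibniz/$\bigwedge^d U_0=0$ identity to finish. So the Leibniz trick is needed for the $x_1$-terms too, not just the $\pmb\delta$-terms.

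Your closing caveat (``no idea beyond those used in (a) and (b) is required'') is correct and saves the proposal from being wrong in substance, but the explicit mechanism you spell out for the $x_1$-terms would mislead a reader trying to carry out the $K$--$K$ case. The other summand combinations behave differently again (in $L$--$K$ the $\pmb\delta\operatorname{Kos}^\Psi$-terms do survive and do cancel via your derivation scheme), so each of the four cases has its own bookkeeping and its own mix of the two identities.
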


\begin{Remark} We guess that the complex $(\mathbb B,b)$ is an associative DG-algebra and that the perfect pairing $\operatorname{pp}_r$ of Definition~\ref{ppr-def} describes the multiplication $B_r\otimes B_{d-r}\to B_d$. (Please notice that a ``guess'' is even weaker than a ``conjecture''.) If our guess is correct, then (\ref{ppr-prop-b}) would show that the multiplication $B_r\otimes B_{d-r}\to B_d$ is graded-commutative and (\ref{ppr-prop-c}) would show that the graded product rule holds for $B_{r+1}\otimes B_{d-r}\to B_{d+1}=0$. \end{Remark}

\begin{proof} To prove (\ref{ppr-prop-a}) it suffices to see that for each $r$, with $1\le r\le d-1$, the $\mathbb Z$-module homomorphism \begin{equation}\label{sts}L_{r-1,n}U_0\otimes K_{d-r-1,n-1}U_0\longrightarrow {\mathbb Z},\end{equation} which is given by
$$\kappa(\theta_r\otimes \mu_{n-1})\otimes \eta(\theta_{d-r}\otimes \nu_n)\mapsto [\mu_{n-1}(\nu_n)](\theta_r)\wedge \theta_{d-r},$$ is a perfect pairing. This assertion is clear and well-known. 
Indeed, the standard bases for $L_{r-1,n}U_0$ and $K_{d-r-1,n-1}U_0$ (see, for example, \cite[(5.4) and (5.5)]{EKK} or Remark~\ref{rmk2} or Definition~\ref{basis-47})
act like dual bases (up to sign) under this homomorphism. Indeed, the basis element  dual (up to sign) 
 to 
$$\kappa(x_{a_1}\wedge \ldots\wedge x_{a_r} \otimes x_{b_1}\cdots x_{b_{n-1}})$$ with $a_1<a_2<\dots <a_r$ and $a_1\le b_1\le b_2\le \dots \le b_{n-1}$ is
\begin{equation}\label{promise}\eta (x_{c_1}\wedge \ldots\wedge x_{c_{d-r}}\otimes (x_{a_1}\cdot x_{b_1}\cdots x_{b_{n-1}})^*),\end{equation} where 
$c_1<\dots<c_{d-r}$ is the complement of $\{a_2,\dots, a_r\}$ in $\{2,\dots,d\}$.
We give a complete proof in Observation~\ref{behavior}.

We prove part of (\ref{ppr-prop-b}). Take $1\le r\le d-1$ and let $\Theta_r=\kappa(\theta_r'\otimes \mu_{n-1})$ and $\Theta_{d-r}=\eta(\theta_{d-r}\otimes \nu_n')$. Apply Definition~\ref{ppr-def}, the graded-commutativity of the exterior algebra $\bigwedge^{\bullet}U_0$, and the fact that action of each element of $U_0^*$ on $\bigwedge^{\bullet}U_0$ satisfies the graded product rule (and $\bigwedge^{d}U_0=0$) to  see that 
$$\begin{array}{lll}\operatorname{pp}_{d-r}(\Theta_{d-r}\otimes \Theta_r)&=&-[\mu_{n-1}(\nu_n')](\theta_{d-r})\wedge \theta_r'\vspace{5pt}\\
&=&-(-1)^{r(d-r-1)}\theta_r'\wedge  [\mu_{n-1}(\nu_n')](\theta_{d-r})\vspace{5pt}\\
&=&-(-1)^{r(d-r-1)}(-1)^{r+1}[\mu_{n-1}(\nu_n')](\theta_r')\wedge  \theta_{d-r}\vspace{5pt}\\
&=&(-1)^{r(d-r)}[\mu_{n-1}(\nu_n')](\theta_r')\wedge  \theta_{d-r}\vspace{5pt}\\
&=& (-1)^{r(d-r)}\operatorname{pp}_r(\Theta_r\otimes\Theta_{d-r}).\end{array}$$ The rest of the proof of (\ref{ppr-prop-b}) proceeds in a similar manner.

We prove part of (\ref{ppr-prop-c}). Fix $r$ with $1\le r\le d-2$ and let 
$$\Theta_{r+1}=\eta(\theta_{r+1}\otimes \nu_n)\in B_{r+1}\quad \text{and}\quad
\Theta_{d-r}=\eta(\theta_{d-r}\otimes \nu_n')\in B_{d-r},$$with $\theta_{i}\in \bigwedge^iU_0$ and $\nu_n$ and $\nu'_n$ in $D_n(U_0^*)$.
We compute
\begingroup
\allowdisplaybreaks
\begin{eqnarray}
&&\operatorname{pp}_r(b_{r+1}(\Theta_{r+1})\otimes \Theta_{d-r})+(-1)^{r+1}\operatorname{pp}_{r+1}(\Theta_{r+1}\otimes b_{d-r}(\Theta_{d-r}))\notag\\
&=&\begin{cases} -x_1\otimes \sum\limits_{j=2}^d \operatorname{pp}_r\left(\kappa(x_j^*(\theta_{r+1})\otimes(\operatorname{proj}\circ q)([x_j(\nu_n)]))\otimes \eta(\theta_{d-r}\otimes \nu_n')\right)\vspace{5pt}\\
-(-1)^{r+1}x_1\otimes \sum\limits_{j=2}^d \operatorname{pp}_{r+1}\left( \eta(\theta_{r+1}\otimes \nu_n)\otimes 
\kappa(x_j^*(\theta_{d-r})\otimes (\operatorname{proj}\circ q)(x_j(\nu_n')))\right)\end{cases}\notag\\
&=&\begin{cases}
-x_1\otimes\sum\limits_{j=2}^d 
\left(\vphantom{\widetilde{\Phi}}\left[
(\operatorname{proj}\circ q)[x_j(\nu_n)]\right](\nu_n')\right)[x_j^*(\theta_{r+1})]\wedge \theta_{d-r} \\
-(-1)^{r+1}(-1) x_1\otimes\sum\limits_{j=2}^d 
\left[ \vphantom{\widetilde{\Phi}}
\left((\operatorname{proj}\circ q)[x_j(\nu_n')]\right)
 (\nu_n)\right](\theta_{r+1}) \wedge x_j^*(\theta_{d-r}).
\end{cases}\label{??}
\end{eqnarray}
\endgroup
We re-configure the element $\left[(\operatorname{proj}\circ q)[x_j(\nu_n)]\right](\nu_n')$ of $U_0^*$. First of all, 
the element $\nu_n'$ is in $D_n(U_0^*)$; so $[q(\nu_{n-1})](\nu'_n)=[(\operatorname{proj}\circ q)(\nu_{n-1})](\nu'_n)$ for any $\nu_{n-1}\in D_{n-1}(U^*)$; thus,  
$$\left[(\operatorname{proj}\circ q)[x_j(\nu_n)]\right](\nu_n')=\left[q[x_j(\nu_n)]\right](\nu_n').$$ Now we write this element
 in terms of the basis $\{x_2^*,\dots,x_d^*\}$ of $U_0^*$ and obtain
$$\left[(\operatorname{proj}\circ q)[x_j(\nu_n)]\right](\nu_n')=\sum\limits_{k=2}^d x_k\left(\vphantom{\widetilde{\Phi}}\left[q[x_j(\nu_n)]\right](\nu_n')\right) \cdot x_k^*.$$ 
In a similar manner, 
\begin{equation}\label{in-a-sim}\left[(\operatorname{proj}\circ q)[x_j(\nu_n')]\right](\nu_n)=\sum\limits_{k=2}^d x_k\left(\vphantom{\widetilde{\Phi}}\left[q[x_j(\nu_n')]\right](\nu_n)\right) \cdot x_k^*.\end{equation}
We  take advantage of the module action of $\operatorname{Sym}_\bullet U$ on $D_{\bullet}(U^*)$, the fact that $\operatorname{Sym}_\bullet U$ is a commutative ring, and Remark~\ref{rmk3}, to see that
\begin{equation}\label{to-see-that}\begin{array}{lll}\left[(\operatorname{proj}\circ q)[x_j(\nu_n)]\right](\nu_n')&=&\sum\limits_{k=2}^d x_k\left(\vphantom{\widetilde{\Phi}}\left[q[x_j(\nu_n)]\right](\nu_n')\right) \cdot x_k^*\\
&=&\sum\limits_{k=2}^d \left(\vphantom{\widetilde{\Phi}}\left[q[x_j(\nu_n)]\right](x_k(\nu_n'))\right) \cdot x_k^*\\
&=&\sum\limits_{k=2}^d \left(\vphantom{\widetilde{\Phi}}\left[q[x_k(\nu_n')]\right](x_j(\nu_n))\right) \cdot x_k^*\\
&=&\sum\limits_{k=2}^d x_j\left(\vphantom{\widetilde{\Phi}}\left[q[x_k(\nu_n')]\right](\nu_n)\right) \cdot x_k^*
.\end{array}\end{equation}
Use (\ref{to-see-that}) and (\ref{in-a-sim}) to see that (\ref{??}) is equal to
\begin{equation}\label{penult}\begin{cases}
-x_1\otimes\sum\limits_{j,k=2}^d 
x_j\left(\vphantom{\widetilde{\Phi}}\left[q[x_k(\nu_n')]\right](\nu_n)\right) \cdot [(x_k^*\wedge x_j^*)(\theta_{r+1})]\wedge \theta_{d-r} \\
+(-1)^{r+1} x_1\otimes\sum\limits_{j,k=2}^d 
x_k\left(\vphantom{\widetilde{\Phi}}\left[q[x_j(\nu_n')]\right](\nu_n)\right) \cdot x_k^*(\theta_{r+1}) \wedge x_j^*(\theta_{d-r}).
\end{cases}\end{equation}The differential $x_j^*$ exhibits the graded product rule on $\bigwedge^{\bullet}U_0$ and $\bigwedge^{d}U_0=0$; so,
$$0=x_j^*(x_k^*(\theta_{r+1}) \wedge \theta_{d-r})=
[x_j^*\wedge x_k^*](\theta_{r+1}) \wedge \theta_{d-r}+(-1)^r
x_k^*(\theta_{r+1}) \wedge x_j^*(\theta_{d-r});$$ so, (\ref{penult}) is 
$$\begin{cases}
-x_1\otimes\sum\limits_{j,k=2}^d 
x_j\left(\vphantom{\widetilde{\Phi}}\left[q[x_k(\nu_n')]\right]((\nu_n))\right) \cdot [(x_k^*\wedge x_j^*)(\theta_{r+1})]\wedge \theta_{d-r} \\
+ x_1\otimes\sum\limits_{j,k=2}^d 
x_k\left(\vphantom{\widetilde{\Phi}}\left[q[x_j(\nu_n')]\right](\nu_n)\right) \cdot [x_j^*\wedge x_k^*](\theta_{r+1}) \wedge \theta_{d-r};
\end{cases}$$ and this is zero. The rest of the proof of (\ref{ppr-prop-c}) proceeds in a similar manner.
\end{proof}

\section{\bf A matrix  description of $\mathbb B$.} \label{expl-desc}

In this section we make the complex $\mathbb B$ significantly more explicit. We describe $\mathbb B$ in terms of  elements of the ring ${\mathfrak R}$, rather than in terms of the maps $q$, $\operatorname{proj}\circ q$, and $\widetilde{\Phi}$. Proposition~\ref{Best} contains a version of $b_r$ which is close to an explicit matrix for all $r$ and $d$. In Proposition~\ref{Best}, the value of $b_r$ applied to a standard basis element of
$B_r$ is given explicitly as a linear combination of elementary generators of $B_{r-1}$ with coefficients in ${\mathfrak R}$. (Each elementary generator of $B_{r-1}$
can be expressed in terms of the standard basis elements of $B_{r-1}$;  that step is carried out in Theorem~\ref{47.4}, but is not carried out in Proposition~\ref{Best}.) 
Theorem~\ref{47.4} gives an explicit matrix version of $b_r$ for all $r$ and $d$. The calculation that gets from $\mathbb B$, as described in Definition~\ref{diff}, to the $\mathbb B$ of  Proposition~\ref{Best} follows the corresponding calculation in \cite{EK-K-2} fairly closely and we omit most details. The proof of Theorem~\ref{47.4} requires a careful analysis of the standard bases for various Schur and Weyl modules, and the duality among theses bases. This part of the proof is recorded in complete detail. 
 In Examples \ref{d=3matrices}, \ref{d=4matrices},  and \ref{d=4matrices-ext}, we emphasize the self-duality of matrices in the middle of the resolution $\mathbb B$ when $d$ is $3$ or $4$. 

The following  notation and elementary results are very similar to those given in   \cite[Sect.~5]{EK-K-2}; see in particular \cite[(5.1.1) and Obs.~5.4]{EK-K-2}.
\begin{data}\label{data-exp} Recall the ring ${\mathfrak R}={\operatorname{Sym}}_{\bullet}(U\oplus {\operatorname{Sym}}_{2n-2}U)$ of Data~\ref{Opening-Data} and Data~\ref{data1}. Fix a basis $x_2,\dots,x_d$ for $U_0$, and, for each $m\in \binom{x_1,\dots,x_d}{2n-2}$, let $t_m$ be the element $\Phi(m^*)$ in ${\mathfrak R}$. In this language, 
$$\begin{array}{l}
\text{${\mathfrak R}$ is the bi-graded polynomial ring $\textstyle \mathbb Z[x_1,\dots,x_d,\{t_m\mid m\in\binom{x_1,\dots,x_d}{2n-2}\}]$},\vspace{5pt}\\
\Phi=\sum\limits_{m\in \binom{x_1,\dots,x_d}{2n-2}}t_m\otimes m^*\in {\mathfrak R}\otimes D_{2n-2}(U^*),\text{ and}\vspace{5pt}\\
\widetilde{\Phi}=\sum\limits_{m\in \binom{x_1,\dots,x_d}{2n-2}}t_m\otimes (x_1m)^*\in {\mathfrak R}\otimes D_{2n-1}(U^*).\end{array}$$
Let $T$ be the matrix $(t_{m_1m_2})$ where $m_1$ and $m_2$ roam over  $\binom{x_1,\dots,x_d}{n-1}$ in the same order, $\pmb \delta$ be the determinant of $T$, and $Q$ be the classical adjoint of $T$. We refer to the entries of $Q$ as $Q_{m_1,m_2}$ for $m_1$ and $m_2$ in $\binom{x_1,\dots,x_d}{n-1}$.
The matrices $T$ and $Q$ are both symmetric,   
\begingroup
\allowdisplaybreaks
\begin{align}\label{TQ2}\textstyle &\sum\limits_{m\in\binom{x_1,\dots,x_d}{n-1}} t_{m'm}Q_{m,m''}=\chi(m'=m'')\pmb \delta, &&\text{for all $m'$ and $m''$ in $\binom{x_1,\dots,x_d}{n-1}$,}\\
\label{q-} &q(m_2^*)= \sum\limits_{m_1\in \binom{x_1,\dots,x_d}{n-1}} Q_{m_1,m_2}\otimes m_1\in {\mathfrak R}\otimes \operatorname{Sym}_{n-1}U&&\text{for $m_2\in \binom{x_1,\dots,x_d}{n-1}$},\\
\label{m-of-superPhi-}&m_1(\widetilde{\Phi})=\sum\limits_{m_2\in \binom{x_1,\dots,x_d}{2n-2-r}} t_{m_1m_2}\otimes (x_1m_2)^*\in {\mathfrak R}\otimes D_{n-1}(U^*)&&\text{for $m_1\in \binom{x_2,\dots,x_d}{r}$},\text{ and}\\
\label{m-of-Phi-}&m_1(\Phi)=\sum\limits_{m_2\in \binom{x_1,\dots,x_d}{2n-2-r}} t_{m_1m_2}\otimes m_2^*\in {\mathfrak R}\otimes D_{n-1}(U^*)&&\text{for $m_1\in \binom{x_1,\dots,x_d}{r}$},\\
&&&\text{for $0\le r\le 2n-2$.}\notag
\end{align}
\endgroup 
\end{data}

\begin{proposition}\label{Best} If $(\mathbb B,b)$ is the complex of Definition~{\rm\ref{diff}} and Corollary~{\rm\ref{main-lemma}.\ref{main-lemma-e}}, then, in the language of Data~{\rm\ref{data-exp}}, the differentials of $\mathbb B$ are explicitly given by the following formulas. 

\begin{enumerate}[\rm(a)] 
\item\label{Best-a} The map $b_1$ is explicitly given as follows.
\begin{enumerate}[\rm(i)] 
\item If $m\in \binom{x_2, \ldots, x_d}{n-1}$, then $m^*\in K_{0,n-1}U$ and $$b_1(m^*)=\sum\limits_{m_1\in \binom{x_1, \ldots, x_d}{n-1}} x_1m_1Q_{m_1,m}.$$

\item If $m\in \binom{x_2, \ldots, x_d}{n}$, then $m\in L_{0,n}U$ and $$b_1(m)=\pmb\delta m -\sum\limits_{m_1\in \binom{x_1, \ldots, x_d}{n-2}}\sum\limits_{m_2\in \binom{x_1, \ldots, x_d}{n-1}} x_1m_2Q_{m_2,x_1m_1}
t_{m_1m}.$$
\end{enumerate}

\item If $2\le r\le d-1$, then the map $b_r$ is explicitly given as follows.
\begin{enumerate}[\rm(i)] 
\item\label{Best-b-i} If 
$\theta\in \bigwedge^rU_0$,  and $m\in \binom{x_2,\dots,x_d}{n}$, then $\eta(\theta\otimes m^*)\in K_{r-1,n-1}U_0^*$ and
$$ b_r(\eta(\theta\otimes m^*))=\begin{cases} - \sum\limits_{j=2}^{d}\sum\limits_{m_1\in \binom{x_1,\dots,x_d}{n-2}}\sum\limits_{m_2\in \binom{x_2,\dots,x_d}{n}}\chi(x_j|m)x_1t_{m_1m_2} Q_{x_1m_1,\frac{m}{x_j}}\otimes \eta(x_j^*(\theta)\otimes  m_2^*)\\
-\pmb\delta \sum\limits_{j=2}^{d}x_j\otimes  \eta(x_j^*(\theta)\otimes m^*)\\
\hline
-\sum\limits_{j=2}^{d}\sum\limits_{m_1\in\binom{x_2,\dots,x_d}{n-1}}\chi(x_j|m)x_1Q_{m_1,\frac{m}{x_j}}\otimes  \kappa (x_j^*(\theta)\otimes m_1).
\end{cases}$$
 \item\label{prove-me} If  
$\theta\in \bigwedge^rU_0$,  and $m\in \binom{x_2,\dots,x_d}{n-1}$, then 
$\kappa(\theta\otimes m)\in L_{r-1,n}U_0$ and
$$ b_r(\kappa(\theta\otimes m))=\begin{cases}
+ \sum\limits_{j=2}^d\sum\limits_{m_1,m_2\in\binom{x_1,\dots,x_d}{n-2}}
\sum\limits_{m_3\in\binom{x_2,\dots,x_d}{n}}
x_1t_{x_jmm_2}Q_{x_1m_1,x_1m_2}t_{m_1m_3} \otimes \eta(x_j^*(\theta)\otimes m_3^*)\\
\hline
+\sum\limits_{j=2}^d\sum\limits_{m_1\in\binom{x_2,\dots,x_d}{n-1}}\sum\limits_{m_2\in\binom{x_1,\dots,x_d}{n-2}}x_1t_{x_jmm_2}Q_{m_1,x_1m_2} \otimes \kappa\left(x_j^*(\theta)\otimes m_1\right)\\
-\pmb \delta\sum\limits_{j=2}^d x_j\otimes \kappa (x_j^*(\theta)\otimes m).
\end{cases}$$
\end{enumerate}
\item\label{Best-c} If $\theta\in \bigwedge^{d-1}U_0$, then $\theta\in B_d$ and
$$b_d(\theta)=\begin{cases} 
\phantom{+}\sum\limits_{m\in\binom{x_2,\dots,x_d}{n}} 
[\pmb\delta m -\sum\limits_{m_1\in \binom{x_1, \ldots, x_d}{n-2}}\sum\limits_{m_2\in \binom{x_1, \ldots, x_d}{n-1}} x_1m_2Q_{m_2,x_1m_1}
t_{m_1m}]
\otimes \eta(\theta\otimes m^*)
\vspace{5pt}\\\hline -\sum\limits_{m\in\binom{x_2,\dots,x_d}{n-1}} 
\sum\limits_{m_1\in \binom{x_1, \ldots, x_d}{n-1}} x_1m_1Q_{m_1,m}
\otimes \kappa(\theta \otimes m).
\end{cases}$$
\end{enumerate}\end{proposition}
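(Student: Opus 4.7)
The strategy is direct coordinate computation: substitute the explicit formulas (\ref{q-}), (\ref{m-of-superPhi-}), (\ref{m-of-Phi-}) from Data~\ref{data-exp} into the intrinsic maps of Definition~\ref{diff}, expand each operator term-by-term, and collapse the resulting sums using the adjugate identity (\ref{TQ2}) together with the identity $x_1(\widetilde\Phi) = \Phi$. The analogous computation in codimension three is carried out in \cite[Sect.~5]{EK-K-2}; the present argument follows that template with only notational changes for arbitrary $d$, so I would exhibit the new $d$-dependent features in detail and suppress the repetitive manipulations.

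The boundary cases (a) and (c) reduce to a single substitution each. For $b_1(m^*)$ with $m \in \binom{x_2,\ldots,x_d}{n-1}$, equation (\ref{q-}) immediately yields $x_1 q(m^*) = \sum_{m_1} x_1 m_1 Q_{m_1,m}$; for $b_1(m)$ with $m \in L_{0,n}U_0$, I would first expand $m(\widetilde{\Phi})$ via (\ref{m-of-superPhi-}) and then apply $q$ via (\ref{q-}) to each summand. Case (c) is obtained by the same two substitutions applied to Definition~\ref{diff}.

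For the $K$-part of $b_r$ in the middle range (Proposition~\ref{Best}(b)(i)), I would first use (\ref{kappa-eta}) to rewrite $\eta(\theta \otimes m^*) = \sum_{j \,:\, x_j \mid m} x_j^*(\theta) \otimes (m/x_j)^*$ and then substitute (\ref{q-}) and (\ref{m-of-superPhi-}) into each of the three terms of Definition~\ref{diff}.\ref{diff-a}. The projection $\operatorname{proj}: \operatorname{Sym}_{n-1}U \to \operatorname{Sym}_{n-1}U_0$ acts on (\ref{q-}) by restricting the outer index to $\binom{x_2,\ldots,x_d}{n-1}$, producing the third displayed summand directly; the middle summand arises from $\pmb\delta \cdot \mathrm{Kos}^\Psi(\theta) \otimes \nu$ after a symmetric re-indexing of $(j,k)$ and a single application of the anticommutativity of $x_j^*$ and $x_k^*$.

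The main obstacle is the $L$-part of $b_r$ in the middle range (Proposition~\ref{Best}(b)(ii)). Starting from $\kappa(\theta \otimes m) = \sum_{j=2}^d x_j^*(\theta) \otimes x_j m$, I would compute $[q((x_j m)(\widetilde\Phi))](\widetilde\Phi)$ by three successive substitutions: (\ref{m-of-superPhi-}) gives $(x_j m)(\widetilde\Phi)$; then (\ref{q-}) gives $q$ of that; then the second application of $\cdot(\widetilde\Phi)$ is evaluated by splitting the inner index $m_1' \in \binom{x_1,\ldots,x_d}{n-1}$ into $x_1 \cdot \binom{x_1,\ldots,x_d}{n-2}$ and $\binom{x_2,\ldots,x_d}{n-1}$, using $x_1(\widetilde\Phi) = \Phi$ with (\ref{m-of-Phi-}) on the first piece and (\ref{m-of-superPhi-}) on the second. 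The piece of the resulting sum supported on $D_n(U_0^*)$ produces the displayed $K$-summand of (b)(ii); the piece supported on $x_1 \cdot D_{n-1}(U^*)$ collapses, via the adjugate identity (\ref{TQ2}), into a Kronecker-delta contribution proportional to $\pmb\delta$, which combines with the $\pmb\delta \cdot \mathrm{Kos}^\Psi(\theta_i) \otimes \mu_i$ summand of Definition~\ref{diff}.\ref{diff-b} to give the remaining two displayed summands. Once laid out in parallel with \cite[Sect.~5]{EK-K-2}, each remaining step is a finite arithmetic check.
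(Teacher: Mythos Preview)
Your overall strategy---direct substitution of (\ref{q-}), (\ref{m-of-superPhi-}), (\ref{m-of-Phi-}) into Definition~\ref{diff}, followed by the adjugate identity (\ref{TQ2}) and $x_1(\widetilde\Phi)=\Phi$---is exactly the paper's approach, and your treatment of (a), (c), and (b)(i) is sound.

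However, your accounting in (b)(ii) is incorrect. Definition~\ref{diff}.\ref{diff-b} has three terms, which the paper calls $S_1$ (the $\eta$-term involving $[q(\mu_i(\widetilde\Phi))](\widetilde\Phi)$), $S_2$ (the $\kappa\circ\operatorname{proj}\circ q$ term), and $S_3$ (the $\pmb\delta\cdot{\mathrm{Kos}}^{\Psi}$ term). You only analyse $S_1$ and $S_3$ and never mention the $\operatorname{proj}\circ q$ contribution in (b)(ii). In fact the three displayed summands of (b)(ii) arise as follows: $S_2$ gives the second summand directly, $S_3$ gives the third summand directly (via anticommutativity of $\kappa$ and ${\mathrm{Kos}}^{\Psi}$), and the first summand is the part of $S_1$ supported on $D_n(U_0^*)$. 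The leftover piece of $S_1$ supported on $x_1\cdot D_{n-1}(U^*)$ does \emph{not} combine with $S_3$; it must vanish on its own. Moreover, this vanishing does not follow from (\ref{TQ2}) alone: after the adjugate identity collapses the inner sum to a Kronecker delta, one is left with a nonzero expression proportional to $\pmb\delta$, and the paper kills it by recognising it as $\pmb\delta x_1\otimes(\eta\circ\eta)(\cdots)=0$. You will need this $\eta\circ\eta=0$ step (and the analogous one in the suppressed proof of (b)(i)) to close the argument.
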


\begin{proof}
Assertions (\ref{Best-a}) and (\ref{Best-c}) are not difficult to prove. The proofs of (\ref{Best-b-i}) and (\ref{prove-me}) are very similar. We prove (\ref{prove-me}) and suppress the proof of (\ref{Best-b-i}). Fix $r$, $\theta$, and $m$ with 
 $2\le r\le d-1$, $\theta\in \bigwedge^rU_0$,  and $m\in \binom{x_2,\dots,x_d}{n-1}$. Let $\Theta=\kappa(\theta\otimes m)$. We compute
$b_r(\Theta)$. We know that $\kappa(\theta\otimes m)=\sum_{j=2}^d x_j^*(\theta)\otimes x_jm$; so we  apply Definition~\ref{diff}.\ref{diff-b} to see that
$b_r(\Theta)=\sum_{i=1}^3S_i$, with
\begingroup
\allowdisplaybreaks
 \begin{align}
S_1&=x_1\otimes \sum\limits_{j=2}^d \eta\left(x_j^*(\theta)\otimes (q[[x_jm](\widetilde{\Phi})])(\widetilde{\Phi})\right),\notag\\
S_2&=x_1\otimes \sum\limits_{j=2}^d \kappa\left(x_j^*(\theta)\otimes (\operatorname{proj}\circ q)[(x_jm)(\widetilde{\Phi})]\right),\text{ and}\notag\\
S_3&=\pmb \delta\sum\limits_{j=2}^d {{\mathrm {Kos}}}^{\Psi}(x_j^*(\theta))\otimes x_jm.\notag
\end{align}
\endgroup
The homomorphisms ${{\mathrm {Kos}}}^{\Psi}$ and $\kappa$ anti-commute; so,
$$S_3=\pmb \delta ({{\mathrm {Kos}}}^{\Psi}\circ \kappa)(\Theta)=-\pmb \delta (\kappa\circ {{\mathrm {Kos}}}^{\Psi})(\Theta);$$ which is the third summand of $b_r(\Theta)$ as given in the statement of (\ref{prove-me}). Apply (\ref{m-of-superPhi-}) to $(x_jm)(\widetilde{\Phi})$, followed by (\ref{q-}), to write
\begingroup\allowdisplaybreaks
\begin{align}
S_1&=x_1\otimes \sum\limits_{j=2}^d\sum\limits_{m_2\in\binom{x_1,\dots,x_d}{n-2}}t_{x_jmm_2} \eta\left(x_j^*(\theta)\otimes (q[(x_1m_2)^*])(\widetilde{\Phi})\right)\notag\\&=x_1\otimes \sum\limits_{j=2}^d\sum\limits_{m_2\in\binom{x_1,\dots,x_d}{n-2}}
\sum\limits_{m_1\in\binom{x_1,\dots,x_d}{n-1}}t_{x_jmm_2}Q_{m_1,x_1m_2} \eta\left(x_j^*(\theta)\otimes m_1(\widetilde{\Phi})\right)\text{ and}\notag\\
S_2&=x_1\otimes \sum\limits_{j=2}^d\sum\limits_{m_2\in\binom{x_1,\dots,x_d}{n-2}}t_{x_jmm_2} \kappa\left(x_j^*(\theta)\otimes (\operatorname{proj}\circ q)[(x_1m_2)^*]\right)\notag\\
&=x_1\otimes \sum\limits_{j=2}^d\sum\limits_{m_2\in\binom{x_1,\dots,x_d}{n-2}}\sum\limits_{m_1\in\binom{x_1,\dots,x_d}{n-1}}t_{x_jmm_2}Q_{m_1,x_1m_2} \kappa\left(x_j^*(\theta)\otimes \operatorname{proj}(m_1)\right)\notag\\
&=x_1\otimes \sum\limits_{j=2}^d\sum\limits_{m_2\in\binom{x_1,\dots,x_d}{n-2}}\sum\limits_{m_1\in\binom{x_2,\dots,x_d}{n-1}}t_{x_jmm_2}Q_{m_1,x_1m_2} \kappa(x_j^*(\theta)\otimes m_1).\notag\end{align}\endgroup 
Observe that $S_2$ is the second summand of $b_r(\Theta)$ as given in the statement of (\ref{prove-me}).
The set $\binom{x_1,\dots,x_d}{n-1}$ is the disjoint union  $x_1\binom{x_1,\dots,x_d}{n-2}\cup \binom{x_2,\dots,x_d}{n-1};$ and therefore, $S_1=S_1'+S_1''$ with
$$\begin{array}{lll}
S_1'&=&x_1\otimes \sum\limits_{j=2}^d\sum\limits_{m_2\in\binom{x_1,\dots,x_d}{n-2}}
\sum\limits_{m_1\in\binom{x_1,\dots,x_d}{n-2}}t_{x_jmm_2}Q_{x_1m_1,x_1m_2} \eta\left(x_j^*(\theta)\otimes (x_1m_1)(\widetilde{\Phi})\right)\text{ and}\\
S_1''&=&x_1\otimes \sum\limits_{j=2}^d\sum\limits_{m_2\in\binom{x_1,\dots,x_d}{n-2}}
\sum\limits_{m_1\in\binom{x_2,\dots,x_d}{n-1}}t_{x_jmm_2}Q_{m_1,x_1m_2} \eta\left(x_j^*(\theta)\otimes m_1(\widetilde{\Phi})\right).
\end{array}$$
The main property of $\widetilde{\Phi}$ is $x_1(\widetilde{\Phi})=\Phi$. Apply (\ref{m-of-Phi-}) to $S_1'$ and (\ref{m-of-superPhi-}) to $S_1''$ to obtain
$$\begin{array}{lll}
S_1'&=&x_1\otimes \sum\limits_{j=2}^d\sum\limits_{m_2\in\binom{x_1,\dots,x_d}{n-2}}
\sum\limits_{m_1\in\binom{x_1,\dots,x_d}{n-2}}
\sum\limits_{m_3\in\binom{x_1,\dots,x_d}{n}}t_{x_jmm_2}t_{m_1m_3}Q_{x_1m_1,x_1m_2} \eta(x_j^*(\theta)\otimes m_3^*)\text{ and}\\
S_1''&=&x_1\otimes \sum\limits_{j=2}^d\sum\limits_{m_2\in\binom{x_1,\dots,x_d}{n-2}}
\sum\limits_{m_1\in\binom{x_2,\dots,x_d}{n-1}}\sum\limits_{m_3\in\binom{x_1,\dots,x_d}{n-1}}t_{x_jmm_2}t_{m_1m_3}Q_{m_1,x_1m_2} \eta(x_j^*(\theta)\otimes (x_1m_3)^*).
\end{array}$$Partition the set $\binom{x_1,\dots,x_d}{n}$  into two subsets and write $S_1'=A+B$ with
$$\begin{array}{lll}
A&=&x_1\otimes \sum\limits_{j=2}^d\sum\limits_{m_2\in\binom{x_1,\dots,x_d}{n-2}}
\sum\limits_{m_1\in\binom{x_1,\dots,x_d}{n-2}}
\sum\limits_{m_3\in\binom{x_2,\dots,x_d}{n}}t_{x_jmm_2}t_{m_1m_3}Q_{x_1m_1,x_1m_2} \eta(x_j^*(\theta)\otimes m_3^*)\text{ and}\\
B&=&x_1\otimes \sum\limits_{j=2}^d\sum\limits_{m_2\in\binom{x_1,\dots,x_d}{n-2}}
\sum\limits_{m_1\in\binom{x_1,\dots,x_d}{n-2}}
\sum\limits_{m_3\in\binom{x_1,\dots,x_d}{n-1}}t_{x_jmm_2}t_{m_1x_1m_3}Q_{x_1m_1,x_1m_2} \eta(x_j^*(\theta)\otimes (x_1m_3)^*).\end{array}$$ Observe that $A$ is the first summand of $b_r(\Theta)$ as given in the statement of (\ref{prove-me}). 

We now prove that $S_1''+B=0$. Combine $x_1m_1\in x_1\binom {x_1,\dots,x_d}{n-2}$ from $B$ and $m_1\in\binom{x_2,\dots,x_d}{n-1}$ from $S_1''$ to see that 
$$S_1''+B=x_1\otimes \sum\limits_{j=2}^d\sum\limits_{m_2\in\binom{x_1,\dots,x_d}{n-2}}
\sum\limits_{m_1\in\binom{x_1,\dots,x_d}{n-1}}\sum\limits_{m_3\in\binom{x_1,\dots,x_d}{n-1}}t_{x_jmm_2}t_{m_1m_3}Q_{m_1,x_1m_2} \eta(x_j^*(\theta)\otimes (x_1m_3)^*).$$ Now use (\ref{TQ2}) in the form
$$\sum\limits_{m_1\in\binom{x_1,\dots,x_d}{n-1}}t_{m_1m_3}Q_{m_1,x_1m_2} =\pmb\delta\chi(m_3=x_1m_2)$$ to conclude that
$$S_1''+B=\pmb \delta x_1\otimes \sum\limits_{j=2}^d\sum\limits_{m_2\in\binom{x_1,\dots,x_d}{n-2}}
t_{x_jmm_2} \eta(x_j^*(\theta)\otimes (x_1^2m_2)^*).$$
Notice that 
$$\begin{array}{lll}
\eta(\sum\limits_{m_2\in\binom{x_1,\dots,x_d}{n-1}}
t_{mm_2} 
\theta
\otimes (x_1^2m_2)^*))&=&
\sum\limits_{j=2}^d
\sum\limits_{m_2\in\binom{x_1,\dots,x_d}{n-1}}
t_{mm_2}\chi(x_j|m_2) (
x_j^*(
\theta
)
\otimes (x_1^2\frac{m_2}{x_j})^*)\\&=&\sum\limits_{j=2}^d
\sum\limits_{m_2\in\binom{x_1,\dots,x_d}{n-2}}
t_{x_jmm_2} 
x_j^*(
\theta
)
\otimes (x_1^2m_2)^*;\end{array}$$hence, 
$$S_1''+B=\pmb \delta x_1\otimes (\eta\circ \eta)(\sum\limits_{m_2\in\binom{x_1,\dots,x_d}{n-1}}
t_{mm_2} 
\theta
\otimes (x_1^2m_2)^*))=0$$ and the proof of (\ref{prove-me}) is complete.
\end{proof}

 Our conventions pertaining to monomials are contained in Conventions~\ref{conv1}. In particular, in  \ref{conv1}.\ref{conv-init} we state 
that if $m$ is a monomial of positive degree in the variables $x_1,\dots,x_d$, then   ``${\operatorname{init}}(m)=x_i$'' and ``$\operatorname{least}(m)=i$'' both  mean that $i$ is the least index for which $x_i|m$.
 Consider $2\le r\le d$ and let $\Theta$ be a standard basis element of $B_r$. (Our use of this expression is explained below.) Proposition~\ref{Best} expresses $b_r(\Theta)$ as a linear combination of elements of $B_{r-1}$ of the form \begin{equation}\label{el-gen}\eta(\theta\otimes m^*)\quad\text{and}\quad  \kappa(\theta\otimes m),\end{equation} where each $\theta$ has the form $x_{a_1}\wedge \dots \wedge x_{a_{r-1}}$, with $2\le a_1<\dots<a_{r-1}\le d$, and each $m$ is a monomial  in $\{x_2,\dots,x_d\}$. (The degree of $m$ is $n$ if $m$ appears in $\eta(\theta\otimes m^*)$ and the degree of $m$ is $n-1$ if $m$ appears in $\kappa(\theta\otimes m)$.) We call the  elements with form given by (\ref{el-gen}) {\it elementary generators} of $B_{r-1}$. 
The idea of {\it standard basis elements} of $K_{*,*}$ and $L_{*,*}$ appears many places; see for example,  \cite[(5.4) and (5.5)]{EKK},  Remark~\ref{rmk2},  \cite[Examples 2.1.3.h and 2.1.17.h]{W}, or \cite[Sect.III.1]{BB}.

\begin{itemize}
\item The 
elementary generator $\eta(\theta\otimes m^*)$ of the form (\ref{el-gen})  
is a standard basis element for $K_{r-2,n-1}U_0$ if and only if 
$\{i\mid 2\le i\le \operatorname{least} (m)\} \subseteq \{a_j\mid 1\le j\le r-1\}$.
\item The elementary generator $\kappa(\theta\otimes m)$ of the form (\ref{el-gen})
is a standard basis element for $L_{r-2,n}U_0$ if and only if $a_1\le \operatorname{least}(m)$. 
\end{itemize}

\noindent In Definition~\ref{basis-47} we collect the standard basis elements of the relevant $K_{*,*}$ and $L_{*,*}$ and call these standard basis the basis $\mathfrak B$ for $\mathbb B$. Recall our use of  ``$[a,b]$'' to represent the set of all of  integers in a closed interval; see Convention~\ref{conv1}.\ref{conv1-i}.
 \begin{definition}\label{basis-47} Let $(\mathbb B,b)$ be the complex of Definition~{\rm\ref{diff}} and Corollary~{\rm\ref{main-lemma}.\ref{main-lemma-e}}. 
\begin{enumerate}[\rm(a)]
\item The basis for $B_0={\mathfrak R}$ is $Y^{(0)}=1$.
\item
If $1\le r\le d-1$, then the basis for $B_r$ is 
\begin{align}&\left\{X^{(r)}_{a_1,\dots,a_r,m}\left| 2\le a_1<\dots< a_r\le d,\  m\in \binom{x_2,\dots,x_d}{n},\text{ and }  [2,\operatorname{least} (m)]\subseteq\{a_1,\dots,a_r\}\right.\right\}\notag\\
\cup&\left\{Y^{(r)}_{a_1,\dots,a_r,m} \left| 2\le a_1<\dots< a_r\le d,\text{ and } m\in \binom{x_{a_1},\dots,x_d}{n-1}\right.\right\} \notag\end{align}
with
\begin{align}X^{(r)}_{a_1,\dots,a_r,m}&=\eta(x_{a_1}\wedge\dots\wedge x_{a_r}\otimes m^*)\in K_{r-1,n-1}U_0\subseteq B_{r} \text{ and}\notag\\
Y^{(r)}_{a_1,\dots,a_r,m}&=\kappa(x_{a_1}\wedge\dots\wedge x_{a_r}\otimes m)\in L_{r-1,n}U_0\subseteq B_{r}. \notag\end{align}

\item The basis for $B_d={\mathfrak R}\otimes \bigwedge^{d-1}U_0$ is $X^{(d)}=x_2\wedge \dots \wedge x_d$.

\item Let $\mathfrak B$ represent the union of all of the above bases; so $\mathfrak B$ is a basis for $\mathbb B$.

\end{enumerate}\end{definition}

Theorem~\ref{47.4} gives an explicit matrix version of $b_r$ in terms of the basis $\mathfrak B$ for all $r$ and $d$.  Lemma~\ref{May31-impt}  expresses the elementary generators of $\mathbb B$ in terms of the standard basis $\mathfrak B$. This result is used often in the proof of Theorem~\ref{47.4}. Observation~\ref{behavior} describes how the perfect pairings ${\operatorname{pp}_r: B_r\otimes B_{d-r}\to B_d}$ of Definition~\ref{ppr-def} behave on the basis $\mathfrak B$ of Definition~\ref{basis-47}. 
This result was promised at (\ref{promise}).

\begin{lemma}\label{May31-impt} Adopt the language of Definition~{\rm\ref{basis-47}} and fix an integer $r$ with $2\le r\le d-1$.  Let
 $m\in \binom{x_2,\dots,x_d}{n}$ be a monomial and $2\le c_1<\dots<c_{r-1}\le d$ be integers. 
\begin{enumerate}[\rm(a)]
\item\label{May31-impt-a}Identify the largest integer $g$   with $[2,g]\subseteq \{c_1,\dots,c_{r-1}\}$. Then $\eta(x_{c_1}\wedge\dots\wedge x_{c_{r-1}}\otimes m^*)$ is equal to $$\begin{cases}
X^{(r-1)}_{c_1,\dots,c_{r-1},m},&\text{if $\operatorname{least}(m)\le g$, and}\\
\sum\limits_{k=g}^{r-1}(-1)^{k+g}\chi(x_{c_k}|m)X^{(r-1)}_{c_1,\dots,c_{g-1},g+1,c_g,\dots,\widehat{c_k},\dots, c_{r-1},
\frac{x_{g+1}m}{x_{c_k}}
},&\text{if $g+1\le \operatorname{least}(m)$.}\end{cases}$$
\item\label{May31-impt-53.2} The following equality holds$:$  $$\kappa(x_{c_1}\wedge\dots  \wedge x_{c_{r-1}}\otimes m)=\begin{cases}
Y^{(r-1)}_{c_1,\dots,c_{r-1},m},&\text{if $c_1\le \operatorname{least}(m)$, and}\\
\sum\limits^{r-1}_{k=1}(-1)^{k+1}Y^{(r-1)}_{\operatorname{least}(m),c_1,\dots,\widehat{c_k},\dots,c_{r-1},\frac{x_{c_k}m}{{\operatorname{init}}(m)}},
&\text{if $\operatorname{least}(m)<c_1$}.\end{cases}$$
\end{enumerate}
\end{lemma}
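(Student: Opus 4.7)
The plan is to reduce all four sub-cases to the exactness-type identities $\kappa\circ\kappa=0$ and $\eta\circ\eta=0$, which follow directly from the formulas in~(\ref{kappa-eta}) once one notes the antisymmetry of $x_j^*\wedge x_i^*$ against the symmetry of $x_jx_i$ (and dually for the divided-power action). The two ``already standard'' sub-cases are dispatched immediately: when $\operatorname{least}(m)\le g$ in part~(a), we have $[2,\operatorname{least}(m)]\subseteq[2,g]\subseteq\{c_1,\dots,c_{r-1}\}$, so $\eta(x_{c_1}\wedge\cdots\wedge x_{c_{r-1}}\otimes m^*)$ is literally $X^{(r-1)}_{c_1,\dots,c_{r-1},m}$ by Definition~\ref{basis-47}; and when $c_1\le\operatorname{least}(m)$ in part~(b), $\kappa(x_{c_1}\wedge\cdots\wedge x_{c_{r-1}}\otimes m)$ is literally $Y^{(r-1)}_{c_1,\dots,c_{r-1},m}$.

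For the straightening case of part~(b) (when $\ell:=\operatorname{least}(m)<c_1$), I would form $\omega=x_\ell\wedge x_{c_1}\wedge\cdots\wedge x_{c_{r-1}}\in\bigwedge^r U_0$ and apply $\kappa\circ\kappa=0$ to $\omega\otimes(m/x_\ell)$. Expanding the outer $\kappa$ via $x_i^*(\omega)$ produces a single ``diagonal'' summand $(x_{c_1}\wedge\cdots\wedge x_{c_{r-1}})\otimes m$ (from $i=\ell$) plus $r-1$ ``off-diagonal'' summands with signs $(-1)^k$ (from $i=c_k$, since $x_{c_k}$ sits in position $k+1$ of $\omega$). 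Applying $\kappa$ once more and setting the total to zero yields the claimed identity. Each right-hand summand is already of standard form $Y^{(r-1)}_{\ell,c_1,\dots,\widehat{c_k},\dots,c_{r-1},x_{c_k}m/x_\ell}$: the exterior indices $\ell<c_1<\cdots<\widehat{c_k}<\cdots<c_{r-1}$ are sorted, and the monomial $x_{c_k}m/x_\ell$ has smallest index $\ge\ell$ since we only exchanged one copy of $x_\ell$ for $x_{c_k}$ with $c_k>\ell$.

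For the straightening case of part~(a) (when $\operatorname{least}(m)\ge g+1$), I would analogously form $\omega'=x_{g+1}\wedge x_{c_1}\wedge\cdots\wedge x_{c_{r-1}}\in\bigwedge^r U_0$, which is nonzero because $g+1\notin\{c_1,\dots,c_{r-1}\}$ by maximality of $g$, and apply $\eta\circ\eta=0$ to $\omega'\otimes(x_{g+1}m)^*$. Using $x_i((x_{g+1}m)^*)=\chi(x_i\mid x_{g+1}m)\,(x_{g+1}m/x_i)^*$, the outer $\eta$ produces the isolated term $(x_{c_1}\wedge\cdots\wedge x_{c_{r-1}})\otimes m^*$ from $i=g+1$, together with contributions from $i=c_k$ that survive only when $\chi(x_{c_k}\mid m)\ne 0$ --- forcing $c_k\ge\operatorname{least}(m)\ge g+1$, hence $c_k\ge g+2$ and $k\ge g$. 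Invoking $\eta\circ\eta=0$ and then transposing $x_{g+1}$ inside each surviving wedge $x_{g+1}\wedge x_{c_1}\wedge\cdots\widehat{x_{c_k}}\cdots\wedge x_{c_{r-1}}$ past $x_{c_1}=x_2,\dots,x_{c_{g-1}}=x_g$ to restore the increasing order (contributing a sign $(-1)^{g-1}$) delivers exactly the claimed formula. Standardness of each resulting $X^{(r-1)}$ is confirmed by the observations $\operatorname{least}(x_{g+1}m/x_{c_k})=g+1$ and $\{c_1,\dots,c_{g-1},g+1\}=[2,g+1]$. The main obstacle is purely sign bookkeeping in these two wedge expansions and the single re-sorting step; I foresee no conceptual difficulty.
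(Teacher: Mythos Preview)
Your proposal is correct and follows essentially the same approach as the paper: both parts are handled by applying $\eta\circ\eta=0$ (respectively $\kappa\circ\kappa=0$) to the wedge obtained by adjoining $x_{g+1}$ (respectively ${\operatorname{init}}(m)$) and then reading off the desired identity. The only cosmetic difference is that the paper inserts $x_{g+1}$ directly into its sorted position in the wedge, whereas you place it at the front and re-sort at the end, absorbing the extra $(-1)^{g-1}$ into the final sign.
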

\begin{Remark} The parameter $g$ in (\ref{May31-impt-a}) satisfies $1\le g\le r$. If $g=r$ and $g+1\le \operatorname{least}(m)$, then the sum in line 2 in (\ref{May31-impt-a}) is taken over the empty set and is therefore zero.\end{Remark} 
\begin{proof} (\ref{May31-impt-a}) If $\operatorname{least}(m)\le g$, then $[2,\operatorname{least}(m)]\subseteq[2,g]\subseteq \{c_1,\dots,c_{r-1}\}$, so $$\eta(x_{c_1}\wedge\dots\wedge x_{c_{r-1}}\otimes m^*)=X^{(r-1)}_{c_1,\dots,c_{r-1},m}\in \mathfrak B.$$ If 
$g+1\le \operatorname{least}(m)$, then 
\begin{align}0&=\eta\eta(x_{c_1}\wedge\dots\wedge x_{c_{g-1}}\wedge x_{g+1}\wedge x_{c_g}\wedge\dots\wedge x_{c_{r-1}}\otimes (x_{g+1}m)^*)\notag\\
&=\begin{cases}(-1)^{g+1}\eta(x_{c_1}\wedge\dots\wedge x_{c_{r-1}}\otimes m^*)\\
+\sum\limits_{k=g}^{r-1}(-1)^k\eta(x_{c_1}\wedge\dots\wedge x_{c_{g-1}}\wedge x_{g+1}\wedge x_{c_g}\wedge\dots\wedge\widehat{x_{c_k}}\wedge\dots\wedge x_{c_{r-1}}\otimes x_{c_k}(x_{g+1}m)^*).\end{cases}\notag\end{align}
So 
$$\begin{array}{ll}&\eta(x_{c_1}\wedge\dots\wedge x_{c_{r-1}}\otimes m^*)\\
=&(-1)^g\sum\limits_{k=g}^{r-1}(-1)^k\eta(x_{c_1}\wedge\dots\wedge x_{c_{g-1}}\wedge x_{g+1}\wedge x_{c_g}\wedge\dots\wedge\widehat{x_{c_k}}\wedge\dots\wedge x_{c_{r-1}}\otimes x_{c_k}(x_{g+1}m)^*)
\\
=&(-1)^g\sum\limits_{k=g}^{r-1}(-1)^k\chi(x_{c_k}|m)\eta(x_{c_1}\wedge\dots\wedge x_{c_{g-1}}\wedge x_{g+1}\wedge x_{c_g}\wedge\dots\wedge\widehat{x_{c_k}}\wedge\dots\wedge x_{c_{r-1}}\otimes (\frac{x_{g+1}m}{x_{c_k}})^*)\\
=&(-1)^g\sum\limits_{k=g}^{r-1}(-1)^k\chi(x_{c_k}|m)
X^{(r-1)}_{c_1,\dots, c_{g-1},g+1,c_g,\dots,\widehat{c_k},\dots,c_{r-1},\frac{x_{g+1}m}{x_{c_k}}}.
\end{array}$$

\medskip\noindent (\ref{May31-impt-53.2}) If $c_1\le \operatorname{least}(m)$, then there is nothing to prove. If $\operatorname{least}(m)<c_1$, then
$$\textstyle 0=\kappa\kappa({\operatorname{init}}(m)\wedge x_{c_1}\wedge\dots  \wedge x_{c_{r-1}}\otimes \frac{m}{{\operatorname{init}}(m)})$$$$\textstyle =
\kappa(x_{c_1}\wedge\dots  \wedge x_{c_{r-1}}\otimes m)+\sum\limits_{k=1}^{r-1} (-1)^{k}
\kappa({\operatorname{init}}(m)\wedge x_{c_1}\wedge\dots\wedge \widehat{x_{c_k}}\wedge\dots  \wedge x_{c_{r-1}}\otimes \frac{x_{c_k}m}{{\operatorname{init}}(m)}).$$\end{proof}

\bigskip Observation~\ref{behavior} describes how the perfect pairings $\operatorname{pp}_r: B_r\otimes B_{d-r}\to B_d$ of Definition~\ref{ppr-def} behave on the basis $\mathfrak B$ of Definition~\ref{basis-47}. 
This result was promised at (\ref{promise}). 

\begin{observation}\label{behavior}If $Y_{a_1,\dots,a_r,m_1}^{(r)}$ and $X_{c_1,\dots,c_{d-r},m_2}^{(d-r)}$ are in $\mathfrak B$, then
\begin{align}\operatorname{pp}(Y_{a_1,\dots,a_r,m_1}^{(r)}\otimes X_{c_1,\dots,c_{d-r},m_2}^{(d-r)})&=\chi(x_{a_1}m_1=m_2) x_{a_2}\wedge\dots\wedge x_{a_r}\wedge x_{c_1}\wedge \dots \wedge x_{c_{d-r}}\text{ and}\notag\\
\operatorname{pp}(X_{c_1,\dots,c_{d-r},m_2}^{(d-r)}\otimes Y_{a_1,\dots,a_r,m_1}^{(r)}) 
&=(-1)^{r+d}\chi(x_{a_1}m_1=m_2) x_{c_1}\wedge \dots \wedge x_{c_{d-r}}\wedge x_{a_2}\wedge\dots\wedge x_{a_r}.\notag \end{align}
\end{observation}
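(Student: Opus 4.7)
The plan is to unwind Definition~\ref{ppr-def} on the specific basis elements. Substituting $Y^{(r)}_{a_1,\dots,a_r,m_1}=\kappa(x_{a_1}\wedge\dots\wedge x_{a_r}\otimes m_1)$ and $X^{(d-r)}_{c_1,\dots,c_{d-r},m_2}=\eta(x_{c_1}\wedge\dots\wedge x_{c_{d-r}}\otimes m_2^*)$ into formula~(\ref{ppr}) yields
$$\operatorname{pp}\bigl(Y^{(r)}_{a_1,\dots,a_r,m_1}\otimes X^{(d-r)}_{c_1,\dots,c_{d-r},m_2}\bigr)=\bigl[m_1(m_2^*)\bigr](x_{a_1}\wedge\dots\wedge x_{a_r})\wedge x_{c_1}\wedge\dots\wedge x_{c_{d-r}}.$$
Since $m_1$ and $m_2$ are monomials in $x_2,\dots,x_d$ of degrees $n-1$ and $n$, a direct check with the divided-power action (or by adjointness to multiplication, using that the bases $\binom{x_1,\dots,x_d}{N}$ and $\{m^*\}$ are dual) gives $m_1(m_2^*)=x_j^*$ when $m_2=x_j m_1$ for a necessarily unique $j\in[2,d]$, and $m_1(m_2^*)=0$ otherwise. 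Expanding $x_j^*(x_{a_1}\wedge\dots\wedge x_{a_r})$ then produces at most one nonzero summand for each $k$ with $j=a_k$.

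The decisive step, which I expect to be the only real obstacle, is to show that any contribution with $k\ge 2$ vanishes after wedging with $x_{c_1}\wedge\dots\wedge x_{c_{d-r}}$. Assume $j=a_k$ with $k\ge 2$, so $a_k>a_1$. The standard-basis condition $\operatorname{least}(m_1)\ge a_1$ on $Y^{(r)}$ yields $\operatorname{least}(m_2)=\min(a_k,\operatorname{least}(m_1))\ge a_1$. A case split on whether $\operatorname{least}(m_1)=a_1$ (forcing $x_{a_1}\mid m_2$ and hence $\operatorname{least}(m_2)=a_1$) or $\operatorname{least}(m_1)>a_1$ (forcing $\operatorname{least}(m_2)>a_1$) shows that $a_1\in[2,\operatorname{least}(m_2)]$ in both sub-cases. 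By the standard-basis condition $[2,\operatorname{least}(m_2)]\subseteq\{c_1,\dots,c_{d-r}\}$ on $X^{(d-r)}$, this places $a_1\in\{c_1,\dots,c_{d-r}\}$. But $a_1$ also appears in $x_{a_1}\wedge\dots\wedge\widehat{x_{a_k}}\wedge\dots\wedge x_{a_r}$ (since $k\ne 1$), so the full wedge has a repeated factor $x_{a_1}$ and vanishes. Only the $k=1$ contribution survives; evaluation at $m_2=x_{a_1}m_1$ produces precisely the first identity, and when no such $j$ exists both sides are already zero.

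For the second identity, I will combine two signs. Proposition~\ref{ppr-prop}.\ref{ppr-prop-b} contributes $(-1)^{r(d-r)}$ when the two arguments of $\operatorname{pp}$ are swapped, and reordering the wedge so that the $c$-block of length $d-r$ precedes the $a$-block of length $r-1$ introduces an additional $(-1)^{(r-1)(d-r)}$. The total sign is $(-1)^{(2r-1)(d-r)}=(-1)^{d-r}=(-1)^{r+d}$, matching the assertion. This last step is purely formal.
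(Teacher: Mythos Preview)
Your proposal is correct, and for the first identity it follows essentially the same idea as the paper: both arguments deduce from the two standard-basis constraints that $a_1\in\{c_1,\dots,c_{d-r}\}$, so any contribution retaining the factor $x_{a_1}$ vanishes by a repeated wedge. Your organisation is somewhat more direct; the paper instead introduces an auxiliary parameter $\gamma$ (the largest integer with $[2,\gamma]\subseteq\{c_1,\dots,c_{d-r}\}$), shows $\gamma+1\in\{a_1,\dots,a_r\}$ by a counting argument, and rules out $\gamma+1=a_1$ by contradiction before concluding $a_1\le\gamma$. Your case split on whether $\operatorname{least}(m_1)=a_1$ reaches the same endpoint without the detour through $\gamma$.

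For the second identity the paper repeats the direct computation from Definition~\ref{ppr-def}, while you invoke Proposition~\ref{ppr-prop}\ref{ppr-prop-b} together with a wedge reordering. This shortcut is legitimate---part~(\ref{ppr-prop-b}) is established there straight from Definition~\ref{ppr-def} and does not rely on the present observation---and it spares you a repeat of the sign bookkeeping.
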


\begin{proof} Most of the argument is involved in proving
\begin{equation}\label{assu}
\operatorname{pp}(Y_{a_1,\dots,a_r,m_1}^{(r)}\otimes X_{c_1,\dots,c_{d-r},m_2}^{(d-r)})\neq 0\implies x_{a_1}m_1=m_2.\end{equation}
Assume \begin{equation}\label{not-zero}\operatorname{pp}(Y_{a_1,\dots,a_r,m_1}^{(r)}\otimes X_{c_1,\dots,c_{d-r},m_2}^{(d-r)})\neq 0.\end{equation} 
Recall, from Definition~\ref{ppr-def}, that the left side of (\ref{not-zero}) is
\begin{equation}\label{exp}[m_1(m_2^*)](x_{a_1}\wedge\dots\wedge x_{a_r})\wedge (x_{c_1}\wedge\dots\wedge x_{c_{d-r}});\end{equation}
and therefore, the hypothesis (\ref{not-zero}) forces \begin{equation}\label{concl}[2,d]\subseteq \{a_1,\dots,a_r\}\cup\{c_1,\dots,c_{d-r}\}.\end{equation} Hence, there exists exactly one pair $(j,j')$ with $1\le j\le r$,  $1\le j'\le d-r$, and $a_j=c_{j'}$. The expression (\ref{exp}) is non-zero; hence, $[m_1(m_2^*)](x_{a_j})\neq 0$ and \begin{equation}\label{baby-step}x_{a_j}m_1=m_2.\end{equation}

To complete the proof of (\ref{assu}) we must show that $j=1$. To that end, let $\gamma$ be the largest integer with $[2,\gamma]\subseteq \{c_1,\dots,c_{d-r}\}$. It follows, in particular, that $\gamma+1\in [2,d]\setminus\{c_1,\dots,c_{d-r}\}$; and therefore, by (\ref{concl}), $\gamma+1\in \{a_1,\dots,a_r\}$. We show that $\gamma+1\neq a_1$. Indeed, if $\gamma+1=a_1$, then
$$\gamma+1=a_1\le \operatorname{least}(a_jm_1)=\operatorname{least}(m_2).$$The inequality is due to the fact that $a_1\le a_j$ and $a_1\le\operatorname{least}(m_1)$ according to the definition of $Y^{(*)}_*$ in  $\mathfrak B$. The equality on the right follows from (\ref{baby-step}). In this case, $$\gamma+1\in [2,\operatorname{least}(m_2)]\subseteq\{c_1,\dots,c_{d-r}\};$$which is a contradiction. (For the inclusion of $[2,\operatorname{least}(m_2)]$ in the set of $c$'s, we have used  the definition of $X^{(*)}_{*}$ in $\mathfrak B$.) We now know that $\gamma+1=a_i$ for some $i$ with $2\le i$. Thus, 
$$a_1<a_i=\gamma+1\implies a_1\le \gamma\implies a_1\in[2,\gamma]\subseteq \{c_1,\dots,c_{d-r}\}.$$The subscript $j$ is defined by the property that $j$ is the unique subscript with $a_j\in\{c_1,\dots,c_{d-r}\}$. We have proven that $j=1$; so, (\ref{baby-step}) completes the proof of (\ref{assu}).

The implication (\ref{assu}) may be rephrased as
$$\operatorname{pp}(Y_{a_1,\dots,a_r,m_1}^{(r)}\otimes X_{c_1,\dots,c_{d-r},m_2}^{(d-r)})
=\chi(x_{a_1}m_1=m_2)\operatorname{pp}(Y_{a_1,\dots,a_r,m_1}^{(r)}\otimes X_{c_1,\dots,c_{d-r},m_2}^{(d-r)}).$$
Use Definition \ref{ppr-def} to see that the most recent expression  
\begin{align}&=\chi(x_{a_1}m_1=m_2)[m_1(m_2^*)](x_{a_1}\wedge\dots\wedge x_{a_r})\wedge x_{c_1}\wedge \dots x_{c_{d-r}}\notag\\
&=\chi(x_{a_1}m_1=m_2)(x_{a_2}\wedge\dots\wedge x_{a_r})\wedge x_{c_1}\wedge \dots x_{c_{d-r}}.\notag\end{align}
Similarly, $\operatorname{pp}(X_{c_1,\dots,c_{d-r},m_2}^{(d-r)}\otimes Y_{a_1,\dots,a_r,m_1}^{(r)})$ is equal to
\begin{align}& =\chi(x_{a_1}m_1=m_2)
\operatorname{pp}(X_{c_1,\dots,c_{d-r},m_2}^{(d-r)}\otimes Y_{a_1,\dots,a_r,m_1}^{(r)})\notag\\
&=-\chi(x_{a_1}m_1=m_2) [m_1(m_2^*)](x_{c_1}\wedge \dots x_{c_{d-r}})\wedge x_{a_1}\wedge\dots\wedge x_{a_r}\notag\\
&=-(-1)^{d-r+1}\chi(x_{a_1}m_1=m_2) (x_{c_1}\wedge \dots x_{c_{d-r}})\wedge [m_1(m_2^*)](x_{a_1}\wedge\dots\wedge x_{a_r})\notag\\
&=(-1)^{d+r}\chi(x_{a_1}m_1=m_2) (x_{c_1}\wedge \dots x_{c_{d-r}})\wedge (x_{a_2}\wedge\dots\wedge x_{a_r}).\notag\end{align}
\end{proof}

\begin{theorem}\label{47.4}
Let $(\mathbb B,b)$ be the complex of Definition~{\rm\ref{diff}} and Corollary~{\rm\ref{main-lemma}.\ref{main-lemma-e}} and $\mathfrak B$ be the basis for $\mathbb B$  as given in
 Definition~{\rm \ref{basis-47}}.
\begin{enumerate}[\rm(a)] 
\item\label{47.4-a} The matrix for $b_1$ with respect to the basis $\mathfrak B$ is described as follows.
\begin{enumerate}[\rm(i)]
\item If $X^{(1)}_{a_1,m}$ is in $\mathfrak B$,  then $b_1(X_{a_1,m}^{(1)})=x_1\sum\limits_{m_1\in \binom{x_1, \ldots, x_d}{n-1}} m_1Q_{m_1,\frac{m}{x_{a_1}}}$.

\item If $Y_{a_1,m}^{(1)}$ is in $\mathfrak B$,   then $b_1(Y_{a_1,m}^{(1)})=\pmb\delta x_{a_1}m -x_1\sum\limits_{m_1\in \binom{x_1, \ldots, x_d}{n-2}}\sum\limits_{m_2\in \binom{x_1, \ldots, x_d}{n-1}} m_2Q_{m_2,x_1m_1}
t_{x_{a_1}m_1m}$.
\end{enumerate}
\item If $2\le r\le d-1$, then 
the matrix for $b_r$ with respect to the basis $\mathfrak B$ is described as follows.
\begin{enumerate}[\rm(i)]
\item\label{47.4-bi} If $X^{(r)}_{a_1,\dots,a_r,m}$ is an element of $\mathfrak B$ and 
  $\gamma$ is the largest integer with $[2,\gamma]\subseteq\{a_1,\dots,a_r\}$,
then 
$b_r(X^{(r)}_{a_1,\dots,a_r,m})$ 
is given in Table~{\rm\ref{br-of-X}}.

\item\label{47.4-bii} If  $Y^{(r)}_{a_1,\dots,a_r,m}$ is an element of $\mathfrak B$ and
  $\gamma$ is the largest integer with $[2,\gamma]\subseteq\{a_1,\dots,a_r\}$,
then $b_r(Y^{(r)}_{a_1,\dots,a_r,m})$ is given in Table~{\rm\ref{br-of-Y}}.
\end{enumerate}

\item\label{47.4-d} The matrix for $b_d$ with respect to the basis $\mathfrak B$ is described by
$$b_d(X^{(d)})=\begin{cases} 
\phantom{+}\sum\limits_{m\in\binom{x_2,\dots,x_d}{n}} 
[\pmb\delta m -\sum\limits_{m_1\in \binom{x_1, \ldots, x_d}{n-2}}\sum\limits_{m_2\in \binom{x_1, \ldots, x_d}{n-1}} x_1m_2Q_{m_2,x_1m_1}
t_{m_1m}]
  X^{(d-1)}_{2,\dots,d,m}\vspace{5pt}\\\hline -\sum\limits_{m\in\binom{x_2,\dots,x_d}{n-1}} 
\sum\limits_{m_1\in \binom{x_1, \ldots, x_d}{n-1}} x_1m_1Q_{m_1,m}
  Y^{(d-1)}_{2,\dots,d,m}.
\end{cases}$$
\end{enumerate}

\begin{table}
\begin{center}
$$\begin{cases}
\phantom{+}x_1 \sum\limits_{\ell=2}^{\gamma}\sum\limits_{k=\ell}^{r}\sum\limits_{m_2\in \binom{x_\ell,\dots,x_d}{n-1}}
\sum\limits_{m_1\in \binom{x_1,\dots,x_d}{n-2}}(-1)^{k}
[\chi(x_{a_k}|m)t_{m_1m_2x_\ell} Q_{x_1m_1,\frac{m}{x_{a_k}}} 
-\chi(x_{\ell}|m)t_{m_1m_2x_{a_k}} Q_{x_1m_1,\frac{m}{x_{\ell}}}]
 \\\hskip.2in  \times X^{(r-1)}_{a_1,\dots,\widehat{a_k}, \dots,a_r,x_\ell m_2}
\\
+x_1\sum\limits_{\gamma\le j<k\le r} \sum\limits_{m_2\in \binom{x_{\gamma+1},\dots,x_d}{n-1}}\sum\limits_{m_1\in \binom{x_1,\dots,x_d}{n-2}}(-1)^{\gamma+j+k} \\\hskip.2in 
\times 
\bmatrix \chi(x_{a_k}|m)t_{m_1m_2x_{a_j}} Q_{x_1m_1,\frac{m}{x_{a_k}}} 
- \chi(x_{a_j}|m)t_{m_1m_2x_{a_k}} Q_{x_1m_1,\frac{m}{x_{a_j}}}  
\endbmatrix 
X^{(r-1)}_{a_1,\dots,a_{\gamma-1},\gamma+1,a_{\gamma},\dots,\widehat{a_j},\dots,\widehat{a_k},\dots,a_r,x_{\gamma+1}m_2}\\
+\pmb\delta \sum\limits_{j=1}^{\operatorname{least}(m)-1}\sum\limits_{k=j+1}^{r} x_{a_j}  (-1)^{k+1}\chi(x_{a_k}|m)X^{(r-1)}_{a_1,\dots,\widehat{a_k},\dots, a_r,
\frac{x_{j+1}m}{x_{a_k}}
}\\
+\pmb\delta \sum\limits_{j=\operatorname{least}(m)}^{r}x_{a_j}  (-1)^{j}X^{(r-1)}_{a_1,\dots,\widehat{a_j},\dots,a_{r},m}\\
\hline
+x_1\sum\limits_{k=2}^{r}\sum\limits_{m_1\in\binom{x_{a_1},\dots,x_d}{n-1}}(-1)^{k}
\bmatrix
\chi(x_{a_k}|m)Q_{m_1,\frac{m}{x_{a_k}}} 
-\chi(x_{a_k}|m_1)\chi(x_{a_1}|m)Q_{\frac{x_{a_1}m_1}{x_{a_k}},\frac{m}{x_{a_1}}}  
\endbmatrix
Y^{(r-1)}_{a_1,\dots,\widehat{a_k},\dots,a_{r},m_1}\\
+x_1\sum\limits_{\ell=a_1+1}^{a_2-1}\sum\limits_{k=2}^{r}\sum\limits_{m_1\in\binom{x_{\ell},\dots,x_d}{n-1}}(-1)^{k+1}\chi(x_{a_k}|m_1)
\chi(x_{a_1}|m)Q_{\frac{x_{\ell}m_1}{x_{a_k}},\frac{m}{x_{a_1}}}  Y^{(r-1)}_{\ell,a_2,\dots,\widehat{a_k},\dots,a_{r},m_1}  \\
-x_1\sum\limits_{m_1\in\binom{x_{a_2},\dots,x_d}{n-1}}\chi(x_{a_1}|m)Q_{m_1,\frac{m}{x_{a_1}}}  Y^{(r-1)}_{a_2,\dots,a_{r},m_1}\\
\end{cases}$$
\caption{{\bf The value of $b_r(X^{(r)}_{a_1,\dots,a_r,m})$ for Theorem~\ref{47.4}.}}\label{br-of-X}
\end{center}\end{table}

\begin{table}
\begin{center}
$$\hskip-1in\begin{cases}
\phantom{+}x_1\sum\limits_{\ell=2}^{\gamma}\sum\limits_{k=\ell}^{r}
\sum\limits_{m_3\in\binom{x_\ell,\dots,x_d}{n-1}}
\sum\limits_{m_1,m_2\in\binom{x_1,\dots,x_d}{n-2}}
(-1)^{k}Q_{x_1m_1,x_1m_2}
[t_{x_{\ell}mm_2}t_{x_{a_k}m_1m_3}  
-t_{x_{a_k}mm_2}t_{x_{\ell}m_1m_3}]
X^{(r-1)}_{a_1,\dots,\widehat{a_k},\dots,a_r,x_{\ell}m_3}\\
+x_1
\sum\limits_{\gamma\le j<k\le r}
\sum\limits_{m_3\in\binom{x_{\gamma+1},\dots,x_d}{n-1}}
\sum\limits_{m_1,m_2\in\binom{x_1,\dots,x_d}{n-2}}
(-1)^{j+\gamma+k}Q_{x_1m_1,x_1m_2}\bmatrix 
t_{x_{a_j}mm_2}t_{x_{a_k}m_1m_3} 
-t_{x_{a_k}mm_2}t_{x_{a_j}m_1m_3}
\endbmatrix\\\hskip.2in\times
X^{(r-1)}_{a_1,\dots,a_{\gamma-1},\gamma+1,a_{\gamma},\dots,\widehat{a_j},
\dots,\widehat{a_k},\dots,a_r,x_{\gamma+1}m_3}
\\
\hline 
+x_1\sum\limits_{\ell=2}^{a_1-1}\sum\limits_{1\le j<k\le r}\sum\limits_{m_1\in\binom{x_{\ell},\dots,x_d}{n-1}}\sum\limits_{m_2\in\binom{x_1,\dots,x_d}{n-2}}(-1)^{k+j}
\bmatrix
\chi(x_{a_j}|m_1)t_{x_{a_k}mm_2}Q_{\frac{x_{\ell}m_1}{x_{a_j}},x_1m_2}
-\chi(x_{a_k}|m_1)t_{x_{a_j}mm_2}Q_{\frac{x_{\ell}m_1}{x_{a_k}},x_1m_2}\endbmatrix
\\\hskip.2in \times Y^{(r-1)}_{\ell,a_1,\dots,
\widehat{a_j},\dots \widehat{a_k},\dots, a_r,m_1}
\\ 
+x_1\sum\limits_{k=2}^r\sum\limits_{m_1\in\binom{x_{a_1},\dots,x_d}{n-1}}\sum\limits_{m_2\in\binom{x_1,\dots,x_d}{n-2}}(-1)^{k}
[\chi(x_{a_k}|m_1)t_{x_{a_1}mm_2}Q_{\frac{x_{a_1}m_1}{x_{a_k}},x_1m_2} -t_{x_{a_k}mm_2}Q_{m_1,x_1m_2} ]
Y^{(r-1)}_{a_1,\dots,\widehat{a_k},\dots, a_r,m_1}\\ 
+x_1\sum\limits_{\ell=a_1+1}^{a_2-1}\sum\limits_{k=2}^r\sum\limits_{m_1\in\binom{x_{\ell},\dots,x_d}{n-1}}\sum\limits_{m_2\in\binom{x_1,\dots,x_d}{n-2}}(-1)^{k}\chi(x_{a_k}|m_1)t_{x_{a_1}mm_2}Q_{\frac{x_{\ell}m_1}{x_{a_k}},x_1m_2} Y^{(r-1)}_{\ell,a_2,\dots,\widehat{a_k},\dots,a_r,
m_1}\\
+x_1\sum\limits_{m_1\in\binom{x_{a_2},\dots,x_d}{n-1}}\sum\limits_{m_2\in\binom{x_1,\dots,x_d}{n-2}}t_{x_{a_1}mm_2}Q_{m_1,x_1m_2}  Y^{(r-1)}_{a_2,\dots,a_r,m_1}\\
+\pmb \delta\sum\limits_{j=2}^r(-1)^{j} x_{a_j} 
Y^{(r-1)}_{a_1,\dots,\widehat{a_j},\dots,a_r,m}\\
-\pmb \delta x_{a_1}\chi(a_2\le \operatorname{least} (m))Y^{(r-1)}_{a_2,\dots,a_r,m}\\
-\pmb \delta x_{a_1}\chi(\operatorname{least}(m)<a_2)\sum\limits_{k=2}^{r}(-1)^k Y^{(r-1)}_{\operatorname{least}(m),a_2,\dots,\widehat{a_k},\dots,a_r,\frac{x_{a_k}m}{{\operatorname{init}}(m)}}
\end{cases}
$$
\caption{{\bf The value of $b_r(Y^{(r)}_{a_1,\dots,a_r,m})$ for Theorem~\ref{47.4}.}}\label{br-of-Y}
\end{center}\end{table}
\end{theorem}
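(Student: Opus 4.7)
The plan is to obtain Theorem~\ref{47.4} as a mechanical (but bookkeeping-intensive) consequence of Proposition~\ref{Best} together with Lemma~\ref{May31-impt}. Proposition~\ref{Best} already expresses $b_r$ applied to any standard basis element of $B_r$ as an ${\mathfrak R}$-linear combination of elementary generators of $B_{r-1}$, that is, elements of the form $\eta(\theta\otimes m^*)$ and $\kappa(\theta\otimes m)$ with $\theta=x_{c_1}\wedge\cdots\wedge x_{c_{r-1}}$. To reach Theorem~\ref{47.4}, we need only re-express each such elementary generator as an ${\mathbb Z}$-linear combination of members of the basis $\mathfrak B$. Lemma~\ref{May31-impt} gives exactly those two expansions: part (\ref{May31-impt-a}) handles $\eta(\theta\otimes m^*)$ (with a split according to whether $\operatorname{least}(m)\le g$ or $g+1\le\operatorname{least}(m)$, where $g$ is the largest integer with $[2,g]\subseteq\{c_1,\dots,c_{r-1}\}$), while part (\ref{May31-impt-53.2}) handles $\kappa(\theta\otimes m)$ (with a split according to whether $c_1\le\operatorname{least}(m)$ or $\operatorname{least}(m)<c_1$).

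First I would dispose of the easy cases: parts (\ref{47.4-a}) and (\ref{47.4-d}) are immediate rewrites of Proposition~\ref{Best}.(\ref{Best-a}) and Proposition~\ref{Best}.(\ref{Best-c}), using $Y^{(1)}_{a_1,m}=x_{a_1}m$ and $X^{(1)}_{a_1,m}=(\frac{m}{x_{a_1}})^*$ (applied only when $x_{a_1}\mid m$) and noting that for $r=d-1$ the index set $\{a_1,\dots,a_{d-1}\}$ is forced to be $\{2,\dots,d\}$, so the basis labels take the form $X^{(d-1)}_{2,\dots,d,m}$ and $Y^{(d-1)}_{2,\dots,d,m}$. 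Then I would turn to the interior cases (\ref{47.4-bi}) and (\ref{47.4-bii}). Each of the three blocks in Proposition~\ref{Best}.(\ref{Best-b-i}) (respectively, Proposition~\ref{Best}.(\ref{prove-me})) contributes an elementary generator $\eta(x_j^*(\theta)\otimes m_2^*)$ or $\kappa(x_j^*(\theta)\otimes m_1)$ with $\theta=x_{a_1}\wedge\cdots\wedge x_{a_r}$; after carrying out $x_j^*(\theta)=\sum_k(-1)^{k+1}\chi(j=a_k)\,x_{a_1}\wedge\cdots\wedge\widehat{x_{a_k}}\wedge\cdots\wedge x_{a_r}$, the residual index set is $\{a_1,\dots,\widehat{a_k},\dots,a_r\}$, and its largest initial segment in $[2,\cdot]$ is either $\gamma$ (when $k>\gamma$) or $\gamma-1$ followed by a jump to $a_{\gamma+1}$ (when $k\le\gamma$). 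Each case is then normalized using Lemma~\ref{May31-impt}.

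The heart of the calculation — and the main obstacle — is the sign- and index-bookkeeping when Lemma~\ref{May31-impt}(\ref{May31-impt-a}) forces us to insert the ``missing'' variable $x_{g+1}$ and re-sort the wedge. Concretely, the splits in Table~\ref{br-of-X} into a sum over $2\le\ell\le\gamma$, $\ell\le k\le r$ on the one hand and over $\gamma\le j<k\le r$ on the other reflect precisely the two cases of Lemma~\ref{May31-impt}(\ref{May31-impt-a}) applied to $\eta(x_j^*(x_{a_1}\wedge\cdots\wedge x_{a_r})\otimes m_2^*)$; the alternating antisymmetrization $[t_{m_1m_2x_\ell}Q_{x_1m_1,m/x_{a_k}}-t_{m_1m_2x_{a_k}}Q_{x_1m_1,m/x_\ell}]$ and its analogue $[\chi(x_{a_k}\mid m)t_{m_1m_2x_{a_j}}Q_{\cdots}-\chi(x_{a_j}\mid m)t_{m_1m_2x_{a_k}}Q_{\cdots}]$ come from combining contributions indexed by $j=a_k$ with the normalization step, after which the sign $(-1)^{\gamma+j+k}$ records the number of transpositions needed to re-sort $x_{a_1}\wedge\cdots\wedge x_{a_{\gamma-1}}\wedge x_{\gamma+1}\wedge x_{a_\gamma}\wedge\cdots\wedge\widehat{x_{a_j}}\wedge\cdots\wedge\widehat{x_{a_k}}\wedge\cdots\wedge x_{a_r}$ into the standard increasing order used in $\mathfrak B$. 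The analogous splits for $\kappa$ in Table~\ref{br-of-Y} (over $\ell<a_1$, over $\ell=a_1$ with $k\ge 2$, and over $a_1<\ell<a_2$) come from applying Lemma~\ref{May31-impt}(\ref{May31-impt-53.2}) to $\kappa(x_j^*(\theta)\otimes m_1)$ after distinguishing whether the new initial index $\operatorname{least}(m_1)$ equals $a_1$, lies below $a_1$, or lies in the gap between $a_1$ and $a_2$.

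Finally, for (\ref{47.4-bii}) I would also dispatch the $\pmb\delta$-contribution coming from the third summand $-\pmb\delta\sum_j x_j\otimes \kappa(x_j^*(\theta)\otimes m)$ of Proposition~\ref{Best}.(\ref{prove-me}): Lemma~\ref{May31-impt}(\ref{May31-impt-53.2}) turns $\kappa(x_{a_1}\wedge\cdots\wedge\widehat{x_{a_j}}\wedge\cdots\wedge x_{a_r}\otimes m)$ into either $Y^{(r-1)}_{a_1,\dots,\widehat{a_j},\dots,a_r,m}$ when the leading index of the remaining set is $\le\operatorname{least}(m)$, or into the straightened expression involving $\operatorname{least}(m)$ as a new first factor, producing the last three rows of Table~\ref{br-of-Y}. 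The only nontrivial verification is that no terms are double-counted: this is where it matters that the two cases in each part of Lemma~\ref{May31-impt} are genuinely disjoint (the dichotomies $\operatorname{least}(m)\le g$ vs.\ $g+1\le\operatorname{least}(m)$, and $c_1\le\operatorname{least}(m)$ vs.\ $\operatorname{least}(m)<c_1$), so that the partitions of the summation ranges in Tables~\ref{br-of-X} and \ref{br-of-Y} correspond to a partition of the terms of Proposition~\ref{Best}. Once this bookkeeping is carried out, Theorem~\ref{47.4} follows. The main obstacle, as indicated, is purely combinatorial: tracking all signs and recognizing each resulting wedge as a specific basis element of $\mathfrak B$; no new structural input beyond Proposition~\ref{Best} and Lemma~\ref{May31-impt} is required.
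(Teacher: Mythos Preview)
Your proposal is correct and follows essentially the same approach as the paper: the paper also declares parts (\ref{47.4-a}) and (\ref{47.4-d}) to require no proof, then for each of (\ref{47.4-bi}) and (\ref{47.4-bii}) applies Proposition~\ref{Best} to write $b_r$ of a basis element as a sum $S_1+S_2+S_3$ and proceeds to straighten each $S_i$ using Lemma~\ref{May31-impt}, with exactly the case splits (on $j\le\gamma-1$ versus $j\ge\gamma$, and on $\operatorname{least}(m_2)\le\gamma$ versus $\gamma+1\le\operatorname{least}(m_2)$, etc.) that you describe. Your identification of the source of the antisymmetrized coefficients and of the last three $\pmb\delta$-rows of Table~\ref{br-of-Y} matches the paper's Claims~\ref{first-claim}--\ref{third-claim'}.
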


\begin{proof} Assertions (\ref{47.4-a}) and  (\ref{47.4-d}) require no proof. We first prove (\ref{47.4-bi}). Fix an element $X^{(r)}_{a_1,\dots,a_{r},m}$ of $\mathfrak B$ and let $\gamma$ be the largest integer with
$[2,\gamma]\subseteq\{a_1,\dots,a_r\}$. Apply Proposition~\ref{Best} to write $b_r(X^{(r)}_{a_1,\dots,a_{r},m})$ as $\sum_{i=1}^3S_i$, with

\begingroup\allowdisplaybreaks
\begin{align}
S_1&=
 \sum\limits_{j=1}^{r}\sum\limits_{m_1\in \binom{x_1,\dots,x_d}{n-2}}\sum\limits_{m_2\in \binom{x_2,\dots,x_d}{n}}\chi(x_{a_j}|m)x_1t_{m_1m_2} Q_{x_1m_1,\frac{m}{x_{a_j}}}\otimes (-1)^{j}\eta((x_{a_1}\wedge\dots\wedge\widehat{x_{a_j}}\wedge\dots \wedge x_{a_r})\otimes  m_2^*),\notag\\
S_2&=\pmb\delta \sum\limits_{j=1}^{r}x_{a_j}\otimes (-1)^{j} \eta((x_{a_1}\wedge\dots\wedge\widehat{x_{a_j}}\wedge\dots \wedge x_{a_r})\otimes m^*), \text{ and}\notag\\
S_3&=
\sum\limits_{j=1}^{r}\sum\limits_{m_1\in\binom{x_2,\dots,x_d}{n-1}}\chi(x_{a_j}|m)x_1Q_{m_1,\frac{m}{x_{a_j}}}\otimes (-1)^{j} \kappa ((x_{a_1}\wedge\dots\wedge\widehat{x_{a_j}}\wedge\dots \wedge x_{a_r})\otimes m_1).\notag
\end{align}\endgroup
We first show that 
\begin{claim}\label{first-claim}$S_1$ is equal to the sum of the first two summands of $b_r(X^{(r)}_{a_1,\dots,a_r,m})$, as given in Table~{\rm\ref{br-of-X}}.\end{claim} 
\noindent Write $S_1$ as $S_{1,1}+S_{1,2}$. In $S_{1,1}$, the parameter $j$ satisfies $1\le j\le \gamma-1$; whereas, in $S_{1,2}$, $j$ satisfies $\gamma\le j\le r$. Apply Lemma~\ref{May31-impt}.\ref{May31-impt-a} to obtain $S_{1,1}=S_{1,1}'+S_{1,1}''$ with $S_{1,1}'$ equal to
$$\sum\limits_{j=1}^{\gamma-1}\sum\limits_{m_1\in \binom{x_1,\dots,x_d}{n-2}}\sum\limits_{m_2\in \binom{x_2,\dots,x_d}{n}}\chi(x_{a_j}|m)x_1t_{m_1m_2} Q_{x_1m_1,\frac{m}{x_{a_j}}} (-1)^{j}\chi(\operatorname{least}(m_2)\le j)X^{(r-1)}_{a_1,\dots,a_{j-1},\widehat{a_j}, a_{j+1},\dots,a_r,m_2}$$
and 
 $$S_{1,1}''=\begin{cases} \sum\limits_{j=1}^{\gamma-1}\sum\limits_{m_1\in \binom{x_1,\dots,x_d}{n-2}}\sum\limits_{m_2\in \binom{x_2,\dots,x_d}{n}}\chi(x_{a_j}|m)x_1t_{m_1m_2} Q_{x_1m_1,\frac{m}{x_{a_j}}} (-1)^{j}\chi(j+1\le \operatorname{least}(m_2))\\ \hskip.2in \times \sum\limits_{k=j+1}^r(-1)^{j+1+k}\chi(x_{a_k}|m_2)X^{(r-1)}_{a_1,\dots,a_{j-1},j+1,a_{j+1}
\widehat{a_k},\dots,a_r,\frac{x_{j+1}m_2}{x_{a_k}}}.\end{cases}$$
In $S_{1,1}''$, let $\ell=j+1$. Keep in mind that $a_j=j+1$, which becomes $\ell$. Of course, as $j$ roams over $[1,\gamma-1]$, $j+1$ (which becomes $\ell$) roams over $[2,\gamma]$. It follows that 

\begingroup\allowdisplaybreaks
\begin{align}
S_{1,1}''&=\begin{cases}\sum\limits_{\ell=2}^{\gamma}\sum\limits_{m_1\in \binom{x_1,\dots,x_d}{n-2}}\sum\limits_{m_2\in \binom{x_2,\dots,x_d}{n}}\chi(x_{\ell}|m)x_1t_{m_1m_2} Q_{x_1m_1,\frac{m}{x_{\ell}}} \chi(\ell\le \operatorname{least}(m_2))\\ \hskip.2in\times\sum\limits_{k=\ell}^r(-1)^{k+1}\chi(x_{a_k}|m_2)X^{(r-1)}_{a_1,\dots,
\widehat{a_k},\dots,a_r,\frac{x_{\ell}m_2}{x_{a_k}}}\end{cases}\notag\\
&=x_1 \sum\limits_{\ell=2}^{\gamma}\sum\limits_{k=\ell}^r
\sum\limits_{m_2\in \binom{x_{\ell},\dots,x_d}{n}}\sum\limits_{m_1\in \binom{x_1,\dots,x_d}{n-2}}(-1)^{k+1}\chi(x_{\ell}|m)t_{m_1m_2} Q_{x_1m_1,\frac{m}{x_{\ell}}}\chi(x_{a_k}|m_2)X^{(r-1)}_{a_1,\dots,
\widehat{a_k},\dots,a_r,\frac{x_{\ell}m_2}{x_{a_k}}}.\notag
\end{align}\endgroup 
Replace the old $m_2$ with $x_{a_k}$ times the new $m_2$ to obtain 
 \begin{equation}\label{47.4.eq1}
S_{1,1}''=x_1 \sum\limits_{\ell=2}^{\gamma}\sum\limits_{k=\ell}^r
\sum\limits_{m_2\in \binom{x_{\ell},\dots,x_d}{n-1}}\sum\limits_{m_1\in \binom{x_1,\dots,x_d}{n-2}}(-1)^{k+1}\chi(x_{\ell}|m)t_{m_1m_2x_{a_k}} Q_{x_1m_1,\frac{m}{x_{\ell}}}X^{(r-1)}_{a_1,\dots,
\widehat{a_k},\dots,a_r,x_{\ell}m_2}\end{equation}
 
We next transform $S_{1,1}'$.
Let $\ell=\operatorname{least}(m_2)$. Of course $2\le \ell$. On the other hand, $\ell\le j\le \gamma-1$. So, in fact 
the sum could be $\sum\limits_{j=1}^{\gamma-1}\sum\limits_{\ell=2}^j$. Exchange the order of summation. The sum could be
$\sum\limits_{\ell=2}^{\gamma-1}\sum\limits_{j=\ell}^{\gamma-1}$. Indeed, no harm is done if we write
$\sum\limits_{\ell=2}^{\gamma}\sum\limits_{j=\ell}^{\gamma-1}$.
It follows that $S_{1,1}'$ is equal to
\begingroup\allowdisplaybreaks
\begin{align}
&\sum\limits_{\ell=2}^{\gamma}\sum\limits_{j=\ell}^{\gamma-1}
\sum\limits_{m_1\in \binom{x_1,\dots,x_d}{n-2}}\sum\limits_{m_2\in \binom{x_2,\dots,x_d}{n}}
\chi(\operatorname{least}(m_2)=\ell)\chi(x_{a_j}|m)x_1t_{m_1m_2} Q_{x_1m_1,\frac{m}{x_{a_j}}} (-1)^{j}X^{(r-1)}_{a_1,\dots,a_{j-1},\widehat{a_j}, a_{j+1},\dots,a_r,m_2}\notag\\
&=\sum\limits_{\ell=2}^{\gamma}\sum\limits_{j=\ell}^{\gamma-1}
\sum\limits_{m_1\in \binom{x_1,\dots,x_d}{n-2}}\sum\limits_{m_2\in \binom{x_\ell,\dots,x_d}{n}}
\chi(x_\ell|m_2)\chi(x_{a_j}|m)x_1t_{m_1m_2} Q_{x_1m_1,\frac{m}{x_{a_j}}} (-1)^{j}X^{(r-1)}_{a_1,\dots,a_{j-1},\widehat{a_j}, a_{j+1},\dots,a_r,m_2}\notag\\
&= \sum\limits_{\ell=2}^{\gamma}\sum\limits_{j=\ell}^{\gamma-1}
\sum\limits_{m_1\in \binom{x_1,\dots,x_d}{n-2}}\sum\limits_{m_2\in \binom{x_\ell,\dots,x_d}{n-1}}
\chi(x_{a_j}|m)x_1t_{m_1m_2x_\ell} Q_{x_1m_1,\frac{m}{x_{a_j}}} (-1)^{j}X^{(r-1)}_{a_1,\dots,a_{j-1},\widehat{a_j}, a_{j+1},\dots,a_r,x_\ell m_2};
\notag\end{align}\endgroup
and therefore,
\begin{equation}\label{47.4.eq2}S_{1,1}'=x_1 \sum\limits_{\ell=2}^{\gamma}\sum\limits_{k=\ell}^{\gamma-1}\sum\limits_{m_2\in \binom{x_\ell,\dots,x_d}{n-1}}
\sum\limits_{m_1\in \binom{x_1,\dots,x_d}{n-2}}(-1)^{k}
\chi(x_{a_k}|m)t_{m_1m_2x_\ell} Q_{x_1m_1,\frac{m}{x_{a_k}}} X^{(r-1)}_{a_1,\dots,\widehat{a_k}, \dots,a_r,x_\ell m_2}.\end{equation}

We apply the same techniques to $S_{1,2}$. First separate $S_{1,2}$ as the sum 
$S_{1,2}=S_{1,2}'+S_{1,2}''$ where the monomials $m_2$ that appear in $S_{1,2}'$ all satisfy $\operatorname{least}(m_2)\le \gamma$ and the monomials $m_2$ that appear in $S_{1,2}''$ all satisfy $\gamma+1\le \operatorname{least}(m_2)$. 
Apply Lemma~\ref{May31-impt}.\ref{May31-impt-a} to see that

$$S_{1,2}'= \sum\limits_{j=\gamma}^{r}\sum\limits_{m_1\in \binom{x_1,\dots,x_d}{n-2}}\sum\limits_{m_2\in \binom{x_2,\dots,x_d}{n}}\chi(x_{a_j}|m)x_1t_{m_1m_2} Q_{x_1m_1,\frac{m}{x_{a_j}}} (-1)^{j}
X^{(r-1)}_{a_1,\dots,\widehat{a_j},\dots, a_r,m_2}
\chi(\operatorname{least}(m_2)\le \gamma).$$Let $\ell=\operatorname{least}(m_2)$. We see that
\begingroup\allowdisplaybreaks
\begin{align}S_{1,2}'&= \sum\limits_{\ell=2}^{\gamma}\sum\limits_{j=\gamma}^{r}\sum\limits_{m_1\in \binom{x_1,\dots,x_d}{n-2}}\sum\limits_{m_2\in \binom{x_{\ell},\dots,x_d}{n}}\chi(x_{\ell}|m_2)\chi(x_{a_j}|m)x_1t_{m_1m_2} Q_{x_1m_1,\frac{m}{x_{a_j}}} (-1)^{j}
X^{(r-1)}_{a_1,\dots,\widehat{a_j},\dots, x_{a_r},m_2}\notag\\
&= \sum\limits_{\ell=2}^{\gamma}\sum\limits_{j=\gamma}^{r}\sum\limits_{m_1\in \binom{x_1,\dots,x_d}{n-2}}\sum\limits_{m_2\in \binom{x_{\ell},\dots,x_d}{n-1}}\chi(x_{a_j}|m)x_1t_{m_1m_2x_{\ell}} Q_{x_1m_1,\frac{m}{x_{a_j}}} (-1)^{j}
X^{(r-1)}_{a_1,\dots,\widehat{a_j},\dots, x_{a_r},x_{\ell}m_2};\notag
\end{align}\endgroup
and therefore,
\begin{equation}\label{47.4.eq3} S_{1,2}'=x_1 \sum\limits_{\ell=2}^{\gamma}\sum\limits_{k=\gamma}^{r}
\sum\limits_{m_2\in \binom{x_{\ell},\dots,x_d}{n-1}}
\sum\limits_{m_1\in \binom{x_1,\dots,x_d}{n-2}}(-1)^{k}
\chi(x_{a_k}|m)t_{m_1m_2x_{\ell}} Q_{x_1m_1,\frac{m}{x_{a_k}}} 
X^{(r-1)}_{a_1,\dots,\widehat{a_k},\dots, a_r,x_{\ell}m_2}.\end{equation} 
Combine (\ref{47.4.eq1}), (\ref{47.4.eq2}) and (\ref{47.4.eq3}) to see that $S_{1,1}''+S_{1,2}'+S_{1,1}'$ is equal to the first summand of $b_r(X^{(r)}_{a_1,\dots,a_r,m})$, as given in Table~\ref{br-of-X}.

Now we apply   Lemma~\ref{May31-impt}.\ref{May31-impt-a} to
$$S_{1,2}''=\begin{cases} \sum\limits_{j=\gamma}^{r}\sum\limits_{m_1\in \binom{x_1,\dots,x_d}{n-2}}\sum\limits_{m_2\in \binom{x_2,\dots,x_d}{n}}\chi(x_{a_j}|m)x_1t_{m_1m_2} Q_{x_1m_1,\frac{m}{x_{a_j}}}\\\hskip.2in\otimes (-1)^{j}\eta((x_{a_1}\wedge\dots\wedge\widehat{x_{a_j}}\wedge\dots \wedge x_{a_r})\otimes  m_2^*)\chi(\gamma+1\le \operatorname{least}(m_2)).\end{cases}$$
Observe that, when the conditions of $S_{1,2}''$ are in effect, 
$$\begin{array}{ll}&\eta((x_{a_1}\wedge\dots\wedge\widehat{x_{a_j}}\wedge\dots \wedge x_{a_r})\otimes  m_2^*)\\=&\begin{cases}
\phantom{+}\sum\limits_{k=\gamma}^{j-1}(-1)^{\gamma+k} \chi(x_{a_k}|m_2)
X^{(r-1)}_{a_1,\dots,a_{\gamma-1},\gamma+1,a_{\gamma},\dots,\widehat{a_k},\dots,\widehat{a_j},\dots,a_r,\frac{x_{\gamma+1}m_2}{x_{a_k}}}\\
+\sum\limits_{k=j+1}^{r}(-1)^{\gamma+k+1} \chi(x_{a_k}|m_2)
X^{(r-1)}_{a_1,\dots,a_{\gamma-1},\gamma+1,a_{\gamma},\dots,\widehat{a_j},\dots,\widehat{a_k},\dots,a_r,\frac{x_{\gamma+1}m_2}{x_{a_k}}}.
\end{cases}\end{array}$$ So, $S_{1,2}''=T_1+T_2$ with
$$T_1=\begin{cases} \sum\limits_{j=\gamma}^{r}\sum\limits_{m_1\in \binom{x_1,\dots,x_d}{n-2}}\sum\limits_{m_2\in \binom{x_2,\dots,x_d}{n}}\chi(x_{a_j}|m)x_1t_{m_1m_2} Q_{x_1m_1,\frac{m}{x_{a_j}}}\\\times (-1)^{j}
\sum\limits_{k=\gamma}^{j-1}(-1)^{\gamma+k} \chi(x_{a_k}|m_2)
X^{(r-1)}_{a_1,\dots,a_{\gamma-1},\gamma+1,a_{\gamma},\dots,\widehat{a_k},\dots,\widehat{a_j},\dots,a_r,\frac{x_{\gamma+1}m_2}{x_{a_k}}}
\chi(\gamma+1\le \operatorname{least}(m_2))\end{cases}$$and $T_2$ equal to 
$$ \begin{cases} \sum\limits_{j=\gamma}^{r}\sum\limits_{m_1\in \binom{x_1,\dots,x_d}{n-2}}\sum\limits_{m_2\in \binom{x_2,\dots,x_d}{n}}\chi(x_{a_j}|m)x_1t_{m_1m_2} Q_{x_1m_1,\frac{m}{x_{a_j}}}\\\times (-1)^{j}
\sum\limits_{k=j+1}^{r}(-1)^{\gamma+k+1} \chi(x_{a_k}|m_2)
X^{(r-1)}_{a_1,\dots,a_{\gamma-1},\gamma+1,a_{\gamma},\dots,\widehat{a_j},\dots,\widehat{a_k},\dots,a_r,\frac{x_{\gamma+1}m_2}{x_{a_k}}}
\chi(\gamma+1\le \operatorname{least}(m_2)).\end{cases}$$
Replace $m_2$ with $x_{a_k}m_2$.
 The ambient hypothesis $\gamma+1<a_{\gamma}$ guarantees that in both sums ${\gamma+1< a_k}$. It follows that $\chi(\gamma+1\le \operatorname{least}(m_2x_{a_k}))=\chi(\gamma+1\le \operatorname{least}(m_2))$ and this factor may be absorbed into the instruction for choosing $m_2$.
At this point
$$ T_1=\begin{cases}\sum\limits_{\gamma\le k<j\le r} \sum\limits_{m_1\in \binom{x_1,\dots,x_d}{n-2}}\sum\limits_{m_2\in \binom{x_{\gamma+1},\dots,x_d}{n-1}}(-1)^{\gamma+k+j} \chi(x_{a_j}|m)x_1t_{m_1m_2x_{a_k}} Q_{x_1m_1,\frac{m}{x_{a_j}}} \\\hskip.2in \times  
X^{(r-1)}_{a_1,\dots,a_{\gamma-1},\gamma+1,a_{\gamma},\dots,\widehat{a_k},\dots,\widehat{a_j},\dots,a_r,x_{\gamma+1}m_2}\end{cases}
$$
and
 $$ T_2=\begin{cases}\sum\limits_{\gamma\le j<k\le r} \sum\limits_{m_1\in \binom{x_1,\dots,x_d}{n-2}}\sum\limits_{m_2\in \binom{x_{\gamma+1},\dots,x_d}{n-1}}(-1)^{\gamma+k+1+j}\chi(x_{a_j}|m)x_1t_{m_1m_2x_{a_k}} Q_{x_1m_1,\frac{m}{x_{a_j}}}\\\hskip.2in \times   
X^{(r-1)}_{a_1,\dots,a_{\gamma-1},\gamma+1,a_{\gamma},\dots,\widehat{a_j},\dots,\widehat{a_k},\dots,a_r,
x_{\gamma+1}m_2}.\end{cases}
$$ Interchange  $j$ and $k$ in $T_1$ and then observe that $S_{1,2}''$, which is equal to $T_1+T_2$, is equal to the second summand of $b_r(X^{(r)}_{a_1,\dots,a_r,m})$, as given in Table~\ref{br-of-X}.
This, together with the sentence immediately below (\ref{47.4.eq3}), completes the proof of Claim~\ref{first-claim}.

We next show that \begin{claim}\label{second-claim}$S_2$ is equal to the sum of  summands three and four of $b_r(X^{(r)}_{a_1,\dots,a_r,m})$, as given in Table~{\rm\ref{br-of-X}}.\end{claim}

\noindent 
Separate $S_2$ into two summands: one with $1\le j\le \gamma-1$ and the other with $\gamma\le j\le r$. Apply Lemma~\ref{May31-impt}.\ref{May31-impt-a} to both summands. Keep in mind that $X^{(r)}_{a_1,\dots,a_r,m}\in \mathfrak B$; so $[2,\operatorname{least}(m)]\subseteq\{a_1,\dots,a_r\}$. The parameter $\gamma$ is defined to be the largest integer with $[2,\gamma]\subseteq\{a_1,\dots,a_r\}$. Thus, $\operatorname{least}(m)\le \gamma$. At this point, $S_2=\sum_{i=1}^3S_{2,i}$, with 
\begingroup\allowdisplaybreaks
\begin{align}
S_{2,1}&=\pmb\delta \sum\limits_{j=1}^{\gamma-1}x_{a_j}  (-1)^{j}X^{(r-1)}_{a_1,\dots,\widehat{a_j},\dots,a_{r},m}\chi(\operatorname{least}(m)\le j),\notag\\
S_{2,2}&=\pmb\delta \sum\limits_{j=1}^{\gamma-1}x_{a_j}  (-1)^{j}\sum\limits_{k=j+1}^{r}(-1)^{k+j+1}\chi(x_{a_k}|m)X^{(r-1)}_{a_1,\dots,\widehat{a_k},\dots, a_r,
\frac{x_{j+1}m}{x_{a_k}}
}\chi(j+1\le \operatorname{least}(m)), \text{ and}\notag\\
S_{2,3}&=\pmb\delta \sum\limits_{j=\gamma}^{r}x_{a_j}  (-1)^{j}X^{(r-1)}_{a_1,\dots  \widehat{a_j}, \dots , a_r,m}.\notag
\end{align}\endgroup
\noindent Observe that $$S_{2,1}+S_{2,3}=\pmb\delta \sum\limits_{j=\operatorname{least}(m)}^{r}x_{a_j}  (-1)^{j}X^{(r-1)}_{a_1,\dots  \widehat{a_j}, \dots , a_r,m},$$ which is summand $4$ of $b_r(X^{(r)}_{a_1,\dots,a_r,m})$, as given in Table~{\rm\ref{br-of-X}}. Furthermore, we have shown that $\operatorname{least}(m)-1\le \gamma-1$. It follows that
$$
S_{2,2}=\pmb\delta \sum\limits_{j=1}^{\operatorname{least}(m)-1}\sum\limits_{k=j+1}^{r} x_{a_j}  (-1)^{k+1}\chi(x_{a_k}|m)X^{(r-1)}_{a_1,\dots,\widehat{a_k},\dots, a_r,
\frac{x_{j+1}m}{x_{a_k}}
},
$$
which is summand 3 of
$b_r(X^{(r)}_{a_1,\dots,a_r,m})$ as given in Table~{\rm\ref{br-of-X}}.
 This completes the proof of Claim~\ref{second-claim}. 

We next show that \begin{claim}\label{third-claim}$S_3$ is equal to the sum of  summands five, six,  and seven  of $b_r(X^{(r)}_{a_1,\dots,a_r,m})$, as given in Table~{\rm\ref{br-of-X}}.\end{claim}

\noindent Separate $S_3$ into two summands: one with $2\le j\le r$ and the other with $j=1$. Apply Lemma~\ref{May31-impt}.\ref{May31-impt-53.2} to both summands. Keep in mind that 
$X^{(r)}_{a_1,\dots,a_r,m}$ is in $\mathfrak B$; so $[2,\operatorname{least}(m)]\subseteq\{a_1,\dots,a_r\}$, where $m$ is a monomial in $\{x_2,\dots,x_d\}$ of positive degree. It follows that $2\le \operatorname{least}(m)$ and $a_1=2$. Thus, every monomial $m_1\in \binom{x_2,\dots,x_d}{n-1}$
satisfies
 \begin{equation}\label{a1=2}a_1\le \operatorname{least}(m_1).\end{equation} At any rate,
 we obtain $S_3=\sum_{i=1}^3S_{3,i}$ with 
\begingroup\allowdisplaybreaks
\begin{align}
S_{3,1}&=\phantom{-}\sum\limits_{j=2}^{r}\sum\limits_{m_1\in\binom{x_2,\dots,x_d}{n-1}}\chi(x_{a_j}|m)x_1Q_{m_1,\frac{m}{x_{a_j}}} (-1)^{j} Y^{(r-1)}_{a_1,\dots\widehat{a_j},\dots,a_{r},m_1}
 \notag\\
S_{3,2}&=-\sum\limits_{m_1\in\binom{x_2,\dots,x_d}{n-1}}\chi(x_{a_1}|m)x_1Q_{m_1,\frac{m}{x_{a_1}}}  \chi(a_2\le \operatorname{least}(m_1))Y^{(r-1)}_{a_2,\dots,a_{r},m_1}
\notag\\
S_{3,3}&=-\sum\limits_{m_1\in\binom{x_2,\dots,x_d}{n-1}}\chi(x_{a_1}|m)x_1Q_{m_1,\frac{m}{x_{a_1}}}  \chi(\operatorname{least}(m_1)<a_2)\sum\limits_{k=2}^{r}(-1)^{k}Y^{(r-1)}_{\operatorname{least}(m_1),a_2,\dots,\widehat{a_k},\dots,a_{r},\frac{x_{a_k}m_1}{{\operatorname{init}}(m_1)}}\notag
\end{align}\endgroup

\noindent In $S_{3,3}$, replace $x_{a_k}$ times the old $m_1$ with  the new $m_1$ to see that  $S_{3,3}$ is equal to 

$$x_1\sum\limits_{k=2}^{r}\sum\limits_{m_1\in\binom{x_2,\dots,x_d}{n}}(-1)^{k+1}\chi(x_{a_k}|m_1)
\chi(x_{a_1}|m)Q_{\frac{m_1}{x_{a_k}},\frac{m}{x_{a_1}}}  \chi(a_1\le \operatorname{least}(m_1)<a_2)Y^{(r-1)}_{\operatorname{least}(m_1),a_2,\dots,\widehat{a_k},\dots,a_{r},\frac{m_1}{{\operatorname{init}}(m_1)}}.$$
Let $\ell=\operatorname{least}(m_1)$ and 
replace the old $m_1$ by $x_{\ell}$ times the new $m_1$. Notice that $\chi(x_{a_k}|x_{\ell}m_1)=\chi(x_{a_k}|m_1)$ 
 because $\ell<a_k$; and therefore,
$$
S_{3,3}=x_1\sum\limits_{\ell=a_1}^{a_2-1}\sum\limits_{k=2}^{r}\sum\limits_{m_1\in\binom{x_{\ell},\dots,x_d}{n-1}}(-1)^{k+1}\chi(x_{a_k}|m_1)
\chi(x_{a_1}|m)Q_{\frac{x_{\ell}m_1}{x_{a_k}},\frac{m}{x_{a_1}}}  Y^{(r-1)}_{\ell,a_2,\dots,\widehat{a_k},\dots,a_{r},m_1}.$$
Separate $S_{3,3}$ into two summands $S_{3,3}'+S_{3,3}''$. In  $S_{3,3}'$, the parameter $\ell$ satisfies $a_{1}+1\le \ell\le a_{2}-1$ and in $S_{3,3}''$, $\ell$ is equal to $a_1$. We are now able to complete the proof of Claim~\ref{third-claim}: the sum $S_{3,1}+S_{3,3}''$ is summand five of  $b_r(X^{(r)}_{a_1,\dots,a_r,m})$, as given in Table~{\rm\ref{br-of-X}},
the sum $S_{3,3}'$ is  equal to summand six; and the sum $S_{3,2}$ is summand seven. 

Now that Claims \ref{first-claim}, \ref{second-claim}, and \ref{third-claim} have been established, the proof of (\ref{47.4-bi}) is also complete.

 We  prove (\ref{47.4-bii}). This proof is similar to the proof of (\ref{47.4-bi}). Fix an element  $Y^{(r)}_{a_1,\dots,a_r,m}$  of $B_r$. Recall that $2\le a_1<\dots<a_r\le d$ are integers, $m\in \binom{x_2,\dots x_d}{n-1}$ is a monomial, $a_1\le \operatorname{least}(m)$, and 
$Y^{(r)}_{a_1,\dots,a_r,m}$ is equal to $\kappa(x_{a_1}\wedge\dots\wedge x_{a_r}\otimes m)$. Identify the integer $\gamma$ with $[2,\gamma]\subseteq \{a_1,\dots,a_r\}$, but $\gamma+1\not\in \{a_1,\dots,a_r\}$. Apply Proposition~\ref{Best} to see that $b_r(Y^{(r)}_{a_1,\dots,a_r,m})=\sum_{i=1}^3S_i$, with
\begingroup\allowdisplaybreaks
\begin{align}
S_1&= x_1\sum\limits_{j=1}^r \sum\limits_{m_3\in\binom{x_2,\dots,x_d}{n}}
\sum\limits_{m_1,m_2\in\binom{x_1,\dots,x_d}{n-2}}
(-1)^{j+1}
t_{x_{a_j}mm_2}Q_{x_1m_1,x_1m_2}t_{m_1m_3} \notag\\&\hskip.2in \otimes \eta((x_{a_1}\wedge\dots\wedge\widehat{x_{a_j}}\wedge\dots\wedge x_{a_r})\otimes m_3^*),\notag\\
S_2&=x_1\sum\limits_{j=1}^r\sum\limits_{m_1\in\binom{x_2,\dots,x_d}{n-1}}\sum\limits_{m_2\in\binom{x_1,\dots,x_d}{n-2}}(-1)^{j+1}t_{x_{a_j}mm_2}Q_{m_1,x_1m_2} \otimes \kappa\left((x_{a_1}\wedge\dots\wedge\widehat{x_{a_j}}\wedge\dots\wedge x_{a_r})\otimes m_1\right),\text{ and}\notag\\
S_3&=-\pmb \delta\sum\limits_{j=1}^r(-1)^{j+1} x_{a_j}\otimes \kappa ((x_{a_1}\wedge\dots\wedge\widehat{x_{a_j}}\wedge\dots\wedge x_{a_r})\otimes m). \notag\end{align}\endgroup

We first show that 
\begin{claim}\label{first-claim'}$S_1$ is equal to the sum of the first two summands of $b_r(Y^{(r)}_{a_1,\dots,a_r,m})$, as given in Table~{\rm\ref{br-of-Y}}.\end{claim} 

\noindent Separate $S_1$ into two summands: one with $1\le j\le \gamma-1$ and the other with $\gamma\le j\le r$. Apply Lemma~\ref{May31-impt}.\ref{May31-impt-a} to both summands in order to obtain $S_1=S_{1,1}'+S_{1,1}''+S_{1,2}'+S_{1,2}''$ with
\begingroup\allowdisplaybreaks
\begin{align}
S_{1,1}'&=x_1\sum\limits_{j=1}^{\gamma-1} \sum\limits_{m_3\in\binom{x_2,\dots,x_d}{n}}\chi(\operatorname{least}(m_3)\le j)
\sum\limits_{m_1,m_2\in\binom{x_1,\dots,x_d}{n-2}}
(-1)^{j+1}
t_{x_{a_j}mm_2}Q_{x_1m_1,x_1m_2}t_{m_1m_3}  X^{(r-1)}_{a_1,\dots,\widehat{a_j},\dots,a_r,m_3},\notag\\
S_{1,1}''&=x_1\sum\limits_{j=1}^{\gamma-1} \sum\limits_{m_3\in\binom{x_2,\dots,x_d}{n}}
\sum\limits_{m_1,m_2\in\binom{x_1,\dots,x_d}{n-2}}\chi(j+1\le \operatorname{least}(m_3))
(-1)^{j+1}
t_{x_{a_j}mm_2}Q_{x_1m_1,x_1m_2}t_{m_1m_3}\notag\\&\hskip.2in\times  \sum\limits_{k=j+1}^r(-1)^{j+1+k}\chi(x_{a_k}|m_3)X^{(r-1)}_{a_1,\dots,\widehat{a_k},\dots,a_r,\frac{x_{j+1}m_3}{x_{a_k}}},\notag\\
S_{1,2}'&=x_1\sum\limits_{j=\gamma}^r \sum\limits_{m_3\in\binom{x_2,\dots,x_d}{n}}
\sum\limits_{m_1,m_2\in\binom{x_1,\dots,x_d}{n-2}}
(-1)^{j+1}
t_{x_{a_j}mm_2}Q_{x_1m_1,x_1m_2}t_{m_1m_3} X^{(r-1)}_{a_1,\dots,\widehat{a_j},\dots,a_r,m_3}\chi(\operatorname{least}(m_3)\le \gamma),\notag\\
S_{1,2}''&=x_1\sum\limits_{j=\gamma}^r \sum\limits_{m_3\in\binom{x_{\gamma+1},\dots,x_d}{n}}
\sum\limits_{m_1,m_2\in\binom{x_1,\dots,x_d}{n-2}}
(-1)^{j+1}
t_{x_{a_j}mm_2}Q_{x_1m_1,x_1m_2}t_{m_1m_3} \notag\\&\hskip.2in \times
\begin{cases}
\sum\limits_{k=\gamma}^{j-1}(-1)^{\gamma+k}\chi(x_{a_k}|m_3)X^{(r-1)}_{a_1,\dots,a_{\gamma-1},\gamma+1,a_{\gamma},\dots,\widehat{a_k},\dots,\widehat{a_j},\dots,a_r,\frac{x_{\gamma+1}m_3}{x_{a_k}}}\\
+\sum\limits_{k=j+1}^r(-1)^{\gamma+k+1}\chi(x_{a_k}|m_3)X^{(r-1)}_{a_1,\dots,a_{\gamma-1},\gamma+1,a_{\gamma},\dots,\widehat{a_j},\dots,\widehat{a_k},\dots,a_r,\frac{x_{\gamma+1}m_3}{x_{a_k}}}.
\end{cases}
\notag
\end{align}\endgroup

\noindent In $S_{1,1}''$, let $\ell=j+1$ and replace $m_3$ by $x_{a_k}m_3$, exactly as in the proof of (\ref{47.4-bi}), in order to obtain 
\begin{equation}\label{Y-eq1}S_{1,1}''= x_1\sum\limits_{\ell=2}^{\gamma} \sum\limits_{k=\ell}^r\sum\limits_{m_3\in\binom{x_\ell,\dots,x_d}{n-1}}
\sum\limits_{m_1,m_2\in\binom{x_1,\dots,x_d}{n-2}}
(-1)^{k}
t_{x_{\ell}mm_2}Q_{x_1m_1,x_1m_2}t_{m_1m_3x_{a_k}}  X^{(r-1)}_{a_1,\dots,\widehat{a_k},\dots,a_r,
x_{\ell}m_3}.\end{equation}
 In $S_{1,1}'$ and $S_{1,2}'$, let $\ell=\operatorname{least}(m_3)$ and apply the tricks of the proof of (\ref{47.4-bi})
to see that
\begin{equation}\label{Y-eq2} S_{1,1}'= x_1\sum\limits_{\ell=2}^{\gamma}\sum\limits_{k=\ell}^{\gamma-1}
\sum\limits_{m_3\in\binom{x_\ell,\dots,x_d}{n-1}}
\sum\limits_{m_1,m_2\in\binom{x_1,\dots,x_d}{n-2}}
(-1)^{k+1}
t_{x_{a_k}mm_2}Q_{x_1m_1,x_1m_2}t_{m_1m_3x_\ell}  X^{(r-1)}_{a_1,\dots,\widehat{a_k},\dots,a_r,x_\ell m_3}
\end{equation}
and
\begin{equation}\label{Y-eq3} S_{1,2}'= x_1\sum\limits_{\ell=2}^\gamma\sum\limits_{k=\gamma}^r \sum\limits_{m_3\in\binom{x_\ell,\dots,x_d}{n-1}}
\sum\limits_{m_1,m_2\in\binom{x_1,\dots,x_d}{n-2}}
(-1)^{k+1}
t_{x_{a_k}mm_2}Q_{x_1m_1,x_1m_2}t_{m_1m_3x_\ell} X^{(r-1)}_{a_1,\dots,\widehat{a_k},\dots,a_r,x_\ell m_3}.\end{equation}
Combine (\ref{Y-eq1}), (\ref{Y-eq2}), and (\ref{Y-eq3}) to see that $(S_{1,1}'+S_{1,2}')+S_{1,1}''$ is equal to 
$$
x_1\sum\limits_{\ell=2}^{\gamma}\sum\limits_{k=\ell}^{r}
\sum\limits_{m_3\in\binom{x_\ell,\dots,x_d}{n-1}}
\sum\limits_{m_1,m_2\in\binom{x_1,\dots,x_d}{n-2}}
(-1)^{k}Q_{x_1m_1,x_1m_2}
[t_{x_{\ell}mm_2}t_{x_{a_k}m_1m_3}  
-t_{x_{a_k}mm_2}t_{x_{\ell}m_1m_3}]
X^{(r-1)}_{a_1,\dots,\widehat{a_k},\dots,a_r,x_{\ell}m_3},$$which is the first summand of $b_r(Y^{(r)}_{a_1,\dots,a_r,m})$, as given in Table~{\rm\ref{br-of-Y}}. Rearrange $S_{1,2}''$ exactly as was done in (\ref{47.4-bi}) to obtain $$S_{1,2}''=\begin{cases} 
x_1
\sum\limits_{\gamma\le j<k\le r}
\sum\limits_{m_3\in\binom{x_{\gamma+1},\dots,x_d}{n-1}}
\sum\limits_{m_1,m_2\in\binom{x_1,\dots,x_d}{n-2}}
(-1)^{j+\gamma+k}Q_{x_1m_1,x_1m_2}\\\hskip.2in \times\left[ 
t_{x_{a_j}mm_2}t_{x_{a_k}m_1m_3} 
-t_{x_{a_k}mm_2}t_{x_{a_j}m_1m_3}
\right]
X^{(r-1)}_{a_1,\dots,a_{\gamma-1},\gamma+1,a_{\gamma},\dots,\widehat{a_j},
\dots,\widehat{a_k},\dots,a_r,x_{\gamma+1}m_3},\end{cases}
$$which is the second summand of $b_r(Y^{(r)}_{a_1,\dots,a_r,m})$, as given in Table~{\rm\ref{br-of-Y}}.
This completes the proof of Claim~\ref{first-claim'}.

We next show that \begin{claim}\label{second-claim'}$S_2$ is equal to the sum of  summands three, four, five, and six of $b_r(Y^{(r)}_{a_1,\dots,a_r,m})$, as given in Table~{\rm\ref{br-of-Y}}.\end{claim}

\noindent Separate $S_2$ into two summands: one with $2\le j\le r$ and the other with $j=1$. Apply Lemma~\ref{May31-impt}.\ref{May31-impt-53.2} to both summands in order to obtain $S_2=\sum_{i=1}^5S_{2,i}$ with
\begingroup\allowdisplaybreaks
\begin{align}
S_{2,1}&= x_1\sum\limits_{j=2}^r\sum\limits_{m_1\in\binom{x_2,\dots,x_d}{n-1}}\sum\limits_{m_2\in\binom{x_1,\dots,x_d}{n-2}}(-1)^{j+1}t_{x_{a_j}mm_2}Q_{m_1,x_1m_2} \chi(a_1\le \operatorname{least}(m_1))Y^{(r-1)}_{a_1,\dots,\widehat{a_j},\dots, a_r,m_1}\notag\\
S_{2,2}&=\begin{cases}x_1\sum\limits_{j=2}^r\sum\limits_{m_1\in\binom{x_2,\dots,x_d}{n-1}}\sum\limits_{m_2\in\binom{x_1,\dots,x_d}{n-2}}(-1)^{j+1}t_{x_{a_j}mm_2}Q_{m_1,x_1m_2} \chi(\operatorname{least}(m_1)<a_1)
\\ \hskip.2in\times 
\sum\limits_{k=1}^{j-1}(-1)^{k+1}Y^{(r-1)}_{\operatorname{least}(m_1),a_1,\dots,
\widehat{a_k},\dots \widehat{a_j},\dots, a_r,\frac{x_{a_k}m_1}{{\operatorname{init}}(m_1)}}\end{cases}\notag\\
S_{2,3}&=\begin{cases}x_1\sum\limits_{j=2}^r\sum\limits_{m_1\in\binom{x_2,\dots,x_d}{n-1}}\sum\limits_{m_2\in\binom{x_1,\dots,x_d}{n-2}}(-1)^{j+1}t_{x_{a_j}mm_2}Q_{m_1,x_1m_2} \chi(\operatorname{least}(m_1)<a_1)\\ \hskip.2in\times \sum\limits_{k=j+1}^{r}(-1)^{k}Y^{(r-1)}_{\operatorname{least}(m_1),a_1,\dots,
\widehat{a_j},\dots \widehat{a_k},\dots, a_r,\frac{x_{a_k}m_1}{{\operatorname{init}}(m_1)}}\end{cases}\notag\\
S_{2,4}&=x_1\sum\limits_{j=1}\sum\limits_{m_1\in\binom{x_2,\dots,x_d}{n-1}}\sum\limits_{m_2\in\binom{x_1,\dots,x_d}{n-2}}(-1)^{j+1}t_{x_{a_j}mm_2}Q_{m_1,x_1m_2} \chi(a_2\le \operatorname{least} (m_1)) Y^{(r-1)}_{a_2,\dots,a_r,m_1}\notag\\
S_{2,5}&=\begin{cases}x_1\sum\limits_{j=1}\sum\limits_{m_1\in\binom{x_2,\dots,x_d}{n-1}}\sum\limits_{m_2\in\binom{x_1,\dots,x_d}{n-2}}(-1)^{j+1}t_{x_{a_j}mm_2}Q_{m_1,x_1m_2} \chi(\operatorname{least}(m_1)<a_2)\\ \hskip.2in\times \sum\limits_{k=2}^r(-1)^kY^{(r-1)}_{\operatorname{least}(m_1),a_2,\dots,\widehat{a_k},\dots,a_r,
\frac{x_{a_k}m_1}{{\operatorname{init}}(m_1)}}\end{cases}\notag
\end{align}\endgroup

Notice that in the present calculation inequality (\ref{a1=2}) does not necessarily hold. This is why there are no summands of $S_3$ in the proof of Claim~\ref{third-claim} which are analogous to the summands $S_{2,2}$ and $S_{2,3}$ in the present calculation. Separate $S_{2,5}$ into three summands: in $S_{2,5}'$ the monomials $m_1$ satisfy $\operatorname{least}(m_1)< a_1$, in $S_{2,5}''$ the monomials  satisfy $\operatorname{least}(m_1)=a_1$, and in $S_{2,5}'''$ the monomials satisfy $a_1+1\le \operatorname{least}(m_1)$. 
Add $S_{2,3}$ and $S_{2,5}'$ to obtain 
$$S_{2,3}+S_{2,5}'=\begin{cases}x_1\sum\limits_{1\le j<k\le r}\sum\limits_{m_1\in\binom{x_2,\dots,x_d}{n-1}}\sum\limits_{m_2\in\binom{x_1,\dots,x_d}{n-2}}(-1)^{k+j+1}t_{x_{a_j}mm_2}Q_{m_1,x_1m_2} \chi(\operatorname{least}(m_1)<a_1)\\ \hskip.2in\times Y^{(r-1)}_{\operatorname{least}(m_1),a_1,\dots,
\widehat{a_j},\dots \widehat{a_k},\dots, a_r,\frac{x_{a_k}m_1}{{\operatorname{init}}(m_1)}}.\end{cases}$$
Transform $S_{2,2}$, $S_{2,3}+S_{2,5}'$, $S_{2,5}''$, and $S_{2,5}'''$ simultaneously. First replace $m_1$ so that the new
$m_1$ is  equal to  $x_{a_k}$ times the old $m_1$. Notice that 
$$1\le k\implies a_1\le a_k\implies \chi(\operatorname{least}(x_{a_k}m_1)<a_1)=\chi(\operatorname{least}(m_1)<a_1)$$ and
$$2\le k\implies a_2\le a_k\implies \chi(a_1\le\operatorname{least}(x_{a_k}m_1)<a_2)=\chi(a_1\le\operatorname{least}(m_1)<a_2).$$
Furthermore, if $\chi(\operatorname{least}(m_1)<a_1)$ is non-zero, then $\operatorname{least}(x_{a_k}m_1)=\operatorname{least}(m_1)$ 
and ${\operatorname{init}}(x_{a_k}m_1)={\operatorname{init}}(m_1)$
for $1\le k$; and if $\chi(a_1\le\operatorname{least}(m_1)<a_2)$ is non-zero, then $\operatorname{least}(x_{a_k}m_1)=\operatorname{least}(m_1)$ 
and ${\operatorname{init}}(x_{a_k}m_1)={\operatorname{init}}(m_1)$
for $2\le k$. Then let $\ell=\operatorname{least}(m_1)$ and replace 
the old $m_1$ with $x_{\ell}$ times the new $m_1$. At this point, $S_{2,2}+S_{2,3}+S_{2,5}'$ is equal to 
\begingroup\allowdisplaybreaks
\begin{align}
&=\begin{cases}x_1\sum\limits_{\ell=2}^{a_1-1}\sum\limits_{1\le j<k\le r}\sum\limits_{m_1\in\binom{x_{\ell},\dots,x_d}{n-1}}\sum\limits_{m_2\in\binom{x_1,\dots,x_d}{n-2}}(-1)^{k+j}
\\\hskip.2in\times [
\chi(x_{a_j}|m_1)t_{x_{a_k}mm_2}Q_{\frac{x_{\ell}m_1}{x_{a_j}},x_1m_2} 
-\chi(x_{a_k}|m_1)t_{x_{a_j}mm_2}Q_{\frac{x_{\ell}m_1}{x_{a_k}},x_1m_2}]
Y^{(r-1)}_{\ell,a_1,\dots,
\widehat{a_j},\dots \widehat{a_k},\dots, a_r,m_1},\end{cases}\notag\\ 
S_{2,5}''&=x_1\sum\limits_{k=2}^r\sum\limits_{m_1\in\binom{x_{a_1},\dots,x_d}{n-1}}\sum\limits_{m_2\in\binom{x_1,\dots,x_d}{n-2}}(-1)^{k}\chi(x_{a_k}|m_1)t_{x_{a_1}mm_2}Q_{\frac{x_{a_1}m_1}{x_{a_k}},x_1m_2} Y^{(r-1)}_{a_1,a_2,\dots,\widehat{a_k},\dots,a_r,
m_1},\text{ and}\notag\\ 
S_{2,5}'''&=x_1\sum\limits_{\ell=a_1+1}^{a_2-1}\sum\limits_{k=2}^r\sum\limits_{m_1\in\binom{x_{\ell},\dots,x_d}{n-1}}\sum\limits_{m_2\in\binom{x_1,\dots,x_d}{n-2}}(-1)^{k}\chi(x_{a_k}|m_1)t_{x_{a_1}mm_2}Q_{\frac{x_{\ell}m_1}{x_{a_k}},x_1m_2} Y^{(r-1)}_{\ell,a_2,\dots,\widehat{a_k},\dots,a_r,
m_1}.\notag
\end{align}\endgroup

\noindent Observe that $S_{2,2}+S_{2,3}+S_{2,5}'$ is summand three from  $b_r(Y^{(r)}_{a_1,\dots,a_r,m})$, as given in Table~{\rm\ref{br-of-Y}}, $S_{2,1}+S_{2,5}''$ is summand four, $S_{2,5}'''$ is summand five, and $S_{2,4}$ is summand six. This completes the proof of Claim~\ref{second-claim'}.

Finally, we show that 
\begin{claim}\label{third-claim'}$S_3$ is equal to the sum of summands seven, eight, and nine of $b_r(Y^{(r)}_{a_1,\dots,a_r,m})$, as given in Table~{\rm\ref{br-of-Y}}.\end{claim} 

\noindent Separate $S_3$ into two summands: in one of the summands $2\le j\le r$ and in the other summand $j=1$. Apply Lemma~\ref{May31-impt}.\ref{May31-impt-53.2} to each summand. Keep in mind the the ambient hypothesis in (\ref{47.4-bii}) ensures that $a_1\le \operatorname{least}(m)$. At any rate, $S_3=\sum\limits_{i=1}^3S_{3,i}$ with
\begingroup\allowdisplaybreaks
\begin{align}
S_{3,1}&=\pmb \delta\sum\limits_{j=2}^r(-1)^{j} x_{a_j} 
Y^{(r-1)}_{a_1,\dots,\widehat{a_j},\dots,a_r,m},\notag\\
S_{3,2}&=-\pmb \delta x_{a_1}\chi(a_2\le \operatorname{least} (m))Y^{(r-1)}_{a_2,\dots,a_r,m},\text{ and}\notag\\
S_{3,3}&=-\pmb \delta x_{a_1}\chi(\operatorname{least}(m)<a_2)\sum\limits_{k=2}^{r}(-1)^k Y^{(r-1)}_{\operatorname{least}(m),a_2,\dots,\widehat{a_k},\dots,a_r,\frac{x_{a_k}m}{{\operatorname{init}}(m)}};\notag
\end{align}\endgroup furthermore, these sums are summands seven, eight, and nine of $b_r(Y^{(r)}_{a_1,\dots,a_r,m})$, respectively. This completes the proof of Claim~\ref{third-claim'}, (\ref{47.4-bii}), and Theorem~\ref{47.4}. \end{proof}

  \begin{example}\label{d=3matrices} The case $d=3$ is already studied in \cite{EK-K-2}, see also Example~\ref{d=3}.  In the notation of the present paper,  use the bases 
$$\begin{array}{rcllll}
&&\{Y^{(0)}\}&&&\text{for $B_0$},\\
\{X_{2,x_2m}^{(1)}|m\in \binom{x_2,x_3}{n-1}\}&\cup& \{Y_{2,m}^{(1)}|m\in \binom{x_2,x_3}{n-1}\}&\cup&
\{Y_{3,m}^{(1)}|m\in \binom{x_3}{n-1}\}&\text{for $B_1$},\\
\{- Y_{2,3,m}^{(2)}|m\in \binom{x_2,x_3}{n-1}\}&\cup& \{X_{2,3,x_2m}^{(2)}|m\in \binom{x_2,x_3}{n-1}\}&\cup&\{X_{2,3,x_3m}^{(2)}\mid m\in\binom{x_3}{n-1}\}
&\text{for $B_2$, and}\\ 
&&\{ X^{(3)}\}&&&\text{for $B_3$.}\end{array}$$ The monomials of $\binom{x_2,x_3}{n-1}$ may be listed in any order; but use the same order for $\{X_{2,x_2m}^{(1)}\}$ and $\{-Y_{2,3,m}^{(2)}\}$. The analogous instruction is in effect for $\{Y^{(1)}_*\}$ and $\{X^{(2)}_*\}$.
 Let $[b_i]$ be the matrix of $b_i$ with respect to this basis. Then  $[b_2]$ is an alternating matrix and $[b_3]$ is the transpose of $[b_1]$. The listed bases are dual bases (on the nose) with respect to the perfect pairing $\operatorname{pp}: B_r\otimes B_{3-r}\to B_3$; see Observation~\ref{behavior} or (\ref{promise}).
\end{example}

\begin{example}\label{d=4matrices}
Let $d=4$. Use the bases $\{Y^{(0)}\}$ for $B_0$,
$$\begin{array}{cccccl}
\{X_{2,x_2m}^{(1)}|m\in \binom{x_2,x_3,x_4}{n-1}\}&\cup& \{Y_{2,m}^{(1)}|m\in \binom{x_2,x_3,x_4}{n-1}\} &\cup& \{Y_{3,m}^{(1)}|m\in \binom{x_3,x_4}{n-1}\}\\
{}\cup\{Y_{4,m}^{(1)}|m\in \binom{x_4}{n-1}\}&&&&&
\text{for $B_1$},\vspace{5pt}\\
\{X_{2,3,(x_2m)}^{(2)}|m\in \binom{x_2,x_3,x_4}{n-1}\}&\cup&
\{X_{2,3,(x_3m)}^{(2)}|m\in \binom{x_3,x_4}{n-1}\}&\cup&
\{X_{2,4,(x_2m)}^{(2)}|m\in \binom{x_2,x_3,x_4}{n-1}\}\\
{}\cup\{Y_{2,4,m}^{(2)}|m\in \binom{x_2,x_3,x_4}{n-1}\}&\cup&
\{Y_{3,4,m}^{(2)}|m\in \binom{x_3,x_4}{n-1}\}&\cup&
\{-Y_{2,3,m}^{(2)}|m\in \binom{x_2,x_3,x_4}{n-1}\}
&\text{for $B_2$,}\vspace{5pt}\\
\{-Y^{(3)}_{2,3,4,m}\mid m\in \binom{x_2,x_3,x_4}{n-1} &\cup&\{X_{2,3,4,x_2m}^{(3)}|m\in \binom{x_2,x_3,x_4}{n-1}\}&\cup& \{X_{2,3,4,x_3m}^{(3)}|m\in \binom{x_3,x_4}{n-1}\}\\
{}\cup\{X_{2,3,4,x_4m}^{(3)}|m\in \binom{x_4}{n-1}\} &&&&&\text{for $B_3$,}\end{array}$$and
$\{X^{(4)}\}$ for $B_4$. The monomials of $\binom{x_2,x_3,x_3}{n-1}$ and $\binom{x_3,x_4}{n-1}$ 
may be listed in any order; but the same order should be used each time.

For each index $i$, let $[b_i]$ be the matrix of $b_i$ with respect to the chosen  basis. Then  $[b_4]$ is the transpose of $[b_1]$
and if $[b_2]$ has the form
$[b_2]=\bmatrix A&B\endbmatrix$, where $A$ and $B$ each are matrices with $n^2+2n$ columns, then $$[b_3]=-\bmatrix \text{the transpose of $B$}\\\hline \text{the transpose of $A$}\endbmatrix.$$
 Let $e_1,\dots,e_{(n+1)^2}$; $f_1,\dots,f_{2n^2+4n}$, and $g_1,\dots,g_{(n+1)^2}$ be the listed bases for $B_1$, $B_2$, and $B_3$, respectively. Then
\begin{equation}\label{crit-prop}\operatorname{pp}(e_i\otimes g_j)=\chi(i=j)\cdot X^{(4)}\quad\text{and}\quad \operatorname{pp}(f_i\otimes f_j)=\chi(|j-i|=n^2+2n)\cdot  X^{(4)}.\end{equation} The relationship between $[b_2]$ and $[b_3]$ is explained  by Proposition~\ref{ppr-prop}.\ref{ppr-prop-c}. Let 
$$\bmatrix C\\\hline D\endbmatrix$$
be the matrix of $b_3$ with respect to the given bases. (The submatrices  $C$ and $D$ each have $n^2+2n$ rows.) If $1\le j\le n^2+2n$, then
$$\begin{array}{ll}b_2(f_j)=\sum\limits_{k=1}^{(n+1)^2} A_{k,j} e_k,&\quad\text{hence} \quad \operatorname{pp}(b_2(f_j)\otimes g_i)=A_{i,j}\cdot  X^{(4)},\text{ and}\\
b_3(g_i)=\sum\limits_{k=1}^{n^2+2n} C_{k,i} f_k+\sum\limits_{k=1}^{n^2+2n} D_{k,i} f_{n^2+2n+k}
,&\quad\text{hence} \quad \operatorname{pp}(f_j\otimes b_3(g_i))=D_{j,i}\cdot  X^{(4)}.\end{array}
$$The fact that $\operatorname{pp}(b_2(f_j)\otimes g_i)+\operatorname{pp}(f_j\otimes b_3(g_i))=0$ yields that $D$ is minus the transpose of $A$. A slight modification in the argument accounts for the relationship between $C$ and $B$. 

We describe the algorithm we used for recording the basis of $\mathbb B$ that is given at the beginning of the example. The critical property that we aim for is (\ref{crit-prop}). We first choose the basis elements for $B_0$, $B_1$, and ${\mathfrak R}\otimes K_{1,n-1}U_0\subseteq B_2$ as listed in Definition~\ref{basis-47}, then we complete the basis for  ${\mathfrak R}\otimes L_{1,n}U_0\subseteq B_2$, $B_3$, and $B_4$ using the duality of Observation~\ref{behavior}. 
\end{example}

\begin{example}\label{d=4matrices-ext} To illustrate
\begin{table}
\begin{center}
The matrix $[\overline{b_2}]$ from Example~\ref{d=4matrices-ext} is equal to $\pmb\delta$ times 
$$\left[
\text{
\begin{tabular}{cccccccc|cccccccc}
$x_3$&$-x_2$&$0$&$0$&$0$&$x_4$&$0$&$-x_2$&$0$&$0$&$0$&$0$&$0$&$0$&$0$&$0$\\
$0$&$x_3$&$0$&$-x_2$&$0$&$0$&$x_4$&$0$&$0$&$0$&$0$&$0$&$0$&$0$&$0$&$0$\\
$0$&$0$&$x_3$&$0$&$-x_2$&$0$&$0$&$x_4$&$0$&$0$&$0$&$0$&$0$&$0$&$0$&$0$\\
$0$&$0$&$0$&$0$&$0$&$0$&$0$&$0$&$x_4$&$0$&$0$&$0$&$0$&$-x_3$&$0$&$0$\\
$0$&$0$&$0$&$0$&$0$&$0$&$0$&$0$&$0$&$x_4$&$0$&$0$&$0$&$x_2$&$-x_3$&$0$\\
$0$&$0$&$0$&$0$&$0$&$0$&$0$&$0$&$-x_2$&$0$&$x_4$&$0$&$0$&$0$&$0$&$-x_3$\\
$0$&$0$&$0$&$0$&$0$&$0$&$0$&$0$&$0$&$0$&$0$&$x_4$&$0$&$0$&$x_2$&$0$\\
$0$&$0$&$0$&$0$&$0$&$0$&$0$&$0$&$0$&$-x_2$&$0$&$-x_3$&$x_4$&$0$&$0$&$x_2$\\
$0$&$0$&$0$&$0$&$0$&$0$&$0$&$0$&$0$&$0$&$-x_2$&$0$&$-x_3$&$0$&$0$&$0$\\
\end{tabular}
}
\right]$$ 
$$\text{and }[\overline{b_3}]=\pmb \delta \bmatrix 
0&0&0&-x_4&0&x_2&0&0&0\\
0&0&0&0&-x_4&0&0&x_2&0\\
0&0&0&0&0&-x_4&0&0&x_2\\
0&0&0&0&0&0&-x_4&x_3&0\\
0&0&0&0&0&0&0&-x_4&x_3\\
0&0&0&x_3&-x_2&0&0&0&0\\
0&0&0&0&x_3&0&-x_2&0&0\\
0&0&0&0&0&x_3&0&-x_2&0\\\hline
-x_3&0&0&0&0&0&0&0&0\\
x_2&-x_3&0&0&0&0&0&0&0\\
0&0&-x_3&0&0&0&0&0&0\\
0&x_2&0&0&0&0&0&0&0\\
0&0&x_2&0&0&0&0&0&0\\
-x_4&0&0&0&0&0&0&0&0\\
0&-x_4&0&0&0&0&0&0&0\\
x_2&0&-x_4&0&0&0&0&0&0\\
\endbmatrix$$
\caption{{\bf The middle matrices from Example~\ref{d=4matrices-ext}.}}\label{d=4-middle-matrices}
\end{center}\end{table}
  Example \ref{d=4matrices} more thoroughly, we record the skeleton $\overline{\mathbb B}$ (see Remark \ref{skeleton-2.2})  of $\mathbb B$ when $d=4$ and $n=2$.
As described in Example \ref{d=4matrices}, we use the bases $Y^{(0)}$ for $B_0$;
\begingroup
\allowdisplaybreaks\begin{align}&e_1=X^{(1)}_{2,x_2^2},& &e_2=X^{(1)}_{2,x_2x_3},&  &e_3=X^{(1)}_{2,x_2x_4},& &e_4=Y^{(1)}_{2,x_2},& &e_5=Y^{(1)}_{2,x_3},\notag\\ &e_6=Y^{(1)}_{2,x_4},&  &e_7=Y^{(1)}_{3,x_3},& &e_8=Y^{(1)}_{3,x_4},& &e_9=Y^{(1)}_{4,x_4}&&\text{for $B_1$;}\notag\\
&f_1=X^{(2)}_{2,3,x_2^2},&\quad
&f_2=X^{(2)}_{2,3,x_2x_3},&\quad
&f_3=X^{(2)}_{2,3,x_2x_4},&\quad
&f_4=X^{(2)}_{2,3,x_3^2},&\quad
&f_5=X^{(2)}_{2,3,x_3x_4},\notag\\
&f_6=X^{(2)}_{2,4,x_2^2},&\quad
&f_7=X^{(2)}_{2,4,x_2x_3},&\quad
&f_8=X^{(2)}_{2,4,x_2x_4},&\quad
&f_9=Y^{(2)}_{2,4,x_2},&\quad
&f_{10}=Y^{(2)}_{2,4,x_3},\notag\\
&f_{11}=Y^{(2)}_{2,4,x_4},&\quad
&f_{12}=Y^{(2)}_{3,4,x_3},&\quad
&f_{13}=Y^{(2)}_{3,4,x_4},&\quad
&f_{14}=-Y^{(2)}_{2,3,x_2},&\quad
&f_{15}=-Y^{(2)}_{2,3,x_3},\notag\\
&f_{16}=-Y^{(2)}_{2,3,x_4}&&\text{for $B_2$;}
\notag\\
&g_1=- Y^{(3)}_{2,3,4,x_2},& &g_2=- Y^{(3)}_{2,3,4,x_3},& &g_3=- Y^{(3)}_{2,3,4,x_4},& &g_4= X^{(3)}_{2,3,4,x_2^2},& &g_5=
 X^{(3)}_{2,3,4,x_2x_3},\notag\\
&g_6= X^{(3)}_{2,3,4,x_2x_4},& &g_7= X^{(3)}_{2,3,4,x_3^2},& &g_8= X^{(3)}_{2,3,4,x_3x_4},& &g_9= X^{(3)}_{2,3,4,x_4^2}&&\text{for $B_3$;}\notag\end{align}\endgroup  and $X^{(4)}$ for $B_4$. The matrices in $\overline{\mathbb B}$ are
$[\overline{b_1}]=\pmb\delta\bmatrix 0&0&0&x_2^2&x_2x_3&x_2x_4&x_3^2&x_3x_4&x_4^2\endbmatrix$,
$[\overline{b_2}]$ and $[\overline{b_3}]$ are given in Table~\ref{d=4-middle-matrices},
and $[\overline{b_4}]$ is the transpose of $[\overline{b_1}]$. The matrices are calculated using the formulas of Theorem~\ref{47.4}.
\end{example}

\end{document}